\newtheorem{theorem}[equation]{Theorem}
\newtheorem{corollary}[equation]{Corollary}
\newtheorem{lemma}[equation]{Lemma}
\newtheorem{proposition}[equation]{Proposition}
\theoremstyle{definition}
\numberwithin{equation}{section}
\newtheorem{example}[equation]{Example}
\newtheorem{definition}[equation]{Definition}
\newcommand{\itemizespacing}{\addtolength{\itemsep}{0.5\baselineskip}}
\newcommand{\codim}{\operatorname{codim}}
\newcommand{\C}{\mathbb{C}}
\newcommand{\dom}{\mathcal{D}}
\newcommand{\h}{\mathcal{H}}
\newcommand{\E}{\mathcal{E}}
\newcommand{\K}{\mathcal{K}}
\newcommand{\N}{\mathbb{N}}
\newcommand{\R}{\mathbb{R}}
\newcommand{\T}{\mathbb{T}}
\newcommand{\V}{\mathcal{V}}
\newcommand{\Z}{\mathbb{Z}}
\newcommand{\diag}{\operatorname{diag}}
\newcommand{\ran}{\operatorname{ran}}
\newcommand{\inner}[1]{\langle #1 \rangle}
\newcommand{\norm}[1]{\left\| #1 \right\|}
\newcommand{\minimatrix}[4]{\begin{bmatrix} #1 & #2 \\ #3 & #4 \end{bmatrix}  }
\newcommand{\megamatrix}[9]{\begin{bmatrix} #1 & #2 & #3 \\ #4 & #5  & #6 \\ #7 & #8 & #9 \end{bmatrix}  }
\newcommand{\threevector}[3]{ \begin{bmatrix} #1 \\ #2 \\ #3 \end{bmatrix} }
\newcommand{\blf}{ {[\,\cdot\, , \,\cdot\,]} }
\newcommand{\sesqui}{ {\langle\,\cdot\, , \,\cdot\,\rangle} }
\renewcommand{\Re}{\operatorname{Re}}
\renewcommand{\Im}{\operatorname{Im}}
\renewcommand{\vec}[1]{{\bf #1}}
\begin{document}

\title{Mathematical and physical aspects of complex symmetric operators}

\author{Stephan Ramon Garcia}
\address{Department of Mathematics, Pomona College, Claremont, CA, 91711, USA.}
\email{stephan.garcia@pomona.edu}
\urladdr{http://pages.pomona.edu/~sg064747/}

\author{Emil Prodan}
\address{Department of Physics, Yeshiva University, New York, NY 10016, USA}
\email{prodan@yu.edu}

\author{Mihai Putinar}
\address{Department of Mathematics, University of California at Santa Barbara, CA 93106, USA, {\it and}
Division of Mathematical Sciences, Nanyang Technological University, Singapore 637371}
\email{mputinar@math.ucsb.edu} 
\email{mputinar@ntu.edu.sg}

\begin{abstract} 
Recent advances in the theory of complex symmetric operators are presented and related to current studies in non-hermitian quantum mechanics. The main themes of the survey are:  the structure of complex symmetric operators, $C$-selfadjoint extensions of $C$-symmetric unbounded operators, resolvent estimates, reality of spectrum, bases of $C$-orthonormal vectors, and conjugate-linear symmetric operators. The main results are complemented by a variety of natural examples arising in field theory, quantum physics, and complex variables.
\end{abstract}

\maketitle
%\clearpage
%\tableofcontents
%\clearpage
\section{Introduction}

The study of complex symmetric operators has been flourished near the intersection of
	operator theory and complex analysis.
	The general study of complex symmetric operators was undertaken by the first author, third author, and W.R.~Wogen (in various combinations)
	in \cite{CSOA, CSO2, ESCSO, MUCFO, ICS, CSPI, SNCSO}.
	A number of other authors
	have recently made significant contributions to the study of complex symmetric operators
	\cite{CFT, Gilbreath, ZLJ, ZL, JKLL, JKLL2,WXH,Tener,Zag,ZLZ}, which has proven
	particularly relevant to the study of truncated Toeplitz operators \cite{CRW, Chalendar, RPTTO, NLEPHS, 
	SedlockThesis, Sedlock, TTOSIUES }, a rapidly growing branch of function-theoretic operator theory 
	stemming from the seminal work of D.~Sarason \cite{Sarason}.

The last decade witnessed a revived interest in non-hermitian quantum mechanics and in the spectral analysis of 
	certain complex symmetric operators. The proliferation of publications and scientific meetings devoted to the subject leaves the mathematicians 
	and the mathematical aspects of the theory far behind. As incomplete and subjective as it may be, our survey aims 
	at connecting the communities of mathematicians and physicists on their common interest in complex symmetric operators.
	Having in mind a non-expert reader with inclination towards mathematical physics, we proceed at a non-technical level, indicating instead 
	precise bibliographical sources.  Among the recently published monographs dealing at least tangentially 
with complex symmetry we mention \cite{Moiseyev} devoted to resonance theory arising in quantum mechanics and the thesis \cite{NesemannPT} where a clear link between complex symmetric operators and spaces with an indefinite metric is unveiled.  The reader may also wish to consult the recent special issue on non-Hermitian
	quantum physics published in the \emph{Journal of Physics A: Mathematical and Theoretical} \cite{Special}.
%	(vol.~{\bf 45}, no.~44, 2012).

%If you would like to reference the recent special issue on quantum physics with non-Hermitian operators (e.g. in the first paragraph of the introduction) rather than any specific papers in it, please use the reference of the preface: Carl Bender et al 2012 J. Phys. A: Math. Theor. 45 440301 doi:10.1088/1751-8113/45/44/440301
%Thank you.

	The study of  \emph{complex symmetric} (i.e., self-transpose) matrices has deep classical roots, 
	stretching back to the work of L.-K.~Hua on automorphic functions \cite{Hua}, N.~Jacobson on projective 
	geometry \cite{Jacobson}, I.~Schur on quadratic forms \cite{Schur}, C.L.~Siegel on symplectic geometry 
	\cite{Siegel}, and T.~Takagi on function theory \cite{Takagi}.  The connection between 
	complex symmetric matrices and the study of univalent functions emerged in the early 1980s
	\cite{DurenUnivalent, Fitzgerald, HJ}.
	Nevertheless, complex symmetric matrices as a whole have not received the attention which they deserve.
	The modern text \cite[Ch.~4.4]{HJ} and the classic \cite[Ch.~XI]{Gantmacher} are among the few places
	where complex symmetric matrices are discussed in the textbook literature.

         The pioneering work of Glazman \cite{Glazman,GlazmanBook}
marks the foundation of the extension
theory of complex symmetric differential operators; see also \cite{Zihar,Visik}. 
Glazman's work was complemented in a series of articles \cite{Knowles,Race,McLeod} offering a detailed analysis of the boundary conditions for Sturm-Liouville operators that enjoy complex symmetry.
The parallel to the theory of symmetric operators
in an indefinite metric space is natural and necessary; both symmetries have the form $T \subseteq ST^\ast S$, with a
conjugate-linear involution in the first case, and a unitary involution in the second. Later on, complex symmetric
operators and symmetric operators with respect to an indefinite metric merged into a powerful modern construct
\cite{AK1, AK2, AK3, AGK,LangerTretter}.

	In the realm of applied mathematics, complex symmetric matrices appear in the study of 
	quantum reaction dynamics \cite{Bar-On, Brandas}, electric power modeling \cite{Howle},
	the numerical simulation of high-voltage insulators \cite{Reitzinger}, 
	magnetized multicomponent transport \cite{Giovangigli},
	thermoelastic wave propagation \cite{Scott}, the maximum clique problem in 
	graph theory \cite{Budinich}, elliptically polarized plane waves in continuous media \cite{MR1284711},
	inverse spectral problems for semisimple damped vibrating systems \cite{Lancaster}, 
	low-dimensional symplectic gravity models \cite{MR1642302},
	the study of decay phenomena \cite{MR1022502}, scattering matrices in atomic collision theory \cite{Brown},
	and the numerical solution of the time-harmonic Maxwell equation in axisymmetric cavity surface 
	emitting lasers \cite{Arbenz}.  Throughout the years, complex symmetric matrices 
	have also been the focus of sporadic numerical work \cite{Arbenz2, MR1887661, MR1663498, MR1474652, 
	MR1145195, MR1243819, MR1766915, MR1262780, MR1342946, MR2098041, MR1483849, 
	MR1923717, MR1203022}.

We aim here to discuss the general mathematical properties of complex symmetric operators, 
keeping an eye on those aspects of the theory that may be more appealing to the mathematical physicist.
Proofs are given when convenient, although much of the time we will simply provide the reader with a
sketch of the proof or a reference.

\subsection*{Disclaimer}
	Given the widespread recent interest in non-selfadjoint operators from the mathematical physics community, 
	it is likely that some of the results presented here already exist in the physics literature.   A rapid count on the American Mathematical Society scientific net (MathSciNet) gives more than 200 articles solely devoted to $\mathcal{PT}$-symmetric operators.
	We are simply trying to help bridge the gap between the growing community of mathematical physicists working
	on non-selfadjoint operators with our own community of operator theorists who study complex symmetric operators for their own sake.
	If we have omitted any key references or major results, then we apologize.
	
	We must also confess that in writing this survey article, we have borrowed freely from our own previously published work.
	In particular, we have engaged in vigorous recycling of material from our articles
	\cite{CSOA, CSO2, CCO, ESCSO, AAEPRI, VPSBF, CSQ}, although we have taken great care to
	streamline our presentation and standardize the notation used throughout this article.

\subsection*{Notation}
	We adopt the customary notation used in the mathematics literature.
	For instance, our inner products are linear in the first slot and we
	use $\overline{z}$ instead of $z^*$ to denote complex conjugation.	
	Vectors in an abstract Hilbert space will be most often written in bold (e.g., $\vec{v}$) as opposed
	to italic (e.g., $v$).  On the other hand, vectors in concrete Hilbert spaces, such as $L^2(\R)$,
	will be denoted as appropriate for that setting.	

	Matrices and operators shall be denote by upper-case letters such as $A,B,\ldots$
	and scalars by lower-case letters $a,b,\ldots$ or their Greek equivalents $\alpha,\beta,\ldots$.
	We let $I$ denote the identity operator and we 	
	use $A^*$ instead of $A^{\dagger}$ to denote the adjoint of $A$.  The superscript $T$, as in $A^T$, will denote
	the transpose of a matrix.	
	
	We say that two operators $A$ and $B$ are said to be \emph{unitarily equivalent} if there exists a unitary
	operator $U$ such that $A = UBU^*$.  We denote this by $A \cong B$, noting that $\cong$ is an equivalence
	relation (in the matrix-theory literature, the term \emph{unitarily similar} is preferred).
	The norm $\norm{A}$ of an operator always refers to the operator norm $\norm{A} = \sup_{\norm{\vec{x}}=1} \norm{A\vec{x}}$.

\subsection*{Acknowledgments}
S.R.~Garcia acknowledges the support of NSF Grants DMS-1001614 and DMS-1265973. E.~Prodan was supported by 
NSF grants DMS-1066045 and DMR-1056168. M.~Putinar was partially supported by a Grant from Nanyang Technological University. 
We are indebted to David Krej\v{c}i\v{r}\'{i}k and Miroslav Znojil for constructive criticism and precious bibliographical guidance.

\section{Complex symmetric operators}
	Since complex symmetric operators are characterized by their interactions with certain conjugate-linear operators,
	we begin with a brief discussion of these auxiliary operators.

\subsection{Conjugations}
	The following concept is a straightforward generalization of complex conjugation $z \mapsto \overline{z}$, which itself
	can be viewed as a conjugate-linear map on the one-dimensional Hilbert space $\C$. 

	\begin{definition}
		A \emph{conjugation} on a complex Hilbert space $\h$ is a function $C:\h\to\h$ that is
		\begin{enumerate}\itemizespacing
			\item conjugate-linear: $C(\alpha \vec{x} + \beta \vec{y}) = \overline{\alpha} C\vec{x} + \overline{\beta}C\vec{y}$
				for all $\vec{x},\vec{y}$ in $\h$,
			\item involutive: $C^2 = I$,
			\item isometric: $\norm{C\vec{x}} = \norm{\vec{x}}$ for all $\vec{x}$ in $\h$.
		\end{enumerate}
	\end{definition}
	
	The relevance of conjugations to the extension theory for unbounded symmetric (i.e., $T \subseteq T^*$) operators
	was recognized by von Neumann, who realized that a symmetric densely defined
	operator $T:\dom(T)\to\h$ that is \emph{$C$-real} (i.e., $T = CTC$) admits
	selfadjoint extensions \cite{vonNeumann}.  In the theory of von Neumann algebras, 
	conjugations feature prominently in the Tomita-Takesaki modular theory for Type III factors and thus
	in the noncommutative geometry program initiated by A.~Connes \cite{ConnesBook}.
		
	Some authors prefer to use the term \emph{antilinear} instead of \emph{conjugate-linear}.
	From this perspective, a function that satisfies the first and third conditions listed above is called an 
	\emph{antiunitary} operator.  A conjugation is simply an antiunitary operator
	that is involutive.  In light of the polarization identity
	\begin{equation*}
		4 \inner{ \vec{x}, \vec{y} } = \norm{\vec{x} + \vec{y}}^2  -\norm{\vec{x} - \vec{y}}^2  
		+i\norm{\vec{x} + i\vec{y}}^2  -i\norm{\vec{x} -i \vec{y}}^2,
	\end{equation*}
	the isometric condition is equivalent to asserting that $\inner{C\vec{x},C\vec{y}} = \inner{\vec{y},\vec{x}}$ 
	for all $\vec{x},\vec{y}$ in $\h$.  
	Let us consider a few standard examples of conjugations.
	
	\begin{example}\label{Example:MS}
		If $(X,\mu)$ is a measure space (with $\mu$ a positive measure on $X$), then the \emph{canonical conjugation} on $L^2(X,\mu)$ is just
		pointwise complex conjugation: 
		\begin{equation*}
			[Cf](x) = \overline{f(x)}.
		\end{equation*}
		Particular instances include the canonical conjugations 
		\begin{equation}\label{eq-CanonicalConjugationFinite}
			C(z_1,z_2,\ldots,z_n) = (\overline{z_1}, \overline{z_2}, \ldots, \overline{z_n} )
		\end{equation}
		on $\C^n = \ell^2(\{1,2,\ldots,n\})$ and
		\begin{equation}\label{eq-CanonicalConjugationInfinite}
			C(z_1,z_2,z_3,\ldots) = (\overline{z_1}, \overline{z_2}, \overline{z_3}, \ldots )
		\end{equation}
		on the space $\ell^2(\N)$ of all square-summable complex sequences.
	\end{example}

	\begin{example}\label{Example:ToeplitzConjugation}
		The \emph{Toeplitz conjugation} on $\C^n$ is defined by
		\begin{equation}\label{eq-ToeplitzConjugation}
				C(z_1,z_2,\ldots,z_n) = (\overline{z_n}, \overline{z_{n-1}}, \ldots, \overline{z_1} ).
		\end{equation}	
		As its name suggests, the Toeplitz conjugation is related to the study of Toeplitz matrices.
		In light of its appearance in the \emph{Szeg\H{o} recurrence} from the theory of orthogonal
		polynomials on the unit circle (OPUC) \cite[eq.~1.1.7]{SimonOPUC}, one might also refer to \eqref{eq-ToeplitzConjugation}
		as the \emph{Szeg\H{o} conjugation}.
	\end{example}
	
	\begin{example}\label{Example:VC}
		Building upon Example \ref{Example:MS},
		if one has a measure space $(X,\mu)$ that possesses a certain amount of symmetry,
		one can sometimes form a conjugation that respects this symmetry.  For instance, the conjugation
		\begin{equation}\label{eq:ConjugationVolterra}
			[Cf](x) = \overline{ f(1-x)}
		\end{equation}
		on $L^2[0,1]$ arises in the study of certain highly non-normal integral operators
		(see Example \ref{Example:Volterra}).
	\end{example}

	\begin{example}
		Consider the \emph{parity} operator $$[\mathcal{P}\psi](\vec{x}) = \psi(-\vec{x})$$ and 
		the \emph{time-reversal} operator $$[\mathcal{T} \psi](\vec{x}) = \overline{\psi(\vec{x})}$$ on $L^2(\R^n)$.
		Since $\mathcal{T}$ is a conjugation on $L^2(\R^n)$ that commutes with $\mathcal{P}$,
		it is not hard to show that their composition $\mathcal{PT}$ is also a conjugation.  As the notation suggests,
		the conjugation $\mathcal{PT}$ plays a central role in the development of $\mathcal{PT}$-symmetric
		quantum theory \cite{MR1686605, MR1627442}. 
	\end{example}

	\begin{example}
		If the spin-degrees of freedom are considered, then we consider the Hilbert space
		$L^2(\R^n,\C^{2s+1}) \cong L^2(\R^n)\otimes \C^{2s+1}$, where $s$ is the spin of the particle. 
		The time-reversal operator now takes the form 
		$$[\mathcal{T}{\bm \psi}](\vec{x})=e^{-i\pi (1\otimes S_y)} \overline{{\bm \psi}(\vec{x})},$$ where $S_y$ is the $y$-component of the spin-operator acting on $\mathbb C^{2s+1}$. 
		For particles with integer spin number $s$ (bosons), $\mathcal{T}$ remains a conjugation. 
		Unfortunately this is not the case for particles with half-integer spin number $s$ (fermions), in which case the time-reversal operator squares to $-I$. 
		A conjugate-linear operator of this sort is called an \emph{anti-conjugation} \cite[Def.~4.1]{UET}.
	\end{example}
			
	It turns out that conjugations are, by themselves, of minimal interest.  Indeed, the following
	lemma asserts that every conjugation is unitarily equivalent to the canonical conjugation 
	on an $\ell^2$-space of the appropriate dimension.  

	\begin{lemma}\label{LemmaAllSame}
		If $C$ is a conjugation on $\h$, then there exists an orthonormal basis $\{\vec{e}_n\}$ of $\h$
		such that $C\vec{e}_n = \vec{e}_n$ for all $n$.  In particular, $C( \sum_n \alpha_n \vec{e}_n) = \sum_n \overline{\alpha_n} \vec{e}_n$
		for all square summable sequences $\{ \alpha_n\}$.
	\end{lemma}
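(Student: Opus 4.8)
The plan is to produce the desired basis as an orthonormal basis of the real subspace of $C$-fixed vectors. Set $\h^C = \{\vec{x}\in\h : C\vec{x} = \vec{x}\}$. First I would record two elementary facts. Since $C$ is continuous (indeed isometric), $\h^C$ is a closed real-linear subspace of $\h$. Moreover, for $\vec{x},\vec{y}\in\h^C$ the relation $\inner{C\vec{x},C\vec{y}} = \inner{\vec{y},\vec{x}} = \overline{\inner{\vec{x},\vec{y}}}$ forces $\inner{\vec{x},\vec{y}} = \overline{\inner{\vec{x},\vec{y}}}$, so the ambient inner product restricts to a genuine \emph{real} inner product on $\h^C$, making $(\h^C,\inner{\cdot,\cdot})$ a real Hilbert space.

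Next I would show that $\h^C$ is a real form of $\h$: every $\vec{z}\in\h$ decomposes as $\vec{z} = \vec{x} + i\vec{y}$ with $\vec{x},\vec{y}\in\h^C$. For existence take $\vec{x} = \tfrac12(\vec{z}+C\vec{z})$ and $\vec{y} = \tfrac{1}{2i}(\vec{z}-C\vec{z})$; conjugate-linearity together with $C^2 = I$ gives $C\vec{x} = \vec{x}$ and $C\vec{y} = \vec{y}$, and plainly $\vec{x}+i\vec{y} = \vec{z}$. (Uniqueness, if wanted, follows from $\h^C\cap i\h^C = \{0\}$, but it is not needed below.)

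Then, applying Zorn's lemma in the real Hilbert space $\h^C$, I would choose a maximal orthonormal set $\{\vec{e}_n\}$ in $\h^C$; by construction $C\vec{e}_n = \vec{e}_n$ for every $n$, and $\{\vec{e}_n\}$ is orthonormal in $\h$ as well since the two inner products agree on $\h^C$. To check completeness of $\{\vec{e}_n\}$ in $\h$, suppose $\vec{z}\in\h$ satisfies $\inner{\vec{z},\vec{e}_n} = 0$ for all $n$ and write $\vec{z} = \vec{x} + i\vec{y}$ as above; then $0 = \inner{\vec{z},\vec{e}_n} = \inner{\vec{x},\vec{e}_n} + i\inner{\vec{y},\vec{e}_n}$, and since both $\inner{\vec{x},\vec{e}_n}$ and $\inner{\vec{y},\vec{e}_n}$ are real they vanish separately; maximality of $\{\vec{e}_n\}$ in $\h^C$ then gives $\vec{x} = \vec{y} = 0$, hence $\vec{z} = 0$. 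Thus $\{\vec{e}_n\}$ is an orthonormal basis of $\h$, and $C(\sum_n \alpha_n\vec{e}_n) = \sum_n \overline{\alpha_n}\vec{e}_n$ follows at once from conjugate-linearity and continuity of $C$.

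The step I expect to require the most care is precisely the passage from ``orthonormal basis of the real space $\h^C$'' to ``orthonormal basis of the complex space $\h$'': this is where the real-form decomposition $\vec{z} = \vec{x}+i\vec{y}$ and the reality of the scalars $\inner{\vec{x},\vec{e}_n}$, $\inner{\vec{y},\vec{e}_n}$ must be used in tandem. Everything else amounts to bookkeeping with the three defining properties of a conjugation.
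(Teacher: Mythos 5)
Your proof is correct and follows essentially the same route as the paper: the paper works with the real subspace $\K = (I+C)\h$, which coincides with your fixed-point set $\h^C$, observes that the inner product is real-valued there, takes an orthonormal basis of this real Hilbert space, and uses $\h = \K + i\K$ to conclude. You have simply filled in the details (the decomposition $\vec{z} = \tfrac12(\vec{z}+C\vec{z}) + i\cdot\tfrac{1}{2i}(\vec{z}-C\vec{z})$ and the reality of the coefficients) that the paper dismisses with ``it follows easily.''
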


	\begin{proof}
		Consider the $\R$-linear subspace $\K = (I + C)\h$ of $\h$ and note that
		each vector in $\K$ is fixed by $C$.  Consequently $\K$ is a \emph{real} Hilbert space under the inner product
		$\inner{\vec{x}, \vec{y} }$ since
		$\inner{ \vec{x},\vec{y}} = \inner{C\vec{y},C\vec{x}} = \inner{\vec{y},\vec{x}}  = \overline{ \inner{\vec{x},\vec{y}} }$
		for every $\vec{x},\vec{y}$ in $\K$.  Let $\{\vec{e}_n\}$ be an orthonormal basis for $\K$. 
		Since $\h = \K + i \K$, it follows easily that $\{\vec{e}_n\}$ is an orthonormal basis for the complex Hilbert space $\h$ as well.
	\end{proof}

	\begin{definition}
		A vector $\vec{x}$ that satisfies $C\vec{x} = \vec{x}$ is called a \emph{$C$-real} vector.
		We refer to a basis having the properties described in Lemma \ref{LemmaAllSame}
		as a \emph{$C$-real orthonormal basis}.
	\end{definition}
	
	\begin{example}\label{Example:CR3}
		Let $C(z_1,z_2,z_3) = (\overline{z_3}, \overline{z_2}, \overline{z_1})$ denote the Toeplitz
		conjugation \eqref{Example:ToeplitzConjugation} on $\C^3$.  Then
		\begin{equation*}
			\vec{e}_1 = (\tfrac{1}{2}, -\tfrac{1}{\sqrt{2}}, \tfrac{1}{2}),\quad
			\vec{e}_2 = (\tfrac{1}{2}, \tfrac{1}{\sqrt{2}}, \tfrac{1}{2}),\quad
			\vec{e}_3 = ( - \tfrac{i}{\sqrt{2}}, 0, \tfrac{i}{\sqrt{2}} )
		\end{equation*}
 		is a $C$-real orthonormal basis of $\C^3$.
	\end{example}
	
	\begin{example}\label{Example:VONB}
		Let $[Cf](x) = \overline{f(1-x)}$ denote the conjugation \eqref{eq:ConjugationVolterra}
		on $L^2[0,1]$.  For each $\alpha \in [0,2\pi)$, one can show that
		$$\qquad\qquad\vec{e}_n(x) = \exp[i(\alpha+2\pi n)(x-\tfrac{1}{2})],\qquad n \in \Z$$
		is a $C$-real orthonormal basis for $L^2[0,1]$
		\cite[Lem.~4.3]{CCO}.
	\end{example}

\subsection{Complex symmetric operators}

	Our primary interest in conjugations lies not with conjugations themselves, but rather
	with certain \emph{linear} operators that interact with them.  We first restrict ourselves
	to the consideration of bounded operators.  An in-depth discussion of the corresponding 
	developments for unbounded operators is carried out in Section \ref{SectionUnbounded}.

	\begin{definition}\label{DefinitionCSO}
		Let $C$ be a conjugation on $\h$.
		A bounded linear operator $T$ on $\h$ is called \emph{$C$-symmetric} if $T = CT^*C$.
		We say that $T$ is a \emph{complex symmetric operator} if there exists a $C$ with respect to 
		which $T$ is $C$-symmetric.
	\end{definition}
	
	Although the terminology introduced in Definition \ref{DefinitionCSO} is at odds
	with certain portions of the differential equations literature,
	the equivalences of the following lemma indicates that the
	term \emph{complex symmetric} is quite appropriate from
	a linear algebraic viewpoint.	
	
	\begin{lemma}\label{LemmaUECSM}
		For a bounded linear operator $T:\h\to\h$, the following are equivalent:
		\begin{enumerate}\addtolength{\itemsep}{0.5\baselineskip}
			\item $T$ is a complex symmetric operator,
			\item There is an orthonormal basis of $\h$ with respect to which $T$ has a complex symmetric
				(i.e., self-transpose) matrix representation,
			\item $T$ is unitarily equivalent to a complex symmetric matrix, acting on an $\ell^2$-space of the
				appropriate dimension.
		\end{enumerate}
	\end{lemma}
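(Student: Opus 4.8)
The plan is to run the cycle of implications $(1)\Rightarrow(2)\Rightarrow(3)\Rightarrow(1)$, with Lemma~\ref{LemmaAllSame} doing the real work of translating between conjugations and $C$-real orthonormal bases.

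For $(1)\Rightarrow(2)$, I would start from a conjugation $C$ with $T = CT^*C$ and use Lemma~\ref{LemmaAllSame} to fix a $C$-real orthonormal basis $\{\vec{e}_n\}$ of $\h$. The matrix entries of $T$ in this basis are the numbers $\inner{T\vec{e}_j,\vec{e}_i}$, and using $T=CT^*C$, the relations $C\vec{e}_n=\vec{e}_n$, and the identity $\inner{C\vec{x},C\vec{y}}=\inner{\vec{y},\vec{x}}$ recorded after the polarization identity, one computes
\begin{equation*}
\inner{T\vec{e}_j,\vec{e}_i}=\inner{CT^*C\vec{e}_j,\vec{e}_i}=\inner{CT^*\vec{e}_j,C\vec{e}_i}=\inner{\vec{e}_i,T^*\vec{e}_j}=\inner{T\vec{e}_i,\vec{e}_j},
\end{equation*}
so the matrix $[\inner{T\vec{e}_j,\vec{e}_i}]$ is self-transpose.

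For $(2)\Rightarrow(3)$, the unitary $U\colon\h\to\ell^2$ that sends $\vec{e}_n$ to the $n$-th standard basis vector carries $T$ to the (necessarily bounded) operator on that $\ell^2$-space given by the prescribed complex symmetric matrix, whence $T$ is unitarily equivalent to a complex symmetric matrix. For $(3)\Rightarrow(1)$, write $T = UMU^*$ with $U$ unitary and $M=M^T$ acting on an $\ell^2$-space, let $J$ be the canonical (entrywise) conjugation there, and note that $JMJ=\overline{M}=M^*$ since $M$ is symmetric, hence also $JM^*J=M$. Then $C:=UJU^*$ is conjugate-linear, involutive, and isometric, so it is a conjugation, and
\begin{equation*}
CT^*C=UJU^*(UMU^*)^*UJU^*=UJM^*JU^*=UMU^*=T,
\end{equation*}
so $T$ is $C$-symmetric.

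Every step is routine once Lemma~\ref{LemmaAllSame} is in hand; the only places that need a little attention are the bookkeeping of conjugate-linearity in the display for $(1)\Rightarrow(2)$ and the elementary verification in $(3)\Rightarrow(1)$ that conjugating the canonical conjugation by a unitary again yields a conjugation. I do not anticipate a genuine obstacle: the index set of the $\ell^2$-space is forced to have the same cardinality as an orthonormal basis of $\h$, which is precisely what Lemma~\ref{LemmaAllSame} provides.
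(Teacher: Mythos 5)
Your proposal is correct and follows essentially the same route as the paper: the key step $(1)\Rightarrow(2)$ is the identical matrix-entry computation in a $C$-real orthonormal basis supplied by Lemma~\ref{LemmaAllSame}. The only cosmetic difference is that you close the cycle via $(3)\Rightarrow(1)$ by transporting the canonical conjugation through the unitary ($C=UJU^*$), whereas the paper runs the reverse matrix computation directly for $(2)\Rightarrow(1)$; these amount to the same verification.
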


	\begin{proof}
		The equivalence of $(2)$ and $(3)$ is clear, so we focus on $(1) \Leftrightarrow (2)$.
		Suppose that $T= CT^*C$ for some conjugation $C$ on $\h$ and let $\vec{e}_n$ be a $C$-real
		orthonormal basis for $\h$ (see Lemma \ref{LemmaAllSame}).  Computing the matrix entries $[T]_{ij}$
		of $T$ with respect to $\{ \vec{e}_n\}$ we find that
		\begin{equation*}
			[T]_{ij} = \inner{T \vec{e}_j, \vec{e}_i} = \inner{CT^*C \vec{e}_j,\vec{e}_i} =
			\inner{C \vec{e}_i, T^* C\vec{e}_j} = $$ $$\inner{ \vec{e}_i, T^* \vec{e}_j} = \inner{ T \vec{e}_i, \vec{e}_j} =
			 [T]_{ji},
		\end{equation*}
		which shows that $(1) \Rightarrow (2)$.  A similar computation shows that if $\{ \vec{e}_n\}$
		is an orthonormal basis of $\h$ with respect to which $T$ has a complex symmetric matrix
		representation, then the conjugation $C$ which satisfies $C\vec{e}_n = \vec{e}_n$ for all $n$ also 
		satisfies $T = CT^*C$.
	\end{proof}
	
	We refer to a square complex matrix $A$ that equals its own transpose $A^T$
	as a \emph{complex symmetric matrix}.  As Lemma \ref{LemmaUECSM} indicates,
	a bounded linear operator is \emph{complex symmetric}, in the sense of Definition \ref{DefinitionCSO},
	if and only if it can be represented as a complex symmetric matrix with respect to some
	orthonormal basis of the underlying Hilbert space.  Thus there is a certain amount of agreement between
	the terminology employed in the matrix theory and in the Hilbert space contexts.
	The excellent book \cite{HJ}, and to a lesser extent the classic text \cite{Gantmacher}, are among
	the few standard matrix theory texts to discuss complex symmetric matrices in any detail.
	
	\begin{example}
		A square complex matrix $T$ is called a \emph{Hankel matrix} if its entries are constant along the perpendiculars
		to the main diagonal (i.e., the matrix entry $[T]_{ij}$ depends only upon $i+j$).  Infinite Hankel matrices 
		appear in the study of moment problems, control theory, approximation theory, 
		and operator theory \cite{Peller, Nikolski1, Nikolski2}.	
		Being a complex symmetric matrix, it is clear that each Hankel matrix $T$ satisfies
		$T = CT^*C$,
		where $C$ denotes the canonical conjugation \eqref{eq-CanonicalConjugationInfinite}
		on an $\ell^2$-space of the appropriate dimension.
	\end{example}
	
	\begin{example}
		The building blocks of any bounded normal operator (i.e., $T^*T = TT^*$) are the multiplication
		operators $[M_z f](z) = zf(z)$ on $L^2(X,\mu)$ where $X$ is a compact subset of $\C$ and $\mu$ is a positive Borel measure on $X$.  Since
		$M_z  = CM_z^* C$ where $C$ denotes complex conjugation in $L^2(X,\mu)$,
		it follows that every normal operator is a complex symmetric operator.
	\end{example}

	\begin{example}
		It is possible to show that every operator on a two-dimensional Hilbert space is complex symmetric.
		More generally, every \emph{binormal} operator is a complex symmetric operator \cite{SNCSO}.
	\end{example}

	\begin{example}\label{Example:ToeplitzMatrix}
		A $n \times n$ matrix $T$ is called a \emph{Toeplitz matrix} if its entries are constant along the
		parallels to the main diagonal (i.e., the matrix entry $[T]_{ij}$ depends only upon $i-j$).
		The pseudospectra of Toeplitz matrices have been the subject of much recent work \cite{Trefethen}
		and the asymptotic behavior of Toeplitz matrices and their determinants is a beautiful and well-explored territory \cite{Bottcher}.
		Generalizations of finite Toeplitz matrices are \emph{truncated Toeplitz operators}, a subject
		of much interest in function-related operator theory \cite{Sarason, RPTTO}.
		Our interest in Toeplitz matrices stems from the fact that 
		every \emph{finite} Toeplitz matrix $T$ satisfies $T = CT^*C$ where $C$ denotes the \emph{Toeplitz conjugation}
		\eqref{eq-ToeplitzConjugation}.
	\end{example}

	\begin{example}
		The question of whether a given operator is actually a complex symmetric operator is more subtle
		than it first appears.  For instance, one can show that among the matrices
		\begin{equation}\label{eq-TenerMatrices}
			\megamatrix{0}{7}{0}{0}{1}{2}{0}{0}{6} \quad
			\megamatrix{0}{7}{0}{0}{1}{3}{0}{0}{6} \quad
			\megamatrix{0}{7}{0}{0}{1}{4}{0}{0}{6} \quad
			\megamatrix{0}{7}{0}{0}{1}{5}{0}{0}{6} \quad
			\megamatrix{0}{7}{0}{0}{1}{6}{0}{0}{6},
		\end{equation}	
		all of which are \emph{similar} to the diagonal matrix $\operatorname{diag}(0,1,6)$, only 
		the fourth matrix listed in \eqref{eq-TenerMatrices} is unitarily
		equivalent to a complex symmetric matrix \cite{Tener}.
		A particularly striking example of such an unexpected 
		unitary equivalence is 
		\begin{equation*}
			\left[
			\begin{array}{ccc}
				9 & 8 & 9 \\0 & 7 & 0 \\0 & 0 & 7
			\end{array}
			\right] \cong
			\left[\tiny
			\begin{array}{ccc}
				 8-\frac{\sqrt{149}}{2} & \frac{9}{2} i \sqrt{\frac{16837+64 \sqrt{149}}{13093}} 
				 & i \sqrt{\frac{133672}{13093}-\frac{1296 \sqrt{149}}{13093}} \\
				 \frac{9}{2} i \sqrt{\frac{16837+64 \sqrt{149}}{13093}} & \frac{207440+9477 \sqrt{149}}{26186} 
				 & \frac{18 \sqrt{3978002+82324 \sqrt{149}}}{13093} \\
				 i \sqrt{\frac{133672}{13093}-\frac{1296 \sqrt{149}}{13093}} 
				 & \frac{18 \sqrt{3978002+82324 \sqrt{149}}}{13093} & \frac{92675+1808 \sqrt{149}}{13093}
			\end{array}
			\right].
		\end{equation*}
		In particular, observe that a highly non-normal operator may possess rather 
		subtle hidden symmetries.  Algorithms to detect and exhibit such unitary equivalences have been discussed at length in 
		\cite{UECSMGC, UECSMLD, UECSMMC, Tener, MR3034497}.
	\end{example}
	
	\begin{example}\label{Example:Volterra}
		The \emph{Volterra operator} and its adjoint
		\begin{equation*}
			[Tf](x) = \int_0^x f(y)\,dy  ,\qquad  [T^*f](x) = \int_x^1 f(y)\,dy,
		\end{equation*}
		on $L^2[0,1]$ 
		satisfy $T = CT^*C$ where  $[Cf](x) = \overline{f(1-x)}$ denotes the conjugation
		from Example \ref{Example:VC}.  The orthonormal basis 
		\begin{equation*}\qquad\qquad
			\vec{e}_n = \exp\left[2 \pi  i n \left(x - \tfrac{1}{2}\right)\right], \qquad n \in \Z
		\end{equation*}
		of $L^2[0,1]$ is $C$-real (see Example \ref{Example:VONB}).
		The matrix for $T$ with respect to the basis $\{\vec{e}_n\}_{n\in \Z}$ is
		\begin{equation*}\small
			\left[
			\begin{array}{ccccccccc}
				\ddots& \vdots & \vdots & \vdots & \vdots & \vdots & \vdots & \vdots & \iddots \\[2pt]
				\cdots &   \frac{i}{6 \pi } & 0 & 0 & \frac{i}{6 \pi } & 0 & 0 & 0 &  \cdots \\[5pt]
				\cdots &   0 & \frac{i}{4 \pi } & 0 & -\frac{i}{4 \pi } & 0 & 0 & 0 &  \cdots \\[5pt]
				\cdots &   0 & 0 & \frac{i}{2 \pi } & \frac{i}{2 \pi } & 0 & 0 & 0 &  \cdots \\[5pt]
				 \cdots  & \frac{i}{6 \pi } & -\frac{i}{4 \pi } & \frac{i}{2 \pi } 
				 & \boxed{\tfrac{1}{2}}  & -\frac{i}{2 \pi } & \frac{i}{4 \pi } & -\frac{i}{6 \pi }& \cdots \\[5pt]
				\cdots &   0 & 0 & 0 & -\frac{i}{2 \pi } & -\frac{i}{2 \pi } & 0 & 0 &  \cdots \\[5pt]
				\cdots &   0 & 0 & 0 & \frac{i}{4 \pi } & 0 & -\frac{i}{4 \pi } & 0 &  \cdots \\[5pt]
				\cdots &   0 & 0 & 0 & -\frac{i}{6 \pi } & 0 & 0 & -\frac{i}{6 \pi } &  \cdots \\[2pt]
				\iddots & \vdots & \vdots & \vdots & \vdots & \vdots & \vdots & \vdots &  \ddots \\
			\end{array}
			\right],
		\end{equation*}	
		which is complex symmetric (i.e., self-transpose).
	\end{example}

	\begin{example}\label{Example:3x3T}
		Building upon Examples \ref{Example:ToeplitzConjugation} and \ref{Example:ToeplitzMatrix}, we
		see that a $3 \times 3$ nilpotent Jordan matrix $T$ satisfies $T = CT^*C$, where
		\begin{equation*}
			T = \megamatrix{0}{1}{0}{0}{0}{1}{0}{0}{0},
			\qquad
			C\threevector{z_1}{z_2}{z_3} = \threevector{ \overline{z_3} }{ \overline{z_2} }{ \overline{z_1} }.
		\end{equation*}
		Let $\{ \vec{e}_1, \vec{e}_2, \vec{e}_3\}$ denote the $C$-real orthonormal basis for $\C^3$
		obtained in Example \ref{Example:CR3}
		and form the unitary $U = [ \vec{e}_1| \vec{e}_2 | \vec{e}_3]$, yielding
		\begin{equation*}
			U = \left[
			\begin{array}{c|c|c}
			 \frac{1}{2} & \frac{1}{2} & -\frac{i}{\sqrt{2}} \\
			 -\frac{1}{\sqrt{2}} & \frac{1}{\sqrt{2}} & 0 \\
			 \frac{1}{2} & \frac{1}{2} & \frac{i}{\sqrt{2}}
			\end{array}
			\right],
			\qquad
			U^*TU = \left[
			\begin{array}{ccc}
			 -\frac{1}{\sqrt{2}} & 0 & -\frac{i}{2} \\
			 0 & \frac{1}{\sqrt{2}} & \frac{i}{2} \\
			 -\frac{i}{2} & \frac{i}{2} & 0
			\end{array}\right].
		\end{equation*}
	\end{example}

	The following folk theorem is well-known and has been rediscovered many times \cite{HJ, CSOA, Gantmacher}.

	\begin{theorem}\label{TheoremFolk}
		Every finite square matrix is similar to a complex symmetric matrix.
	\end{theorem}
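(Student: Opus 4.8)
The plan is to reduce to the case of a single Jordan block via the Jordan canonical form, and then to exhibit an explicit complex symmetric matrix unitarily equivalent to (hence similar to) each Jordan block. First I would recall that every square matrix $A$ is similar to a direct sum of Jordan blocks $J_{\lambda,k}$, and that a direct sum of complex symmetric matrices is again complex symmetric (place them in block-diagonal position; each block is self-transpose, so the whole matrix is). Moreover, similarity is transitive and compatible with direct sums, so it suffices to show that a single Jordan block $J_{\lambda,k} = \lambda I + N$, where $N$ is the $k \times k$ nilpotent shift with ones on the superdiagonal, is similar to a complex symmetric matrix. Since $\lambda I$ is already complex symmetric, by linearity it is in fact enough to handle the nilpotent part: I would show that the nilpotent Jordan block $N$ is \emph{unitarily equivalent} to a complex symmetric matrix, which is a fortiori a similarity, and then add back $\lambda I$ (conjugation by the same unitary fixes $\lambda I$).

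The key step is the explicit construction. Let $C$ be the Toeplitz conjugation on $\C^k$ from Example \ref{Example:ToeplitzConjugation}, namely $C(z_1,\dots,z_k) = (\overline{z_k},\dots,\overline{z_1})$. A direct computation (exactly as in Examples \ref{Example:3x3T} and \ref{Example:ToeplitzMatrix}) shows that the nilpotent shift $N$ satisfies $N = C N^* C$, i.e.\ $N$ is $C$-symmetric: indeed $N$ sends $\vec{e}_j \mapsto \vec{e}_{j-1}$ while $N^*$ sends $\vec{e}_j \mapsto \vec{e}_{j+1}$, and conjugating the index-reversing $C$ interchanges these actions. By Lemma \ref{LemmaUECSM}, any $C$-symmetric operator is unitarily equivalent to a complex symmetric matrix; concretely, choosing a $C$-real orthonormal basis $\{\vec{f}_n\}$ for $\C^k$ (which exists by Lemma \ref{LemmaAllSame}) and forming the unitary $U = [\vec{f}_1 | \cdots | \vec{f}_k]$, the matrix $U^* N U$ is self-transpose. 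Then $U^*(\lambda I + N)U = \lambda I + U^* N U$ is also self-transpose, so $J_{\lambda,k}$ is unitarily equivalent --- in particular similar --- to a complex symmetric matrix.

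Assembling the pieces: given an arbitrary $n \times n$ matrix $A$, write $A = S (\bigoplus_i J_{\lambda_i, k_i}) S^{-1}$ for some invertible $S$; replace each block $J_{\lambda_i,k_i}$ by a unitarily equivalent complex symmetric matrix $B_i$ as above; then $\bigoplus_i B_i$ is complex symmetric and similar to $A$ (compose the similarity $S$ with the block-diagonal unitary $\bigoplus_i U_i$). I do not expect any serious obstacle here --- the argument is essentially bookkeeping around the Jordan form plus one explicit verification. The only point requiring minor care is confirming the identity $N = C N^* C$ for the shift of arbitrary size $k$ (not just $k=3$ as worked out in Example \ref{Example:3x3T}), but this is a routine index computation: with $C$ reversing coordinates and conjugating, and $N^*$ shifting in the direction opposite to $N$, the two index reversals compose to return $N$. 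An alternative, if one wishes to avoid even this computation, is to invoke the already-stated fact that every finite Toeplitz matrix is $C$-symmetric for the Toeplitz conjugation (Example \ref{Example:ToeplitzMatrix}), since $N$ is itself a Toeplitz matrix.
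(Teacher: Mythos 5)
Your proposal is correct and follows essentially the same route as the paper: pass to the Jordan canonical form and show each Jordan block is unitarily equivalent (hence similar) to a complex symmetric matrix via the Toeplitz conjugation, which is exactly the ``suitable generalization of Example \ref{Example:3x3T}'' the paper invokes. You have merely filled in the details (the identity $N = CN^*C$, the direct-sum bookkeeping) that the paper leaves to the reader.
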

	
	\begin{proof}
		Every matrix is similar to its Jordan canonical form.  A suitable generalization of Example \ref{Example:3x3T}
		shows that every Jordan block is unitarily equivalent (hence similar) to a complex symmetric matrix.
	\end{proof}

	The preceding theorem illustrates a striking contrast between the theory of selfadjoint
	matrices (i.e., $A = A^*$) and complex symmetric matrices (i.e., $A = A^T)$.  The Spectral
	Theorem asserts that every selfadjoint matrix has an orthonormal basis of eigenvectors
	and that its eigenvalues are all real.  On the other hand, a complex symmetric matrix may have any
	possible Jordan canonical form.  This extra freedom arises from the fact that it takes
	$n^2+n$ real parameters to specify a complex symmetric matrix, but only $n^2$ real parameters
	to specify a selfadjoint matrix.  The extra degrees of freedom occur due to the fact that the
	diagonal entries of a selfadjoint matrix must be real, whereas there is no such restriction upon
	the diagonal entries of a complex symmetric matrix.
	
\subsection{Bilinear forms}\label{Subsection:Bilinear}
	Associated to each conjugation $C$ on $\h$ is the bilinear form
	\begin{equation}\label{eq:BLF}
		[\vec{x},\vec{y} ] = \inner{ \vec{x}, C\vec{y}}.
	\end{equation}
	Indeed, since the standard sesquilinear form $\sesqui$ is conjugate-linear in the second position, it follows from the fact that $C$ is conjugate-linear
	that $\blf$ is linear in both positions.
	
	It is not hard to see that the bilinear form \eqref{eq:BLF} is \emph{nondegenerate}, 
	in the sense that $[\vec{x},\vec{y}] = 0$ for all $\vec{y}$ in $\h$ if and only if $\vec{x} = \vec{0}$.
	We also have the Cauchy-Schwarz inequality
	\begin{equation*}
		|[\vec{x},\vec{y}]| \leq \norm{\vec{x}} \norm{\vec{y}},
	\end{equation*}
	which follows since $C$ is isometric.
	However, $\blf$ is not a true inner product since 
	$[e^{i\theta/2}\vec{x},e^{i\theta/2} \vec{x}] = e^{i\theta}[\vec{x},\vec{x}]$ for any $\theta$ and,
	moreover, it is possible for $[\vec{x},\vec{x}]=0$ to hold even it $\vec{x} \neq \vec{0}$.
	
	Two vectors $\vec{x}$ and $\vec{y}$ are \emph{$C$-orthogonal} if $[\vec{x},\vec{y}]=0$ (denoted by $\vec{x} \perp_C \vec{y}$).  We say 
	that two subspaces $\E_1$ and $\E_2$ are $C$-orthogonal (denoted $\E_1 \perp_C \E_2$)
	if $[\vec{x}_1,\vec{x}_2] = 0$ for every $\vec{x}_1$ in $\E_1$ and $\vec{x}_2$ in $\E_2$.	

	To a large extent, the study of complex symmetric operators is equivalent to the study of 
	symmetric bilinear forms.  Indeed, 
	for a fixed conjugation $C:\h\to\h$, there is a bijective
	correspondence between bounded, symmetric bilinear forms $B(x,y)$
	on $\h\times\h$ and bounded $C$-symmetric operators on $\h$.
	
	\begin{lemma}\label{LemmaRepresenting}
		If $B:\h\times\h \to\C$ is a bounded, bilinear form
		and $C$ is a conjugation on $\h$, then there exists a unique bounded linear operator
		$T$ on $\h$ such that
		\begin{equation}\label{EquationForm}
			B(\vec{x},\vec{y}) = [T\vec{x},\vec{y}],
		\end{equation}
		for all $\vec{x},\vec{y}$ in $\h$, where $\blf$ denotes the bilinear form \eqref{eq:BLF} corresponding to $C$.
		If $B$ is symmetric, then $T$ is $C$-symmetric. Conversely, a bounded
		$C$-symmetric operator $T$ gives rise to a bounded, symmetric bilinear
		form via \eqref{EquationForm}.
	\end{lemma}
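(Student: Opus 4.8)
The plan is to produce $T$ via the Riesz representation theorem and then to translate the symmetry of $B$ into the operator identity $T=CT^{*}C$ by carefully keeping track of the conjugations.

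First I would fix $\vec{x}\in\h$ and examine the map $\vec{z}\mapsto B(\vec{x},C\vec{z})$. Since $C$ is conjugate-linear and $B$ is linear in its second argument, this is a conjugate-linear functional, and it is bounded because $|B(\vec{x},C\vec{z})|\leq\norm{B}\,\norm{\vec{x}}\,\norm{C\vec{z}}=\norm{B}\,\norm{\vec{x}}\,\norm{\vec{z}}$ by the isometry of $C$. Applying the Riesz representation theorem to the (now genuinely linear) functional $\vec{z}\mapsto\overline{B(\vec{x},C\vec{z})}$ produces a unique vector, which I call $T\vec{x}$, with $\inner{T\vec{x},\vec{z}}=B(\vec{x},C\vec{z})$ for all $\vec{z}$; substituting $\vec{z}=C\vec{y}$ and using $C^{2}=I$ gives $B(\vec{x},\vec{y})=\inner{T\vec{x},C\vec{y}}=[T\vec{x},\vec{y}]$, which is \eqref{EquationForm}. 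Linearity of $\vec{x}\mapsto T\vec{x}$ follows from linearity of $B$ in its first argument together with the uniqueness clause of Riesz, and boundedness from $\norm{T\vec{x}}=\sup_{\norm{\vec{z}}=1}|\inner{T\vec{x},\vec{z}}|=\sup_{\norm{\vec{z}}=1}|B(\vec{x},C\vec{z})|\leq\norm{B}\,\norm{\vec{x}}$, so that $\norm{T}\leq\norm{B}$. Uniqueness of $T$ among bounded operators satisfying \eqref{EquationForm} is immediate from the nondegeneracy of $\blf$ noted above: if $[T_{1}\vec{x},\vec{y}]=[T_{2}\vec{x},\vec{y}]$ for every $\vec{y}$, then $T_{1}\vec{x}=T_{2}\vec{x}$.

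It remains to connect symmetry of $B$ with $C$-symmetry of $T$, and this is the only step requiring real care. I would use the identity $\inner{C\vec{x},C\vec{y}}=\inner{\vec{y},\vec{x}}$ recorded above (equivalent to the isometry of $C$), which rearranges to $\inner{\vec{a},\vec{b}}=\inner{C\vec{b},C\vec{a}}$. Taking $\vec{a}=CTC\vec{x}$, $\vec{b}=\vec{y}$ and using $C^{2}=I$ gives $\inner{CTC\vec{x},\vec{y}}=\inner{C\vec{y},TC\vec{x}}=\overline{[TC\vec{x},\vec{y}]}$, while $\inner{T^{*}\vec{x},\vec{y}}=\overline{\inner{T\vec{y},\vec{x}}}$. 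Hence $T=CT^{*}C$ (equivalently $CTC=T^{*}$) holds if and only if $[TC\vec{x},\vec{y}]=\inner{T\vec{y},\vec{x}}$ for all $\vec{x},\vec{y}$; replacing $\vec{x}$ by $C\vec{x}$ turns the right-hand side into $\inner{T\vec{y},C\vec{x}}=[T\vec{y},\vec{x}]$, so $T$ is $C$-symmetric precisely when the bilinear form $(\vec{x},\vec{y})\mapsto[T\vec{x},\vec{y}]$ is symmetric. Since by \eqref{EquationForm} this form is exactly $B$, symmetry of $B$ forces $C$-symmetry of $T$. For the converse, given a bounded $C$-symmetric $T$ one simply sets $B(\vec{x},\vec{y}):=[T\vec{x},\vec{y}]$: this is bilinear because $\blf$ is bilinear and $T$ is linear, bounded with $\norm{B}\leq\norm{T}$ by the Cauchy--Schwarz inequality for $\blf$, and symmetric by the equivalence just established. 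I expect the only genuine obstacle to be the bookkeeping in this last step --- ensuring the conjugate bars and the extra copies of $C$ land in the right places when passing among $\inner{CTC\vec{x},\vec{y}}$, $\inner{T^{*}\vec{x},\vec{y}}$, and $\blf$ --- since everything else is a routine application of the Riesz representation theorem together with the nondegeneracy and Cauchy--Schwarz properties of $\blf$ already recorded in this subsection.
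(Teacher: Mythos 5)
Your proposal is correct and follows essentially the same route as the paper: you represent the bounded sesquilinear form $(\vec{x},\vec{z})\mapsto B(\vec{x},C\vec{z})$ by a bounded operator $T$ (you do this by hand via Riesz rather than citing the representation theorem for sesquilinear forms, which is the same thing), substitute $\vec{z}=C\vec{y}$ to get \eqref{EquationForm}, and then translate symmetry of $B$ into $CTC=T^{*}$ by the identity $\inner{C\vec{x},C\vec{y}}=\inner{\vec{y},\vec{x}}$. The only cosmetic differences are that you prove the symmetry correspondence as a single equivalence rather than in two directions, and that you make the uniqueness and norm estimates explicit; all the conjugations land in the right places.
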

	
	\begin{proof}
		If $B$ is a bounded, bilinear form, then $(\vec{x},\vec{y})\mapsto B(\vec{x},C\vec{y})$ defines a bounded,
		sesquilinear form.  Thus there
		exists a bounded linear operator $T:\h\to\h$ such that
		$B(\vec{x},C\vec{y}) = \inner{T\vec{x},\vec{y}}$ for all $\vec{x},\vec{y}$ in $\h$.
		Replacing $\vec{y}$ with $C\vec{y}$, we obtain
		$B(\vec{x},\vec{y}) = [T\vec{x},\vec{y}]$.  If $B(\vec{x},\vec{y}) = B(\vec{y},\vec{x})$, then
		$\inner{T\vec{y},C\vec{x}} = \inner{T\vec{x},C\vec{y}}$ so that
		$\inner{\vec{x},CT\vec{y}} = \inner{\vec{x},T^*C\vec{y}}$ holds for all $\vec{x},\vec{y}$.
		This shows that $CT=T^*C$ and hence $T$ is $C$-symmetric.
		Conversely, if $T$ is $C$-symmetric, then
		\begin{equation*}
			[T\vec{x},\vec{y}] = \inner{T\vec{x},C\vec{y}} = \inner{\vec{x},T^*C\vec{y}} = \inner{\vec{x},CT\vec{y}} = [\vec{x},T\vec{y}].
		\end{equation*}
		The isometric property of $C$ and the Cauchy-Schwarz
		inequality show that the bilinear form $[T\vec{x},\vec{y}]$
		is bounded whenever $T$ is.
	\end{proof}

	If $B$ is a given bounded bilinear form, then Lemma
	\ref{LemmaRepresenting} asserts that for each conjugation $C$ on
	$\h$, there exists a unique representing operator $T$ on $\h$, which is
	$C$-symmetric if $B$ is symmetric, such that
	$$B(\vec{x},\vec{y}) = \inner{\vec{x},CT\vec{y}}.$$
	Although the choice of $C$ is arbitrary, the conjugate-linear operator
	$CT$ is uniquely determined by the bilinear form $B(\vec{x},\vec{y})$. 
	One can also see that the positive operator $|T| = \sqrt{T^*T}$ is uniquely
	determined by the form $B$.  Indeed, since $B(\vec{x},\vec{y})=\inner{\vec{x},T^*C\vec{y}} = \inner{\vec{y},CT\vec{x}}$,
	the conjugate-linear operators $CT$ and $T^*C$ are intrinsic to $B$ and thus so is the positive
	operator $(T^*C)(CT) = T^*T = |T|^2$.
	
	Without any ambiguity, we say that
	a bounded bilinear form $B(\vec{x},\vec{y})$ is \emph{compact} if the modulus
	$|T|$ of any of the representing operators $T$ is compact. If
	$B(\vec{x},\vec{y})$ is a compact bilinear form, then the \emph{singular
	values} of $B$ are defined to be the eigenvalues of the positive
	operator $|T|$, repeated according to their multiplicity.

\section{Polar structure and singular values}

\subsection{The Godi\v{c}-Lucenko Theorem}
	It is well-known that any planar rotation can be obtained as the product of
	two reflections.  The following theorem of Godi\v{c} and Lucenko \cite{GL}
	generalizes this simple observation and provides an interesting
	perspective on the structure of unitary operators.

	\begin{theorem}\label{TheoremGL}
		If $U$ is a unitary operator on a Hilbert space $\h$, then there exist conjugations $C$ and $J$ on $\h$
		such that $U = CJ$ and $U^* = JC$.
	\end{theorem}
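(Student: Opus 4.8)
The plan is to reduce the statement to a fact already recorded in the excerpt: every normal operator, and hence every unitary operator, is complex symmetric. The guiding observation is that factoring $U = CJ$ into a product of two conjugations amounts, up to bookkeeping, to finding a conjugation $C$ with respect to which $U$ is $C$-symmetric.

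First I would dispense with the second identity. If $C$ and $J$ are conjugations, then $CJ$ and $JC$ are linear isometric bijections with $(CJ)(JC) = C J^2 C = I$ and likewise $(JC)(CJ) = I$, so $(CJ)^{-1} = JC$. Consequently, once we have written a unitary $U$ as $U = CJ$, we automatically get $U^* = U^{-1} = JC$; so it is enough to produce conjugations $C$ and $J$ with $U = CJ$.

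Next I would observe that it suffices to find a single conjugation $C$ for which $U = CU^*C$, i.e.\ with respect to which $U$ is $C$-symmetric. Given such a $C$, the relation $U = CU^*C$ is equivalent (conjugating both sides by $C$, using $C^2 = I$) to $CUC = U^*$. Set $J := CU$. Then $J$ is conjugate-linear, and isometric since $\norm{J\vec{x}} = \norm{CU\vec{x}} = \norm{U\vec{x}} = \norm{\vec{x}}$; moreover $J^2 = CUCU = (CUC)U = U^*U = I$, so $J$ is a conjugation. Finally $CJ = C^2 U = U$ and $JC = CUC = U^*$, giving both asserted identities. (In fact the implication also reverses, so the theorem is \emph{equivalent} to the assertion that every unitary operator is complex symmetric, but only this direction is needed.)

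It remains to produce the conjugation $C$. Since $U$ is unitary it is normal, and every normal operator is complex symmetric: for multiplication operators $M_z$ on $L^2(X,\mu)$ one has $M_z = C M_z^* C$ with $C$ pointwise conjugation, and the spectral theorem expresses a general normal operator as a direct sum of such $M_z$'s (the direct sum of the corresponding conjugations is again a conjugation, and it transfers along a unitary equivalence $W$ by passing to $WCW^*$). This yields a conjugation $C$ with $U = CU^*C$, and the previous paragraph finishes the argument. The one genuinely substantive ingredient is this last appeal to the spectral theorem; granted that, the rest is purely formal manipulation of conjugations.
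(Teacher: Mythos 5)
Your proof is correct and follows essentially the same route as the paper, which disposes of the theorem by remarking that it ``follows from the spectral theorem'' together with the explicit factorization $U = CJ$, $[Cf](e^{i\theta}) = e^{i\theta}\overline{f(e^{i\theta})}$, $[Jf](e^{i\theta}) = \overline{f(e^{i\theta})}$, of the multiplication operator $[Uf](e^{i\theta})=e^{i\theta}f(e^{i\theta})$ on $L^2(\T,\mu)$. Your formal reduction --- given a conjugation $C$ with $U = CU^*C$, the map $J = CU$ is a conjugation satisfying $U = CJ$ and $U^* = JC$ --- is a clean repackaging of the same idea (essentially Lemma \ref{LemmaCJ} run in reverse), and the one substantive ingredient in both arguments is the spectral theorem applied to the normal operator $U$.
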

	
	The preceding theorem states that any unitary operator on a fixed Hilbert space can be 
	constructed by gluing together two copies of essentially the same conjugate-linear operator.
	Indeed, by Lemma \ref{LemmaAllSame} any conjugation on $\h$ can be represented as complex 
	conjugation with respect to a certain orthonormal basis.  In this sense, the conjugations 
	$C$ and $J$ in Theorem \ref{TheoremGL} are structurally identical objects.  Thus the fine structure of unitary operators 
	arises entirely in how two copies of the same object are put together.  
	The converse of Theorem \ref{TheoremGL} is also true.
	
	\begin{lemma}\label{LemmaCJ}
		If $C$ and $J$ are conjugations on a Hilbert space $\h$, then $U = CJ$ is a unitary operator.  Moreover, 
		$U$ is both $C$-symmetric and $J$-symmetric.
	\end{lemma}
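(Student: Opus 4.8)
The plan is to read off every assertion directly from the three defining properties of a conjugation. First I would note that $U = CJ$ is genuinely \emph{linear}: the composition of two conjugate-linear maps is linear, since for a scalar $\alpha$ one has $CJ(\alpha\vec{x}) = C(\overline{\alpha}J\vec{x}) = \alpha CJ\vec{x}$, and similarly $JC$ is linear. Boundedness is immediate as well, because $\norm{U\vec{x}} = \norm{CJ\vec{x}} = \norm{J\vec{x}} = \norm{\vec{x}}$, so $U$ is in fact isometric.

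The heart of the argument is to identify the adjoint $U^*$. The tool is the polarized form of the isometry condition already recorded in the excerpt, $\inner{C\vec{x},C\vec{y}} = \inner{\vec{y},\vec{x}}$, which upon replacing $\vec{y}$ by $C\vec{y}$ gives the convenient identity $\inner{C\vec{u},\vec{v}} = \inner{C\vec{v},\vec{u}}$ valid for \emph{any} conjugation, in particular for both $C$ and $J$. Applying this twice — once to move $C$ and once to move $J$ across the inner product — I would compute
\begin{equation*}
	\inner{CJ\vec{x},\vec{y}} = \inner{C\vec{y},J\vec{x}} = \overline{\inner{J\vec{x},C\vec{y}}} = \overline{\inner{JC\vec{y},\vec{x}}} = \inner{\vec{x},JC\vec{y}},
\end{equation*}
which identifies $U^* = JC$. (As a sanity check, this also recovers Theorem \ref{TheoremGL}'s pairing $U^* = JC$ for the converse direction.)

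With $U^* = JC$ in hand, the remaining claims reduce to one-line involution checks. Since $U^*U = (JC)(CJ) = JC^2J = J^2 = I$ and $UU^* = (CJ)(JC) = CJ^2C = C^2 = I$, the operator $U$ is invertible with $U^{-1} = U^*$, i.e. unitary. For the symmetry statements, $CU^*C = C(JC)C = CJC^2 = CJ = U$ shows that $U = CU^*C$, so $U$ is $C$-symmetric; and $JU^*J = J(JC)J = J^2CJ = CJ = U$ shows $U = JU^*J$, so $U$ is $J$-symmetric.

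I do not expect a real obstacle here: the only delicate point is the bookkeeping of conjugate-linearity when transposing $C$ and $J$ across $\inner{\,\cdot\,,\,\cdot\,}$ — one must be careful to use $\inner{C\vec{u},\vec{v}} = \inner{C\vec{v},\vec{u}}$ in the correct direction rather than slipping in a spurious complex conjugate. An alternative that avoids the explicit adjoint computation is to observe that $U$ is a composition of two surjective isometries, hence itself a surjective isometry on a Hilbert space, hence unitary; but computing $U^* = JC$ outright is cleaner and feeds the $C$- and $J$-symmetry assertions with no extra work.
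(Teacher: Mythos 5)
Your proof is correct and follows essentially the same route as the paper: both identify $U^* = JC$ by transposing the conjugations across the inner product (the paper's chain $\inner{CJf,g} = \inner{Cg,Jf} = \inner{f,JCg}$ is your computation written more compactly), and then read off unitarity and the two symmetry identities from $C^2 = J^2 = I$. Your extra remarks on linearity and isometry of the composition are sound but the paper leaves them implicit.
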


	\begin{proof}
		If $U = CJ$, then (by the isometric property of $C$ and $J$) it follows that
		$\inner{f,U^*g} = \inner{Uf,g} = \inner{CJf,g} = \inner{Cg,Jf} = \inner{f,JCg}$
		for all $f,g$ in $\h$.  Thus $U^* = JC$ from which $U = CU^*C$ and $U = JU^*J$ both follow.  
	\end{proof}

	\begin{example}
		Let $U:\C^n \to \C^n$ be a unitary operator with $n$ (necessarily unimodular) 
		eigenvalues $\xi_1,\xi_2,\ldots,\xi_n$ and corresponding orthonormal
		eigenvectors $\vec{e}_1,\vec{e}_2,\ldots,\vec{e}_n$.  If $C$ and $J$ are defined by setting
		$C\vec{e}_k = \xi_k \vec{e}_k$ and $J\vec{e}_k = \vec{e}_k$ for $k = 1,2,\ldots, n$ and extending 
		by conjugate-linearly to all of $\C^n$, then
		clearly $U =CJ$.  By introducing offsetting unimodular parameters in the definitions of $C$ and $J$, 
		one sees that the Godi\v{c}-Lucenko decomposition of $U$ is not unique.
	\end{example}
			
	\begin{example}
		Let $\mu$ be a finite Borel measure on the unit circle $\T$.
		If $U$ denotes the unitary operator $[Uf](e^{i\theta}) = e^{i\theta}f(e^{i\theta})$ on $L^2(\T,\mu)$, then $U = CJ$ where
		$$[Cf](e^{i\theta}) = e^{i\theta}\overline{f(e^{i\theta})},\quad [Jf](e^{i\theta}) = \overline{f(e^{i\theta})}$$
		for all $f$ in $L^2(\T,\mu)$.  The proof of Theorem \ref{TheoremGL} follows from the spectral theorem
		and this simple example.
	\end{example}
	
	\begin{example}
		Let $\h = L^2(\R, dx)$ and let
		$$[\mathcal{F} f] (\xi) = \frac{1}{\sqrt{2 \pi}} \int_{\R} e^{-i x \xi} f(x) dx$$
		denote the Fourier transform.
		Since $[Jf](x) = \overline{f(x)}$ satisfies
		$\mathcal{F} = J\mathcal{F}^* J$, we see that $\mathcal{F}$ is a
		$J$-symmetric unitary operator. The Fourier transform
		is the product of two simple conjugations:
		$C = \mathcal{F}J$ is complex conjugation in the frequency domain and $J$ is complex
		conjugation in the state space domain.
	\end{example}

\subsection{Refined polar decomposition}
	The Godi\v{c}-Lucenko decomposition (Theorem \ref{TheoremGL}) can be generalized
	to complex symmetric operators.
	Recall that the \emph{polar decomposition} $T = U|T|$ of a bounded linear operator $T:\h\rightarrow\h$
	expresses $T$ uniquely as the product of a positive operator $|T| = \sqrt{T^*T}$ and a partial isometry $U$ that satisfies
	$\ker U = \ker |T|$ and which maps $(\ran|T|)^-$ onto $(\ran T)^-$.  
	The following lemma, whose proof we briefly sketch, is from \cite{CSO2}:

	\begin{theorem}\label{RPDB}
		If $T:\h\rightarrow\h$ is a bounded $C$-symmetric operator,
		then $T = CJ|T|$ where $J$ is a conjugation that commutes with $|T| = \sqrt{T^*T}$
		and all of its spectral projections.
	\end{theorem}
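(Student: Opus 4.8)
The plan is to start from the ordinary polar decomposition $T = U|T|$ and exploit $C$-symmetry to pin down the structure of the partial isometry $U$. First I would compute: since $T = CT^*C$ and $T^* = |T|U^*$, we get $T = C|T|U^*C = (C|T|C)(CU^*C)$. The operator $C|T|C$ is positive (it is $C\sqrt{T^*T}C = \sqrt{C T^* T C} = \sqrt{T T^*}$ using involutivity and isometry of $C$, which identifies it with $|T^*|$), and $CU^*C$ is a partial isometry. By uniqueness of the polar decomposition of $T$ one is tempted to match factors directly, but the factorization $T = (C|T|C)(CU^*C)$ is a \emph{left} polar decomposition ($|T^*|$ times a partial isometry), so instead I would use the standard intertwining $U|T| = |T^*|U$, equivalently $|T^*| = U|T|U^*$ on the appropriate subspaces. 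Combining $T = CJ_0|T|$ for the candidate conjugation, the natural guess is $J_0 = C U$: then $CJ_0|T| = C(CU)|T| = U|T| = T$, so the factorization itself is automatic once we know $CU$ is a conjugation commuting with $|T|$.

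So the real content reduces to two claims about $J := CU$: that it is a conjugation, and that it commutes with $|T|$ and all its spectral projections. For the commutation, I would argue as follows. From $T = CT^*C$ take moduli: $|T|^2 = T^*T = (CT^*C)^*(CT^*C) = CTC \cdot CT^*C = C(TT^*)C = C|T^*|^2 C$, so $|T| = C|T^*|C$, and hence $C|T|C = |T^*| = U|T|U^*$. Therefore $(CU)|T|(CU)^{-1}$... more directly: $J|T|J = CU|T|UC$. Using $U|T| = |T^*|U$ (valid since $U$ maps $\ran|T|$ to $\ran|T^*|$ appropriately) this is $C|T^*|U U C = C|T^*|C$ on $(\ker|T|)^\perp$, and $C|T^*|C = |T|$ by the identity just derived. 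One must handle the kernel separately, but on $\ker|T| = \ker T = \ker U$ both sides vanish, so $J|T|J = |T|$, i.e. $J|T| = |T|J$ once we know $J$ is involutive; commutation with spectral projections then follows from the functional calculus (or from $J p(|T|) J = p(|T|)$ for polynomials $p$, then a limiting argument).

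The step I expect to be the genuine obstacle is verifying that $J = CU$ is an \emph{honest conjugation} — in particular involutive — rather than merely an antiunitary partial isometry, because $U$ is only a partial isometry when $T$ is not invertible. On $(\ker T)^\perp$ everything is fine: $J$ is antiunitary there, and $J^2 = CUCU = C(UCU)$; one checks $UCU$ acts as $C$ on $(\ker|T|)^\perp$ using $CUC = $ the partial isometry in the polar decomposition of $T^*$, together with the symmetry relation, so $J^2 = I$ on that subspace. The subtlety is extending $J$ across $\ker T$: here $U$ annihilates everything, so one cannot simply take $J = CU$ globally. The fix, following \cite{CSO2}, is to observe that $C$ maps $\ker T = \ker|T|$ onto $\ker T^* = \ker|T^*|$, that these two kernels have equal dimension, and to glue in any conjugation between them (for instance, compose $C$ with a fixed unitary identifying the kernels, chosen so that the pieces fit together into a global involution commuting with $|T|$ — which is trivial on the kernel since $|T|$ vanishes there). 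Assembling $J$ from its restriction $CU$ on $(\ker T)^\perp$ and this auxiliary conjugation on $\ker T$, and checking the two pieces are consistent (both are antiunitary, both commute with $|T|$, and the direct sum is involutive), completes the proof. I would flag this kernel-gluing as the one place where a little care, rather than a routine computation, is needed.
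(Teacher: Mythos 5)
Your proposal is correct and follows essentially the same route as the paper: start from $T=U|T|$, identify $J=CU$ as a partial conjugation on $\overline{\ran|T|}$ commuting with $|T|$, and extend it across $\ker|T|$ to a genuine conjugation. The only cosmetic difference is that the paper gets $U=CU^*C$ and the commutation in one stroke by regrouping $CT^*C=(CU^*C)(CU|T|U^*C)$ and invoking uniqueness of the polar decomposition, whereas you reach the same identities via $|T|=C|T^*|C$, the intertwining $U|T|=|T^*|U$, and the polar decomposition of $T^*$; you also correctly flag the kernel-gluing as the step needing care.
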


	\begin{proof}
		Write the polar decomposition $T = U|T|$ of $T$ and note that
		$T = CT^*C = (CU^*C)(CU|T|U^*C)$ since $U^*U$ is the orthogonal projection onto
		$(\ran|T|)^-$.  One shows that $\ker CU^*C = \ker CU|T|U^*C$,
		notes that $CU^*C$ is a partial isometry and that
		$CU|T|U^*C$ is positive, then concludes from the uniqueness
		of the terms in the polar decomposition that $U = CU^*C$
		(so that $U$ is $C$-symmetric) and that the conjugate-linear operator
		$CU = U^*C$ commutes with $|T|$ and hence with all of its spectral projections. One then verifies
		that this ``partial conjugation'' supported on $(\ran|T|)^-$ can be extended to
		a conjugation $J$ on all of $\h$.
	\end{proof}

	A direct application of the refined polar decomposition is an analogue of the celebrated Adamyan-Arov-Kre\u{\i}n theorem
	asserting that the optimal approximant of prescribed rank of a Hankel operator is also a Hankel operator (see 
	\cite{Peller} for complete details). The applications of the
	Adamyan-Arov-Kre\u{\i}n theorem to extremal problems of modern function theory are analyzed in a concise and definitive form in \cite{Sarason1}.
	The case of complex symmetric operators is completely parallel.	
	
	\begin{theorem}
		Let $T$ be a compact $C$-symmetric operator with singular values 
		$s_0 \geq s_1 \geq \cdots$, repeated according to multiplicity, then
		\begin{equation*}
			s_n = \inf_{ \substack{ \operatorname{rank} T' = n\\ \text{$T'$ $C$-symmetric}}} \norm{T - T'}.
		\end{equation*}
	\end{theorem}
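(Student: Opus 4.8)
The plan is to mimic the standard proof of the Adamyan--Arov--Kre\u{\i}n (AAK) theorem, using the refined polar decomposition (Theorem \ref{RPDB}) as the device that carries extra structure through the argument. First I would dispose of the easy inequality $s_n \leq \inf \norm{T-T'}$: for \emph{any} bounded operator $T'$ of rank $n$ (not necessarily $C$-symmetric), the min-max characterization of singular values gives $s_n(T) \leq \norm{T - T'}$, since $s_n(T) = \inf_{\operatorname{rank} T' = n} \norm{T - T'}$ is precisely the classical Schmidt--Eckart--Young--Mirsky theorem. Hence restricting the infimum to the smaller class of $C$-symmetric $T'$ can only increase it, and $s_n \leq \inf_{\substack{\operatorname{rank} T' = n\\ T' \ C\text{-symmetric}}} \norm{T-T'}$ follows trivially.

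The substance is the reverse inequality: I must exhibit a $C$-symmetric $T'$ of rank $n$ with $\norm{T - T'} \leq s_n$. Write the refined polar decomposition $T = CJ|T|$ from Theorem \ref{RPDB}, where $J$ is a conjugation commuting with $|T|$ and all its spectral projections. Since $T$ is compact, $|T|$ has an orthonormal eigenbasis; because $J$ commutes with the spectral projections of $|T|$, each eigenspace of $|T|$ is $J$-invariant, so I can choose within each eigenspace a $J$-real orthonormal basis (invoking Lemma \ref{LemmaAllSame} applied to the restriction of $J$ to that finite-dimensional eigenspace). This yields an orthonormal system $\{\vec{e}_k\}$ of eigenvectors of $|T|$ with $|T|\vec{e}_k = s_k \vec{e}_k$ and $J\vec{e}_k = \vec{e}_k$. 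Set $\vec{f}_k = C\vec{e}_k$; one checks that $T = \sum_k s_k \inner{\cdot,\vec{e}_k} \vec{f}_k$ is the Schmidt expansion. Now define the truncation $T' = \sum_{k=0}^{n-1} s_k \inner{\cdot,\vec{e}_k}\vec{f}_k$. This has rank at most $n$, and $\norm{T - T'} = \norm{\sum_{k\geq n} s_k \inner{\cdot,\vec{e}_k}\vec{f}_k} = s_n$ by the usual singular-value estimate. The key point is that $T'$ is still $C$-symmetric: since $J\vec{e}_k = \vec{e}_k$ and $C = CJ|T| \cdot (\ldots)$ — more directly, because $T'$ is obtained by compressing $T = CJ|T|$ with the spectral projection $P = \sum_{k<n}\inner{\cdot,\vec{e}_k}\vec{e}_k$ of $|T|$, and $P$ commutes with $|T|$ and satisfies $JPJ = P$ (as $P$ has a $J$-real basis), so $T' = CJ(P|T|P)$ has the same form with the $C$-symmetry relation $T' = C(T')^*C$ surviving the truncation. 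Equivalently, in the $J$-real basis $\{\vec{e}_k\}$ the matrix of $T' $ relative to the bilinear pairing $[\,\cdot\,,\,\cdot\,]$ of Lemma \ref{LemmaRepresenting} is symmetric, because it equals the matrix of the symmetric bilinear form $[T\vec{x},\vec{y}]$ restricted (compressed) to the span of $\vec{e}_0,\ldots,\vec{e}_{n-1}$, and a compression of a symmetric bilinear form is symmetric.

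The main obstacle is verifying cleanly that the rank-$n$ truncation \emph{preserves} $C$-symmetry; this is exactly where the refined polar decomposition earns its keep, since an arbitrary singular-value truncation of a complex symmetric operator need not be complex symmetric unless the Schmidt vectors are chosen compatibly with the conjugation. I would therefore state the truncation as a compression $T' = E T E$ where $E$ is the orthogonal projection onto $\vecspan\{\vec{e}_0,\dots,\vec{e}_{n-1}\}$ and note two facts: (i) $E$ commutes with $|T|$, so $T' = CJ|T|E = CJE|T|$ has the correct polar-type form and $\norm{T-T'} = \norm{CJ|T|(I-E)} = \norm{|T|(I-E)} = s_n$; (ii) $E$ is $J$-real (i.e. $JEJ = E$) because $\{\vec{e}_k\}$ is a $J$-real basis of $\ran E$, and combined with the $C$-symmetry of $T$ this gives $C(T')^*C = C(ETE)^*C = C E T^* E C = (CEC)(CT^*C)(CEC) = (CEC)\,T\,(CEC)$; finally $CEC = JEJ = E$ because $C$ and $J$ differ by $U = CJ$ which restricts to a unitary on $\ran E$ fixed by the construction — this last identity is the one delicate bookkeeping step, and it follows because both $C$ and $J$ act as complex conjugation in the \emph{same} real basis $\{\vec e_k\}$ of $\ran E$, so their restrictions to $\ran E$ coincide. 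Hence $T' = ETE$ is $C$-symmetric, of rank $\le n$, and attains $\norm{T-T'} = s_n$, completing the proof.
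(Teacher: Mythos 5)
The paper itself offers no proof of this statement (it simply asserts that the case is ``completely parallel'' to the Adamyan--Arov--Kre\u{\i}n theorem), and your overall strategy is the intended one: the easy inequality follows from the classical Schmidt--Eckart--Young--Mirsky theorem, and the refined polar decomposition $T = CJ|T|$ of Theorem \ref{RPDB} produces a $J$-real orthonormal system $\{\vec{e}_k\}$ of eigenvectors of $|T|$, hence a Schmidt expansion $T = \sum_k s_k \inner{\cdot,\vec{e}_k}C\vec{e}_k$ whose truncation $T' = \sum_{k<n}s_k\inner{\cdot,\vec{e}_k}C\vec{e}_k$ satisfies $\norm{T-T'} = s_n$ and is $C$-symmetric. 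Your direct verification of this last fact via the bilinear form --- $[T'\vec{x},\vec{y}] = \sum_{k<n}s_k\inner{\vec{x},\vec{e}_k}\inner{\vec{y},\vec{e}_k}$ is symmetric in $\vec{x}$ and $\vec{y}$, so Lemma \ref{LemmaRepresenting} applies --- is correct and already closes the argument.

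However, the final paragraph, which you yourself single out as the delicate step, is wrong as written. First, $ETE$ is not the Schmidt truncation: $TE = CJ|T|E = \sum_{k<n}s_k\inner{\cdot,\vec{e}_k}C\vec{e}_k$ has range $\vecspan\{C\vec{e}_k\}_{k<n}$, which need not lie in $\ran E$, so $ETE \neq TE$ in general and $\norm{T - ETE}$ can exceed $s_n$. Second, the identity $CEC = E$ is false in general: $CEC$ is the orthogonal projection onto $C(\ran E) = \vecspan\{C\vec{e}_0,\dots,C\vec{e}_{n-1}\}$, and the assertion that ``$C$ and $J$ act as complex conjugation in the same real basis $\{\vec{e}_k\}$'' conflates the two conjugations: $J\vec{e}_k = \vec{e}_k$ by construction, but $C\vec{e}_k = (CJ)\vec{e}_k$ is the $k$-th \emph{right} Schmidt vector, which equals $\vec{e}_k$ only when the partial isometry $U = CJ$ fixes $\ran E$ pointwise (for instance when $T \geq 0$). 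The repair is one line: take $T' = TE$ rather than $ETE$, and compute $C(T')^*C = (CEC)(CT^*C) = FT$, where $F = CEC$ is the orthogonal projection onto the span of the first $n$ right Schmidt vectors; inspection of the Schmidt expansion gives $FT = TE = T'$, so $T'$ is $C$-symmetric, of rank at most $n$, and attains $\norm{T - T'} = s_n$.
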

	
	Some applications of this theorem to rational approximation (of Markov functions) in the complex plane are discussed
	in \cite{PutinarProkhorov}.

\subsection{Approximate antilinear eigenvalue problems}
	A new method for computing the norm and singular values of a complex symmetric operator
	was developed in \cite{CSO2, AAEPRI}.  This technique has been used to
	compute the spectrum of the modulus of a Foguel operator \cite{NMFO}
	and to study non-linear extremal problems arising in classical function theory \cite{NLEPHS}.

	Recall that Weyl's criterion \cite[Thm.~VII.12]{RS1} states that if $A$ is a bounded selfadjoint operator, then $\lambda \in \sigma(A)$
	if and only if there exists a sequence $\vec{x}_n$ of unit vectors so that $\lim_{n\rightarrow\infty} \norm{(A - \lambda I)\vec{x}_n} = 0$.
	The following theorem characterizes $\sigma(|T|)$ in terms of an
	\emph{approximate antilinear eigenvalue problem}.
	
	\begin{theorem}\label{TheoremSpectrum}
		Let $T$ be a bounded $C$-symmetric operator and write $T = CJ|T|$ where $J$ is a conjugation commuting
		with $|T|$ (see Theorem \ref{RPDB}).  If		
		$\lambda\geq 0$, then
		\begin{enumerate}\addtolength{\itemsep}{0.5\baselineskip}
			\item $\lambda$ belongs to $\sigma(|T|)$ 
			if and only if there exists a sequence of unit vectors $\vec{x}_n$ such that
			$$\lim_{n\rightarrow\infty} \norm{(T - \lambda C)\vec{x}_n} = 0.$$  Moreover, the $\vec{x}_n$ may be chosen so that $J\vec{x}_n = \vec{x}_n$ for all $n$.
			
			\item $\lambda$ is an eigenvalue of $|T|$ (i.e., a singular value of $T$) if and only if the antilinear eigenvalue problem
			$$T\vec{x} = \lambda C\vec{x}$$ has a nonzero solution $\vec{x}$.  Moreover, $\vec{x}$ may be chosen so that $J\vec{x} = \vec{x}$.
		\end{enumerate}
	\end{theorem}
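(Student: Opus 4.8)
The plan is to strip off the unitary factor in $T = CJ|T|$ and reduce both assertions to ordinary spectral questions for the positive operator $|T|$, where Weyl's criterion and the commutation relation $J|T| = |T|J$ from Theorem \ref{RPDB} do all the work. Concretely, since $C$ is an isometric involution, for every vector $\vec{x}$ we have $\norm{(T - \lambda C)\vec{x}} = \norm{C(J|T| - \lambda I)\vec{x}} = \norm{(J|T| - \lambda I)\vec{x}}$, and $T\vec{x} = \lambda C\vec{x}$ is equivalent, after applying $C$, to $J|T|\vec{x} = \lambda \vec{x}$. Thus everything is about the conjugate-linear operator $J|T| - \lambda I$.

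For part (1), first suppose unit vectors $\vec{x}_n$ satisfy $\norm{(J|T|-\lambda I)\vec{x}_n}\to 0$. Applying $J$ (an isometry, with $\lambda$ real) gives $\norm{(|T| - \lambda J)\vec{x}_n} \to 0$; applying the bounded operator $|T|$ and using $|T|J = J|T|$ gives $\norm{(J|T|^2 - \lambda|T|)\vec{x}_n}\to 0$; combining the two yields $\norm{(|T|^2 - \lambda^2 I)\vec{x}_n}\to 0$. By Weyl's criterion $\lambda^2 \in \sigma(|T|^2)$, and since $|T|\geq 0$ and $\lambda \geq 0$ this forces $\lambda \in \sigma(|T|)$ (spectral mapping for $s\mapsto s^2$, or a direct factorization of $|T|^2 - \lambda^2 I$). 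Conversely, given $\lambda \in \sigma(|T|)$, pick unit vectors $\vec{y}_n$ with $\norm{(|T| - \lambda I)\vec{y}_n}\to 0$. The vectors $\vec{y}_n + J\vec{y}_n$ and $i(\vec{y}_n - J\vec{y}_n)$ are $J$-real, at least one of them has norm $\geq 1$ (because $\vec{y}_n$ is their average), and each remains an approximate eigenvector of $|T|$ at $\lambda$ since $J$ commutes with $|T|$; normalizing the larger one produces $J$-real unit vectors $\vec{x}_n$ with $\norm{(|T| - \lambda I)\vec{x}_n}\to 0$. For a $J$-real vector, $J|T|\vec{x} = |T|J\vec{x} = |T|\vec{x}$, so $\norm{(J|T|-\lambda I)\vec{x}_n} = \norm{(|T|-\lambda I)\vec{x}_n}\to 0$, which by the reduction is $\norm{(T-\lambda C)\vec{x}_n}\to 0$ with $J\vec{x}_n = \vec{x}_n$.

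Part (2) follows the same pattern without limits. If $T\vec{x} = \lambda C\vec{x}$ with $\vec{x}\neq \vec{0}$, then $J|T|\vec{x} = \lambda\vec{x}$; applying $J$ and then $|T|$ (and using $J|T| = |T|J$) gives $|T|^2\vec{x} = \lambda^2\vec{x}$, and since $|T|\geq 0$, $\lambda \geq 0$, factoring $|T|^2 - \lambda^2 I$ (with $\lambda = 0$ handled by $\norm{|T|\vec{x}}^2 = \inner{|T|^2\vec{x}, \vec{x}}$) shows $\lambda$ is an eigenvalue of $|T|$. Conversely, if $\lambda$ is an eigenvalue of $|T|$, the eigenspace $N = \ker(|T| - \lambda I)$ is invariant under $J$ (again because $J$ commutes with $|T|$, and $\lambda$ is real), so $J|_N$ is a conjugation on $N$ and Lemma \ref{LemmaAllSame} yields a $J$-real $\vec{x}\in N\setminus\{\vec{0}\}$; for this $\vec{x}$ we get $T\vec{x} = CJ|T|\vec{x} = CJ(\lambda\vec{x}) = \lambda C\vec{x}$, which simultaneously settles the ``moreover'' clause.

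The only point requiring care is the production of a $J$-real (approximate) solution from an arbitrary one: one must check that the averaging $\vec{y}\mapsto \vec{y}\pm J\vec{y}$ does not kill both halves at once and that the surviving half remains a good (approximate) eigenvector of $|T|$, both of which rest squarely on $J|T| = |T|J$ from Theorem \ref{RPDB}. The rest is routine manipulation of the factorization $T = CJ|T|$ together with Weyl's criterion applied to the bounded selfadjoint operator $|T|^2$.
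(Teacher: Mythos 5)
Your proof is correct and follows essentially the same route as the paper's: factor $T = CJ|T|$, reduce both claims to Weyl's criterion for the positive operator $|T|$, and symmetrize an arbitrary Weyl sequence into a $J$-real one using the commutation $J|T| = |T|J$ from Theorem \ref{RPDB}. The one place you elaborate --- passing through $\norm{(|T|^2 - \lambda^2 I)\vec{x}_n} \to 0$ and the factorization $(|T|+\lambda)(|T|-\lambda)$ to recover $\lambda \in \sigma(|T|)$ from an arbitrary, not necessarily $J$-real, approximate antilinear eigensequence --- is a welcome filling-in of a step the paper disposes of in a single sentence.
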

	
	\begin{proof}
		Since the second statement follows easily from the first, 
		we prove only the first statement.  Following Theorem \ref{RPDB}, write
		$T = CJ|T|$ where $J$ is a conjugation that commutes with $|T|$.	
		By Weyl's criterion, $\lambda \geq 0$ belongs to $\sigma(|T|)$ if and only if there exists a sequence $\vec{u}_n$
		of unit vectors so that $\norm{\,|T|\vec{u}_n - \lambda \vec{u}_n} \rightarrow 0$.  Since $J$ is isometric and commutes with $|T|$, 
		this happens if and only if $\norm{\,|T|J\vec{u}_n - \lambda J \vec{u}_n} \rightarrow 0$ as well.
		Since not both of $\tfrac{1}{2}(\vec{u}_n + J\vec{u}_n)$ and $\tfrac{1}{2i}(\vec{u}_n - J\vec{u}_n)$ can be zero for a given $n$, we can obtain a
		sequence of unit vectors $\vec{x}_n$ such that $J\vec{x}_n = \vec{x}_n$ and 
		\begin{equation*}
		\norm{(T - \lambda C)\vec{x}_n} = \norm{CT\vec{x}_n - \lambda \vec{x}_n} = \norm{J|T|\vec{x}_n - \lambda \vec{x}_n} 
		= \norm{|T|\vec{x}_n - \lambda \vec{x}_n} \to 0.
		\end{equation*}
		On the other hand, if a sequence $\vec{x}_n$ satisfying the original criteria exists, then it follows from Theorem
		\ref{RPDB} that $\lim_{n\rightarrow\infty}\norm{ (|T| - \lambda I) \vec{x}_n} = 0$.  By Weyl's criterion, $\lambda \in \sigma(|T|)$.
	\end{proof}

\subsection{Variational principles}
	The most well-known result in the classical theory of complex symmetric matrices
	is the so-called \emph{Takagi factorization}.  However, as the authors of \cite[Sect.~3.0]{HJTopics} point out, 
	priority must be given to L.~Autonne, who published this theorem in 1915	\cite{Autonne}.
	
	\begin{theorem}\label{TheoremTakagi}
		If $A = A^T$ is $n \times n$, then there exists a unitary matrix $U$ such that $A = U \Sigma U^T$
		where $\Sigma = \diag(s_0,s_1,\ldots,s_{n-1})$ is the diagonal matrix that has 
		the singular values of $A$ listed along the main diagonal.
	\end{theorem}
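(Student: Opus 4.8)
The plan is to deduce the Takagi (Autonne) factorization directly from the refined polar decomposition of Theorem~\ref{RPDB}, specialized to the finite-dimensional setting. Since $A = A^T$ acting on $\C^n$ with the canonical conjugation $C$ from \eqref{eq-CanonicalConjugationFinite} is exactly the statement that $A$ is $C$-symmetric (indeed $A^T = CA^*C$ for this $C$), Theorem~\ref{RPDB} applies and yields $A = CJ|A|$, where $J$ is a conjugation commuting with $|A| = \sqrt{A^*A}$ and with all of its spectral projections. Here $|A|$ is a positive semidefinite matrix whose eigenvalues, listed with multiplicity, are precisely the singular values $s_0 \geq s_1 \geq \cdots \geq s_{n-1}$ of $A$.

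The key step is to diagonalize $|A|$ using a \emph{$J$-real} orthonormal basis. Because $J$ commutes with every spectral projection of $|A|$, it preserves each eigenspace $E_\lambda = \ker(|A| - \lambda I)$, and the restriction of $J$ to $E_\lambda$ is again a conjugation on that finite-dimensional space; by Lemma~\ref{LemmaAllSame} we may pick a $J$-real orthonormal basis of each $E_\lambda$. Concatenating these over the distinct eigenvalues produces an orthonormal basis $\{\vec{e}_0, \vec{e}_1, \ldots, \vec{e}_{n-1}\}$ of $\C^n$, ordered so that $|A|\vec{e}_k = s_k \vec{e}_k$, with $J\vec{e}_k = \vec{e}_k$ for every $k$. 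Let $V = [\,\vec{e}_0\,|\,\vec{e}_1\,|\cdots|\,\vec{e}_{n-1}\,]$ be the unitary matrix with these columns; then $V^* |A| V = \Sigma := \diag(s_0, \ldots, s_{n-1})$, and since $J$ acts as coordinatewise conjugation in this basis, one checks that $J V = \overline{V}$ (equivalently $V^* J = V^T$ as an identity of conjugate-linear operators applied and then expressed in coordinates).

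It remains to assemble the pieces. Write $A = (CJ)\,|A|$ and note that $CJ$ is a unitary operator by Lemma~\ref{LemmaCJ}; set $W = CJ$. Then
\begin{equation*}
	A = W\,|A| = W\,(V \Sigma V^*) = (WV)\,\Sigma\,V^*.
\end{equation*}
The claim is that $V^* = (WV)^T$, so that with $U := WV$ we get the desired $A = U \Sigma U^T$. To see this, compute $(WV)^T$: in coordinates, $W^T = (CJ)^T$, and because $C$ is coordinatewise conjugation while $J$ fixes each $\vec{e}_k$, a short computation with the matrix of $W$ in the standard basis gives $W^T = W$ (the matrix of $CJ$ is symmetric, since $(CJ)^* = JC$ and $\overline{CJ} = JC$ force $(CJ)^T = \overline{(CJ)^*} = \overline{JC} = CJ$). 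Hence $(WV)^T = V^T W^T = V^T W = (V^* J)(J W^{-1})^{*\,\ldots}$—more cleanly: $V^T W = \overline{V}^* W$; using $JV = \overline V$ and $W = CJ$ one gets $V^T W = \overline{V}^*CJ$, and since $\overline{V}^* C$ applied coordinatewise is $V^*$ composed with $J$-type bookkeeping, this collapses to $V^* W^{-1}\cdot W = V^*$. So $U = WV$ is unitary and $A = U\Sigma U^T$, with $\Sigma$ carrying the singular values on its diagonal.

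The main obstacle is purely bookkeeping: keeping straight the interplay between the conjugate-linear operators $C$, $J$, $W=CJ$ and the \emph{linear} unitary $V$, and verifying the transpose identity $(WV)^T = V^*$ without sign or conjugation errors. The cleanest route is to work entirely in the $J$-real basis $\{\vec{e}_k\}$ from the start: in that basis $J$ \emph{is} the canonical conjugation, $|A|$ \emph{is} $\Sigma$, and $W = CJ$ has a symmetric matrix because $C = WJ$ is itself a conjugation (Lemma~\ref{LemmaCJ} applied to $C$ and $J$ gives nothing new, but $W$ being $J$-symmetric by Lemma~\ref{LemmaCJ} is exactly the statement $W = JW^*J$, which in a $J$-real basis reads $W = \overline{W^*} = W^T$). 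With $U$ the matrix of $W$ in this basis we then have $A = U\Sigma U^T$ immediately; reverting to the original coordinates via the change-of-basis unitary absorbs into $U$ and leaves the form unchanged. The singular-value claim then follows because unitary congruence $A \mapsto U^* A \overline{U}$ does not change $A^*A$ up to unitary conjugation, so $\Sigma^2 = \Sigma^* \Sigma$ has the same eigenvalues as $A^*A$.
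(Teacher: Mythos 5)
The paper gives no proof of Theorem \ref{TheoremTakagi}; it is stated as a classical result and attributed to Autonne and Takagi, so there is no internal argument to compare yours against. Your route---specializing the refined polar decomposition of Theorem \ref{RPDB} to $A=CA^*C$ on $\C^n$ with the canonical conjugation \eqref{eq-CanonicalConjugationFinite}, building a $J$-real orthonormal eigenbasis of $|A|$ by applying Lemma \ref{LemmaAllSame} inside each eigenspace (legitimate, since $J$ preserves them), and setting $U=WV$ with $W=CJ$---is sound and very much in the spirit of how the paper uses Theorem \ref{RPDB} elsewhere. The crucial identity does hold: since $J\vec{x}=V\,\overline{V^*\vec{x}}$, one gets $W\vec{x}=CJ\vec{x}=\overline{V}\,V^*\vec{x}$, i.e.\ $W=\overline{V}V^*$, hence $W^T=W$ and $V^TW=V^T\overline{V}V^*=(\overline{V}^*\overline{V})V^*=V^*$, which is exactly $(WV)^T=V^*$ and yields $A=WV\Sigma V^*=U\Sigma U^T$ with $U=WV$ unitary.

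Two places in your write-up need repair, though. First, your stated verification of $V^TW=V^*$ is not actually a computation: the chain ``$V^TW=\overline{V}^*CJ$ \dots\ collapses to $V^*W^{-1}\cdot W=V^*$'' (with a stray exponent) mixes linear and conjugate-linear objects and proves nothing; replace it with the one-line identity $W=\overline{V}V^*$ above. Relatedly, ``$JV=\overline{V}$'' equates a conjugate-linear map with a linear one; the correct bookkeeping is $J=VCV^*$ as conjugate-linear operators. Second, and more seriously, the closing ``cleanest route'' paragraph is wrong: in the $J$-real basis you obtain $A=W\Sigma$ with $W$ unitary and symmetric, and it is \emph{not} true that this equals $U\Sigma U^T$ with $U$ the matrix of $W$---already for $n=1$, $A=e^{i\theta}s$ gives $W=e^{i\theta}$ and $W\Sigma W^T=e^{2i\theta}s\neq A$. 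Passing from the symmetric unitary $W$ to a genuine Takagi factor (in effect a square root of $W$ compatible with $\Sigma$) is precisely what your main argument accomplishes via $U=WV$, so the proposed shortcut does not bypass it. With the garbled verification replaced and the final paragraph deleted or corrected, the proof is complete.
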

		
	This result has been rediscovered many times, most notably by
	Hua in the study of automorphic functions \cite{Hua}, Siegel in symplectic geometry \cite{Siegel}, 
	Jacobson in projective geometry \cite{Jacobson}, and Takagi \cite{Takagi} in complex function theory.
	As a consequence of the Autonne-Takagi decomposition we see that
	\begin{equation*}
		\vec{x}^T A \vec{x} = \vec{x}^T U\Sigma U^T \vec{x} = (U^T \vec{x})^T \Sigma (U^T \vec{x}) = \vec{y}^T \Sigma \vec{y}
	\end{equation*}
	where $\vec{y} = U^T \vec{x}$.  This simple observation is the key to proving a complex symmetric analogue of
	the following important theorem,
	the finite dimensional \emph{minimax principle}. The general principle can be used to numerically compute
	the bound state energies for Schr\"odinger operators \cite[Thm.~XIII.1]{RS4}.
	
	\begin{theorem}
		If $A = A^*$ is $n \times n$, then for $0 \leq k \leq n-1$ the eigenvalues 
		$\lambda_0 \geq \lambda_1 \geq  \cdots \geq \lambda_{n-1}$ of $A$ satisfy
		\begin{equation*}
			 \min_{\codim \V = k} \quad\max_{\substack{ \vec{x} \in \V \\ \norm{ \vec{x} }=1}}  \vec{x}^* A \vec{x} = \lambda_{k}.
		\end{equation*}		
	\end{theorem}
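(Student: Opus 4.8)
The plan is to reduce everything to the Spectral Theorem and then run the classical Courant--Fischer dimension-counting argument. First I would fix, via the Spectral Theorem, an orthonormal basis $\vec{u}_0, \vec{u}_1, \ldots, \vec{u}_{n-1}$ of $\C^n$ consisting of eigenvectors with $A\vec{u}_j = \lambda_j \vec{u}_j$ and $\lambda_0 \geq \lambda_1 \geq \cdots \geq \lambda_{n-1}$. For any unit vector $\vec{x} = \sum_{j} c_j \vec{u}_j$ one then has the identity $\vec{x}^* A \vec{x} = \sum_{j} \lambda_j |c_j|^2$ with $\sum_{j}|c_j|^2 = 1$, exhibiting $\vec{x}^* A \vec{x}$ as a convex combination of the eigenvalues. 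This elementary observation is the only computation required, and it drives both inequalities.

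For the bound $\min \le \lambda_k$ I would simply produce one witnessing subspace: take $\V_0 = \vecspan\{\vec{u}_k, \vec{u}_{k+1}, \ldots, \vec{u}_{n-1}\}$, which has dimension $n-k$ and hence codimension $k$. Every unit vector $\vec{x} \in \V_0$ has $c_j = 0$ for $j < k$, so $\vec{x}^* A \vec{x} = \sum_{j \ge k} \lambda_j |c_j|^2 \le \lambda_k$, because each $\lambda_j \le \lambda_k$ for $j \ge k$ while the coefficients $|c_j|^2$ sum to one (and equality holds at $\vec{x} = \vec{u}_k$). Taking the maximum over such $\vec{x}$ and then the minimum over all codimension-$k$ subspaces gives a value at most $\lambda_k$.

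For the reverse bound $\max \ge \lambda_k$ on an arbitrary subspace $\V$ with $\codim \V = k$, the crux — and really the only non-routine idea — is a dimension count. Set $\W = \vecspan\{\vec{u}_0, \vec{u}_1, \ldots, \vec{u}_k\}$, which has dimension $k+1$. Since $\dim \V + \dim \W = (n-k) + (k+1) = n+1 > n$, the subspaces $\V$ and $\W$ must intersect nontrivially, so there is a unit vector $\vec{x} \in \V \cap \W$, necessarily of the form $\vec{x} = \sum_{j=0}^{k} c_j \vec{u}_j$. For this $\vec{x}$ we get $\vec{x}^* A \vec{x} = \sum_{j=0}^{k} \lambda_j |c_j|^2 \ge \lambda_k$, as each $\lambda_j \ge \lambda_k$ for $j \le k$. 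Hence the inner maximum over $\V$ is at least $\lambda_k$ for every admissible $\V$, whence the outer minimum is $\ge \lambda_k$. Combining the two bounds yields equality. The one point worth flagging is the attainment of the extrema, so that one may legitimately write $\min$ and $\max$ rather than $\inf$ and $\sup$: the inner extremum is attained because $\vec{x} \mapsto \vec{x}^* A \vec{x}$ is continuous on the compact unit sphere of the finite-dimensional space $\V$, and the outer extremum is attained at $\V_0$, which actually realizes the value $\lambda_k$.
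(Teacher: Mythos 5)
Your proof is correct: it is the classical Courant--Fischer argument, with the witnessing subspace $\vecspan\{\vec{u}_k,\ldots,\vec{u}_{n-1}\}$ for the upper bound and the dimension-count intersection with $\vecspan\{\vec{u}_0,\ldots,\vec{u}_k\}$ for the lower bound, and the attainment remarks are a sensible finishing touch. The paper itself states this minimax principle without proof (citing Reed--Simon, Thm.~XIII.1), and your argument is exactly the standard one found in that reference, so there is nothing to compare beyond noting full agreement with the canonical approach.
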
			

	The following analogue of minimax principle was discovered by J.~Danciger in 2006 \cite{Danciger}, while
	still an undergraduate at U.C.~Santa Barbara.

	\begin{theorem}
		If $A = A^T$ is $n \times n$, then the singular values
		$s_0 \geq s_1 \geq \cdots \geq s_{n-1}$ of $A$ satisfy
		\begin{equation*}
			\min_{\codim \V = k} \quad\max_{\substack{\vec{x} \in \V \\ 
			\norm{\vec{x}}=1}} \Re { \vec{x}^T\! A \vec{x}}  \quad=\quad 
			\begin{cases}
				s_{2k}	&	\text{if $0 \leq k < \frac{n}{2}$,} \\[5pt]
				0		&	\text{if $\frac{n}{2} \leq k \leq n$.}
			\end{cases}
		\end{equation*}
	\end{theorem}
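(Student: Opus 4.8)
The plan is to diagonalize the symmetric form via the Autonne–Takagi factorization $A = U\Sigma U^T$ (Theorem \ref{TheoremTakagi}), substitute $\vec{y} = U^T\vec{x}$, and reduce the problem to a purely diagonal computation. Since $\vec{x}\mapsto U^T\vec{x}$ is a unitary bijection preserving norms, and since the codimension-$k$ subspaces $\V\subseteq\C^n$ are carried bijectively onto codimension-$k$ subspaces $\W = U^T\V$, the quantity to be analyzed becomes
\begin{equation*}
	\min_{\codim\W = k}\ \max_{\substack{\vec{y}\in\W\\ \norm{\vec{y}}=1}}\ \Re\bigl(\vec{y}^T\Sigma\vec{y}\bigr),
	\qquad \Sigma = \diag(s_0,\dots,s_{n-1}),\ s_0\geq\cdots\geq s_{n-1}\geq 0.
\end{equation*}
Writing $\vec{y} = \vec{a} + i\vec{b}$ with $\vec{a},\vec{b}\in\R^n$, one computes $\Re(\vec{y}^T\Sigma\vec{y}) = \vec{a}^T\Sigma\vec{a} - \vec{b}^T\Sigma\vec{b} = \sum_j s_j(a_j^2 - b_j^2)$, while $\norm{\vec{y}}^2 = \norm{\vec{a}}^2 + \norm{\vec{b}}^2 = 1$. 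So everything reduces to a real constrained optimization of the real diagonal quadratic form $Q(\vec{a},\vec{b}) = \sum_j s_j(a_j^2 - b_j^2)$ on the real sphere in $\R^{2n}$, subject to a real-linear codimension constraint.

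**Key steps.**
First, I would reinterpret the complex codimension-$k$ subspace $\W$: it is the kernel of $k$ complex linear functionals, hence the common real kernel of $2k$ real linear functionals, so $\W$ (viewed in $\R^{2n}$) has real codimension $2k$. Thus the inner maximization is the maximum of $Q$ over a unit sphere intersected with a real subspace of $\R^{2n}$ of real codimension $2k$. Second, I observe that $Q$ is itself a real symmetric quadratic form on $\R^{2n}$ whose eigenvalues, listed in decreasing order, are $s_0, s_1, \dots, s_{n-1}, -s_{n-1}, \dots, -s_1, -s_0$; that is, the positive part of the spectrum is $s_0\geq\cdots\geq s_{n-1}\geq 0$ and the negative part is its mirror image. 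Now the classical real minimax principle (the Hermitian theorem quoted just above, applied to the $2n\times 2n$ symmetric matrix representing $Q$) would give $\min_{\codim = 2k}\max Q = \mu_{2k}$, the $(2k)$-th eigenvalue of $Q$ in decreasing order — except that here we only minimize over those real codimension-$2k$ subspaces that arise from complex subspaces, which is a restricted (proper, in general) subfamily. So the third step is the crucial one: show that restricting to complex subspaces does not change the value of the min. The upper bound $\min_{\text{complex}} \leq \mu_{2k}$ follows by exhibiting one good complex subspace — namely $\W_0 = \{\vec{y} : y_0 = y_1 = \cdots = y_{k-1} = 0\}$, on which $Q(\vec{y}) = \sum_{j\geq k} s_j(a_j^2-b_j^2) \leq s_k\max\{0,\ldots\}$; a short computation shows $\max_{\vec{y}\in\W_0,\norm{\vec{y}}=1}\Re(\vec{y}^T\Sigma\vec{y}) = s_k$ when $k < n/2$ (pick $\vec{y}$ real, supported on coordinate $k$) and $=0$ when $k\geq n/2$ (there is enough room to make the whole thing zero or negative). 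For the matching lower bound, I must show that every complex codimension-$k$ subspace $\W$ contains a unit vector $\vec{y}$ with $\Re(\vec{y}^T\Sigma\vec{y})$ at least the claimed value; this I would do by the standard dimension-count argument, intersecting $\W$ (real codimension $2k$) with the $(2\lfloor k\rfloor+\text{appropriate})$-dimensional real subspace spanned by the "top" real eigenvectors of $Q$ and noting the intersection is nontrivial, then using that $Q$ is bounded below by the relevant $s$ there. The even/odd bookkeeping — why $s_{2k}$ rather than $s_k$ appears, and why the answer is $0$ past the halfway point — comes out precisely because the spectrum of $Q$ on $\R^{2n}$ interleaves the $s_j$'s with the $-s_j$'s, so the $(2k)$-th largest eigenvalue of $Q$ is $s_{2k}$ exactly while $k<n/2$ and is $\leq 0$ (hence the constrained max, which can always reach $0$ by choosing a null vector, equals $0$) once $k\geq n/2$.

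**Main obstacle.**
The routine parts are the Takagi substitution and the real/complex quadratic-form bookkeeping. The genuine difficulty is the lower bound in the restricted minimax — proving that minimizing over the subfamily of real subspaces coming from complex subspaces still attains the unrestricted eigenvalue $\mu_{2k}$, rather than something larger. The subtlety is that a complex codimension-$k$ subspace is a real codimension-$2k$ subspace of a very special type (it is invariant under multiplication by $i$, equivalently under the symplectic-type structure that swaps $\vec{a}\leftrightarrow\vec{b}$), so one cannot directly invoke the real minimax. What saves the day is the mirror symmetry of the spectrum of $Q$: the eigenspaces pair up under $\vec{y}\mapsto i\vec{y}$ (which sends $Q\to -Q$), so the "top $2k$" eigenvectors of $Q$ can be organized into $k$ complex coordinate directions, and intersecting $\W$ with their complex span — a complex subspace of complex dimension $k$ meeting a complex subspace of complex codimension $k$ inside $\C^n$ — is forced to be nontrivial. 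On that intersection $Q$ is controlled from below by $s_{2k}$ (resp. is $\geq$ a nonpositive quantity, giving $0$). I expect the cleanest writeup to phrase the whole lower bound in $\C^n$ directly rather than passing to $\R^{2n}$, precisely to keep the "complex subspace" constraint manifest.
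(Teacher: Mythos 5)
The paper itself offers no proof of this theorem beyond the remark that the Autonne--Takagi substitution $\vec{y}=U^T\vec{x}$ is ``the key''; your reduction to the diagonal form $\Re(\vec{y}^T\Sigma\vec{y})=\sum_j s_j(a_j^2-b_j^2)$ on the unit sphere of $\R^{2n}$, and your identification of the spectrum of this real form $Q$ as $\{\pm s_j\}$, are correct and consistent with that hint. The proposal breaks down, however, at the step you treat as routine. Since every complex codimension-$k$ subspace is in particular a real codimension-$2k$ subspace of $\R^{2n}$, shrinking the family over which one minimizes can only \emph{increase} the minimum, so the real Courant--Fischer theorem already gives $\min\ge\mu_{2k}=s_{2k}$ for free when $k<n/2$; the substantive direction is the upper bound, for which you must exhibit one \emph{complex} codimension-$k$ subspace whose maximum is only $s_{2k}$. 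Your candidate $\W_0=\{\vec{y}:y_0=\cdots=y_{k-1}=0\}$ does not do this: by your own computation its maximum is $s_k$, which in general strictly exceeds $s_{2k}$, so it proves only $\min\le s_k$. The missing idea --- the actual content of the theorem, and the reason only even-indexed singular values appear --- is that a single complex-linear condition can annihilate \emph{two} of the top eigenvalues of $Q$ at once: imposing $\sqrt{s_{2j}}\,y_{2j}=i\,\sqrt{s_{2j+1}}\,y_{2j+1}$ for $j=0,\dots,k-1$ (replacing any vacuous condition by $y_{2j}=0$) forces $s_{2j}y_{2j}^2+s_{2j+1}y_{2j+1}^2=0$, so each pair's contribution to $\Re(\vec{y}^T\Sigma\vec{y})$ vanishes identically, leaving $\sum_{j\ge 2k}s_j\Re(y_j^2)\le s_{2k}$ with equality at $\vec{e}_{2k}$. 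For $k\ge n/2$ one pairs off all coordinates (setting a leftover coordinate to zero when $n$ is odd) to make the form vanish identically on a subspace of the required codimension.

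Two further assertions in your closing paragraph are false and would undermine the writeup if relied upon. The top $2k$ eigenvectors of $Q$ span the totally real plane $\operatorname{span}_{\R}\{\vec{e}_0,\dots,\vec{e}_{2k-1}\}$, which is not a complex subspace and whose complex span has complex dimension $2k$, not $k$; and a complex subspace of dimension $k$ need not meet one of codimension $k$ nontrivially, since $k+(n-k)-n=0$. The correct intersection count for the lower bound is the real one you already wrote down (real codimension $2k$ meets real dimension $2k+1$), and for $k\ge n/2$ the inequality $\max\ge 0$ on any nonzero complex subspace does not come from a dimension count at all: it follows from the phase rotation $\vec{y}\mapsto e^{i\theta}\vec{y}$, which stays in the subspace and multiplies $\vec{y}^T\Sigma\vec{y}$ by $e^{2i\theta}$, so some choice of $\theta$ makes $\Re(\vec{y}^T\Sigma\vec{y})=0$.
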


	The preceding theorem is remarkable since the expression $\Re \vec{x}^T \!A \vec{x}$
	detects only the evenly indexed singular values.  The Hilbert space generalization of Danciger's minimax
	principle is the following \cite{VPSBF}.

	\begin{theorem}\label{TheoremMain}
		If $T$ is a compact $C$-symmetric operator on $\h$ and
		$\sigma_0 \geq \sigma_1 \geq \cdots \geq 0$
		are the singular values of $T$, then
		\begin{equation}\label{EquationCSMM}
			\min_{\codim\V=n} \max_{\substack{\vec{x} \in \V \\ \norm{\vec{x}}=1}} \Re [T\vec{x},\vec{x}]    =
			\begin{cases}
				\sigma_{2n} & \text{if $0 \leq n < \frac{\dim\h}{2}$},\\[5pt]
				0         & \text{otherwise}.
			\end{cases}
		\end{equation}
	\end{theorem}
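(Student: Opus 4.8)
The plan is to use the refined polar decomposition to rewrite $\Re[T\vec{x},\vec{x}]$ as an explicit real quadratic form from which the conjugation disappears, to note that complex subspaces of $\h$ are exactly the closed real subspaces invariant under multiplication by $i$, and then to prove the two inequalities in \eqref{EquationCSMM}: a lower bound by a dimension count and a matching upper bound realized by an explicit trial subspace. First I would invoke Theorem \ref{RPDB} to write $T=CJ\,|T|$, with $J$ a conjugation commuting with $|T|$ and all of its spectral projections. Since $|T|$ is compact and positive, $\h$ is the orthogonal direct sum of $\ker|T|$ and the finite-dimensional eigenspaces of $|T|$; as $J$ preserves each summand and restricts there to a conjugation, Lemma \ref{LemmaAllSame} provides an orthonormal basis $\{\vec{e}_k\}$ with $|T|\vec{e}_k=\sigma_k\vec{e}_k$ and $J\vec{e}_k=\vec{e}_k$ for every $k$. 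For $\vec{x}=\sum_k c_k\vec{e}_k$, the isometry identity $\inner{C\vec{a},C\vec{b}}=\inner{\vec{b},\vec{a}}$ together with $J|T|=|T|J$ gives $[T\vec{x},\vec{x}]=\inner{\vec{x},J|T|\vec{x}}=\sum_k\sigma_k c_k^2$, so, writing $c_k=a_k+ib_k$ with $a_k,b_k\in\R$,
\[
\Re[T\vec{x},\vec{x}]=\sum_k\sigma_k\bigl(a_k^2-b_k^2\bigr)=:Q(\vec{x}),\qquad \norm{\vec{x}}^2=\sum_k\bigl(a_k^2+b_k^2\bigr).
\]
This is the Hilbert space analogue of the reduction $\vec{x}^T A\vec{x}=\vec{y}^T\Sigma\vec{y}$ of the finite-dimensional case, now supplied by the polar structure rather than by Autonne--Takagi. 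The one observation that drives everything is that replacing $\vec{x}$ by $i\vec{x}$ interchanges $a_k$ and $-b_k$, hence
\[
Q(i\vec{x})=-Q(\vec{x}).
\]

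A subspace $\V\subseteq\h$ of complex codimension $n$ is precisely a closed real subspace with $i\V=\V$ and real codimension $2n$, and $Q(i\vec{x})=-Q(\vec{x})$ immediately gives the lower bound in the second regime: if $n\ge\tfrac12\dim\h$ and $\V\ne\{\vec{0}\}$, then for a unit vector $\vec{x}\in\V$ one of $Q(\vec{x}),Q(i\vec{x})$ is nonnegative, and $i\vec{x}\in\V$ is also a unit vector, so $\max_{\vec{x}\in\V,\norm{\vec{x}}=1}\Re[T\vec{x},\vec{x}]\ge0$. For the first regime, $0\le n<\tfrac12\dim\h$, I would intersect $\V$ with the real span of $\vec{e}_0,\dots,\vec{e}_{2n}$, a real subspace of dimension $2n+1$; since $\V$ has real codimension $2n$ this intersection contains a unit vector $\vec{x}=\sum_{k=0}^{2n}a_k\vec{e}_k$, and then $Q(\vec{x})=\sum_{k=0}^{2n}\sigma_k a_k^2\ge\sigma_{2n}$. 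Thus every admissible $\V$ satisfies $\max_{\vec{x}\in\V,\norm{\vec{x}}=1}\Re[T\vec{x},\vec{x}]\ge\sigma_{2n}$.

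For the matching upper bounds I would exhibit a trial subspace. The crucial device is that for each $j$ the complex line
\[
\ell_j:=\C\cdot\bigl(\sqrt{\sigma_{2j+1}}\,\vec{e}_{2j}+i\sqrt{\sigma_{2j}}\,\vec{e}_{2j+1}\bigr)\quad\bigl(\text{replaced by }\C\cdot\vec{e}_{2j+1}\text{ when }\sigma_{2j+1}=0\bigr)
\]
satisfies $Q\equiv0$ on $\ell_j$, by a one-line computation; this pairing of the consecutive singular values $\sigma_{2j}$ and $\sigma_{2j+1}$ inside a single complex line is precisely what produces the index $2n$ in the conclusion instead of $n$. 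In the first regime I would take for $\V$ the closed span of $\bigoplus_{j=0}^{n-1}\ell_j$ together with $\{\vec{e}_k:k\ge2n\}$, a subspace of complex codimension $n$; decomposing $\vec{x}\in\V$ along the orthogonal, $|T|$-reducing blocks gives $Q(\vec{x})=0+\sum_{k\ge2n}\sigma_k(a_k^2-b_k^2)\le\sigma_{2n}\norm{\vec{x}}^2$, so the maximum over this $\V$ is $\le\sigma_{2n}$. In the second regime (where $\dim\h<\infty$), $\bigoplus_j\ell_j$ has complex dimension $\lfloor\tfrac12\dim\h\rfloor\ge\dim\h-n$, so one may choose for $\V$ any complex subspace of codimension $n$ lying inside $\bigoplus_j\ell_j$; there $Q\equiv0$ and the maximum is $\le0$. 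Combined with the lower bounds of the previous paragraph, this establishes \eqref{EquationCSMM} in both regimes.

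The algebra of the first paragraph and the verification that $Q$ vanishes on $\ell_j$ are routine and I would dispatch them in a few lines. The real work is the upper bound: one must recognize that the ``extra'' factor of two — complex codimension $n$ costs real codimension $2n$ — is to be spent on pairing consecutive singular values within a common complex line, so that the sacrificed $2n$ real directions neutralize exactly the top $2n$ singular values while the orthogonal complement contributes no more than $\sigma_{2n}$. The asymmetry between $Q$ on $i$-invariant subspaces and on arbitrary real subspaces recorded in $Q(i\vec{x})=-Q(\vec{x})$ is what flattens the value to $0$ for $n\ge\tfrac12\dim\h$ and is exactly the feature that distinguishes this variational principle from the classical minimax theorem.
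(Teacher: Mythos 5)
Your proof is correct: the reduction via Theorem \ref{RPDB} and a $J$-fixed eigenbasis of $|T|$ to the diagonal form $\Re[T\vec{x},\vec{x}]=\sum_k\sigma_k(a_k^2-b_k^2)$ is precisely the Hilbert-space analogue of the Autonne--Takagi observation $\vec{x}^TA\vec{x}=\vec{y}^T\Sigma\vec{y}$ that the survey identifies as the key to this theorem (the survey itself only cites \cite{VPSBF} and gives no proof), and your two-sided argument --- the real dimension count against $\operatorname{span}_{\R}\{\vec{e}_0,\dots,\vec{e}_{2n}\}$ for the lower bound and the isotropic lines $\C(\sqrt{\sigma_{2j+1}}\vec{e}_{2j}+i\sqrt{\sigma_{2j}}\vec{e}_{2j+1})$ pairing consecutive singular values for the upper bound --- is the standard and complete way to finish. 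The only cosmetic point is that when $\ker|T|$ is infinite-dimensional the enumeration $\{\vec{e}_k\}$ should be understood to include the kernel vectors alongside the decreasingly ordered nonzero singular values, which changes nothing in either bound.
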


	By considering the expression $\Re [T\vec{x},\vec{x}]$
	over $\R$-linear subspaces of $\h$, one avoids the ``skipping'' phenomenon and obtains all of the singular values of $T$.

	\begin{theorem}\label{TheoremMainReal}
		If $T$ is a compact $C$-symmetric operator on a separable
		Hilbert space $\h$ and
		$\sigma_0 \geq \sigma_1 \geq \cdots \geq 0$
		are the singular values of $T$, then
		\begin{align}
			\sigma_{n}  &= \min_{\codim_{\R}\V=n} \max_{\substack{\vec{x} \in \V \\ \norm{\vec{x}}=1}} \Re [T\vec{x},\vec{x}]  \label{EquationCSRMM}
		\end{align}
		holds whenever $0 \leq n < \dim\h$.  Here $\V$ ranges over all $\R$-linear subspaces
		of the complex Hilbert space $\h$ and $\codim_{\R}\V$ denotes the codimension of $\V$ in $\h$ when
		both are regarded as $\R$-linear spaces.
	\end{theorem}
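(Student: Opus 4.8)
The plan is to reduce the identity \eqref{EquationCSRMM} to the classical Courant--Fischer minimax principle applied to a compact selfadjoint operator on the \emph{real} Hilbert space underlying $\h$; the one new ingredient, as compared with the complex-subspace principle of Theorem~\ref{TheoremMain}, is a realification of the quadratic form $\vec{x}\mapsto\Re[T\vec{x},\vec{x}]$. First I would put $T$ into a generalized Autonne--Takagi normal form. By Theorem~\ref{RPDB} we may write $T = CJ|T|$ with $J$ a conjugation commuting with $|T| = \sqrt{T^*T}$ and hence with all of its spectral projections. Since $T$ is compact, $|T|$ decomposes $\h$ orthogonally into its eigenspaces $E_\sigma$ (with $E_0 = \ker T$), each of which is invariant under $J$; applying Lemma~\ref{LemmaAllSame} to the restriction of $J$ to each $E_\sigma$ and assembling the resulting bases produces an orthonormal basis $\{\vec{e}_n\}$ of $\h$ with $J\vec{e}_n = \vec{e}_n$ and $|T|\vec{e}_n = \sigma_n\vec{e}_n$, where after reindexing $\sigma_0 \ge \sigma_1 \ge \cdots \ge 0$ is the singular-value list of $T$. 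Then $T\vec{e}_n = CJ|T|\vec{e}_n = \sigma_n C\vec{e}_n$.

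Writing $\vec{x} = \sum_n\alpha_n\vec{e}_n$ and using $\inner{C\vec{u},C\vec{v}} = \inner{\vec{v},\vec{u}}$ together with the orthonormality of $\{\vec{e}_n\}$, a one-line computation gives
\begin{equation*}
 [T\vec{x},\vec{x}] = \inner{T\vec{x},C\vec{x}} = \sum_n\sigma_n\alpha_n^2 ,
 \qquad\text{hence}\qquad
 \Re[T\vec{x},\vec{x}] = \sum_n\sigma_n\,(a_n^2 - b_n^2),
\end{equation*}
where $\alpha_n = a_n + i b_n$, while $\norm{\vec{x}}^2 = \sum_n (a_n^2 + b_n^2)$. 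Next I would realify: let $\h_\R$ denote $\h$ regarded as a real Hilbert space under the inner product $\Re\inner{\cdot,\cdot}$, so that $\{\vec{e}_n,\, i\vec{e}_n\}$ is a real orthonormal basis. Because the bilinear form $(\vec{x},\vec{y})\mapsto[T\vec{x},\vec{y}]$ is symmetric (Lemma~\ref{LemmaRepresenting}), its real part is a bounded symmetric $\R$-bilinear form on $\h_\R$, hence equals $\inner{R\vec{x},\vec{y}}_\R$ for a unique bounded selfadjoint $R$ on $\h_\R$; by the displayed formula $R$ is diagonal, $R\vec{e}_n = \sigma_n\vec{e}_n$ and $R(i\vec{e}_n) = -\sigma_n(i\vec{e}_n)$, so $R$ is compact.

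The heart of the matter is now an eigenvalue count. Listed in nonincreasing order, the eigenvalues of $R$ consist of the positive singular values $\sigma_n$, followed by a block of zeros of multiplicity $2\dim\ker T$, followed by the corresponding negatives $-\sigma_n$. Since $\dim_\R\h_\R = 2\dim_\C\h$, that block of zeros is long enough to absorb \emph{every} index $n < \dim\h$ with $\sigma_n = 0$ before any negative eigenvalue is reached; consequently the $n$-th eigenvalue of $R$ equals $\sigma_n$ for all $0 \le n < \dim\h$. Applying the Courant--Fischer minimax theorem to the compact selfadjoint operator $R$ therefore yields
\begin{equation*}
 \sigma_n = \min_{W}\ \max_{\substack{\vec{x}\in W\\ \norm{\vec{x}}=1}}\ \inner{R\vec{x},\vec{x}}_\R
 = \min_{W}\ \max_{\substack{\vec{x}\in W\\ \norm{\vec{x}}=1}}\ \Re[T\vec{x},\vec{x}],
\end{equation*}
where $W$ ranges over the \emph{closed} $\R$-linear subspaces of $\h$ with $\codim_\R W = n$.

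It remains to drop the closedness restriction, and here I would use that $\vec{x}\mapsto\Re[T\vec{x},\vec{x}]$ is norm-continuous: for any $\R$-linear subspace $\V$ the supremum of this form over the unit sphere of $\V$ coincides with its supremum over the unit sphere of $\overline{\V}$, and $\codim_\R\overline{\V}\le\codim_\R\V$, so replacing $\V$ by $\overline{\V}$ can only lower the codimension and, since the eigenvalues of $R$ decrease with their index, cannot push the value below $\sigma_n$. Hence enlarging the class of competitors from closed subspaces to all $\R$-linear subspaces of codimension $n$ changes neither the minimum nor the fact that it is attained, and \eqref{EquationCSRMM} follows. I expect the eigenvalue bookkeeping of the third step to be the only genuinely delicate point: one must be certain that the zero eigenvalues of $R$---which arise from $\ker T$ \emph{and} from the imaginary copies $i\vec{e}_n$ of the singular vectors---are plentiful enough that the eigenvalues $-\sigma_n$ never intrude into the range $0 \le n < \dim\h$. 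That this doubling occurs is exactly why the real-subspace principle sees all the singular values of $T$, whereas the complex-subspace principle of Theorem~\ref{TheoremMain} detects only the even-indexed ones.
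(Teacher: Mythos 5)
Your proof is correct: the diagonalization $T\vec{e}_n=\sigma_n C\vec{e}_n$ via the refined polar decomposition $T=CJ|T|$, the resulting identity $\Re[T\vec{x},\vec{x}]=\sum_n\sigma_n(a_n^2-b_n^2)$, and the realified Courant--Fischer argument (including the bookkeeping showing the eigenvalues $-\sigma_n$ of $R$ never enter the range $0\le n<\dim\h$, and the reduction from arbitrary to closed $\R$-subspaces) all check out. The survey itself gives no proof of Theorem~\ref{TheoremMainReal}, deferring to \cite{VPSBF}; the argument there rests on exactly the same diagonalization and real-subspace minimax, so your packaging of it as the spectral theorem for the compact selfadjoint operator $R$ on $\h_\R$ is essentially the intended route.
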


	The proofs of these theorems do not actually
	require the compactness of $T$, only the discreteness
	of the spectrum of $|T|$.  It is therefore possible
	to apply these variational principles if one knows
	that the spectrum of $|T|$ is discrete.  Moreover, 
	these variational principles still apply to eigenvalues
	of $|T|$ that are located strictly above the essential spectrum of $|T|$.

\section{Spectral Theory}\label{Section:Spectral}

	Although the spectral theory of complex symmetric operators is still under development,
	we collect here a number of observations and basic results that are often sufficient 
	for analyzing specific examples.

\subsection{Direct sum decomposition} 
	The first step toward understanding a given operator is to resolve it, if possible, into an orthogonal direct sum of simpler
	operators.
	Recall that a bounded linear operator $T$ is called \emph{reducible} if $T \cong A \oplus B$ (orthogonal direct sum).  
	Otherwise, we say that $T$ is \emph{irreducible}.    
	An irreducible operator commutes with no orthogonal projections
	except for $0$ and $I$.  
	
	In low dimensions, every complex symmetric operator is a direct sum of irreducible complex symmetric operators
	\cite{UET}.

	\begin{theorem}\label{Theorem:ABCD}
		If $T:\h\to\h$ is a complex symmetric operator and $\dim \h \leq 5$, then
		$T$ is unitarily equivalent to a direct sum of irreducible complex symmetric operators.
	\end{theorem}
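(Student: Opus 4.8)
The plan is to argue by induction on $\dim\h$, the base cases $\dim\h \leq 2$ being trivial since every operator on a Hilbert space of dimension at most two is complex symmetric and irreducibility or a single nontrivial splitting is all that can occur. For the inductive step, suppose $T$ is a $C$-symmetric operator on $\h$ with $3 \leq \dim\h \leq 5$. If $T$ is irreducible we are done, so assume $T \cong A \oplus B$ for some nontrivial orthogonal decomposition $\h = \h_1 \oplus \h_2$. The difficulty is that the reducing subspaces witnessing $T \cong A \oplus B$ need not be compatible with the conjugation $C$: a summand of a complex symmetric operator need not itself be complex symmetric. So the first real step is to promote an arbitrary reduction to a $C$-compatible one.

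First I would establish a structural lemma: if a $C$-symmetric operator $T$ reduces, then one can choose the reducing subspaces so that each is invariant under $C$ (equivalently, so that each summand is itself complex symmetric with respect to the restricted conjugation). The idea is to take a reducing subspace $\mathcal{M}$ for $T$, consider $C\mathcal{M}$ (which is reducing for $CT^*C = T$, hence again reducing for $T$), and analyze the lattice of reducing projections generated by the projections onto $\mathcal{M}$ and $C\mathcal{M}$. Because $C$ is a conjugation, $P \mapsto CPC$ is an order-reversing (in the sense of $C P^\perp C = (CPC)^\perp$) involution on the projection lattice of the commutant $\{T\}'$, and in a von Neumann algebra one can produce from $P$ and $CPC$ a nonzero projection that is fixed by $\operatorname{Ad}C$; this uses that in low dimensions the commutant is a finite-dimensional $C^*$-algebra so the relevant lattice manipulations (meets, joins, central covers) stay inside a tractable finite list of cases. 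The output is a nontrivial orthogonal decomposition $\h = \K_1 \oplus \K_2$ reducing $T$ with $C\K_j = \K_j$, so that $T = T_1 \oplus T_2$ with $T_j$ being $C|_{\K_j}$-symmetric.

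With that in hand, the induction closes immediately: each $\K_j$ has dimension strictly smaller than $\dim\h$, hence at most $4$, so by the inductive hypothesis each $T_j$ is unitarily equivalent to a direct sum of irreducible complex symmetric operators, and concatenating these decompositions expresses $T$ itself as such a direct sum. The constraint $\dim\h \leq 5$ enters precisely because the dimensions $1,2,3,4,5$ of $\h$ force each proper reducing summand to have dimension in $\{1,2,3,4\}$, where the structural lemma and the inductive hypothesis are available; the honest place where one must check that nothing goes wrong is the case analysis for the $C$-compatible refinement when $\dim\h = 5$ splits as $2 \oplus 3$ or as $1 \oplus 4$, and within the $4$-dimensional piece when it further splits as $2\oplus 2$.

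The main obstacle is the structural lemma on $C$-compatible reductions — that is the crux, since everything else is bookkeeping. The subtlety is that $\operatorname{Ad}C$ need not fix the minimal central projections of $\{T\}'$ individually (it may permute them), and a fixed point of $\operatorname{Ad}C$ on the whole lattice need not be a sum of minimal reducing projections; one has to verify that in each low-dimensional configuration there is enough room to extract a genuinely nontrivial $C$-invariant reducing subspace rather than being forced back to $\{0\}$ or $\h$. I would handle this by enumerating the possible finite-dimensional $*$-algebra types for $\{T\}'$ in dimensions up to $5$ (there are only a handful) and checking each against the possible actions of the conjugate-linear involution $\operatorname{Ad}C$, referencing the detailed treatment in \cite{UET}.
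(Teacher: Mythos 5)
The survey does not actually prove this theorem; it only cites \cite{UET}, where the result is obtained as a corollary of the stronger structure theorem stated immediately afterwards (every finite-dimensional complex symmetric operator is a direct sum of irreducible complex symmetric operators and of blocks $A \oplus CA^*C$ with $A$ irreducible and not complex symmetric, and the latter blocks have dimension at least $6$). Your strategy — induct on dimension after showing that in dimension at most $5$ a reducible $C$-symmetric operator admits a nontrivial reducing subspace $\K$ with $C\K = \K$ — runs on essentially the same mechanism: the conjugate-linear involution $\operatorname{Ad}C$ acts on the commutant $\{T,T^*\}'$, permuting its isotypic components $j \mapsto \sigma(j)$ so that $A_{\sigma(j)} \cong CA_j^*C \cong A_j^T$. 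Your structural lemma is in fact true for $\dim\h \leq 5$, so the induction closes.

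What your sketch leaves genuinely open, and what is the entire content of the theorem, is the identification of exactly when $\operatorname{Ad}C$ fixes no nontrivial reducing projection, together with the reason this forces $\dim\h \geq 6$. The case analysis you defer comes down to two obstructions: (i) two singleton isotypic components swapped by $\sigma$, which requires an irreducible $A$ with $A \not\cong A^T$ sitting alongside a copy of $A^T$; and (ii) a single isotypic component $T \cong A \oplus A$ on which $\operatorname{Ad}C$ restricts to $X \mapsto u\overline{X}u^*$ with $u\overline{u} = -I$ (the anticonjugation case on the multiplicity space $\C^2$, whose only fixed projections are $0$ and $I$). The decisive input that kills both in low dimensions is the fact you mention only in passing for the base case: every operator on a space of dimension at most $2$ is complex symmetric, hence unitarily equivalent to its transpose. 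This rules out (i) unless each swapped block has dimension at least $3$ (total at least $6$), and it rules out (ii) because when $A$ is a complex symmetric irreducible one can write some conjugation implementing the symmetry as a tensor product $j_0 \otimes j$ and check that every admissible $C$ differs from it by a unitary in the commutant, forcing the conjugation type $u\overline{u} = +I$; an anticonjugation twist requires $A$ irreducible and not complex symmetric, hence again total dimension at least $6$. Without these two verifications the "enumeration" you invoke is not yet a proof, but with them your argument is complete and is, in substance, the argument of \cite{UET}.
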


	The preceding theorem is false in dimensions six and above due to the following simple construction. 

	\begin{lemma}
		If $A:\h\to\h$ is a bounded linear operator and $C:\h\to\h$ is conjugation, then $T = A\oplus CA^*C$
		is complex symmetric.
	\end{lemma}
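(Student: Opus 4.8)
The plan is to exhibit, by hand, a conjugation $\widetilde{C}$ on $\h\oplus\h$ for which the operator $T = A\oplus CA^*C$ satisfies $T = \widetilde{C}T^*\widetilde{C}$; by Definition \ref{DefinitionCSO} this makes $T$ complex symmetric. The natural candidate is the ``swap and conjugate'' map
\begin{equation*}
	\widetilde{C}(\vec{x},\vec{y}) = (C\vec{y},\,C\vec{x}), \qquad \vec{x},\vec{y}\in\h .
\end{equation*}
First I would check that $\widetilde{C}$ is a conjugation on $\h\oplus\h$: conjugate-linearity is inherited coordinatewise from $C$; involutivity follows from $\widetilde{C}^2(\vec{x},\vec{y}) = \widetilde{C}(C\vec{y},C\vec{x}) = (C^2\vec{x},C^2\vec{y}) = (\vec{x},\vec{y})$ since $C^2 = I$; and $\widetilde{C}$ is isometric because $\norm{\widetilde{C}(\vec{x},\vec{y})}^2 = \norm{C\vec{y}}^2 + \norm{C\vec{x}}^2 = \norm{\vec{y}}^2 + \norm{\vec{x}}^2$ by the isometric property of $C$.

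Next I would identify the adjoint of the second summand. Because $C$ is conjugation, $CA^*C$ is an honest (complex-linear) bounded operator, and a short computation using $\inner{C\vec{u},\vec{v}} = \inner{C\vec{v},\vec{u}}$ together with $\inner{C\vec{u},C\vec{v}} = \inner{\vec{v},\vec{u}}$ gives, for all $\vec{x},\vec{y}$,
\begin{equation*}
	\inner{CA^*C\vec{x},\vec{y}} = \inner{C\vec{y},A^*C\vec{x}} = \inner{AC\vec{y},C\vec{x}} = \inner{\vec{x},CAC\vec{y}},
\end{equation*}
so that $(CA^*C)^* = CAC$ and hence $T^* = A^*\oplus CAC$ on $\h\oplus\h$.

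Finally I would simply compute $\widetilde{C}T^*\widetilde{C}$ on a generic vector: applying $\widetilde{C}$ gives $(C\vec{y},C\vec{x})$; applying $T^*$ gives $(A^*C\vec{y},\,CAC\,C\vec{x}) = (A^*C\vec{y},\,CA\vec{x})$; applying $\widetilde{C}$ once more gives $(C(CA\vec{x}),\,C(A^*C\vec{y})) = (A\vec{x},\,CA^*C\vec{y}) = T(\vec{x},\vec{y})$. Thus $T = \widetilde{C}T^*\widetilde{C}$, i.e.\ $T$ is $\widetilde{C}$-symmetric, and therefore complex symmetric. I do not expect any genuine obstacle: the entire argument is a direct verification, and the only point requiring care is bookkeeping in the middle step—correctly tracking which maps are linear and which are conjugate-linear when passing to the adjoint—after which everything reduces to repeated use of $C^2 = I$.
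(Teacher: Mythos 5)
Your proposal is correct and is essentially the paper's own proof: the paper uses exactly the same conjugation $\widetilde{C} = \minimatrix{0}{C}{C}{0}$ on $\h\oplus\h$ and asks the reader to verify the identity $T = \widetilde{C}T^*\widetilde{C}$, which is precisely the computation you carry out in detail. Your intermediate step $(CA^*C)^* = CAC$ and the final verification are both accurate.
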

	
	\begin{proof}
		Verify that
		\begin{equation*}
			\minimatrix{A}{0}{0}{CA^*C} = \minimatrix{0}{C}{C}{0}\minimatrix{A}{0}{0}{CA^*C}^*\minimatrix{0}{C}{C}{0}.\qedhere
		\end{equation*}
	\end{proof}
	
	If $A$ is an irreducible operator that is \emph{not} complex symmetric, then $T = A\oplus CA^*C$ is a complex symmetric
	operator that possesses irreducible direct summands that are not complex symmetric.  In other words, the class of complex
	symmetric operators is \emph{not} closed under restriction to direct summands.  The correct
	generalization (in the finite dimensional case) of Theorem \ref{Theorem:ABCD} is the following \cite{UET}:

	\begin{theorem}
		If $T$ is a complex symmetric operator on a finite dimensional Hilbert space, then $T$ is unitarily equivalent to a direct sum of
		(some of the summands may be absent) of
		\begin{enumerate}\addtolength{\itemsep}{0.5\baselineskip}
			\item irreducible complex symmetric operators,
			\item operators of the form $A \oplus CA^*C$, where $A$ is irreducible and not a complex symmetric operator.
		\end{enumerate}
	\end{theorem}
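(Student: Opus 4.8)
The plan is to decompose $T$ into irreducible orthogonal summands and then use the identity $T = CT^*C$ to read off exactly which irreducible blocks occur, and with which multiplicities. Since $\h$ is finite dimensional, the unital $*$-algebra generated by $T$ is a finite-dimensional $C^*$-algebra, hence a finite direct sum of full matrix algebras; equivalently, the decomposition of $T$ into irreducibles is essentially unique. Grouping unitarily equivalent blocks gives a \emph{canonical} orthogonal splitting
\begin{equation*}
	\h = \h_1 \oplus \cdots \oplus \h_r, \qquad T|_{\h_j} \cong \big(T^{(j)}\big)^{\oplus m_j},
\end{equation*}
where each $T^{(j)}$ is irreducible, $m_j \geq 1$, and $T^{(j)} \not\cong T^{(k)}$ for $j \neq k$. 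It then suffices to decide, for each isotypic component $\h_j$, whether $T^{(j)}$ is a complex symmetric operator, and, when it is not, to pair $\h_j$ off with another component.

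First I would record how $C$ interacts with reducing subspaces. From $T = CT^*C$ one checks at once that $CM$ reduces $T$ whenever $M$ does. More precisely, if $V:\C^k\to M$ is a unitary onto $M$, if $A := V^*T|_MV$ is the corresponding matrix of $T|_M$, and if $C_0$ denotes the canonical conjugation on $\C^k$, then $W := CVC_0:\C^k\to CM$ is a \emph{linear} unitary onto $CM$ satisfying $W^* = C_0V^*C$, and a one-line computation using $C^2 = I$ gives
\begin{equation*}
	W^*(T|_{CM})W = C_0(V^*T^*V)C_0 = C_0A^*C_0 = A^{\mathsf{T}}.
\end{equation*}
Thus $T|_{CM}$ is unitarily equivalent to the transpose $A^{\mathsf{T}}$ of a matrix of $T|_M$; by Lemma~\ref{LemmaUECSM} (and the fact that matrix transposition is an invariant of unitary equivalence) this is intrinsic, and $A^{\mathsf{T}}$ is irreducible exactly when $A$ is. Applying this with $M$ a minimal reducing subspace of $\h_j$, we see that $C\h_j$ is a reducing subspace whose irreducible pieces are all unitarily equivalent to $(T^{(j)})^{\mathsf{T}}$; hence $C\h_j$ sits inside the isotypic component of that type (which is therefore nonzero, so it is one of the $\h_k$), and a short dimension argument applying $C$ twice forces $C\h_j$ to equal it. Therefore $C$ induces a permutation $\pi$ of $\{1,\dots,r\}$ with $C\h_j = \h_{\pi(j)}$, and $C^2 = I$ makes $\pi^2 = \mathrm{id}$: it is a product of fixed points and transpositions.

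The two cases then close the argument. If $\pi(j)=j$, then $(T^{(j)})^{\mathsf{T}}\cong T^{(j)}$, so $T^{(j)}$ is a complex symmetric operator by Lemma~\ref{LemmaUECSM}, and $T|_{\h_j}\cong(T^{(j)})^{\oplus m_j}$ is already a direct sum of irreducible complex symmetric operators. If $\{j,k\}$ is a transposition, then $C|_{\h_j}$ is an isometric bijection onto $\h_k$, so $\dim\h_j=\dim\h_k$ and, the two irreducibles acting on spaces of equal dimension, $m_j=m_k=:m$; moreover $T^{(k)}\cong(T^{(j)})^{\mathsf{T}}\cong C_0(T^{(j)})^*C_0$, and neither $T^{(j)}$ nor $T^{(k)}$ is complex symmetric. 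Reorganizing copies,
\begin{equation*}
	T|_{\h_j\oplus\h_k}\;\cong\;\big(T^{(j)}\big)^{\oplus m}\oplus\big(C_0(T^{(j)})^*C_0\big)^{\oplus m}\;\cong\;\bigoplus_{i=1}^{m}\Big(T^{(j)}\oplus C_0(T^{(j)})^*C_0\Big),
\end{equation*}
a direct sum of $m$ operators of the form~(2) with $A = T^{(j)}$, each complex symmetric by the lemma preceding the theorem. Assembling these descriptions over all fixed points and transpositions of $\pi$ produces the asserted decomposition of $T$.

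The routine parts are the essential uniqueness of the irreducible decomposition, the adjoint identity $W^* = C_0V^*C$, and the recombination of summands in the last display. The step needing genuine care is verifying that $C$ respects the isotypic structure: that $M\mapsto CM$ sends minimal reducing subspaces of $T$ to minimal reducing subspaces, that the resulting block is unitarily equivalent to the transpose of $T|_M$ — so that ``$T^{(j)}$ complex symmetric'' is precisely the condition ``$\pi(j)=j$'' — and that the two members of a transposition pair of $\pi$ carry equal multiplicities, which is exactly what allows the pair to be written as a direct sum of operators $A\oplus C_0A^*C_0$.
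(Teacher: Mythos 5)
Your overall strategy---pass to the canonical isotypic decomposition, show that $C$ carries the isotypic component of type $T^{(j)}$ onto the one of type $(T^{(j)})^{\mathsf{T}}$, and then treat the fixed points and $2$-cycles of the induced involution $\pi$ separately---is sound, and your handling of the $2$-cycles (equal multiplicities, regrouping into blocks $A\oplus C_0A^*C_0$) is correct. The fatal problem is the fixed-point case. There you argue that $\pi(j)=j$ gives $(T^{(j)})^{\mathsf{T}}\cong T^{(j)}$ and conclude ``so $T^{(j)}$ is a complex symmetric operator by Lemma~\ref{LemmaUECSM}.'' That lemma says an operator is complex symmetric if and only if it is unitarily equivalent to a \emph{complex symmetric matrix}; it says nothing about being unitarily equivalent to its own \emph{transpose}. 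These conditions are genuinely different: as the survey itself notes in the paragraph immediately following this theorem, a matrix unitarily equivalent to its transpose need not be unitarily equivalent to a complex symmetric matrix once the dimension is $8$ or larger. So the one implication on which your fixed-point case rests is false, and with it the conclusion that $T|_{\h_j}$ is a direct sum of irreducible complex symmetric operators.

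What is actually needed at this point (and what \cite{UET} supplies) is a dichotomy coming from Schur's lemma: if $A$ is irreducible and $U^*AU=A^{\mathsf{T}}$, then the intertwining unitary satisfies $U^{\mathsf{T}}=\pm U$. In the symmetric case $U^{\mathsf{T}}=U$ an Autonne--Takagi factorization of $U$ shows that $A$ \emph{is} unitarily equivalent to a complex symmetric matrix, so those blocks are honestly of type (1). In the antisymmetric case $U^{\mathsf{T}}=-U$ the block $A$ is \emph{not} complex symmetric, and one must further prove that its multiplicity $m_j$ inside a complex symmetric $T$ is forced to be even, so that the $m_j$ copies can be paired into summands $A\oplus A^{\mathsf{T}}=A\oplus C_0A^*C_0$ of type (2). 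Neither the dichotomy nor the evenness of the multiplicity appears in your argument; without them the fixed-point case, which is where the entire difficulty of the theorem lives, remains unproved.
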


The preceding result found unexpected applications to quantum computing, specifically to the trichotomy of constricted
quantum semigroups recently singled out by Singh \cite{Singh}, see also \cite{Ghosh-Singh}.

An operator is called \emph{completely reducible} if it does not admit any minimal reducing subspaces.  For instance, a normal operator
	is complete reducible if and only if it has no eigenvalues.
	In arbitrary dimensions, Guo and Zhu recently proved the following striking result \cite{GuoZhu}.

	\begin{theorem}
		If $T$ is a bounded complex symmetric operator on a Hilbert space, then $T$ is unitarily equivalent to a direct sum 
		(some of the summands may be absent) of
		\begin{enumerate}\addtolength{\itemsep}{0.5\baselineskip}
			\item completely reducible complex symmetric operators,
			\item irreducible complex symmetric operators,
			\item operators of the form $A \oplus CA^*C$, where $A$ is irreducible and not a complex symmetric operator.
		\end{enumerate}
	\end{theorem}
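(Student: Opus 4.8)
We work with the commutant von Neumann algebra $\mathcal A=\{T\}'\cap\{T^*\}'$, whose orthogonal projections are exactly the projections onto the reducing subspaces of $T$; the three families in the statement will correspond to the splitting of $\mathcal A$ into its part with no minimal projections and its atomic part, the only real subtlety being how $C$ interacts with the latter. The starting point is that $C$ respects $\mathcal A$: since $T=CT^*C$, an operator $X$ commutes with $T$ and $T^*$ iff $CXC$ does, so $C\mathcal A C=\mathcal A$, and because $\inner{C\vec x,C\vec y}=\inner{\vec y,\vec x}$ one has $(CXC)^*=CX^*C$. Thus $\Phi\colon X\mapsto CXC$ is a conjugate-linear $*$-automorphism of $\mathcal A$ with $\Phi^2=I$. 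Now peel off the completely reducible summand: let $z\in\mathcal A$ be the central projection onto the closed span of all minimal reducing subspaces of $T$, i.e.\ the support of the atomic part of $\mathcal A$. As a $*$-automorphism $\Phi$ permutes the minimal projections of $\mathcal A$ and hence fixes their supremum, so $CzC=z$. Writing $\h=\h_c\oplus\h_d$ with $\h_c=(I-z)\h$, $\h_d=z\h$, both reduce $T$ and are $C$-invariant, so $T=T|_{\h_c}\oplus T|_{\h_d}$ with each summand complex symmetric for the restricted conjugation; the commutant of $T|_{\h_c}$ is $\mathcal A(I-z)$, which has no minimal projections, so $T|_{\h_c}$ is completely reducible (type $(1)$).

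For the atomic part, $\mathcal A_d:=\mathcal Az|_{\h_d}=\{T|_{\h_d},T^*|_{\h_d}\}'$ is an atomic von Neumann algebra, hence unitarily equivalent to $\bigoplus_i\bigl(B(\mathcal K_i)\,\bar\otimes\,\C I_{\mathcal M_i}\bigr)$ acting on $\h_d=\bigoplus_i\mathcal K_i\otimes\mathcal M_i$, with commutant $\bigoplus_i\bigl(\C I_{\mathcal K_i}\,\bar\otimes\,B(\mathcal M_i)\bigr)$. Since $T|_{\h_d}$ lies in and generates this commutant, $T|_{\h_d}=\bigoplus_i\bigl(I_{\mathcal K_i}\otimes T_i\bigr)$ with each $T_i$ irreducible; grouping isomorphic pieces we may assume $T_i\not\cong T_j$ for $i\neq j$, so $n_i:=\dim\mathcal K_i$ is the multiplicity of $T_i$ and this decomposition is essentially unique. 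Transporting $\Phi$ through the identification, it permutes the central projections $z_i$ (the units of the factors) by an involution, so the indices split into fixed points and transposed pairs, and $C$ decomposes accordingly.

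It remains to treat the two cases. \emph{Fixed point $\Phi(z_i)=z_i$}: here $C$ maps $\h_i=\mathcal K_i\otimes\mathcal M_i$ onto itself and implements on $B(\mathcal K_i)\otimes I$ a conjugate-linear $*$-automorphism, necessarily $X\otimes I\mapsto(W_iXW_i^{-1})\otimes I$ for an antiunitary $W_i$ on $\mathcal K_i$; a Schur-type argument then gives $C|_{\h_i}=W_i\otimes W_i'$ for an antiunitary $W_i'$ on $\mathcal M_i$. From $C^2=I$ one gets $W_i^2=(W_i')^2=\varepsilon_i I$ with $\varepsilon_i\in\{1,-1\}$, and from $T=CT^*C$ one gets $T_i=\varepsilon_i W_i'T_i^*W_i'$. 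If $\varepsilon_i=1$ then $W_i'$ is a conjugation, $T_i$ is complex symmetric, and the block is $n_i$ copies of an irreducible complex symmetric operator (type $(2)$). If $\varepsilon_i=-1$ then $W_i,W_i'$ are anti-conjugations, $n_i$ is even or infinite, and $T_i$ cannot be complex symmetric (if $T_i=DT_i^*D$ with $D$ a conjugation, then $DW_i'$ commutes with $T_i$ and $T_i^*$, hence is a scalar, which forces $D^2=-I$, a contradiction); putting $W_i$ in standard symplectic form splits $\h_i$ into an orthogonal sum of $C$-invariant copies of $\C^2\otimes\mathcal M_i$, on each of which $T$ acts as $T_i\oplus T_i$ and $C$ as a genuine conjugation, and a short computation identifies $T_i\oplus T_i\cong T_i\oplus C_0T_i^*C_0$ for a conjugation $C_0$ on $\mathcal M_i$ (type $(3)$). \emph{Transposed pair $\Phi(z_i)=z_j$, $i\neq j$}: then $n_i=n_j$, $\dim\mathcal M_i=\dim\mathcal M_j$, and $C|_{\h_i}=W\otimes W'$ for antiunitaries $W\colon\mathcal K_i\to\mathcal K_j$ and $W'\colon\mathcal M_i\to\mathcal M_j$, whence $T_j=W'T_i^*W'^{-1}$; choosing an orthonormal basis of $\mathcal K_i$ together with its image under $W$, one cuts $\h_i\oplus\h_j$ into $n_i$ $C$-invariant copies of $\mathcal M_i\oplus\mathcal M_j$, on each of which $T$ acts as $T_i\oplus T_j\cong T_i\oplus C_0T_i^*C_0$ — of type $(3)$ when $T_i$ is not complex symmetric, and a direct sum of irreducible complex symmetric operators (type $(2)$) when it is. Collecting these summands over all $i$, together with the type $(1)$ piece $T|_{\h_c}$, finishes the proof.

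The hard part is the fixed-point case with $\varepsilon_i=-1$ and its sibling, the transposed-pair case: one must see that an irreducible summand of $T$ which is \emph{not} itself complex symmetric can occur only with even multiplicity and only glued to a copy of $C_0T_i^*C_0$, and that this is precisely what is encoded in the ``tensor with an anti-conjugation'' (symplectic) structure that $C$ is forced to have on such an isotypic block. Everything else is standard von Neumann algebra bookkeeping: the correspondence between projections in the commutant and reducing subspaces, the structure of atomic algebras, and the fact that $C$ induces a conjugate-linear $*$-automorphism of $\mathcal A$.
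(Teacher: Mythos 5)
Your argument is correct, but there is nothing in the paper to compare it against: the survey states this theorem without proof, attributing it to Guo and Zhu \cite{GuoZhu} (``Guo and Zhu recently proved the following striking result''). Measured against that source, your route is a clean repackaging of the same underlying phenomenon in von Neumann algebra language. Guo and Zhu work directly with minimal reducing subspaces $M$ and a Zorn-type exhaustion, analyzing case by case how $C$ moves $M$; you instead observe that $X\mapsto CXC$ is a conjugate-linear $*$-automorphism of $\mathcal{A}=\{T,T^*\}'$, split off the diffuse part of $\mathcal{A}$ to get the completely reducible summand, and then run a Wigner-type analysis on the atomic part: $C$ either exchanges two isotypic blocks or fixes one, and in the fixed case factors as $W_i\otimes W_i'$ with $W_i^2=(W_i')^2=\varepsilon_i I$, the sign $\varepsilon_i=\pm1$ being exactly the real/quaternionic dichotomy that separates types $(2)$ and $(3)$. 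What this buys is conceptual transparency --- the trichotomy in the theorem is visibly the decomposition of $\mathcal{A}$ into diffuse and atomic parts together with the Frobenius--Schur indicator of $C$ on each isotypic block --- at the cost of invoking standard but nontrivial machinery (atomic von Neumann algebras are direct sums of type I factors, the spatial form $B(\mathcal{K}_i)\otimes\C I_{\mathcal{M}_i}$, the tensor factorization of an antiunitary normalizing such a factor, the symplectic normal form of an anti-conjugation). All the delicate points check out: $z$ is central and $C$-invariant; the unitary $DW_i'$ argument correctly rules out complex symmetry of $T_i$ when $\varepsilon_i=-1$; and $C_0T_i^*C_0=(C_0W_i')\,T_i\,(C_0W_i')^{-1}\cong T_i$ justifies the identification $T_i\oplus T_i\cong T_i\oplus C_0T_i^*C_0$. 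One cosmetic remark: in your transposed-pair case the sub-case ``$T_i$ complex symmetric'' is vacuous, since $T_i$ complex symmetric forces $C_0T_i^*C_0\cong T_i$ and hence $\Phi(z_i)=z_i$, i.e.\ a fixed point; including it is harmless but could be deleted.
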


	A related question, of interest in matrix theory, is whether a matrix $A$ that is unitarily equivalent to $A^T$ is complex symmetric.   
	This conjecture holds for matrices that are $7 \times 7$ smaller, but fails for matrices that are $8 \times 8$ or larger \cite{UET}.

	Currently, the preceding theorems are the best available.  It is not yet clear whether a concrete functional model
	for, say, irreducible complex symmetric operators, can be obtained.  However, a growing body of evidence suggests
	that truncated Toeplitz operators may play a key role (see the survey article \cite{RPTTO}).

\subsection{$C$-projections}	

	If $T$ is a bounded linear operator and $f$ is a holomorphic function on a (not necessarily 
	connected) neighborhood $\Omega$ of $\sigma(T)$, then the Riesz functional calculus allows us to define
	an operator $f(T)$ via the Cauchy-type integral
	\begin{equation}\label{eq-ResolventIntegral}
		f(T) = \frac{1}{2\pi i } \int_{\Gamma} f(z) (zI - T)^{-1} dz
	\end{equation}
	in which $\Gamma$ denotes a finite system of rectifiable Jordan curves, oriented in the positive sense and lying in $\Omega$ \cite[p.568]{Dunford1}.

	For each \emph{clopen} (relatively open and closed) subset $\Delta$ of $\sigma(T)$,
	there exists a natural idempotent $P(\Delta)$ defined by the formula
	\begin{equation}\label{eq-RieszIdempotent}
		P(\Delta) = \frac{1}{2\pi i } \int_{\Gamma} (zI - T)^{-1}\,dz
	\end{equation}
	where $\Gamma$ is any rectifiable Jordan curve such that $\Delta$ is contained in the interior $\operatorname{int} \Gamma$ of $\Gamma$ and
	$\sigma(T)\backslash\Delta$ does not intersect $\operatorname{int} \Gamma$.  We refer to this
	idempotent as the \emph{Riesz idempotent} corresponding to $\Delta$.

	If the spectrum of an operator $T$ decomposes as the disjoint union of two clopen sets,
	then the corresponding Riesz idempotents are usually not orthogonal projections.  Nevertheless, the Riesz idempotents that arise from 
	complex symmetric operators have some nice features.

	\begin{theorem}\label{TheoremRiesz}
		Let $T$ be a $C$-symmetric operator.
		If $\sigma(T)$ decomposes as the disjoint union $\sigma(T) = \Delta_1 \cup \Delta_2$ of two clopen sets, then 
		the corresponding Riesz idempotents $P_1 =P(\Delta_1)$ and $P_2 = P(\Delta_2)$ defined by \eqref{eq-RieszIdempotent} are
		\begin{enumerate}\addtolength{\itemsep}{0.5\baselineskip}
			\item $C$-symmetric:  $P_i = CP_i^*C$ for $i=1,2$,
			
			\item $C$-orthogonal, in the sense that $\ran P_1 \perp_C \ran P_2$.
		\end{enumerate}
	\end{theorem}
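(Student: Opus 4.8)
The plan is to derive both parts from one observation — that the resolvent of a $C$-symmetric operator is again $C$-symmetric — together with the elementary algebra of the Riesz idempotents.

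First I would record the resolvent identity. Since $C$ is a conjugate-linear involution, $C(\lambda I)C = \overline{\lambda}\,I$ for every scalar $\lambda$, so $T = CT^*C$ gives $zI - T = C(\overline{z}I - T^*)C$ for every $z$. Inverting, and using $C^{-1} = C$ together with $(\overline{z}I - T^*)^{-1} = \bigl((zI - T)^*\bigr)^{-1} = \bigl((zI-T)^{-1}\bigr)^*$, we obtain
\[
R(z) := (zI - T)^{-1} = C\,R(z)^*\,C \qquad\text{for all } z \notin \sigma(T),
\]
i.e. every resolvent of $T$ is itself $C$-symmetric.

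For part (1), fix a rectifiable Jordan contour $\Gamma$ as in \eqref{eq-RieszIdempotent} enclosing $\Delta_1$ (the case of $\Delta_2$ being identical), parametrize it by $z = \gamma(t)$, and write $P_1 = \tfrac{1}{2\pi i}\int R(\gamma(t))\,\gamma'(t)\,dt$. Passing to the adjoint conjugates the scalar factors $\tfrac{1}{2\pi i}$ and $\gamma'(t)$ and replaces $R$ by $R^*$; then sandwiching by $C$ — and moving $C$ inside the norm-convergent integral, which is legitimate because $C$ is isometric and conjugate-linear and the integration runs over the real parameter $t$ — conjugates those scalars a second time (so they return to their original values) and, by the displayed identity, turns $R(\gamma(t))^*$ back into $R(\gamma(t))$. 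Hence $C P_1^* C = \tfrac{1}{2\pi i}\int R(\gamma(t))\,\gamma'(t)\,dt = P_1$. I expect the only delicate point to be the bookkeeping of the conjugations attached to $1/(2\pi i)$ and to $dz$ (one checks $\overline{1/(-2\pi i)} = 1/(2\pi i)$), so I would keep the parametrization explicit rather than argue ``by inspection''.

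For part (2), I would invoke the standard functional-calculus facts that, since $\sigma(T) = \Delta_1 \cup \Delta_2$ is a disjoint union of clopen sets, $P_1 + P_2 = I$ and $P_1 P_2 = P_2 P_1 = 0$. Then for $\vec{x} \in \ran P_1$ and $\vec{y} \in \ran P_2$ we have $\vec{x} = P_1\vec{x}$ and $\vec{y} = P_2\vec{y}$, while part (1) gives $CP_2 = C(CP_2^*C) = P_2^*C$; therefore
\[
[\vec{x},\vec{y}] = \inner{P_1\vec{x},\,CP_2\vec{y}} = \inner{P_1\vec{x},\,P_2^*C\vec{y}} = \inner{P_2P_1\vec{x},\,C\vec{y}} = 0,
\]
so $\ran P_1 \perp_C \ran P_2$. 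The whole of the conceptual content lies in the resolvent identity; parts (1) and (2) are then routine, the only real care being the conjugation bookkeeping flagged above.
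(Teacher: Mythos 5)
Your proposal is correct and is precisely the argument the paper has in mind: the paper's own (one-line) proof sketch says only that "the proof relies on the fact that the resolvent $(zI-T)^{-1}$ is $C$-symmetric for all $z$," and your write-up supplies exactly that resolvent identity, pushes $C$ through the parametrized contour integral with the correct conjugation bookkeeping for $1/(2\pi i)$ and $dz$, and then derives part (2) from $P_2P_1=0$ in the standard way. No gaps.
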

	
	The proof relies on the fact that the resolvent $(zI - T)^{-1}$ is $C$-symmetric for all $z \in \C$.
	We refer to a $C$-symmetric idempotent as a \emph{$C$-projection}.
	In other words, a bounded linear operator $P$ is a $C$-projection if and only if $P = CP^*C$ and $P^2 = P$.
	It is not hard to see that if $P$ is a $C$-projection, then $\norm{P} \geq 1$ and $\ran P$ is closed.  Moreover, for any $C$-projection, we have
	$\ker P \cap \ran P = \{0\}$.
	This is not true for arbitrary complex symmetric operators (e.g., a $2 \times 2$ nilpotent Jordan matrix).

	A classical theorem of spectral theory \cite[p.579]{Dunford1} states that if $T$ is a compact operator, 
	then every nonzero point $\lambda$ in $\sigma(T)$ is an eigenvalue of finite order $m = m(\lambda)$.  
	For each such $\lambda$, the corresponding Riesz idempotent
	has a nonzero finite dimensional range given by
	$\ran P_{\lambda} = \ker (T - \lambda I)^{m}$.
	In particular, the nonzero elements of the spectrum of a compact operator correspond to generalized eigenspaces.

	\begin{theorem}\label{TheoremCompact}
		The generalized eigenspaces of a compact $C$-symmetric operator are $C$-orthogonal.
	\end{theorem}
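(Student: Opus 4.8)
The plan is to reduce this to Theorem~\ref{TheoremRiesz}. Recall the classical structure theory invoked just above: since $T$ is compact, every nonzero $\lambda \in \sigma(T)$ is an isolated point of the spectrum and an eigenvalue of finite order $m(\lambda)$, and the corresponding generalized eigenspace is exactly $\ran P_\lambda = \ker(T - \lambda I)^{m(\lambda)}$, where $P_\lambda$ is the Riesz idempotent \eqref{eq-RieszIdempotent} attached to the clopen singleton $\{\lambda\}$. So it suffices to prove that $\ran P_\lambda \perp_C \ran P_\mu$ whenever $\lambda$ and $\mu$ are distinct nonzero eigenvalues of $T$.

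First I would fix such a pair $\lambda \ne \mu$ and use that $\lambda$ is isolated in $\sigma(T)$: then $\Delta_1 = \{\lambda\}$ and $\Delta_2 = \sigma(T)\setminus\{\lambda\}$ partition $\sigma(T)$ into two clopen sets, with $\mu \in \Delta_2$. Applying Theorem~\ref{TheoremRiesz} to this partition shows that $P_1 = P(\Delta_1) = P_\lambda$ and $P_2 = P(\Delta_2) = I - P_\lambda$ are $C$-symmetric and satisfy $\ran P_1 \perp_C \ran P_2$. Since $\{\mu\} \subseteq \Delta_2$, the multiplicativity of the Riesz functional calculus gives $P_2 P_\mu = P(\Delta_2 \cap \{\mu\}) = P_\mu$, so that $\ran P_\mu \subseteq \ran P_2$; combined with the previous line this yields $[\vec{x},\vec{y}] = 0$ for all $\vec{x} \in \ran P_\lambda$ and $\vec{y} \in \ran P_\mu$, which is the claim.

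I do not expect any genuine obstacle: the entire weight of the argument has been placed on Theorem~\ref{TheoremRiesz}, whose proof in turn rests on the $C$-symmetry of the resolvent $(zI-T)^{-1}$. The only points deserving a word of care are that $\lambda$ really is isolated, so that the two-piece clopen decomposition of $\sigma(T)$ is legitimate (this is where compactness enters), and the inclusion $\ran P_\mu \subseteq \ran P_2$; both are standard facts about the holomorphic functional calculus. Alternatively, one can bypass Theorem~\ref{TheoremRiesz} with a one-line computation: for $\vec{x} = P_\lambda\vec{x}$ and $\vec{y} = P_\mu\vec{y}$,
\begin{equation*}
	[\vec{x},\vec{y}] = \inner{P_\lambda \vec{x}, C P_\mu \vec{y}} = \inner{P_\lambda \vec{x}, P_\mu^* C \vec{y}} = \inner{P_\mu P_\lambda \vec{x}, C\vec{y}} = 0,
\end{equation*}
using that $P_\mu$ is $C$-symmetric (equivalently $CP_\mu = P_\mu^* C$) together with $P_\mu P_\lambda = P(\{\mu\}\cap\{\lambda\}) = 0$.
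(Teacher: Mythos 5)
Your reduction to Theorem~\ref{TheoremRiesz} for a pair of distinct \emph{nonzero} eigenvalues is exactly the first half of the paper's argument, and both versions of your computation (via the clopen splitting $\{\lambda\}\cup(\sigma(T)\setminus\{\lambda\})$, or via $P_\mu P_\lambda = 0$ together with $P_\mu = CP_\mu^*C$) are correct. There is, however, a genuine gap: you have silently discarded the eigenvalue $0$. A compact operator can have $0$ as an eigenvalue, the vectors of $\bigcup_m \ker T^m$ form a generalized eigenspace that the theorem is meant to cover, and the paper's proof devotes its entire second half to precisely this case. The difficulty is that $0$ need not be an isolated point of $\sigma(T)$ --- it is typically the accumulation point of the nonzero eigenvalues --- so there is no Riesz idempotent attached to the singleton $\{0\}$, and your identity $P_\mu P_\lambda = P(\{\mu\}\cap\{\lambda\}) = 0$ has no meaning when $\mu = 0$. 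Your opening sentence, ``it suffices to prove that $\ran P_\lambda \perp_C \ran P_\mu$ whenever $\lambda$ and $\mu$ are distinct nonzero eigenvalues,'' is therefore not justified.

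The paper closes this case as follows: for a fixed nonzero eigenvalue $\lambda$, choose $\epsilon$ with $0 < \epsilon < |\lambda|$ so that the circle $|z| = \epsilon$ misses $\sigma(T)$ (possible since $0$ is the only accumulation point of the spectrum), and apply Theorem~\ref{TheoremRiesz} to the clopen splitting of $\sigma(T)$ determined by this circle. This yields $\ran P_\lambda \perp_C \ran P_\epsilon$, where $P_\epsilon = \frac{1}{2\pi i}\int_{|z|=\epsilon}(zI-T)^{-1}\,dz$, and every generalized eigenvector for $0$ lies in $\ran P_\epsilon$: if $T^m\vec{x} = 0$, then $T^m(I - P_\epsilon)\vec{x} = 0$, while $T$ restricted to $\ran(I-P_\epsilon)$ is invertible (its spectrum lies outside the circle), so $(I-P_\epsilon)\vec{x} = 0$. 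Adding this paragraph would make your argument complete; everything else you wrote follows the paper's route.
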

	
	\begin{proof}
		It follows immediately from Theorem \ref{TheoremRiesz} and the preceding remarks that the generalized
		eigenspaces corresponding to nonzero eigenvalues of a compact $C$-symmetric operator $T$ are mutually $C$-orthogonal.
		Since $0$ is the only possible accumulation point of the eigenvalues of $T$, it follows that a generalized eigenvector
		corresponding to a nonzero eigenvalue is $C$-orthogonal to any vector in the range of
		\begin{equation*}
			P_{\epsilon} = \frac{1}{2\pi i } \int_{|z| = \epsilon} (zI - T)^{-1}\,dz
		\end{equation*}
		if $\epsilon>0$ is taken sufficiently small.  In particular, $\ran P_{\epsilon}$ contains the generalized eigenvectors
		for the eigenvalue $0$ (if any exist).
	\end{proof}

\subsection{Eigenstructure}\label{Subsection:Eigenstructure}
	With respect to the bilinear form $\blf$, it turns out that $C$-symmetric operators superficially resemble selfadjoint operators.
	For instance, an operator $T$ is $C$-symmetric if and only if
	$[T\vec{x},\vec{y}] = [\vec{x},T\vec{y}]$ for all $\vec{x},\vec{y}$ in $\h$.  
	As another example, the eigenvectors of a
	$C$-symmetric operator corresponding to distinct eigenvalues
	are orthogonal with respect to $\blf$, even though they are not
	necessarily orthogonal with respect to the original sesquilinear form $\sesqui$.  
	
	\begin{lemma}\label{LemmaEigenvectors}
		The eigenvectors of a $C$-symmetric operator $T$ corresponding to distinct eigenvalues are
		orthogonal with respect to the bilinear form $\blf$.
	\end{lemma}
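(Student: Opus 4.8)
The plan is to run the classical selfadjoint-operator argument, but with the bilinear form $\blf$ in place of the sesquilinear form $\sesqui$. The single ingredient I need is the identity $[T\vec{x},\vec{y}] = [\vec{x},T\vec{y}]$ for all $\vec{x},\vec{y}$, which was recorded just before Lemma~\ref{LemmaRepresenting} (and verified in its proof) as a restatement of $C$-symmetry; in fact it follows directly from $T = CT^*C$ together with the definition $[\vec{x},\vec{y}] = \inner{\vec{x},C\vec{y}}$.

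So suppose $T\vec{x} = \lambda\vec{x}$ and $T\vec{y} = \mu\vec{y}$ with $\lambda \neq \mu$. First I would compute $[T\vec{x},\vec{y}]$ in two ways. Since $\blf$ is linear in \emph{both} slots (this is the key structural point, and it is exactly why no complex conjugate appears on the eigenvalues, in contrast to what would happen with $\sesqui$), we get
\begin{equation*}
	\lambda[\vec{x},\vec{y}] = [\lambda\vec{x},\vec{y}] = [T\vec{x},\vec{y}] = [\vec{x},T\vec{y}] = [\vec{x},\mu\vec{y}] = \mu[\vec{x},\vec{y}].
\end{equation*}
Hence $(\lambda - \mu)[\vec{x},\vec{y}] = 0$, and since $\lambda \neq \mu$ we conclude $[\vec{x},\vec{y}] = 0$, i.e., $\vec{x} \perp_C \vec{y}$.

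There is essentially no obstacle here: the only thing worth flagging for the reader is the contrast with the selfadjoint case — there $\inner{T\vec{x},\vec{y}} = \inner{\vec{x},T\vec{y}}$ yields $\lambda\inner{\vec{x},\vec{y}} = \overline{\mu}\inner{\vec{x},\vec{y}}$, and one needs the eigenvalues to be real before concluding; here the bilinearity of $\blf$ removes the conjugate, so distinctness of $\lambda$ and $\mu$ alone suffices and no spectral information (such as reality of the spectrum, which anyway fails for general complex symmetric operators) is required.
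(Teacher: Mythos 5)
Your proof is correct and is essentially identical to the paper's: both use the identity $[T\vec{x},\vec{y}] = [\vec{x},T\vec{y}]$ (a restatement of $C$-symmetry) together with bilinearity of $\blf$ in both slots to obtain $\lambda[\vec{x},\vec{y}] = \mu[\vec{x},\vec{y}]$ and conclude. Your added remark contrasting this with the sesquilinear selfadjoint case is accurate but not part of the paper's argument.
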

	
	\begin{proof}
		The proof is essentially identical to the corresponding proof for selfadjoint operators.
		If $\lambda_1 \neq \lambda_2$, $T\vec{x}_1 = \lambda_1 \vec{x}_1$, and $T\vec{x}_2 = \lambda_2 \vec{x}_2$, then
		\begin{equation*}
			\lambda_1 [\vec{x}_1,\vec{x}_2] = [\lambda_1 \vec{x}_1,\vec{x}_2] 
			= [T\vec{x}_1,\vec{x}_2]=[\vec{x}_1,T\vec{x}_2] = [\vec{x}_1,\lambda_2 \vec{x}_2] = \lambda_2 [\vec{x}_1,\vec{x}_2].
		\end{equation*}
		Since $\lambda_1 \neq \lambda_2$, it follows that $[\vec{x}_1,\vec{x}_2]=0$.  
	\end{proof}

	There are some obvious differences between selfadjoint and complex symmetric operators.
	For instance, a complex symmetric matrix can have any possible Jordan canonical form (Theorem \ref{TheoremFolk})
	whereas a selfadjoint matrix must be unitarily diagonalizable.
	The following result shows that complex symmetric operators 
	have a great deal more algebraic structure than one can expect from an arbitrary operator
	(see \cite{ESCSO} for a complete proof; Theorem \ref{TheoremCompact}
	addresses only the compact case).

	\begin{theorem}\label{TheoremGeneralized}
		The generalized eigenspaces of a $C$-symmetric operator corresponding to distinct eigenvalues 
		are mutually $C$-orthogonal.
	\end{theorem}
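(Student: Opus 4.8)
The plan is to deduce the statement from the single elementary identity already recorded just before Lemma \ref{LemmaEigenvectors}: a bounded operator $S$ is $C$-symmetric if and only if $[S\vec{x},\vec{y}] = [\vec{x},S\vec{y}]$ for all $\vec{x},\vec{y}$. First I would observe that $T - \lambda I$ is $C$-symmetric for \emph{every} $\lambda \in \C$ (not just real ones), since $\blf$ is linear in both slots and $T$ is $C$-symmetric; and that powers of a $C$-symmetric operator are again $C$-symmetric, because $C\bigl((T-\lambda I)^k\bigr)^*C = \bigl(C(T-\lambda I)^*C\bigr)^k = (T-\lambda I)^k$. So both $(T-\lambda I)^p$ and $(T-\mu I)^q$ are $C$-symmetric for all exponents.

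Now fix distinct eigenvalues $\lambda \neq \mu$ and let $\vec{x} \in \ker(T-\lambda I)^p$ and $\vec{y} \in \ker(T-\mu I)^q$ be arbitrary generalized eigenvectors (with $p,q$ finite); the goal is $[\vec{x},\vec{y}] = 0$. The key point is that $\E_\lambda := \ker(T-\lambda I)^p$ is $T$-invariant, and that the restriction of $T - \mu I = (T-\lambda I) + (\lambda-\mu)I$ to $\E_\lambda$ is invertible: on $\E_\lambda$ the operator $T - \lambda I$ is nilpotent of order at most $p$, so $T - \mu I$ is the sum of a nilpotent operator and the invertible scalar $(\lambda-\mu)I$, and its inverse is the terminating Neumann series $(\lambda-\mu)^{-1}\sum_{j=0}^{p-1}\bigl(-(T-\lambda I)/(\lambda-\mu)\bigr)^j$, a polynomial in $T-\lambda I$ that maps $\E_\lambda$ into itself. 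Hence $(T-\mu I)^q$ maps $\E_\lambda$ \emph{onto} $\E_\lambda$, so I may write $\vec{x} = (T-\mu I)^q\vec{x}'$ with $\vec{x}' \in \E_\lambda$. Then
\begin{equation*}
	[\vec{x},\vec{y}] = \bigl[(T-\mu I)^q\vec{x}',\,\vec{y}\bigr] = \bigl[\vec{x}',\,(T-\mu I)^q\vec{y}\bigr] = [\vec{x}',\vec{0}] = 0,
\end{equation*}
using the $C$-symmetry of $(T-\mu I)^q$ in the middle and $\vec{y}\in\ker(T-\mu I)^q$ at the end. Since $\vec{x},\vec{y}$ were arbitrary, $\E_\lambda \perp_C \E_\mu$.

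The one step deserving care is the invertibility claim: one must work with $\ker(T-\lambda I)^p$ for a \emph{fixed finite} $p$ — rather than the full ascending union $\bigcup_k \ker(T-\lambda I)^k$, whose ascent may be infinite for a general bounded operator — so that genuine nilpotency is available on $\E_\lambda$; this is harmless because $C$-orthogonality of the generalized eigenspaces just means $[\vec{x},\vec{y}]=0$ for every pair of generalized eigenvectors, and each such vector is killed by some finite power. Once that is granted everything else is routine, and I expect no further obstacle. (Alternatively one could run a double induction on $p+q$, using $[(T-\lambda I)\vec{x},\vec{y}] = [\vec{x},(T-\lambda I)\vec{y}]$ to lower exponents, but the surjectivity argument above is cleaner and avoids bookkeeping.)
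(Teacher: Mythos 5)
Your proof is correct, and it takes a genuinely different route from the paper. The paper does not actually prove Theorem \ref{TheoremGeneralized} in the text: it defers to \cite{ESCSO} and only establishes the compact case (Theorem \ref{TheoremCompact}), which it deduces from the Riesz functional calculus --- the Riesz idempotents of a $C$-symmetric operator are $C$-projections with mutually $C$-orthogonal ranges (Theorem \ref{TheoremRiesz}), and for a compact operator the range of the idempotent at a nonzero eigenvalue is exactly the generalized eigenspace. Your argument instead is purely algebraic: you use only the identity $[S\vec{x},\vec{y}]=[\vec{x},S\vec{y}]$ for $C$-symmetric $S$, the fact that $(T-\mu I)^q$ is again $C$-symmetric, and the invertibility of $T-\mu I$ on $\ker(T-\lambda I)^p$ via the terminating Neumann series, so that $\vec{x}=(T-\mu I)^q\vec{x}'$ and $[\vec{x},\vec{y}]=[\vec{x}',(T-\mu I)^q\vec{y}]=0$. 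Each step checks out, including your care in working with a fixed finite power $p$ so that genuine nilpotency is available. What your approach buys is significant: it is self-contained, avoids contour integrals entirely, and applies to arbitrary eigenvalues of an arbitrary bounded $C$-symmetric operator --- in particular to eigenvalues that are not isolated points of the spectrum, where the Riesz-idempotent machinery of the paper's compact-case proof is unavailable. The paper's route, in exchange, yields the stronger clopen-set decomposition of Theorem \ref{TheoremRiesz}, which is information your argument does not recover.
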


	We say that a vector $x$ is \emph{isotropic} if $[\vec{x},\vec{x}] = 0$.
	Although $\vec{0}$ is an isotropic vector, 
	nonzero isotropic vectors are nearly unavoidable (see Lemma \ref{LemmaIsotropic} below).
	However, isotropic eigenvectors often have 
	meaningful interpretations.  For example, isotropic eigenvectors of complex symmetric matrices
	are considered in \cite{Scott} in the context of elastic wave propagation.  In that theory,
	isotropic eigenvectors correspond to circularly polarized waves.  
	
	The following simple lemma hints at the relationship
	between isotropy and multiplicity that we will explore later.
	
	\begin{lemma}\label{LemmaIsotropic}
		If $C:\h\to\h$ is a conjugation, then
		every subspace of dimension $\geq 2$ contains isotropic vectors for the bilinear form $\blf$.
	\end{lemma}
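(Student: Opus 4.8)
The plan is to reduce at once to the two-dimensional case. If $\V \subseteq \h$ is a subspace with $\dim \V \geq 2$, I would pick two linearly independent vectors $\vec{u},\vec{v} \in \V$; it then suffices to produce a nonzero isotropic vector in $\vecspan\{\vec{u},\vec{v}\} \subseteq \V$. First I would dispose of a trivial subcase: if $[\vec{v},\vec{v}] = 0$, then $\vec{v}$ is already the desired nonzero isotropic vector, so we may assume $[\vec{v},\vec{v}] \neq 0$.

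The idea is then to search along the affine line $\vec{u} + t\vec{v}$, $t \in \C$. Using the fact that $\blf$ is bilinear (noted just after \eqref{eq:BLF}), I would expand
\begin{equation*}
	p(t) := [\vec{u} + t\vec{v},\, \vec{u} + t\vec{v}] = [\vec{u},\vec{u}] + t\big([\vec{u},\vec{v}] + [\vec{v},\vec{u}]\big) + t^2 [\vec{v},\vec{v}].
\end{equation*}
Since the leading coefficient $[\vec{v},\vec{v}]$ is nonzero, $p$ is a genuine quadratic polynomial in the complex variable $t$, so by the fundamental theorem of algebra it has a root $t_0 \in \C$. Setting $\vec{x} = \vec{u} + t_0 \vec{v}$ gives $[\vec{x},\vec{x}] = p(t_0) = 0$, so $\vec{x}$ is isotropic; and $\vec{x} \neq \vec{0}$, since $\vec{u} = -t_0 \vec{v}$ would contradict the linear independence of $\vec{u}$ and $\vec{v}$.

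I do not anticipate any real obstacle here. The only delicate point is ensuring that the isotropic vector produced is nonzero — which is exactly why the hypothesis $\dim \V \geq 2$ is needed (on a one-dimensional span $\C\vec{v}$ with $[\vec{v},\vec{v}] \neq 0$, the form $\alpha \mapsto \alpha^2 [\vec{v},\vec{v}]$ vanishes only at $\alpha = 0$) and why I take $\vec{u},\vec{v}$ linearly independent. Equivalently, this records the standard fact that a nonzero symmetric bilinear form on a complex vector space of dimension at least two always admits nontrivial isotropic vectors, the proof of which is precisely the quadratic-root computation above.
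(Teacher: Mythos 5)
Your proof is correct, but it takes a genuinely different route from the paper's. The paper argues by a Gram--Schmidt-type reduction with respect to $\blf$: assuming neither $\vec{x}_1$ nor $\vec{x}_2$ is isotropic, it forms $\vec{y}_2 = \vec{x}_2 - \frac{[\vec{x}_2,\vec{x}_1]}{[\vec{x}_1,\vec{x}_1]}\vec{x}_1$, and then (after normalizing so that $[\vec{y}_1,\vec{y}_1]=[\vec{y}_2,\vec{y}_2]=1$) observes that $\vec{y}_1 \pm i\vec{y}_2$ are isotropic. You instead restrict the quadratic form $t \mapsto [\vec{u}+t\vec{v},\vec{u}+t\vec{v}]$ to an affine line and invoke the fundamental theorem of algebra. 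Your argument is shorter, requires fewer case distinctions (only whether $[\vec{v},\vec{v}]$ vanishes), and uses nothing beyond bilinearity; it is the standard proof that a quadratic form on a complex space of dimension at least two has nontrivial zeros, and your remark about why linear independence guarantees $\vec{x}\neq\vec{0}$ is exactly the point that needs care. What the paper's construction buys in exchange for its extra casework is slightly more structure: in the generic case it produces \emph{two} isotropic vectors $\vec{y}_1 \pm i\vec{y}_2$ that together span the same two-dimensional subspace (so the plane has a basis of isotropic vectors), whereas your quadratic may have a double root and then yields only one isotropic direction. Neither refinement is needed for the lemma as stated, so your proof is a perfectly adequate substitute.
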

	
	\begin{proof}
		Consider the span of
		two linearly independent vectors $\vec{x}_1$ and $\vec{x}_2$.  If $\vec{x}_1$ or $\vec{x}_2$ is 
		isotropic, we are done.  If neither $\vec{x}_1$ nor $\vec{x}_2$ is isotropic, then 		\begin{equation*}
			\vec{y}_1 = \vec{x}_1, \quad \vec{y}_2 = \vec{x}_2 - \frac{[\vec{x}_2,\vec{x}_1]}{ [\vec{x}_1,\vec{x}_1] }\vec{x}_1
		\end{equation*}
		are $C$-orthogonal and have the same span as $\vec{x}_1,\vec{x}_2$.
		In this case, either $\vec{y}_2$ is isotropic (and we are done) or neither $\vec{y}_1$ nor $\vec{y}_2$ is isotropic.
		If the latter happens, we may assume that $\vec{y}_1$ and $\vec{y}_2$ satisfy $[\vec{y}_1,\vec{y}_1] = [\vec{y}_2,\vec{y}_2] = 1$.
		Then the vectors $\vec{y}_1 \pm i \vec{y}_2$ are both isotropic and have the same span as
		$\vec{x}_1$ and $\vec{x}_2$.
	\end{proof}	

	The following result shows that the existence of an isotropic eigenvector for an \emph{isolated} eigenvalue is determined
	by the multiplicity of the eigenvalue.
	
	\begin{theorem}\label{TheoremIsotropic}
		If $T$ is a $C$-symmetric operator, then an isolated eigenvalue 
		$\lambda$ of $T$ is simple if and only if $T$ has no isotropic eigenvectors for $\lambda$.
	\end{theorem}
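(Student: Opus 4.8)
The plan is to localize at $\lambda$ using its Riesz idempotent and then exploit the fact that, with respect to the bilinear form $\blf$, a $C$-symmetric operator behaves like a selfadjoint one (cf.\ Subsection~\ref{Subsection:Eigenstructure}). Since $\lambda$ is isolated, $\sigma(T)=\{\lambda\}\cup(\sigma(T)\setminus\{\lambda\})$ is a disjoint union of clopen sets; let $P=P_\lambda$ be the Riesz idempotent \eqref{eq-RieszIdempotent} attached to $\{\lambda\}$ and put $N=\ran P$, a closed $T$-invariant subspace with $\sigma(T|_N)=\{\lambda\}$. Two facts drive the argument. First, $\ker(T-\lambda)\subseteq N$: if $(T-\lambda)\vec{v}=\vec{0}$ then $(I-P)\vec{v}$ lies in $\ran(I-P)$, on which $T-\lambda$ is invertible, forcing $(I-P)\vec{v}=\vec{0}$. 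Second, by Theorem~\ref{TheoremRiesz}, $P$ is $C$-symmetric and $\ran P\perp_C\ran(I-P)$; since every vector of $\h$ splits as a sum of one from $\ran P$ and one from $\ran(I-P)$, the nondegeneracy of $\blf$ on $\h$ forces the restriction of $\blf$ to $N$ to be nondegenerate as well. Throughout one uses that $T$ being $C$-symmetric means $[(T-\lambda)\vec{x},\vec{y}]=[\vec{x},(T-\lambda)\vec{y}]$ for all $\vec{x},\vec{y}\in\h$.

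If $\lambda$ is simple, then $N=\C\vec{e}$ for a single vector $\vec{e}$, which necessarily spans $\ker(T-\lambda)$. Nondegeneracy of $\blf$ on the one-dimensional space $N$ gives $[\vec{e},\vec{e}]\neq0$, whence $[c\vec{e},c\vec{e}]=c^{2}[\vec{e},\vec{e}]\neq0$ for every $c\neq0$; thus no eigenvector of $T$ for $\lambda$ is isotropic.

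Conversely, suppose $\lambda$ is not simple, so $\dim N\geq2$; I will produce an isotropic eigenvector. If $\dim\ker(T-\lambda)\geq2$, then Lemma~\ref{LemmaIsotropic}, applied to the subspace $\ker(T-\lambda)$, furnishes a nonzero isotropic vector in $\ker(T-\lambda)$, i.e.\ an isotropic eigenvector. If instead $\dim\ker(T-\lambda)=1$, say $\ker(T-\lambda)=\C\vec{e}$, consider $S=(T-\lambda)|_N$; it has spectrum $\{0\}$ and one-dimensional kernel $\C\vec{e}$, so its Jordan form is a single nilpotent block of size $\dim N\geq2$, and in particular $\ker S\subseteq\ran S$. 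Choosing $\vec{f}\in N$ with $S\vec{f}=\vec{e}$ and invoking the $\blf$-symmetry of $T$ yields
\begin{equation*}
	[\vec{e},\vec{e}]=[(T-\lambda)\vec{f},\vec{e}]=[\vec{f},(T-\lambda)\vec{e}]=[\vec{f},\vec{0}]=0,
\end{equation*}
so $\vec{e}$ is an isotropic eigenvector. This establishes the other direction.

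The form-theoretic bookkeeping above is routine; the genuine obstacle is that passing to the Jordan form of $S=(T-\lambda)|_N$ uses that $N=\ran P_\lambda$ is \emph{finite-dimensional}. This holds precisely when $\lambda$ is an isolated eigenvalue of finite algebraic multiplicity — the setting in which ``simple'' is unambiguous and the one intended here. For a merely quasinilpotent $S$ one would instead have to show directly that the one-dimensional kernel of a quasinilpotent $C$-symmetric operator on $N$ is forced to be isotropic (equivalently, to lie in $(\ran S)^{-}$), and I expect that to be the delicate point beyond the discrete-spectrum case.
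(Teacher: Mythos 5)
Your proof is correct and takes essentially the same route as the paper's: the forward direction rests on the $C$-orthogonality of $\ran P_\lambda$ and $\ran(I-P_\lambda)$ from Theorem~\ref{TheoremRiesz} plus nondegeneracy of $\blf$, and the converse splits into the identical two cases (Lemma~\ref{LemmaIsotropic} when the geometric multiplicity exceeds one, and the computation $[\vec{e},\vec{e}]=[\vec{f},(T-\lambda I)\vec{e}]=0$ via a Jordan chain otherwise). Your closing caveat about needing finite algebraic multiplicity to produce that chain flags a point the paper's proof also leaves implicit (it simply asserts $\dim\ker(T-\lambda I)^2>1$ when $\lambda$ is not simple), so you have lost nothing relative to the published argument.
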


	\begin{proof}
		If $\lambda$ is an isolated eigenvalue of $T$, then the Riesz idempotent $P$ corresponding to $\lambda$
		is a $C$-projection.  If $\lambda$ is a simple eigenvalue, then the eigenspace corresponding to $\lambda$
		is spanned by a single unit vector $\vec{x}$.  If $\vec{x}$ is isotropic, then it is $C$-orthogonal to all of $\h$
		since $\vec{x}$ is $C$-orthogonal to the range of the complementary $C$-projection $I - P$.  This would imply that
		$\vec{x}$ is $C$-orthogonal to all of $\h$ and hence $\vec{x} = \vec{0}$, a contradiction.

		If $\lambda$ is not a simple eigenvalue, then there are two cases to consider.
		\medskip
		  
		\noindent\textsc{Case 1}: If $\dim \ker(T - \lambda I) > 1$, then by Lemma \ref{LemmaIsotropic},
		$\ker(T - \lambda I)$ contains an isotropic vector.  Thus $T$ has an isotropic eigenvector corresponding to the eigenvalue $\lambda$.
		\medskip
		
		\noindent\textsc{Case 2}: If $\dim \ker(T - \lambda I) = 1$, then $\ker(T - \lambda I) = \operatorname{span}\{\vec{x}\}$ for some $\vec{x} \neq \vec{0}$ 
		and $\dim \ker(T - \lambda I)^2 > 1$ since $\lambda$ is not a simple eigenvalue.  
		We can therefore find a nonzero generalized eigenvector $\vec{y}$ for $\lambda$ such that
		$\vec{x} = (T - \lambda I)\vec{y}$.  Thus
		\begin{equation*}
			[\vec{x},\vec{x}] = [\vec{x}, (T - \lambda I)\vec{y}] = [(T - \lambda I)\vec{x},\vec{y}] = [\vec{0},\vec{y}] = 0
		\end{equation*}
		and hence $\vec{x}$ is an isotropic eigenvector.
	\end{proof}

	\begin{example}
		The hypothesis that $\lambda$ is an isolated eigenvalue is crucial.
		The operator $S \oplus S^*$, where $S$ is the unilateral shift on $\ell^2(\N)$, is complex symmetric  
		and has each point in the open unit disk as a simple eigenvalue \cite{CSO2}.  Nevertheless,
		every eigenvector is isotopic.
	\end{example}
	
\subsection{$C$-orthonormal systems and Riesz bases}\label{Subsection:Diagonalization}

Let $\h$ be a separable, infinite dimensional complex Hilbert space
endowed with a conjugation $C$.
Suppose that $\{\vec{u}_n\}$ is a complete system of \emph{$C$-orthonormal} vectors:
\begin{equation}\label{EquationOrthogonality}
  [\vec{u}_n,\vec{u}_m] = \delta_{nm},
\end{equation}
in which $\blf$ denotes the symmetric bilinear form \eqref{eq:BLF} 
induced by $C$.  In other words, suppose that
$\{\vec{u}_n\}$ and $\{C\vec{u}_n\}$ are
complete biorthogonal sequences in $\h$.  Such sequences frequently arise
as the eigenvectors for a $C$-symmetric operator (see Subsection \ref{Subsection:Eigenstructure}).
Most of the following material originates in \cite{ESCSO}.

We say that a vector $\vec{x}$ in $\h$ is \emph{finitely supported} if it is a finite linear combination of the $\vec{u}_n$
and we denote the linear manifold of finitely supported vectors by $\mathcal{F}$.
Due to the $C$-orthonormality of the $\vec{u}_n$, it follows immediately that each such $\vec{x} \in \mathcal{F}$ can be recovered via the
\emph{skew Fourier expansion}
\begin{equation}\label{eq-SkewFourier}
  \vec{x} = \sum_{n=1}^{\infty}\, [\vec{x},\vec{u}_n] \vec{u}_n,
\end{equation}
where all but finitely many of the coefficients $[\vec{x},\vec{u}_n]$ are nonzero.
We will let $A_0:\mathcal{F} \rightarrow \h$ denote the linear extension of the map 
$A_0 u_n = Cu_n$ to $\mathcal{F}$.  Since $\mathcal{F}$ is a dense linear submanifold of $\h$,
it follows that if $A_0:\mathcal{F}\rightarrow\h$ is bounded on $\mathcal{F}$, then $A_0$ has a unique
bounded extension (which we denote by $A$) to all of $\h$.

It turns out that the presence of the conjugation $C$ ensures that such an extension must have
several desirable algebraic properties.  In particular, the following lemma shows that if $A$ is bounded,
then it is $C$-orthogonal.  Specifically, we say that an operator $U:\h\rightarrow\h$ is \emph{$C$-orthogonal} if
$CU^*CU = I$.  The terminology comes from the fact that, when represented with respect to a $C$-real orthonormal basis,
the corresponding matrix will be complex orthogonal (i.e., $U^T U = I$ as matrices).  

The importance of $C$-orthogonal operators lies in the fact that they preserve the bilinear form induced by $C$.  To be specific,
$U$ is a $C$-orthogonal operator if and only if  $[U\vec{x},U\vec{y}] = [\vec{x},\vec{y}]$ for all $\vec{x},\vec{y}$ in $\h$.  Unlike unitary operators, $C$-orthogonal
operators can have arbitrarily large norms.  In fact, unbounded $C$-orthogonal operators are considered in \cite{Riss},
where they are called \emph{$J$-unitary} operators.

\begin{lemma}\label{LemmaOrthogonal}
  If $A_0$ is bounded, then its extension $A:\h\rightarrow\h$ is positive and $C$-orthogonal.  If this is the case, then $A$ is invertible
  with $A^{-1} = CAC \geq 0$ and the operator $B = \sqrt{A}$ is also $C$-orthogonal.
\end{lemma}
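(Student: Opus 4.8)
The plan is to read off the quadratic form of $A$ from the skew Fourier expansion, extract $C$-orthogonality from an involution identity, and then transport everything through the positive square root. The two ``if this is the case'' conclusions will fall out of the $C$-orthogonality identity together with uniqueness of positive square roots.

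First I would work on the dense manifold $\mathcal{F}$. For $\vec{x},\vec{y}\in\mathcal{F}$, expand both by \eqref{eq-SkewFourier}; since $\inner{C\vec{u}_n,\vec{u}_m}=\overline{\inner{\vec{u}_m,C\vec{u}_n}}=\overline{[\vec{u}_m,\vec{u}_n]}=\delta_{nm}$ by \eqref{EquationOrthogonality}, one gets
\[
  \inner{A_0\vec{x},\vec{y}}=\sum_n [\vec{x},\vec{u}_n]\,\overline{[\vec{y},\vec{u}_n]}.
\]
This expression satisfies $\inner{A_0\vec{x},\vec{y}}=\overline{\inner{A_0\vec{y},\vec{x}}}$ and $\inner{A_0\vec{x},\vec{x}}=\sum_n |[\vec{x},\vec{u}_n]|^2\geq 0$; since $\mathcal{F}$ is dense and $A$ is bounded, passing to the limit gives $A=A^*\geq 0$.

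Next I would prove $C$-orthogonality. Observe that $CA\vec{u}_n=C(C\vec{u}_n)=\vec{u}_n$, so the bounded operator $(CA)^2$ — which is \emph{linear}, being the composition of two conjugate-linear maps — agrees with $I$ on the dense set $\mathcal{F}$ and hence equals $I$. Thus $CACA=I$; since $A=A^*$ this is exactly $CA^*CA=I$, i.e.\ $A$ is $C$-orthogonal. The same identity shows $CA$ is its own inverse, so $A=C(CA)$ is a product of invertible maps, hence invertible, with $A^{-1}=(CA)^{-1}C^{-1}=(CA)C=CAC$. Being the inverse of a positive invertible operator, $CAC=A^{-1}\geq 0$. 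Finally, for $B=\sqrt{A}$ (well defined, positive, invertible since $\sigma(A)\subset(0,\infty)$), I would note that $C(\cdot)C$ preserves positivity — $CBC$ is self-adjoint and $\inner{CBC\vec{x},\vec{x}}=\inner{B(C\vec{x}),C\vec{x}}\geq 0$ — and that $(CBC)^2=CB^2C=CAC=A^{-1}$; uniqueness of the positive square root then forces $CBC=\sqrt{A^{-1}}=B^{-1}$, whence $CB^*CB=CBCB=I$ and $B$ is $C$-orthogonal.

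The only genuinely delicate point is bookkeeping around the conjugate-linearity of $C$: recognizing that $(CA)^2$ is \emph{linear} (so that agreement on a dense set forces it to be $I$), that inversion distributes over a product containing the conjugate-linear $C$, and that conjugation by $C$ commutes with squaring and with the positive-square-root functional calculus. Once these are handled, the rest is a routine density-and-continuity argument.
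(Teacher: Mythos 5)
Your proof is correct and follows essentially the same route as the paper's: positivity from the quadratic form $\inner{A_0\vec{x},\vec{x}}=\sum_n|[\vec{x},\vec{u}_n]|^2$, $C$-orthogonality from $(CA)^2\vec{u}_n=\vec{u}_n$ together with $A=A^*$, and $C$-orthogonality of $B=\sqrt{A}$ from $(CBC)^2=A^{-1}$ plus uniqueness of the positive square root. The extra bookkeeping you supply (linearity of $(CA)^2$, self-adjointness and positivity of $CBC$) is exactly what the paper leaves implicit.
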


\begin{proof}
  By \eqref{eq-SkewFourier}, it follows that
 $\inner{A_0 \vec{x},\vec{x}} = \sum_{n=1}^{\infty} |[\vec{x},\vec{u}_n]|^2 \geq 0$
  for all $\vec{x}$ in $\mathcal{F}$.  If $A_0$ is bounded, then it follows by continuity that $A$ will be positive.  
  The fact that $A$   is $C$-orthogonal (hence invertible) follows from the
  fact that  $(CA^*C) A \vec{u}_n = (CA)^2 \vec{u}_n = \vec{u}_n$
  for all $n$.
  Since $(CBC)(CBC) = CAC = A^{-1}$ and $CBC \geq 0$,
  it follows that $CBC$ is a positive square
  root of $A^{-1}$.  By the uniqueness of the positive square root of a positive operator, we see that
  $CBC = B^{-1}$ and hence $B$ is also $C$-orthogonal. 
\end{proof}

We remark that Lemma \ref{LemmaOrthogonal} shows that if the map $\vec{u}_n \mapsto C\vec{u}_n$ is bounded, then its
linear extension $A:\h\rightarrow\h$ is necessarily invertible.
This property distinguishes $C$-orthonormal systems $\{\vec{u}_n\} $ and their duals $\{C\vec{u}_n\}$ from general
biorthogonal systems.
Among other things, Lemma \ref{LemmaOrthogonal} also shows that if $A_0$ is bounded, then the \emph{skew conjugation}
$J\left(\sum_{n=1}^{\infty} c_n \vec{u}_n\right) = \sum_{n=1}^{\infty} \overline{c_n}\vec{u}_n$
(defined initially on $\mathcal{F}$) is given by
\begin{equation*}
  J = CA = CBB = B^{-1}CB.
\end{equation*}
In other words, the skew conjugation $J$ is similar to our original conjugation $C$ via
the operator $B = \sqrt{A}$.
Another consequence of the boundedness of $A_0$
is the existence of a natural orthonormal basis
for $\h$.

 \begin{lemma}
  If $A_0$ is bounded, then the vectors $\{\vec{s}_n\} $ defined by $\vec{s}_n = B\vec{u}_n$ (where $B = \sqrt{A}$) satisfy the following:
  \begin{enumerate}\addtolength{\itemsep}{0.5\baselineskip}
    \item $\{\vec{s}_n\} $ is orthonormal:  $\inner{\vec{s}_j,\vec{s}_k} = \delta_{jk}$ for all $j,k$,

    \item $\{\vec{s}_n\} $ is $C$-orthonormal:  $[\vec{s}_j,\vec{s}_k] = \delta_{jk}$ for all $j,k$,

    \item $C\vec{s}_n = \vec{s}_n$ for all $n$.
  \end{enumerate}
  Furthermore, $\{\vec{s}_n\} $ is an orthonormal basis for $\h$.
\end{lemma}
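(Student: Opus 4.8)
The plan is to deduce all four assertions from the structural facts about $A$ and $B=\sqrt{A}$ already in hand from Lemma~\ref{LemmaOrthogonal}: $A$ is positive, invertible and $C$-orthogonal, $B$ is positive (so $B=B^{*}$ and $B^{2}=A$) and $C$-orthogonal, and from $C$-orthogonality of $B$ together with $B=B^{*}$ one extracts $CB=B^{-1}C$, equivalently $B^{-1}=CBC$. The two further inputs are $A\vec{u}_{n}=A_{0}\vec{u}_{n}=C\vec{u}_{n}$ (legitimate since $\vec{u}_{n}\in\mathcal{F}$, where $A$ extends $A_{0}$) and the standing hypothesis $[\vec{u}_{n},\vec{u}_{m}]=\inner{\vec{u}_{n},C\vec{u}_{m}}=\delta_{nm}$.

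First I would check orthonormality (1): using $B=B^{*}$ and $B^{2}=A$,
\begin{equation*}
  \inner{\vec{s}_{j},\vec{s}_{k}}=\inner{B\vec{u}_{j},B\vec{u}_{k}}=\inner{B^{2}\vec{u}_{j},\vec{u}_{k}}=\inner{A\vec{u}_{j},\vec{u}_{k}}=\inner{C\vec{u}_{j},\vec{u}_{k}}=\overline{[\vec{u}_{k},\vec{u}_{j}]}=\delta_{jk}.
\end{equation*}
Next, the fixed-point property (3) comes from $CB=B^{-1}C$:
\begin{equation*}
  C\vec{s}_{n}=CB\vec{u}_{n}=B^{-1}C\vec{u}_{n}=B^{-1}A\vec{u}_{n}=B^{-1}B^{2}\vec{u}_{n}=B\vec{u}_{n}=\vec{s}_{n}.
\end{equation*}
Then (2) is immediate from (1) and (3): $[\vec{s}_{j},\vec{s}_{k}]=\inner{\vec{s}_{j},C\vec{s}_{k}}=\inner{\vec{s}_{j},\vec{s}_{k}}=\delta_{jk}$.

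Finally, for the \emph{orthonormal basis} claim I would argue density of $\vecspan\{\vec{s}_{n}\}$. Since $B$ is bounded and invertible with bounded inverse $B^{-1}=CBC$, it is a linear homeomorphism of $\h$; hence, using the completeness of $\{\vec{u}_{n}\}$,
\begin{equation*}
  \overline{\vecspan\{\vec{s}_{n}\}}=\overline{B\big(\vecspan\{\vec{u}_{n}\}\big)}\supseteq B\big(\overline{\vecspan\{\vec{u}_{n}\}}\big)=B\h=\h.
\end{equation*}
An orthonormal system with dense linear span is an orthonormal basis, which closes the proof. I do not expect a genuine obstacle here; the argument is essentially bookkeeping around Lemma~\ref{LemmaOrthogonal}. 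The two points that warrant a moment's care are that the identity $A\vec{u}_{n}=C\vec{u}_{n}$ is invoked only for $\vec{u}_{n}\in\mathcal{F}$, where $A$ and $A_{0}$ genuinely agree, and that the density step uses surjectivity of $B$ (mere continuity gives only $B(\overline{S})\subseteq\overline{B(S)}$, which is exactly the inclusion displayed), which is precisely where the invertibility half of Lemma~\ref{LemmaOrthogonal} enters.
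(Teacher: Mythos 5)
Your proposal is correct and follows essentially the same route as the paper: the same direct computations using $B=B^{*}$, $B^{2}=A$, $A\vec{u}_n=C\vec{u}_n$, and $CB=B^{-1}C$, with completeness again resting on the invertibility of $B$ and the completeness of $\{\vec{u}_n\}$. The only cosmetic differences are that you derive (2) from (1) and (3) instead of computing it directly, and you phrase completeness via density of the span rather than via triviality of the orthogonal complement.
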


\begin{proof}
  This follows from direct computations:
  \begin{align*}
    \inner{\vec{s}_j,\vec{s}_k} &= \inner{B\vec{u}_j, B\vec{u}_k}
            = \inner{\vec{u}_j,A\vec{u}_k} =  \inner{\vec{u}_j,C\vec{u}_k}
            = [\vec{u}_j,\vec{u}_k] = \delta_{jk},\\
    [\vec{s}_j,\vec{s}_k] &= \inner{\vec{s}_j, C\vec{s}_k}
        = \inner{B\vec{u}_j, CB \vec{u}_k} = \inner{B\vec{u}_j, B^{-1}C\vec{u}_k}
        = \inner{\vec{u}_j, C \vec{u}_k}
        = \delta_{jk},\\
    C\vec{s}_j &= CB\vec{u}_j = B^{-1} C\vec{u}_j = B^{-1} B^2 \vec{u}_j = B\vec{u}_j = \vec{s}_j.
  \end{align*}
  We now show that the system $\{\vec{s}_n\} $ is complete.
  If $\vec{x}$ is orthogonal to each $\vec{s}_j$, then
  $\inner{B\vec{x},\vec{u}_j} = \inner{\vec{x},B\vec{u}_j} = \inner{\vec{x},\vec{s}_j} = 0$ for all $j$.
  Since $B$ is invertible, it follows that $\vec{x} = \vec{0}$ since $\{\vec{u}_n\} $ is complete.
\end{proof}

If the operator $A_0$ is bounded, then its extension
$A$ is a positive, invertible operator whose spectrum
is bounded away from zero.  Thus
$\Theta = -i \log A$
can be defined using the functional calculus for $A$ and
the principal branch of the logarithm.  Since $A$ is
selfadjoint and the principal branch of the logarithm
is real on $(0,\infty)$, it follows that $\Theta$ is
skew-Hermitian:
$\Theta^* = -\Theta$.
Moreover, since $A$ is a $C$-orthogonal operator, it follows that $\Theta$ is a \emph{$C$-real} operator:
$\overline{\Theta} = \Theta$,
where $\overline{\Theta} = C\Theta C$.

Returning to our original $C$-symmetric operator $T$, we see that if $A_0$ is bounded, then $T$ is similar to the 
diagonal operator $D:\h\rightarrow\h$ defined by $D\vec{s}_n = \lambda_n \vec{s}_n$ since
$T = B^{-1} D B$.
Writing this in terms of the exponential representation
$A = \exp(i\Theta)$ and inserting a parameter $\tau\in[0,1]$, we obtain
a family of operators
$$T_{\tau} = e^{ -\frac{i\tau}{2}\Theta} D e^{ \frac{i\tau}{2} \Theta}$$
that satisfies $T_0 = D$ and $T_1 = T$.  This
provides a continuous deformation of $T$ to its diagonal model $D$.
We also remark that the fact that $\Theta$ is $C$-real and skew-Hermitian
implies that the operators $\exp(\pm\frac{i\tau}{2}\Theta)$ are
$C$-orthogonal for all $\tau$.  From here, it is easy to show that
each intermediate operator $T_{\tau}$ is $C$-symmetric and that
the path $\tau \mapsto T_{\tau}$ from $[0,1]$ to $B(\h)$ is norm continuous.

The following theorem provides a number of conditions equivalent to the boundedness of $A_0$:

\begin{theorem}\label{TheoremTFAE}
  If $\{\vec{u}_n\}$ is a complete $C$-orthonormal system in $\h$, then
  the following are equivalent:
  \begin{enumerate}\addtolength{\itemsep}{0.5\baselineskip}
    \item $\{\vec{u}_n\}$ is a Bessel sequence with Bessel bound $M$,

    \item $\{\vec{u}_n\}$ is a Riesz basis with lower and upper bounds $M^{-1}$ and $M$,

    \item $A_0$ extends to a bounded linear operator on $\h$
      satisfying $\norm{A_0} \leq M$,

    \item There exists $M>0$ satisfying:
      $$\norm{\sum_{n=1}^N \overline{c_n}  \vec{u}_n} \leq M
      \norm{\sum_{n=1}^N c_n \vec{u}_n},$$
      for every finite sequence $c_1,c_2,\ldots,c_N$.

    \item The Gram matrix $(\inner{\vec{u}_j,\vec{u}_k})_{j,k=1}^\infty$
      dominates its transpose:
      $$\big(M^2 \inner{\vec{u}_j,\vec{u}_k} - \inner{\vec{u}_k,\vec{u}_j} \big)_{j,k=1}^\infty \geq 0$$
      for some $M > 0$.

    \item The Gram matrix $G = (\inner{\vec{u}_j,\vec{u}_k})_{j,k=1}^\infty$ is
      bounded on $\ell^2(\N)$
      and orthogonal ($G^T G = I$ as matrices).  Furthermore, $\norm{G} \leq M$

    \item The skew Fourier expansion $$\sum_{n=1}^{\infty}\, [f,\vec{u}_n]\vec{u}_n$$ converges
      in norm for each $f \in H$ and
      $$\frac{1}{M} \norm{f}^2 \leq \sum_{n=1}^{\infty} |[f,\vec{u}_n]|^2 \leq M \norm{f}^2.$$
  \end{enumerate}
  In all cases, the infimum over all such $M$ equals the norm of $A_0$.
\end{theorem}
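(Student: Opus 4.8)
The plan is to treat condition $(3)$---that $A_0$ extends boundedly with $\norm{A_0}\le M$---as the hub, and to introduce two densely defined operators from the space of finitely supported sequences into $\h$: the \emph{synthesis operator} $V\colon c\mapsto\sum_n c_n\vec{u}_n$ (injective, since $C$-orthonormality forces linear independence) and its conjugate $W\colon c\mapsto\sum_n c_n C\vec{u}_n$, so that $Wc=C\big(\sum_n\overline{c_n}\vec{u}_n\big)$ and $A_0=WV^{-1}$ on $\mathcal{F}$. I would first dispatch $(3)\Leftrightarrow(4)\Leftrightarrow(5)$. For $\vec{x}=\sum_n c_n\vec{u}_n\in\mathcal{F}$ one has $c_n=[\vec{x},\vec{u}_n]$ and, since $C$ is isometric, $\norm{A_0\vec{x}}=\norm{\sum_n\overline{c_n}\vec{u}_n}$, so $\norm{A_0}\le M$ is \emph{verbatim} the inequality of $(4)$. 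Expanding the two norms via $\inner{C\vec{u}_n,C\vec{u}_m}=\inner{\vec{u}_m,\vec{u}_n}$ gives $\norm{\vec{x}}^2=\sum_{n,m}c_n\overline{c_m}\inner{\vec{u}_n,\vec{u}_m}$ and $\norm{A_0\vec{x}}^2=\sum_{n,m}c_n\overline{c_m}\inner{\vec{u}_m,\vec{u}_n}$, whence $\norm{A_0\vec{x}}^2\le M^2\norm{\vec{x}}^2$ on $\mathcal{F}$ is precisely the positive semidefiniteness of the Hermitian matrix $\big(M^2\inner{\vec{u}_j,\vec{u}_k}-\inner{\vec{u}_k,\vec{u}_j}\big)_{j,k}$, i.e.\ $(5)$. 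In each case the least admissible $M$ is evidently $\norm{A_0}$.

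Next I would handle $(1),(2),(7)$, the recurring point being that a conjugation is isometric, so $\{C\vec{u}_n\}$ has the same Bessel, frame and Riesz bounds as $\{\vec{u}_n\}$, while $[\vec{x},\vec{u}_n]=\inner{\vec{x},C\vec{u}_n}$ links the skew coefficients to $\{C\vec{u}_n\}$. For $(1)\Rightarrow(3)$: if $\{\vec{u}_n\}$ is Bessel with bound $M$ then so is $\{C\vec{u}_n\}$, so $\norm{A_0\vec{x}}=\norm{Wc}\le M^{1/2}\norm{c}_{\ell^2}$ while $\norm{c}_{\ell^2}^2=\sum_n|\inner{\vec{x},C\vec{u}_n}|^2\le M\norm{\vec{x}}^2$, giving $\norm{A_0}\le M$. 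For $(3)\Rightarrow(2)$: by Lemma~\ref{LemmaOrthogonal} the extension $A$ is positive and invertible with $A^{-1}=CAC$, and by the lemma that follows it $\{\vec{s}_n\}=\{B\vec{u}_n\}$ with $B=\sqrt{A}$ is a ($C$-real) orthonormal basis; since $\vec{u}_n=B^{-1}\vec{s}_n$ and $\norm{B},\norm{B^{-1}}\le\norm{A}^{1/2}=\norm{A_0}^{1/2}\le M^{1/2}$, the system $\{\vec{u}_n\}$ is a Riesz basis with bounds $M^{-1}$ and $M$. For $(2)\Rightarrow(1)$: a Riesz basis with bounds $M^{-1},M$ is the image of an orthonormal basis under a bounded invertible operator, hence a frame with the same bounds, in particular Bessel with bound $M$. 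For $(2)\Rightarrow(7)$: $\{C\vec{u}_n\}$ is the unique biorthogonal dual of the Riesz basis $\{\vec{u}_n\}$, so the skew Fourier expansion $f=\sum_n[f,\vec{u}_n]\vec{u}_n$ converges for every $f$, and the frame bounds of $\{C\vec{u}_n\}$ yield $\tfrac1M\norm{f}^2\le\sum_n|[f,\vec{u}_n]|^2\le M\norm{f}^2$; conversely $(7)\Rightarrow(1)$ because the upper bound in $(7)$ says $\{C\vec{u}_n\}$, hence $\{\vec{u}_n\}$, is Bessel with bound $M$.

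The step I expect to be the main obstacle is $(6)$, since the orthogonality $G^TG=I$ of the Gram matrix is exactly the rigidity special to $C$-orthonormal systems. The formal identity $(W^*W)_{jk}=\inner{C\vec{u}_k,C\vec{u}_j}=\inner{\vec{u}_j,\vec{u}_k}=G_{jk}$ shows that ``$G$ is bounded on $\ell^2$ with $\norm{G}\le M$'' is equivalent to ``$W$ is bounded with $\norm{W}^2\le M$'', i.e.\ to $\{C\vec{u}_n\}$ being Bessel with bound $M$, which by the previous paragraph is equivalent to $\norm{A_0}\le M$; moreover $\norm{G}=\norm{W}^2=\norm{A_0}$. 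It remains to see that $G^TG=I$ holds automatically once $A_0$ is bounded. For this I would use the $C$-real orthonormal basis $\{\vec{s}_n\}$ and $\vec{u}_n=B^{-1}\vec{s}_n$ to compute $G_{jk}=\inner{\vec{u}_j,\vec{u}_k}=\inner{A^{-1}\vec{s}_j,\vec{s}_k}$, so the matrix of $A^{-1}$ with respect to $\{\vec{s}_n\}$ is $G^T$; because $\{\vec{s}_n\}$ is $C$-real, for any bounded $X$ the matrix of $CX^*C$ with respect to $\{\vec{s}_n\}$ is the transpose of the matrix of $X$, and applying this to the selfadjoint operator $X=A^{-1}$ together with $A=CA^{-1}C$ shows that the matrix of $A$ is $(G^T)^T=G$. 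Since matrix representation in an orthonormal basis respects products, $G\,G^T=[A][A^{-1}]=I=[A^{-1}][A]=G^TG$.

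Putting the pieces together gives the equivalence of $(1)$--$(7)$, and in every case the infimum of the admissible constants $M$ equals $\norm{A_0}$, as tracked throughout. The only genuinely delicate bookkeeping is keeping the transpose-versus-conjugate distinction straight in the $C$-real basis (where it is $CX^*C$, not $CXC$, that corresponds to transposition) and matching the Bessel/frame/Riesz constants consistently; the remaining arguments are standard frame theory together with Lemma~\ref{LemmaOrthogonal}.
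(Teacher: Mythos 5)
The paper itself gives no proof of Theorem \ref{TheoremTFAE}; it is stated as a result imported from the authors' earlier work and followed immediately by applications, so there is no argument in the text to compare against line by line. Your proposal, however, is correct and is fully consistent with the machinery the survey does set up: you use exactly Lemma \ref{LemmaOrthogonal} (positivity, invertibility, $A^{-1}=CAC$, and the $C$-orthogonality of $B=\sqrt{A}$) and the subsequent lemma producing the $C$-real orthonormal basis $\vec{s}_n=B\vec{u}_n$, which is precisely the infrastructure the paper develops in this subsection for this purpose. The identifications $(3)\Leftrightarrow(4)\Leftrightarrow(5)$ via $\norm{A_0\vec{x}}=\norm{\sum\overline{c_n}\vec{u}_n}$ are verbatim correct (the positivity of the matrix in $(5)$ versus its transpose/conjugate is immaterial since the matrix is Hermitian), the constant-tracking through $(1)\Rightarrow(3)\Rightarrow(2)\Rightarrow(1)$ and $(2)\Leftrightarrow(7)$ is standard frame theory and the bounds $M^{-1},M$ come out right because $\norm{B^{-1}}^2=\norm{A^{-1}}=\norm{CAC}=\norm{A}$. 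The one genuinely nontrivial point, that $G^TG=I$ is automatic once $A_0$ is bounded, is handled correctly: in the $C$-real basis $\{\vec{s}_n\}$ the matrix of $CX^*C$ is the transpose of the matrix of $X$, the matrix of $A^{-1}$ is $G^T$, hence the matrix of $A=CA^{-1}C$ is $G$, and multiplicativity of matrix representation gives $GG^T=G^TG=I$. This is exactly the ``transpose-versus-conjugate'' bookkeeping you flag, and you resolve it correctly. I find no gap.
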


A nontrivial application of the preceding result to free interpolation in the Hardy space of the unit disk
is described in \cite{ICS}. More appropriate for the profile of the present survey are the following
Riesz basis criteria for the eigenvectors of a complex symmetric operator.

A classical observation due to Glazman \cite{Glazman2} gives
conditions solely in terms of the (simple) spectrum of a
dissipative operator for the root vectors to form a Riesz basis
\cite{Glazman2}. This idea was further exploited, and put into a
general context in the last chapter of Gohberg and Kre\u{\i}n's
monograph \cite{GK1}.
We illustrate below how complex symmetry can be used to weaken
Glazman's assumption without changing the conclusion.

Suppose that $T$ is a $C$-symmetric contraction with a complete
system $\{\vec{u}_n\}$ of eigenvectors corresponding to the simple
eigenvalues $\{\lambda_n\}$. Remark that, due to the
$C$-symmetry assumption $[\vec{u}_n, \vec{u}_m] = 0$ for $n \neq m$ (Lemma \ref{LemmaEigenvectors}). Moreover,
$[\vec{u}_n,\vec{u}_n]\neq 0$ because the system $\{\vec{u}_n\}$ is complete.

Letting $D = I - T^*T$, we see that $D\geq 0$ and hence $\inner{D\vec{x},\vec{y}}$ defines a positive sesquilinear form
on $\h \times \h$ and thus
$$|\inner{D\vec{x},\vec{y}} | \leq \sqrt{\inner{D\vec{x},\vec{x}}}\sqrt{\inner{D\vec{y},\vec{y}}}$$
for all $\vec{x},\vec{y}$ in $\h$.  Setting $\vec{x} = \vec{u}_j$ and $\vec{y} = \vec{u}_k$
we find that
\begin{align*}
  |\inner{D\vec{u}_j,\vec{u}_k}|
  &= |\inner{\vec{u}_j,\vec{u}_k} - \inner{T\vec{u}_j,T\vec{u}_k}|\\
  &= |1 - \lambda_j \overline{\lambda_k} ||\inner{\vec{u}_j,\vec{u}_k}|.
\end{align*}
Similarly, we find that
\begin{equation*}
  \sqrt{\inner{D\vec{u}_j,\vec{u}_j}}
  = \sqrt{ 1 - |\lambda_j|^2}\norm{\vec{u}_j}
\end{equation*}
and thus
$$|\inner{\vec{u}_j,\vec{u}_k}| \leq \norm{\vec{u}_j}\norm{\vec{u}_k}\frac{ \sqrt{1 - |\lambda_j|^2}\sqrt{1 - |\lambda_k|^2}}
{|1 - \lambda_j \overline{\lambda_k}|}.$$
This leads us to the following result from \cite{ICS}.

\begin{theorem} Let $T$ be a contractive $C$-symmetric operator
with simple spectrum $\{\lambda_n\}_{n=1}^\infty$ and complete
system of corresponding eigenvectors $\{\vec{u}_n\}$. Assume
that the normalization $[\vec{u}_n,\vec{u}_n] = 1, \ n \geq 1,$ is adopted. If
the matrix
$$\left[ \norm{\vec{u}_j}\norm{\vec{u}_k}\frac{ \sqrt{1 - |\lambda_j|^2}\sqrt{1 - |\lambda_k|^2}}
{|1 - \lambda_j \overline{\lambda_k}|} \right]_{j,k=1}^{\infty}$$ defines a linear bounded
operator on $\ell^2(\N)$, then $\{\vec{u}_n\}$ is a Riesz basis for $\h$.
\end{theorem}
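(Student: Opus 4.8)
The plan is to reduce the statement to the battery of equivalences in Theorem~\ref{TheoremTFAE}: if I can show that the Gram matrix $G=(\inner{\vec{u}_j,\vec{u}_k})_{j,k\geq 1}$ defines a bounded operator on $\ell^2(\N)$, then the implication $(1)\Rightarrow(2)$ of that theorem immediately yields that $\{\vec{u}_n\}$ is a Riesz basis. So everything hinges on a boundedness statement for $G$, and the hypothesis on the matrix in the theorem is tailor-made to supply it.

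First I would check that $\{\vec{u}_n\}$ is a \emph{complete $C$-orthonormal system} in the sense of \eqref{EquationOrthogonality}, so that Theorem~\ref{TheoremTFAE} applies. Completeness is assumed. Since $T$ is $C$-symmetric and the $\lambda_n$ are pairwise distinct, Lemma~\ref{LemmaEigenvectors} gives $[\vec{u}_j,\vec{u}_k]=0$ for $j\neq k$; and $[\vec{u}_n,\vec{u}_n]\neq 0$, for otherwise the bounded conjugate-linear functional $\vec{y}\mapsto[\vec{u}_n,\vec{y}]=\inner{\vec{u}_n,C\vec{y}}$ would vanish on the dense linear span of $\{\vec{u}_m\}$, forcing $\vec{u}_n=\vec{0}$. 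After the rescaling $[\vec{u}_n,\vec{u}_n]=1$ adopted in the statement, the system is $C$-orthonormal, and all hypotheses of Theorem~\ref{TheoremTFAE} are in force.

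Next I would invoke the entrywise estimate already derived just before the statement: because $T$ is a contraction, $D=I-T^{*}T\geq 0$, and Cauchy--Schwarz for the positive sesquilinear form $\inner{D\,\cdot\,,\,\cdot\,}$ evaluated at $\vec{u}_j,\vec{u}_k$ yields $|\inner{\vec{u}_j,\vec{u}_k}|\leq M_{jk}$, where $M_{jk}=\norm{\vec{u}_j}\norm{\vec{u}_k}\,\sqrt{1-|\lambda_j|^2}\,\sqrt{1-|\lambda_k|^2}\,|1-\lambda_j\overline{\lambda_k}|^{-1}$ is the $(j,k)$ entry of the matrix $M$ in the statement. Thus $G$ is dominated entrywise in modulus by $M$, whose entries are nonnegative and which, by hypothesis, is bounded on $\ell^2(\N)$. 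The elementary domination principle then finishes the job: for finitely supported $\vec{x},\vec{y}\in\ell^2(\N)$,
\begin{equation*}
  |\inner{G\vec{x},\vec{y}}|\leq\sum_{j,k}|G_{jk}|\,|x_k|\,|y_j|\leq\sum_{j,k}M_{jk}\,|x_k|\,|y_j|=\inner{M|\vec{x}|,|\vec{y}|}\leq\norm{M}\,\norm{\vec{x}}\,\norm{\vec{y}},
\end{equation*}
where $|\vec{x}|$ denotes the sequence of moduli; hence $G$ extends to a bounded (self-adjoint) operator on $\ell^2(\N)$ with $\norm{G}\leq\norm{M}$. Since $G$, and therefore its transpose, is bounded, $\{\vec{u}_n\}$ is a Bessel sequence: for a finitely supported scalar sequence $(c_n)$,
\begin{equation*}
  \norm{\sum_n c_n\vec{u}_n}^2=\sum_{j,k}\inner{\vec{u}_j,\vec{u}_k}\,c_j\overline{c_k}\leq\norm{G}\sum_n|c_n|^2.
\end{equation*}
By Theorem~\ref{TheoremTFAE}, $(1)\Rightarrow(2)$, this upgrades to the Riesz basis property.

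The one genuinely non-obvious point — and the only place where $C$-symmetry earns its keep — is that we have produced merely an \emph{upper} bound for $G$, whereas a Riesz basis requires a two-sided inequality. The complementary lower bound comes for free from the self-duality of a $C$-orthonormal system: by Lemma~\ref{LemmaOrthogonal}, the positive operator $A$ that extends $\vec{u}_n\mapsto C\vec{u}_n$ is automatically $C$-orthogonal, hence invertible with $A^{-1}=CAC$, so its upper and lower bounds are reciprocal. This self-improvement is exactly what is encoded in the equivalences of Theorem~\ref{TheoremTFAE}, which is why no separate lower estimate is needed. Everything else is either cited (the entrywise bound) or routine (the domination inequality and the bookkeeping with the conjugation conventions), so I do not anticipate any serious obstacle.
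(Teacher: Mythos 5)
Your proposal is correct and follows essentially the same route the paper sketches: the $C$-orthonormality of the eigenvectors via Lemma \ref{LemmaEigenvectors} and completeness, the entrywise Gram-matrix estimate obtained from Cauchy--Schwarz applied to the positive form $\inner{(I-T^*T)\,\cdot\,,\,\cdot\,}$, entrywise (Schur) domination by the hypothesized bounded nonnegative matrix, and then the equivalences of Theorem \ref{TheoremTFAE} to pass from the Bessel bound to the Riesz basis property. Your closing remark correctly identifies why no separate lower bound is needed, namely that Lemma \ref{LemmaOrthogonal} forces the extension $A$ of $\vec{u}_n\mapsto C\vec{u}_n$ to be invertible, which is exactly what Theorem \ref{TheoremTFAE} packages.
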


Glazman's original result \cite{Glazman2}, stated for unit eigenvectors and
without the complex symmetry assumption, invoked the finiteness of
the Hilbert-Schmidt norm of the matrix
$$\frac{ \sqrt{1 - |\lambda_j|^2}\sqrt{1 - |\lambda_k|^2}} {|1 -\lambda_j \overline{\lambda_k}|}.$$ 

A completely analogous result can be stated for an unbounded
$C$-symmetric purely dissipative operator \cite{ICS}.

\begin{theorem}
  Let $T:\mathcal{D}\to \h$ be a $C$-symmetric, pure dissipative operator
  with simple spectrum $\{\lambda_n\}$ and complete
  sequence of corresponding unit eigenvectors $\{\vec{v}_n \}$.
  If the separation condition
  \begin{equation}\label{Separation}
  	\inf_n |[\vec{v}_n ,\vec{v}_n ]|  > 0
  \end{equation}
  holds and if the matrix
  \begin{equation}\label{D-Carleson}
    \left[  \frac{ \sqrt{ (\Im \lambda_j)(\Im \lambda_k)}}{ | \lambda_j - \overline{\lambda_k}|}    \right]_{j,k=1}^{\infty}
  \end{equation}
  defines a bounded linear operator on $\ell^2(\N)$, then
  \begin{enumerate}\addtolength{\itemsep}{0.5\baselineskip}
    \item The sequence $\{\vec{v}_n \}$ forms a Riesz basis for $\h$.

    \item Each $\vec{x}$ in $\h$ can be represented by a norm-convergent
      skew Fourier expansion given by
      $$\vec{x} = \sum_{n=1}^{\infty} \frac{[\vec{x},\vec{v}_n ]}{[\vec{v}_n ,\vec{v}_n ]} \vec{v}_n .$$
  \end{enumerate}
  In particular, if the matrix \eqref{D-Carleson} is bounded above,
  then it is also invertible.
\end{theorem}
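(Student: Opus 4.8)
The strategy is to normalize the eigenvectors to a complete $C$-orthonormal system and then invoke Theorem \ref{TheoremTFAE}, the whole content being the verification that the resulting Gram matrix is bounded on $\ell^2(\N)$. Since the $\lambda_n$ are distinct, Lemma \ref{LemmaEigenvectors} gives $[\vec v_j,\vec v_k]=0$ for $j\neq k$, while completeness of $\{\vec v_n\}$ forces $[\vec v_n,\vec v_n]\neq 0$; the separation hypothesis \eqref{Separation} says these scalars are uniformly bounded below by some $\delta>0$, and Cauchy--Schwarz bounds them above by $\norm{\vec v_n}^2=1$. Choosing square roots $\alpha_n$ of $[\vec v_n,\vec v_n]$ and setting $\vec u_n=\alpha_n^{-1}\vec v_n$ yields a complete $C$-orthonormal system $\{\vec u_n\}$ with $\sqrt\delta\leq|\alpha_n|\leq 1$. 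If the Gram matrix $G=\big(\inner{\vec u_j,\vec u_k}\big)$ is bounded on $\ell^2(\N)$, then so is the synthesis operator $(c_n)\mapsto\sum_n c_n\vec u_n$ (its norm squared being controlled by $\norm{G}$), so $\{\vec u_n\}$ is a Bessel sequence; Theorem \ref{TheoremTFAE} then yields at once that $\{\vec u_n\}$, hence $\{\vec v_n\}$, is a Riesz basis and that the skew Fourier expansion converges in norm, and rewriting $[\vec x,\vec u_n]\vec u_n=\alpha_n^{-2}[\vec x,\vec v_n]\vec v_n=\dfrac{[\vec x,\vec v_n]}{[\vec v_n,\vec v_n]}\vec v_n$ gives the displayed series.

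\textbf{The Gram estimate.} This is where dissipativity enters. The sesquilinear form $q(\vec x,\vec y)=\tfrac{1}{2i}\big(\inner{T\vec x,\vec y}-\inner{\vec x,T\vec y}\big)$ on $\dom(T)\times\dom(T)$ is Hermitian, and $q(\vec x,\vec x)=\Im\inner{T\vec x,\vec x}\geq 0$, so $q$ is a positive form and obeys $|q(\vec x,\vec y)|^2\leq q(\vec x,\vec x)q(\vec y,\vec y)$. Evaluating on eigenvectors, $q(\vec v_j,\vec v_k)=\tfrac{1}{2i}(\lambda_j-\overline{\lambda_k})\inner{\vec v_j,\vec v_k}$ and $q(\vec v_j,\vec v_j)=\Im\lambda_j$, whence
\begin{equation*}
  |\inner{\vec v_j,\vec v_k}|\ \leq\ \frac{2\sqrt{(\Im\lambda_j)(\Im\lambda_k)}}{|\lambda_j-\overline{\lambda_k}|}\ =\ 2\,\Gamma_{jk},
\end{equation*}
where $\Gamma$ denotes the matrix \eqref{D-Carleson}. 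Consequently $|\inner{\vec u_j,\vec u_k}|\leq 2\delta^{-1}\Gamma_{jk}$; since $\Gamma$ has non-negative entries and is bounded on $\ell^2(\N)$ by hypothesis, the elementary comparison $\norm{(a_{jk})}\leq\norm{(|a_{jk}|)}$ together with entrywise domination between matrices with non-negative entries gives $\norm{G}\leq 2\delta^{-1}\norm{\Gamma}<\infty$, which is the required boundedness.

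\textbf{An alternative, and the last assertion.} One could instead pass to the Cayley transform $V=(T-iI)(T+iI)^{-1}$: using that $T$ is $C$-symmetric and that $C$ is conjugate-linear (so $CiC=-i$) one checks $CV^*C=V$, while $V$ is a contraction with the same eigenvectors as $T$, now attached to the eigenvalues $\mu_n=(\lambda_n-i)/(\lambda_n+i)$, which satisfy $|\mu_n|<1$. The identities $1-|\mu_j|^2=4\Im\lambda_j/|\lambda_j+i|^2$ and $|1-\mu_j\overline{\mu_k}|=2|\lambda_j-\overline{\lambda_k}|/(|\lambda_j+i|\,|\lambda_k+i|)$ reduce the matrix appearing in the bounded contractive Riesz-basis criterion above exactly to $2\norm{\vec u_j}\norm{\vec u_k}\Gamma_{jk}$, which is bounded on $\ell^2(\N)$ precisely because $\Gamma$ is, the diagonal factors $\norm{\vec u_j}$ being bounded and bounded away from $0$; that theorem then applies verbatim. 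Finally, the ``in particular'' clause is not formal: under these hypotheses, boundedness of \eqref{D-Carleson} is equivalent to a Carleson-type interpolation condition on the $\lambda_n$ which in turn forces \eqref{D-Carleson} to be bounded below; see \cite{ICS}.

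\textbf{Expected obstacle.} The displayed computations are routine. The points requiring care are the unbounded-operator bookkeeping --- checking that $q$ really is a positive Hermitian form on $\dom(T)$, which is exactly where the ``pure dissipative'' hypothesis is used, and, in the Cayley-transform variant, that $V$ is everywhere defined, contractive, and $C$-symmetric --- together with the final invertibility assertion, which genuinely rests on interpolation theory rather than on the preceding machinery.
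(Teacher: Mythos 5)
Your proposal is correct and is essentially the argument the paper intends: the paper gives no proof of this theorem itself (deferring to \cite{ICS}), but it derives the contractive analogue by bounding the Gram matrix via the positive form $I-T^*T$ and feeding the result into Theorem \ref{TheoremTFAE}, and your main route is precisely the dissipative counterpart of that computation, using the positive form $\frac{1}{2i}\bigl(\inner{T\vec{x},\vec{y}}-\inner{\vec{x},T\vec{y}}\bigr)$ (your Cayley-transform variant just reduces to the paper's contractive theorem directly). The only caveats are minor: the paper's stated sign convention for ``dissipative'' ($\Im\inner{T\vec{x},\vec{x}}\le 0$) must be flipped for $\sqrt{\Im\lambda_j}$ to make sense, and the final invertibility assertion is, as you note, deferred to interpolation theory in both your write-up and the source.
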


We close this section with two instructive examples.

\begin{example} 
Let $\h = L^2[-\pi,\pi]$, endowed with normalized Lebesgue measure $dm= \frac{dt}{2\pi}$,
and let $[Cf] (x) = \overline{f(-x)}$.
Let $h$ be an odd, real-valued measurable function on $[-\pi,\pi]$, such that $e^h$ is unbounded but belongs to $\h$. 
 The vectors
$$ u_n(x) = \exp (h(x) + inx), \ \ n \in \Z,$$
are uniformly bounded in norm since $\| \vec{u}_n\| = \| e^h\|$ and $C$-orthonormal.
Since the operator $A_0$ is simply multiplication by $e^{-2h}$,
it is essentially selfadjoint and unbounded. Thus $\{u_n\}$ is not a Riesz basis, in spite of the fact that
it is a $C$-orthonormal system whose vectors are uniformly bounded in norm.
\end{example}

\begin{example}
  Let $w = \alpha+i\beta$ where $\alpha$ and $\beta$ are real constants and consider
  $L^2[0,1]$, endowed with the conjugation $[Cf](x) = \overline{f(1-x)}$.  A short
  computation shows that if $w$ is not an integer multiple of $2\pi$, then the vectors
  \begin{equation*}
    u_n(x) = \exp[i(w + 2\pi n)(x - \tfrac{1}{2})], \quad n\in\mathbb{Z},
  \end{equation*}
  are eigenfunctions of the $C$-symmetric operator
  \begin{equation*}
    [Tf](x) = e^{iw/2} \int_0^x f(y)\,dy + e^{-iw/2} \int_x^1 f(y)\,dy
  \end{equation*}
  (i.e., $T  = e^{iw/2} V + e^{-iw/2}V^*$ where $V$ denotes the Volterra integration operator; see Example \ref{Example:Volterra})
  and that the system $\{u_n\}$ is complete and $C$-orthonormal.  One the other hand,
  one might also say that the $u_n$ are eigenfunctions of the derivative operator with 
  boundary condition $f(1) = e^{iw} f(0)$.

  We also see that the map $A_0$ given by $u_n \mapsto Cu_n$ extends to a bounded operator on all of $L^2[0,1]$.
  Indeed, this extension is simply the multiplication operator
  $[Af](x) = e^{2\beta(x- 1/2)}f(x)$
  whence $B = \sqrt{A}$ is given by
  \begin{equation*}
    [Bf](x) = e^{\beta(x- 1/2)}f(x).
  \end{equation*}
  The positive operators $A$ and $B$ are both $C$-orthogonal (i.e., $CA^*CA = I$ and $CB^*CB = I$) and the system
  $\{u_n\}$ forms a Riesz basis for $L^2[0,1]$.  In fact, $\{u_n\}$ is the image of the $C$-real 
  orthonormal basis $\{s_n\}$, defined by $s_n = Bu_n$, under the bounded and invertible operator $B^{-1}$.
  The $s_n$ are given by 
  \begin{equation*}
    s_n(x) = \exp[i(\alpha + 2\pi n)(x - \tfrac{1}{2})]
  \end{equation*}
  and they are easily seen to be both orthonormal and $C$-real \cite[Lem.~4.3]{CCO}.
\end{example}

\subsection{Local spectral theory} 
	Among all of the various relaxations of the spectral properties of a normal operator, Foia\c{s}' notion of decomposability
	is one of the most general and versatile. A bounded linear operator $T:\h\to\h$ is called \emph{decomposable} if for every finite 
	open cover of its spectrum
	$$ \sigma(T) \subseteq U_1 \cup U_2 \cup \ldots \cup U_n,$$ there exists 
	closed $T$-invariant subspaces $\h_1,\h_2,\ldots,\h_n$, with the property that
	$$\qquad\qquad\sigma(T|_{\h_i}) \subseteq U_i, \qquad  1 \leq i \leq n$$ 
	and
	$$ \h_1 + \h_2 +\cdots +\h_n = \h.$$
	Checking for decomposability based upon the definition is highly nontrivial.
	In this respect, the early works of Dunford and Bishop are notable for
	providing simple decomposability criteria. We only mention Bishop's property $(\beta)$:  for every open set $U \subseteq \C$, the map
	$$ zI-T: \mathcal{O}(U,\h) \to \mathcal{O}(U,\h),$$
	is injective and has closed range.  Here $\mathcal{O}(U,\h)$ stands for the Fr\'echet space of
	$\h$-valued analytic functions on $U$. 
	A bounded linear operator $T$ is decomposable if and only if both $T$ and $T^*$ possess Bishop's property $(\beta)$.
	We refer to \cite{EschmeierPutinar} for details. 
	
	By combining the results above with the definition of $C$-symmetry, we obtain the following observation.
	
	\begin{proposition} 
		If $T$ is a bounded $C$-symmetric operator, then $T$ is decomposable if and only if $T$ satisfies Bishop's condition $(\beta)$.
	\end{proposition}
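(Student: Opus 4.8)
The plan is to deduce the statement from the criterion recalled just above: a bounded operator $S$ is decomposable if and only if both $S$ and $S^{*}$ possess Bishop's property $(\beta)$. With this in hand the forward implication is immediate and uses nothing about $C$: if $T$ is decomposable, then in particular $T$ has property $(\beta)$. Everything therefore comes down to the converse, where the $C$-symmetry relation $T = CT^{*}C$, equivalently $T^{*} = CTC$, is exactly the ingredient needed to upgrade property $(\beta)$ for $T$ to property $(\beta)$ for $T^{*}$.

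So assume $T$ has property $(\beta)$. For an open set $U \subseteq \C$ put $U^{*} = \{\overline{z} : z \in U\}$ and, for $g \in \mathcal{O}(U,\h)$, define $(\Phi g)(z) = C\bigl(g(\overline{z})\bigr)$ for $z \in U^{*}$. I would first check that $\Phi g \in \mathcal{O}(U^{*},\h)$: expanding $g$ in a local power series $\sum_{k\geq 0}\vec{a}_{k}(w-w_{0})^{k}$ and applying the conjugate-linear, continuous map $C$ termwise, the scalar conjugation coming from $w = \overline{z}$ is cancelled by the conjugation built into $C$, leaving the convergent series $\sum_{k\geq 0}(C\vec{a}_{k})(z-\overline{w_{0}})^{k}$; hence $\Phi g$ is holomorphic. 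Since $C$ is isometric and $z \mapsto \overline{z}$ carries compacta of $U^{*}$ to compacta of $U$, the map $\Phi$ is a topological isomorphism of $\mathcal{O}(U,\h)$ onto $\mathcal{O}(U^{*},\h)$, with $\Phi^{-1}$ of the same form (because $C^{2}=I$). The point of $\Phi$ is the intertwining identity: if $h(w) = (wI - T^{*})g(w)$, then, using $C(\overline{z}\,\vec{v}) = z\,C\vec{v}$ and $CT^{*}C = T$,
\[
  (\Phi h)(z) \;=\; C\bigl(\overline{z}\,g(\overline{z}) - T^{*}g(\overline{z})\bigr) \;=\; z\,(\Phi g)(z) - T\,(\Phi g)(z) \;=\; (zI - T)(\Phi g)(z)
\]
for all $z \in U^{*}$.

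Granting this, suppose $g_{n} \in \mathcal{O}(U,\h)$ with $(zI - T^{*})g_{n} \to 0$ uniformly on compact subsets of $U$. By the intertwining identity and the fact that $\Phi$ preserves uniform convergence on compacta, $(zI - T)(\Phi g_{n}) \to 0$ uniformly on compact subsets of $U^{*}$; property $(\beta)$ for $T$ (applied to the open set $U^{*}$) then forces $\Phi g_{n} \to 0$ on compacta of $U^{*}$, whence $g_{n} = \Phi^{-1}(\Phi g_{n}) \to 0$ on compacta of $U$. This shows $T^{*}$ has property $(\beta)$, so $T$ and $T^{*}$ both do, and the quoted criterion yields that $T$ is decomposable. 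The only genuinely delicate step is the claim that $\Phi$ maps $\mathcal{O}(U,\h)$ into $\mathcal{O}(U^{*},\h)$ bicontinuously --- that post-composing the antiholomorphic map $z \mapsto g(\overline{z})$ with the conjugate-linear $C$ restores holomorphy --- and this is precisely where all three defining properties of a conjugation (conjugate-linearity, isometry, and $C^{2}=I$) are used. The rest is bookkeeping.
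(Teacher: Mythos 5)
Your proof is correct and follows exactly the route the paper intends: the paper simply cites the criterion that decomposability is equivalent to $(\beta)$ for both $T$ and $T^{*}$ and remarks that $C$-symmetry does the rest, and your construction of the intertwining map $\Phi g(z) = C(g(\overline{z}))$, which transports property $(\beta)$ from $T$ to $T^{*}=CTC$, is precisely the omitted detail. Both the holomorphy of $\Phi g$ and the identity $(\Phi h)(z) = (zI-T)(\Phi g)(z)$ check out, so nothing further is needed.
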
 

	The articles \cite{JKL, JKLL,JKLL2} contain a host of related results concerning the local spectral
	theory of complex symmetric operators and we refer the reader there for further details and additional results.

%%%%%%%%%%%%%%%%%%%%%%%%%%%%%%%%%%
\section{Unbounded Complex Symmetric Operators}\label{SectionUnbounded}

\subsection{Basic definitions}

When extending Definition \ref{DefinitionCSO} to encompass unbounded operators, some care must be taken.
This is due to the fact that the term \emph{symmetric} means one thing when dealing with matrices and another when dealing with unbounded operators.

\begin{definition}
	Let $T : \dom(T) \to \h$ be a closed, densely defined linear operator acting on 
	$\h$ and let $C$ be a conjugation on $\h$.  We say that $T$ is \emph{$C$-symmetric}
	if $T \subseteq C T^* C$.
\end{definition}

Equivalently, the operator $T$ is $C$-symmetric if
\begin{equation}\label{EquationCTfg}
  \inner{ CTf,g } = \inner{ CTg,f}
\end{equation}
for all $f,g$ in $\mathcal{D}(T)$.   We say that an operator $T$
is $C$-\emph{selfadjoint} if $T = C T^\ast C$ (in particular, a bounded $C$-symmetric operator is $C$-selfadjoint).  
Unbounded $C$-selfadjoint operators are sometimes called \emph{$J$-selfadjoint},
although this should not be confused with the notion of $J$-selfadjointness in the theory of Kre\u{\i}n spaces
(in which $J$ is a \emph{linear} involution).

In contrast to the classical extension theory of von Neumann, 
it turns out that a $C$-symmetric operator always has a $C$-selfadjoint extension \cite{GlazmanBook,Glazman} (see also \cite{Galindo,Race}).
Indeed, the maximal \emph{conjugate-linear} symmetric operators $S$ (in the sense
that $\inner{Sf,g} = \inner{ Sg,f}$ for all $f,g$ in $\mathcal{D}(S)$) produce $C$-selfadjoint operators $CS$.
Because of this, we use the term \emph{complex symmetric operator} freely in both the bounded and unbounded
situations when we are not explicit about the conjugation $C$.  Much of this theory was developed
by Glazman  \cite{Glazman}.

In concrete applications, $C$ is typically derived from complex conjugation  
on an appropriate $L^2$ and $T$ is a non-selfadjoint differential operator.
For instance, the articles \cite{Knowles, Race} contain a careful analysis and parametrization of 
boundary conditions for Sturm-Liouville type operators with complex potentials which
define $C$-selfadjoint operators.  Such operators also arise in studies related to Dirac-type 
operators \cite{Cascaval}.  The complex scaling technique, a standard tool in the theory 
of Schr\"odinger operators, also leads to the consideration $C$-selfadjoint operators \cite{CSQ}
and the related class of \emph{$C$-unitary} operators \cite{Riss}.  

A useful criterion for $C$-selfadjointness can be deduced from the equality
\begin{equation*}
\mathcal{D}(C T^\ast C) = \mathcal{D}(T) \oplus \{ f \in \mathcal{D}(T^\ast C T^\ast C) : \ T^\ast C T^\ast C f + f = 0 \},
\end{equation*}
which is derived in \cite{Race}. 
A different criterion goes back to \v{Z}ihar$'$
\cite{Zihar}: if the $C$-symmetric
operator $T$ satisfies $\h = (T-zI)\mathcal{D}(T)$ for some complex
number $z$, then $T$ is $C$-selfadjoint. The resolvent set of $T$ consists of exactly the
points $z$ fulfilling the latter condition. We denote the inverse to the
right by $(T-zI)^{-1}$ and note that it is a bounded linear
operator defined on all of $\h$. We will return to these criteria in Subsection \ref{Subsection:RPD} below.
We focus now on the following important result.

\begin{theorem}
	If $T:\mathcal{D}(T)\to\h$ is a densely defined $C$-symmetric operator, then $T$ admits
	a $C$-selfadjoint extension.
\end{theorem}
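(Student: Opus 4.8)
The plan is to follow Glazman and reduce the statement to a conjugate-linear analogue of von Neumann's extension theory. The starting point is the observation that a densely defined operator $T:\dom(T)\to\h$ is $C$-symmetric if and only if the \emph{conjugate-linear} operator $A:=CT$, with $\dom(A)=\dom(T)$, is conjugate-linear symmetric in the sense that
\begin{equation*}
  \inner{A\vec{f},\vec{g}}=\inner{A\vec{g},\vec{f}}\qquad(\vec{f},\vec{g}\in\dom(A)).
\end{equation*}
Indeed, using $\inner{C\vec{x},C\vec{y}}=\inner{\vec{y},\vec{x}}$ one rewrites $\inner{CT\vec{f},\vec{g}}=\inner{C\vec{g},T\vec{f}}=\inner{T^{*}C\vec{g},\vec{f}}$ whenever $C\vec{g}\in\dom(T^{*})$, so $CT\subseteq T^{*}C$ is precisely $T\subseteq CT^{*}C$. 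Since $C^{2}=I$ we also have $T=CA$, so a $C$-selfadjoint extension of $T$ is exactly what one obtains by composing $C$ with a suitable extension of $A$.

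For a densely defined conjugate-linear operator $A$ I would define its adjoint $A^{\star}$ by declaring $\vec{g}\in\dom(A^{\star})$ precisely when there is a (necessarily unique) vector $\vec{h}$ with $\inner{A\vec{f},\vec{g}}=\inner{\vec{h},\vec{f}}$ for all $\vec{f}\in\dom(A)$, and setting $A^{\star}\vec{g}:=\vec{h}$; one checks that $A^{\star}$ is again conjugate-linear and closed. Unwinding the identities above shows that $A$ being conjugate-linear symmetric is the same as $A\subseteq A^{\star}$, and that $T=CA$ is $C$-selfadjoint if and only if $A=A^{\star}$ (call this \emph{conjugate-linear selfadjoint}). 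Thus the theorem is equivalent to the assertion that every densely defined conjugate-linear symmetric operator has a conjugate-linear selfadjoint extension.

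To produce such an extension I would first invoke Zorn's lemma: the conjugate-linear symmetric extensions of $A$, partially ordered by graph inclusion, form a poset in which the union of any chain is again densely defined and conjugate-linear symmetric, hence is an upper bound; so there is a maximal element $\widetilde{A}$. The heart of the argument — and the one step I expect to be the main obstacle, requiring genuine computation — is that a maximal conjugate-linear symmetric operator is automatically conjugate-linear selfadjoint. If it were not, there would be a vector $\vec{g}_{0}\in\dom(\widetilde{A}^{\star})\setminus\dom(\widetilde{A})$; on the algebraic direct sum $\dom(\widetilde{A})+\C\vec{g}_{0}$ one then defines
\begin{equation*}
  A'(\vec{f}+\alpha\vec{g}_{0}):=\widetilde{A}\vec{f}+\overline{\alpha}\,\widetilde{A}^{\star}\vec{g}_{0},
\end{equation*}
a well-defined conjugate-linear operator strictly extending $\widetilde{A}$. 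A direct check that $A'$ is still conjugate-linear symmetric — in which every cross term is reconciled by the defining relation $\inner{\widetilde{A}\vec{f},\vec{g}_{0}}=\inner{\widetilde{A}^{\star}\vec{g}_{0},\vec{f}}$ — contradicts maximality, forcing $\dom(\widetilde{A}^{\star})\subseteq\dom(\widetilde{A})$ and hence $\widetilde{A}=\widetilde{A}^{\star}$.

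Finally, I would transfer back: $T':=C\widetilde{A}$ extends $CA=T$, and since $\widetilde{A}=\widetilde{A}^{\star}$ the equivalence from the second paragraph gives $T'=C(T')^{*}C$, so $T'$ is the desired $C$-selfadjoint extension. The conceptual point to stress is that, in contrast with the classical linear theory, there is no deficiency-index obstruction: conjugate-linearity is exactly what makes the one-dimensional extension in the maximality step always possible, which is why a $C$-symmetric operator \emph{always} has a $C$-selfadjoint extension.
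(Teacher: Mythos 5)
Your proposal is correct and follows exactly the route the paper indicates: it reduces the problem to the conjugate-linear symmetric operator $S=CT$ and uses the fact that maximal conjugate-linear symmetric operators give $C$-selfadjoint operators $CS$, which is precisely the remark the paper makes before the theorem (the paper itself only sketches this and defers to Glazman, Galindo, and Knowles for details). Your computation at the maximality step is right, and it correctly isolates the reason the deficiency-index obstruction of the linear theory disappears, namely that the diagonal term $\inner{\widetilde{A}^{\star}\vec{g}_0,\vec{g}_0}$ appears with the same coefficient $\overline{\alpha\beta}$ on both sides and so needs no reality condition.
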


	The history of this results dates back to von Neumann himself, who
	proved that every densely defined, $C$-symmetric operator $T$ which is also $C$-real,
	in the sense that $CT = TC$, admits a selfadjoint extension \cite{vonNeumann}.  Shortly thereafter,
	Stone demonstrated that an extension can be found that
	is $C$-real and hence $C$-selfadjoint \cite{Stone}.  Several decades passed before
	Glazman established that if $T$ is densely defined and \emph{dissipative} (meaning that
	$\Im \inner{Ax,x} \leq 0$ on $\mathcal{D}(T)$), then a dissipative $C$-selfadjoint 
	extension of $T$ exists \cite{Glazman}.
	
	Motivated by work on the renormalized field operators for the problem of the interaction of a 
	``meson'' field with a nucleon localized at a fixed point \cite{FriedrichsGalindo}, Galindo
	simultaneously generalized the von Neumann-Stone and Glazman results by 
	eliminating both the $C$-real and the dissipative requirements which had been placed upon $T$
	\cite{Galindo}.  Another proof was later discovered by Knowles \cite{Knowles}.

\begin{example}
	Consider an essentially bounded function $q : [-\pi,\pi] \rightarrow \C $ which
	satisfies $\Im q \geq 0$ and $\Re q \geq 1$ almost everywhere.  The operator
	\begin{equation*}
	[Tf] (x) = -f''(x) + q(x) f(x)
	\end{equation*}
	defined on the Sobolev space $W_0^2[-\pi,\pi]$ is dissipative and $C$-selfadjoint with respect to the canonical conjugation
	$Cf = \overline{f}$ \cite{CSO2}.
	By a deep Theorem of Keldysh \cite[Theorem V.10.1]{GK1},
	the eigenfunctions of $T$ are complete in $L^2[-\pi,\pi]$ and hence such operators are a prime candidates
	for analysis using the methods of Section \ref{Section:Spectral}.
\end{example}

\begin{example}
  Let $q(x)$ be a real valued, continuous, even function on $[-1,1]$
  and let $\alpha$ be a nonzero complex number satisfying $|\alpha| < 1$.
  For a small parameter $\epsilon>0$, we define the operator
  \begin{equation}\label{EquationTAE}
    [T_\alpha f](x) = -i f'(x) + \epsilon q(x) f(x),
  \end{equation}
  with domain
  \begin{equation*}
    \mathcal{D}(T_\alpha) = \{\, f \in L^2[-1,1]\,\,:\,\,  f' \in L^2[-1,1],\, \,f(1) = \alpha f(-1)\,\}.
  \end{equation*}
  Clearly $T_{\alpha}$ is a closed operator and $\dom(T_{\alpha})$ is dense in $L^2[-1,1]$.
  If $C$ denotes the conjugation $[Cu](x) = \overline{u(-x)}$
  on $L^2[-1,1]$, then the \emph{nonselfadjoint} operator $T_{\alpha}$ satisfies
  $T_{\alpha} = CT_{1/\overline{\alpha}}C$.  A short computation shows that
  $T_{\alpha}^* = T_{1/\overline{\alpha}}$ and hence $T_{\alpha}$ is $C$-selfadjoint.
\end{example}

\begin{example}
  Consider a Schr\"odinger operator $H:{\mathcal D}(\nabla^2)\rightarrow L^2(\R^d)$
  defined by $H = -\nabla^2+v(\vec{x} )$
  where the potential $v(\vec{x} )$ is dilation analytic in a finite strip $|\Im\theta|<I_0$ and $\nabla^2$-relatively compact. 
  The standard dilation 
  \begin{equation*}
    [U_{\theta}\psi](\vec{x} ) = e^{d \theta/2}\psi(e^\theta\vec{x} )
  \end{equation*}
  allows us to define an analytic (type $A$) family of operators:
  \begin{equation*}
    H_\theta \equiv U_{\theta}HU_{\theta}^{-1}=-e^{-2\theta} \nabla^2 + v(e^\theta \vec{x} ),
  \end{equation*}
  where $\theta$ runs in the finite strip $|\Im\theta|<I_0$ (see \cite{RS4} for definitions).
  It is readily verified that the scaled Hamiltonians $H_\theta$ are $C$-selfadjoint with respect to
  complex conjugation $Cf=\overline{f}$.  
\end{example}

\subsection{Refined polar decomposition}\label{Subsection:RPD}

If an unbounded $C$-selfadjoint
operator has a compact resolvent, then a canonically associated
antilinear eigenvalue problem always has a complete set of mutually
orthogonal eigenfunctions \cite{CSQ, CSO2}:

\begin{theorem}\label{RPDUB}
 If $T: \mathcal{D}(T) \to H$ is an
 unbounded $C$-selfadjoint operator with compact resolvent
 $(T-zI)^{-1}$ for some complex number $z$, then there exists an
 orthonormal basis $\{\vec{u}_n\}_{n=1}^\infty$ of $\h$ consisting of
 solutions of the antilinear eigenvalue problem:
 $$
   (T-zI)\vec{u}_n = \lambda_n C\vec{u}_n
$$
 where $\{\lambda_n\}_{n=1}^{\infty}$ is an increasing sequence
 of positive numbers tending to $\infty$.
\end{theorem}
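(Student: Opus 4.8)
\emph{Proof proposal.} The plan is to transfer the whole problem to the bounded operator $R = (T-zI)^{-1}$, which by hypothesis is a compact operator defined on all of $\h$, and then to invoke the refined polar decomposition of Theorem \ref{RPDB}. The point is that $C$-selfadjointness of $T$ forces $R$ to be a bounded $C$-symmetric operator, so Section~3 applies verbatim, and an antilinear eigenvalue problem for $R$ is easily rewritten as one for $T-zI$.

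First I would check that $R$ is $C$-symmetric and injective. From $T = CT^\ast C$ we get $T^\ast = CTC$, and for the closed densely defined operator $T$ one has $(T-zI)^\ast = T^\ast - \overline{z}I = CTC - \overline{z}I = C(T-zI)C$, where the replacement $z \mapsto \overline{z}$ in the adjoint is exactly undone by the conjugate-linearity of $C$. Taking inverses yields $R^\ast = \big((T-zI)^\ast\big)^{-1} = C(T-zI)^{-1}C = CRC$, i.e.\ $R = CR^\ast C$. Moreover $T-zI$ maps $\mathcal{D}(T)$ bijectively onto $\h$, so $R$ is injective with $\ran R = \mathcal{D}(T)$; hence $|R| = \sqrt{R^\ast R}$ is injective, positive and compact (which in particular makes $\h$ separable, and infinite-dimensional in the situation at hand).

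Next I would apply Theorem \ref{RPDB} to the compact $C$-symmetric operator $R$, writing $R = CJ|R|$ where $J$ is a conjugation that commutes with $|R|$ and all of its spectral projections. By the spectral theorem for compact positive operators, $|R|$ has eigenvalues $\mu_1 \geq \mu_2 \geq \cdots > 0$ with $\mu_n \to 0$, each eigenspace $E_\mu$ being finite-dimensional. Since $J$ commutes with $|R|$ and $\mu$ is real, $J$ maps each $E_\mu$ into itself and restricts there to a conjugation, so by Lemma \ref{LemmaAllSame} each $E_\mu$ has an orthonormal basis of $J$-real vectors. Collecting these over all $\mu$ gives an orthonormal basis $\{\vec{v}_n\}$ of $\h$ with $|R|\vec{v}_n = \mu_n \vec{v}_n$ and $J\vec{v}_n = \vec{v}_n$, whence $R\vec{v}_n = CJ|R|\vec{v}_n = \mu_n\, CJ\vec{v}_n = \mu_n C\vec{v}_n$.

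Finally I would translate back. Since $R\vec{v}_n = \mu_n C\vec{v}_n \in \ran R = \mathcal{D}(T)$ and $\mu_n \neq 0$, we have $C\vec{v}_n \in \mathcal{D}(T)$, and applying $T-zI$ to $R\vec{v}_n$ gives $(T-zI)(\mu_n C\vec{v}_n) = \vec{v}_n$, i.e.\ $(T-zI)(C\vec{v}_n) = \tfrac{1}{\mu_n}\vec{v}_n = \tfrac{1}{\mu_n}\,C(C\vec{v}_n)$. Setting $\vec{u}_n := C\vec{v}_n$ — which is again an orthonormal basis of $\h$ because $C$ is antiunitary — and $\lambda_n := 1/\mu_n$, we obtain $(T-zI)\vec{u}_n = \lambda_n C\vec{u}_n$ with $0 < \lambda_1 \leq \lambda_2 \leq \cdots \to \infty$, as desired. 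There is no deep obstacle here; the steps requiring care are the bookkeeping in Step~2 (keeping track of how conjugation by $C$ acts on the spectral parameter when passing to the adjoint) and the change of variable $\vec{v}_n \mapsto C\vec{v}_n$ in Step~4, where one must verify both that the relevant vectors genuinely lie in $\mathcal{D}(T)$ and that this substitution reorganizes $R\vec{v} = \mu C\vec{v}$ into $(T-zI)\vec{u} = \lambda C\vec{u}$ with the conjugation on the correct side.
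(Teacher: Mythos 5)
Your proposal is correct and follows exactly the route the paper indicates: the paper proves Theorem \ref{RPDUB} precisely by observing that it is a consequence of the refined polar decomposition (Theorem \ref{RPDB}) applied to the compact $C$-symmetric resolvent $R=(T-zI)^{-1}$, which is what you carry out in detail. Your verification that $(T-zI)^*=C(T-zI)C$, the selection of $J$-real eigenvectors of $|R|$ in each finite-dimensional eigenspace, and the substitution $\vec{u}_n=C\vec{v}_n$, $\lambda_n=1/\mu_n$ are all sound and supply the bookkeeping the paper leaves implicit.
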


This result is a consequence of the refined polar
decomposition for \emph{bounded} $C$-symmetric operators
described in Theorem~\ref{RPDB}. The preceding result provides a useful
tool for estimating the norms of resolvents of certain unbounded operators.

\begin{corollary}
  If $T$ is a densely-defined $C$-selfadjoint operator
  with compact resolvent
  $(T-zI)^{-1}$ for some complex number $z$, then
  \begin{equation}\label{ResolventEstimate}
    \norm{(T - zI)^{-1}} = \frac{1}{\inf_n \lambda_n}
  \end{equation}
  where the $\lambda_n$ are the positive solutions to the
  antilinear eigenvalue problem:
 \begin{equation}\label{Anti}
   (T-zI)\vec{u}_n = \lambda_n C\vec{u}_n.
 \end{equation}
\end{corollary}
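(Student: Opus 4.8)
The plan is to read the identity \eqref{ResolventEstimate} directly off the refined polar decomposition for unbounded $C$-selfadjoint operators (Theorem~\ref{RPDUB}). That theorem furnishes an orthonormal basis $\{\vec{u}_n\}_{n=1}^\infty$ of $\h$ together with an increasing sequence $0<\lambda_1\le\lambda_2\le\cdots\to\infty$ of positive numbers satisfying the antilinear eigenvalue relation \eqref{Anti}; in particular $\inf_n\lambda_n=\lambda_1>0$, so the right-hand side of \eqref{ResolventEstimate} is a well-defined positive number.

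First I would rewrite \eqref{Anti} in terms of $R:=(T-zI)^{-1}$. Since each $\vec{u}_n$ lies in $\dom(T)$, applying $R$ to $(T-zI)\vec{u}_n=\lambda_n C\vec{u}_n$ and using $R(T-zI)\vec{u}_n=\vec{u}_n$ together with $\lambda_n\neq 0$ gives $RC\vec{u}_n=\tfrac{1}{\lambda_n}\vec{u}_n$ for all $n$. Because $C$ is a conjugation, hence isometric and surjective, $\{C\vec{u}_n\}_{n=1}^\infty$ is again an orthonormal basis of $\h$, so the displayed relations completely describe $R$.

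Next, for arbitrary $f\in\h$ I would expand $f=\sum_n\inner{f,C\vec{u}_n}\,C\vec{u}_n$ and apply the bounded operator $R$ termwise to obtain $Rf=\sum_n\inner{f,C\vec{u}_n}\tfrac{1}{\lambda_n}\vec{u}_n$. Since $\{\vec{u}_n\}$ is orthonormal, Parseval's identity yields
$$\norm{Rf}^2=\sum_n\frac{1}{\lambda_n^2}\bigl|\inner{f,C\vec{u}_n}\bigr|^2\le\frac{1}{\lambda_1^2}\sum_n\bigl|\inner{f,C\vec{u}_n}\bigr|^2=\frac{1}{\lambda_1^2}\norm{f}^2,$$
whence $\norm{R}\le 1/\lambda_1=1/\inf_n\lambda_n$. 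For the reverse inequality I would simply test on $f=C\vec{u}_1$: then $\norm{Rf}=\tfrac{1}{\lambda_1}\norm{\vec{u}_1}=\tfrac{1}{\lambda_1}\norm{f}$, so the bound is attained and $\norm{(T-zI)^{-1}}=1/\inf_n\lambda_n$.

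There is no real obstacle here: the entire content sits in Theorem~\ref{RPDUB}, and what remains is a routine orthonormal-basis estimate. The only points needing a word of care are that $\{C\vec{u}_n\}$ is genuinely an orthonormal basis, that interchanging $R$ with the infinite sum is legitimate by boundedness of $R$, and that $\inf_n\lambda_n$ is the honestly attained value $\lambda_1>0$, so both sides of \eqref{ResolventEstimate} are finite and positive. (Alternatively, one could combine Theorem~\ref{TheoremSpectrum}(2) --- which identifies the singular values of the bounded $C$-symmetric operator $R$ with the positive solutions $\mu$ of $R\vec{x}=\mu C\vec{x}$ --- with the substitution $\mu=1/\lambda$ and the compactness of $R$, so that $\norm{R}$ is the largest singular value; but the direct computation above is shorter and self-contained.)
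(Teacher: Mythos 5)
Your argument is correct and is exactly the natural elaboration of what the paper leaves implicit: the corollary is stated there without proof as a direct consequence of Theorem~\ref{RPDUB}, and your computation (passing to $R=(T-zI)^{-1}$, noting $RC\vec{u}_n=\lambda_n^{-1}\vec{u}_n$, and running Parseval over the orthonormal bases $\{C\vec{u}_n\}$ and $\{\vec{u}_n\}$) is precisely the intended derivation. The points you flag for care --- that $\{C\vec{u}_n\}$ is again an orthonormal basis and that $\inf_n\lambda_n=\lambda_1>0$ is attained --- are the right ones, and the parenthetical alternative via Theorem~\ref{TheoremSpectrum}(2) is also valid.
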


We also remark that the refined polar decomposition $T= CJ|T|$
applies, under certain circumstances, to unbounded $C$-selfadjoint
operators:

\begin{theorem}
  If $T: \mathcal{D}(T) \to \h$ is a densely
  defined $C$-selfadjoint operator with zero in its resolvent,
  then $T = CJ|T|$ where $|T|$ is a positive selfadjoint operator
  (in the von Neumann sense) satisfying
  $\mathcal{D}(|T|) = \mathcal{D}(T)$ and $J$ is a conjugation
  on $\h$ that commutes with the spectral measure
  of $|T|$. Conversely, any operator of the form described above is
  $C$-selfadjoint.
\end{theorem}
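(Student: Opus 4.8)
The plan is to transport the ordinary polar decomposition of $T$ through the defining identity $T=CT^*C$ and then read off the statement from the uniqueness of that decomposition; the converse is a short direct computation. I would first record two standing facts: $T$ is automatically closed, being equal to $CT^*C$, and $0\in\rho(T)$ forces $0\in\rho(T^*)$ since $(T^{-1})^*=(T^*)^{-1}$ is bounded. Next I would invoke the polar decomposition of the closed, densely defined operator $T$, writing $T=U|T|$ with $|T|=(T^*T)^{1/2}$ positive selfadjoint and $U$ the unique partial isometry satisfying $\ker U=\ker T$. This already gives $\mathcal{D}(|T|)=\mathcal{D}(T)$, one of the asserted properties; and since $\ker T=\{0\}$ and $\ran T=\h$, the partial isometry $U$ is in fact unitary, while $|T|$ is injective with bounded inverse $|T|^{-1}=T^{-1}U$.

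Because $U$ is bounded and everywhere defined, $T^*=(U|T|)^*=|T|U^*$, so
\begin{equation*}
  T=CT^*C=C|T|U^*C=(C|T|C)(CU^*C).
\end{equation*}
Here $C|T|C$ is positive selfadjoint and $CU^*C$ is unitary, so this expresses $T$ as (positive selfadjoint)$\,\times\,$(unitary). Putting it in the usual order,
\begin{equation*}
  T=(CU^*C)\bigl[(CU^*C)^*(C|T|C)(CU^*C)\bigr]=(CU^*C)\,(CU|T|U^*C),
\end{equation*}
which is again a polar decomposition, since $CU^*C$ is unitary, $CU|T|U^*C$ is positive selfadjoint, and both are injective. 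Comparing with $T=U|T|$ and using uniqueness of the polar decomposition of the closed operator $T$, I would conclude $U=CU^*C$ and $|T|=CU|T|U^*C$.

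I would then put $J:=CU$. It is conjugate-linear and isometric, and it is involutive: from $U=CU^*C$ one gets $UC=CU^*$, so $J^2=C(UC)U=C(CU^*)U=U^*U=I$. Hence $J$ is a conjugation with $U=CJ$, so $T=U|T|=CJ|T|$. To see that $J$ commutes with the spectral measure of $|T|$, rewrite $|T|=CU|T|U^*C$ using $U^*=CUC$ (so that $U^*C=CU=J$): it becomes $J|T|J=|T|$, i.e.\ $J$ maps $\mathcal{D}(|T|)$ onto itself and $J|T|=|T|J$ there. For each real $t>0$ this yields $J(|T|+t)=(|T|+t)J$ and hence $J(|T|+t)^{-1}=(|T|+t)^{-1}J$; since these resolvents generate, through the functional calculus and a standard approximation, all bounded Borel functions of $|T|$, and $\chi_\Omega$ is real-valued, it follows that $JE(\Omega)=E(\Omega)J$ for every spectral projection $E(\Omega)$ of $|T|$. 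For the converse, given a positive selfadjoint $|T|$ and a conjugation $J$ commuting with its spectral measure, I would set $T:=CJ|T|$ (so $\mathcal{D}(T)=\mathcal{D}(|T|)$, as $CJ$ is a bounded bijection), compute $T^*=(CJ\cdot|T|)^*=|T|(CJ)^*=|T|JC$ with $\mathcal{D}(T^*)=\{x:Cx\in J\mathcal{D}(|T|)=\mathcal{D}(|T|)\}=C\mathcal{D}(|T|)$, and then check that $\mathcal{D}(CT^*C)=C\mathcal{D}(T^*)=\mathcal{D}(|T|)=\mathcal{D}(T)$ while $CT^*Cx=C|T|Jx=CJ|T|x=Tx$ for $x\in\mathcal{D}(|T|)$, using $J|T|=|T|J$; hence $T=CT^*C$.

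I expect the main obstacle, as with most unbounded-operator arguments of this flavor, to be the domain bookkeeping rather than any conceptual difficulty: one must justify the identities $T^*=|T|U^*$ and $T^*=|T|JC$ via the rule $(BA)^*=A^*B^*$ valid when $B$ is bounded and everywhere defined, verify that the two factorizations of $T$ agree as operators — equal domains, not merely agreement on a common core — so that the uniqueness theorem for the polar decomposition genuinely applies, and finally upgrade ``$J$ commutes with $|T|$'' to ``$J$ commutes with the spectral measure of $|T|$'' for a conjugate-linear $J$. None of these steps is deep; the real content of the theorem is simply the transport of the ordinary polar decomposition through the identity $T=CT^*C$, exactly as in the bounded case (Theorem~\ref{RPDB}).
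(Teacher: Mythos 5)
Your argument is correct and follows essentially the same route the paper takes for the bounded case (Theorem \ref{RPDB}): form the polar decomposition $T=U|T|$, conjugate the identity $T=CT^*C$ into a second polar decomposition, invoke uniqueness to get $U=CU^*C$, and set $J=CU$; the hypothesis $0\in\rho(T)$ makes $U$ unitary, which is exactly what lets you avoid the ``extend the partial conjugation'' step needed in the bounded setting. The domain bookkeeping you flag (validity of $(BA)^*=A^*B^*$ for bounded $B$, equality of domains before applying uniqueness, and passing from $J|T|J=|T|$ to commutation with the spectral measure via resolvents) is handled correctly.
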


\subsection{$C$-selfadjoint extensions of $C$-symmetric operators}

The theory of $C$-selfadjoint extensions of $C$-symmetric operators is parallel to von Neumann's
theory of selfadjoint extensions of a symmetric operator. It was the Soviet school that developed the
former, in complete analogy, but with some unexpected twists, to the later. Two early contributions are
\cite{Visik,Zihar} complemented by Glazman's lucid account \cite{Glazman}.

A convenient $C$-selfadjointness criterion is offered by the following observation of 
\v{Z}ihar$'$ \cite{Zihar}.

\begin{theorem}
If $T$ is a $C$-symmetric operator such that
$\ran (T-\lambda)\dom(T) = \h$ for some complex number $\lambda$, then $T$ is $C$-selfadjoint.
\end{theorem}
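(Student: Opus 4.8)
The plan is to reduce to the case $\lambda=0$ and then identify $T$ with $CT^\ast C$ by comparing the two operators' inverses. Since $\dom(T-\lambda I)=\dom(T)$ and $C(T-\lambda I)^\ast C=CT^\ast C-\lambda I\supseteq T-\lambda I$, the operator $T-\lambda I$ is closed, densely defined, and again $C$-symmetric, and it still maps its domain onto $\h$; moreover proving $T-\lambda I=C(T-\lambda I)^\ast C$ gives back $T=CT^\ast C$. So we may assume $\lambda=0$, and it suffices to show that a $C$-symmetric operator $T$ with $T\,\dom(T)=\h$ satisfies $T=CT^\ast C$.

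First I would show that $T$ is injective. Suppose $Tf=\vec{0}$. Then for every $g\in\dom(T)$ the $C$-symmetry identity \eqref{EquationCTfg} gives $\inner{CTg,f}=\inner{CTf,g}=\inner{C\vec{0},g}=0$. As $g$ ranges over $\dom(T)$ the vectors $Tg$ exhaust $\h$ by hypothesis, and $C$ is a bijection, so $\{CTg:g\in\dom(T)\}=\h$; hence $f\perp\h$ and $f=\vec{0}$. Consequently $T^{-1}$ is a well-defined linear operator whose domain is all of $\h$. (It is even bounded, by the closed graph theorem, but we shall not need this.)

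Now set $S=CT^\ast C$. Because $T\,\dom(T)=\h$ we have $\ker T^\ast=(\ran T)^{\perp}=\{\vec{0}\}$, so $T^\ast$, and hence $S$, is injective. The inclusion $T\subseteq S$ then forces the inclusion $T^{-1}\subseteq S^{-1}$ of the (single-valued) inverse operators: if $y=Tf$ then $Sf=y$ as well, so $S^{-1}y=f=T^{-1}y$. But $\dom(T^{-1})=\h$, so $S^{-1}$, being an operator extending $T^{-1}$, must have domain all of $\h$ and satisfy $S^{-1}=T^{-1}$; in particular $\ran S=\h$. Taking inverses of the identity $S^{-1}=T^{-1}$ of injective operators yields $S=T$, that is, $T=CT^\ast C$, which is precisely $C$-selfadjointness.

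I do not expect a genuine obstacle. The single point demanding a little care is that $C$-symmetry together with the surjectivity hypothesis forces $T$ to be injective; after that everything is a formal manipulation of operator graphs. One could also phrase the argument through the resolvent — the hypothesis says that $\lambda$ lies in the resolvent set of $T$, and at a point of the resolvent set a $C$-symmetric operator is automatically $C$-selfadjoint — but unwinding that statement reproduces exactly the computation above.
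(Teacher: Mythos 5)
Your proof is correct and complete. The paper itself gives no argument for this statement --- it is quoted as a classical observation of \v{Z}ihar$'$ with only a citation --- so there is nothing to compare against; your reduction to $\lambda=0$, the use of \eqref{EquationCTfg} together with surjectivity to get injectivity of $T$, and the graph-inclusion argument $T^{-1}\subseteq (CT^*C)^{-1}$ with $\dom(T^{-1})=\h$ forcing equality, is exactly the standard route (and is the content behind the paper's earlier remark that the resolvent set of a $C$-symmetric $T$ consists precisely of the points $z$ with $(T-zI)\dom(T)=\h$). The only steps worth double-checking are the ones you flagged: that $C(\bar\lambda I)C=\lambda I$ so that $T-\lambda I$ is again $C$-symmetric, and that $\ker T^*=(\ran T)^\perp$ for a densely defined $T$; both are fine.
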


One step further, to have an effective description of all $C$-selfadjoint extensions of an operator $T$
one assumes (after Vi\v{s}ik \cite{Visik}) that there exists a point $\lambda_0 \in \C$ and a positive constant
$\gamma$ with the property
$$\qquad\qquad \| (T-\lambda_0 I)\vec{x} \| \geq \gamma \| \vec{x} \|, \qquad \vec{x} \in \dom(T).$$
Then one knows from \v{Z}ihar$'$ \cite{Zihar} that there are $C$-selfadjoint extensions which are also
bounded from below at $\lambda_0$.  Consequently, the familiar von Neumann parametrization
of all such extensions $\widetilde{T}$ in terms of a direct sum decomposition is available:
$$ \dom(C T^\ast C) = \dom(T) + (\widetilde{T}-\lambda_0 I)^{-1} \ker (T^\ast - \overline{\lambda_0} I)
+ C \ker (T^\ast - \overline{\lambda_0}I).$$ Consequently $\dim \ker (T^\ast - \overline{\lambda_0} I)$
is constant among all points $\lambda$ for which $(T-\lambda  I)$ is bounded from below.

The analysis of $C$-selfadjoint extensions is pushed along the above lines by Knowles \cite{Knowles},
who provided efficient criteria applicable, for instance, to Sturm-Liouville operators of any order. We reproduce below an
illustrative case.

\begin{example}
Let $[a,\infty)$ be a semi-bounded interval of the real line and let $p_0, p_1$ denote Lebesgue
integrable, complex valued functions on $[a,\infty)$ such that $p_0'$ and $1/p_0$ are also integrable. We define the
Sturm-Liouville operator 
$$ \tau(f) = - ( p_0 f')' + p_1 f$$
with maximal domain, in the sense of distributions, $\dom(T_{\max}) \subseteq L^2[a,\infty)$.
By choosing $C$ to be complex conjugation we remark that $\tau$ is formally 
$C$-symmetric.
One can define the minimal closed operator $T_{\min}$ having as graph
the closure of  $(f,\tau(f))$ with $f \in \dom(T_{\max})$ of compact support in $(a,b)$. 
Then
$CT_{\min}^\ast C = T_{\max}$, hence
$T_{\min}$ is $C$-symmetric 
and there are regularity points in the resolvent of $T_{\min}$.
Assume that the deficiency index is equal to one, that is $\dim {\rm Ker}(T_{\max} - \lambda_0) = 1$
for some point $\lambda_0 \in \C$.
Any regular $C$-selfadjoint extension $\widetilde{T}$ of $T_{\min}$ is the restriction of $\tau$
to a domain
$$ \dom(T_{\min}) \subseteq \dom(\widetilde{T}) \subseteq \dom(T_{\max}) $$ 
specifically described by a pair of complex numbers $(\alpha_0, \alpha_1)$:
$$ \dom(\widetilde{T}) = \{ f \in \dom(T_{\max});\ \ \alpha_0 f(a) + \alpha_1 p_0(a) f'(a) = 0 \}.$$
\end{example}

The existence of regular points in the resolvent set of a $C$-symmetric operator is not guaranteed.
However, there are criteria that guarantee this; see \cite{Knowles,Race}.
The anomaly in the following example is resolved in an ingenious way by Race \cite{Race}
by generalizing the notion of resolvent.

\begin{example} We reproduce from 
\cite{McLeod}
an example of simple Sturm-Liouville operator without regular points in the resolvent.
Consider on $[0,\infty)$ the operator
$$ \tau(f)(x) = - f''(x) -2i e^{2(1+i)x} f(x).$$
Then for every $\lambda \in \C$ there are no solutions $f$ of $\tau f = \lambda f$
belonging to $L^2[0,\infty)$.
\end{example}

Finally, we reproduce a simple but illustrative example considered by Krej\v{c}i\v{r}\'ic, Bila and Znojil \cite{KBZ}.

\begin{example} Fix a positive real number $d$. Let $H_\alpha f = - f''$ defined on the Sobolev space $W^{2,2}([0,d])$ with boundary conditions
$$ f'(0) +i\alpha f(0) = 0, \ \ f'(d)+i\alpha f(d) = 0,$$
where $\alpha$ is a real parameter.
Then the operator $H_\alpha$ is $C$-symmetric, with respect to the standard $\mathcal{PT}$-symmetry
$[Cf] (x) = \overline{f(d-x)}$, that is $H^\ast_\alpha = H_{-\alpha}$.

It turns out by simple computations that the spectrum of $H_\alpha$ is discrete, with only
simple eigenvalues if $\alpha$ is not an integer multiple of $\pi/d$:
$$ \sigma(H_\alpha) = \Big\{ \alpha^2, \frac{\pi^2}{d^2}, \frac{2^2 \pi^2}{d^2}, \frac{3^2 \pi^2}{d^2}, \ldots \Big\}.$$
The eigenfunctions of $H^\ast_\alpha$ are computable in closed form:
$$ h_0(x) = \sqrt{1/d} + \frac{e^{i\alpha x}-1}{\sqrt{d}}$$
corresponding to the eigenvalue $\alpha^2$, and respectively
$$ h_j(x) = \sqrt{2/d}\Big[ \cos \Big(\frac{j \pi x}{d}\Big) + i \frac{d \alpha}{j \pi} \sin \Big(\frac{j \pi x}{d}\Big) \Big]$$
corresponding to the eigenvalues $\frac{j^2 \pi^2}{d^2}, \ \ j \geq 1.$

Remarkably, these eigenfunctions form a Riesz basis in $L^2([0,d])$, whence the operator $H_\alpha$
can be "symmetrized" and put in diagonal form in a different Hilbert space metric which turns
the functions $h_k, \ k \geq 0,$ into an orthonormal base. See also \cite{KrejcirikJPA2008vu,KSZ}. { One should be aware that this is not a general rule, as there are known examples, such as the even non-selfadjoint anharmonic oscillators, where the eigenfunctions form a complete set but they do not form a basis in the Hilbert, Riesz or Schauder sense \cite{HenryJST2013cu}. The cubic harmonic oscillator, to be discussed below, is also an example displaying same phenomenon.}

\end{example}

\section{$\mathcal{P T}$-symmetric Hamiltonians}

The question of what is the correct way to represent an observable in quantum mechanics has been brought up more often lately. Among its axioms, the traditional quantum theory says that the classical observables are represented by selfadjoint operators whose spectrum of eigenvalues represents the set of values one can observe during a physical measurement of this observable. It has been noted, however, that the selfadjointness of an operator, which can be seen as a symmetry property relative to complex conjugation and transposition, can be replaced with other types of symmetries and the operator will still posses a set of real eigenvalues. 

A good introduction to the subject is the paper by Bender \cite{BenderRPP2007vg} where the reader can also find a valuable list of references.  A personal view of the role of non-hermitian operators in quantum mechanics is contained in Znojil's article \cite{Znojil2}. The aficionados of $\mathcal{PT}$-symmetry
in quantum physics maintain an entertaining and highly informative blog 
\url{http://ptsymmetry.net/}, while a serious criticism was voiced by Streater \url{http://www.mth.kcl.ac.uk/~streater/lostcauses.html\#XIII}.
We seek here only to comment on the connection between $\mathcal{PT}$-symmetric and complex symmetric operators.

\subsection{{ Selected Results} }

The work by Bender and Mannheim \cite{BenderPLA2010hg} resulted in a set of necessary and sufficient conditions for the reality of energy eigenvalues of finite dimensional Hamiltonians. The first interesting conclusion of this work is the fact that for the 
secular equation
$$\det(H-\lambda I)=0$$
to contain only real coefficients,
the Hamiltonian must necessarily obey
$$(\mathcal{P}\mathcal{T}) H (\mathcal{P}\mathcal{T})^{-1} = H,$$
where $\mathcal{P}$ is a unitary matrix with $\mathcal{P}^2=1$ and $\mathcal{T}$ is a conjugation. In many examples of interest, one can identify 
$\mathcal{P}$ with the parity operator and $\mathcal{T}$ with the time-reversal operator (this excludes fermionic systems for which $\mathcal{T}^2=-1$). Hence, the reality of the energy eigenvalues always requires some type of $\mathcal{P}\mathcal{T}$ symmetry, but this condition alone is generally not sufficient.

For diagonalizable finite dimensional $\mathcal P \mathcal T$-symmetric Hamiltonians, the following criterion gives a sufficient condition. Consider the set $\mathfrak C$ of operators $\mathcal C$ that commute with $H$ and satisfy $\mathcal C^2 = 1$. Note that if $P$ is the spectral projection for an eigenvalue, then 
$\mathcal{C} = P - P^{\perp}$ satisfies these conditions. 
The criterion for the reality of the spectrum says that if every $\mathcal C$ from $\mathfrak C$ commutes with $\mathcal P \mathcal T$, then all 
of the eigenvalues of $H$ are real. If at least one such $\mathcal C$ does not commute with $\mathcal P \mathcal T$, then the spectrum of $H$ 
contains at least one conjugate pair of complex eigenvalues.

For non-diagonalizable Hamiltonians, Bender and Mannheim  derived the following criterion: the eigenvalues of any nondiagonalizable Jordan 
block matrix that possesses just one eigenvector will all be real if the block is $\mathcal{PT}$-symmetric, and will all be complex if the block is not $\mathcal{PT}$-symmetric.

The reality of the energy spectrum of a $\mathcal P \mathcal T$-symmetric Hamiltonian is only part of the story because to build a quantum theory with a probabilistic interpretation one needs a unitary dynamics. One useful observation in this direction is that a non-Hermitian $\mathcal P \mathcal T$-symmetric Hamiltonian becomes Hermitian with respect to the inner product
\begin{equation*}
(f,g)_{\mathcal P \mathcal T} =  \inner{ \mathcal P \mathcal T \mathcal K f, g},
\end{equation*}
where $\mathcal{K}$ denotes ordinary complex conjugation.
The shortcoming of the construction is that ${(\,\cdot\,,\,\cdot\,)}_{\mathcal P \mathcal T}$ is indefinite. 
The hope is then in finding an additional complex linear symmetry
$\mathcal{C}$ which commutes with the hamiltonian, so that inner product
$$(f,g)_{\mathcal{CPT}} = \inner{\mathcal{PTCK}f,g}$$
is positive definite.
The work \cite{BenderJPA2012fr} highlighted some interesting possibilities in this respect. Specifically, it was shown that if the symmetry transformation $\mathcal{C}$ is bounded, then indeed the $\mathcal P \mathcal T$-symmetric Hamiltonian can be realized as Hermitian operator on the same functional-space but endowed with a new scalar product. In contradistinction, if the symmetry transformation $\mathcal C$ is unbounded, then the original  $\mathcal P \mathcal T$-symmetric operator has selfadjoint extensions but in general is not essentially selfadjoint. That means, it accepts more than one selfadjoint extension, and the possible extensions describe distinct physical realities. The extensions are defined in a functional-space that is strictly larger than the original Hilbert space.

In the same direction, a cluster of recent discoveries \cite{AK1, AK2, AK3, AGK} provided rigorous constructions of the symmetries $\mathfrak C$ above from additional hidden symmetries of the original operator. In particular,
motivated by carefully chosen examples, Albeverio and Kuzhel combine in a novel and ingenious manner von Neumann's classical theory
of extensions of symmetric operators, spectral analysis in a space with an indefinite metric, and elements of Clifford algebra. Notable is their adaptation of scattering theory to the study of $\mathcal P \mathcal T$-selfadjoint extensions
of $\mathcal P \mathcal T$-symmetric operators. We refer to \cite{AK3} for details, as the rather complex framework necessary to state the main results contained in that paper cannot be reproduced in our survey.

\begin{example} The perturbed {\it cubic oscillator operator} 
$$ T_\alpha y = -y'' + i x^3 y + i\alpha x y, \ \  \alpha \geq 0,$$
defined with maximal domain on $L^2(\R, dx)$ served as a paradigm during the evolution period of $\mathcal{PT}$ quantum mechanics.
It is a complex symmetric operator, $T_\alpha^\ast  = CT_\alpha C$, with respect to the $\mathcal{PT}$-conjugation
$$ Cf (x) = \overline{f(-x)}.$$ The reality of its spectrum was conjectured in 1992 by Bessis and Zinn-Justin.
The conjecture was numerically supported by the work of Bender and Boettcher \cite{MR1627442} and settled into the affirmative
by Shin \cite{Shin2002} and Dorey-Dunning-Tateo \cite{DDT2007}. { The rigorous analysis of the last two references rely on classical PDE techniques such as  asymptotic analysis in the complex domain, WKB expansions, Stokes lines, etc.} The survey by Giordanelli and Graf \cite{GiordanelliARXIV2013hg} offers a sharp, lucid account of these asymptotic expansions. The next section will be devoted to a totally different method of proving the reality of the spectrum of the operator $T_\alpha$, derived this time from perturbation theory in Krein space.

In a recent preprint Henry \cite{HenryARXIV2013re} concludes that the operator $T_\alpha$ is not similar to a selfadjoint operator 
by estimating the norm of the spectral projection on the $n$th eigenvalue, and deriving in particular that the eigenfunctions of $T_\alpha$ do not form a Riesz basis, a result already proved by Krej\v{c}i\v{r}\'ik and Siegl \cite{KS1}.

\end{example}

We select in the subsequent sections a couple of relevant and mathematically complete results pertaining to the flourishing topics of non-Hermitian quantum physics.

\subsection{Perturbation theory in Kre\u{\i}n space}

Among the rigorous explanations of the reality of the spectrum of a non-selfadjoint 
operator, perturbation arguments play a leading role. In particular, perturbation theory in Kre\u{\i}n space was succesfully used by Langer and Tretter
\cite{LangerTretter,LangerTretterCorrigendum}. The thesis of Nesemann \cite{NesemannPT} contains 

A \emph{Kre\u{\i}n space} is a vector space $\K$ endowed with an inner product $\{\cdot, \cdot\}$, such that
there exists a direct sum orthogonal decomposition
$$ \K = \h_{+} + \h_{-},   \ \ \ \{ \h_+, \h_-\} = 0,$$
in which $(\h_{+}, \{\cdot, \cdot\}), (\h_{-}, -\{\cdot, \cdot\})$ are Hilbert spaces. 
Note that such a decomposition is not unique, as a two dimensional indefinite example immediately shows. The underlying positive definite form
$$\langle \cdot, \cdot \rangle = \{\cdot, \cdot\}|_{\h_+} - \{\cdot, \cdot\}|_{\h_-}$$
defines a Hilbert space structure on $\K$. In short, a Kre\u{\i}n space corresponds to a linear, unitary involution
$J$, acting on a Hilbert space $\K$, with the associated product
$$ \{ \vec{x}, \vec{y} \} = \langle J\vec{x}, \vec{y} \rangle.$$

For a closed, densely defined operator $T$ on $\K$, the Kre\u{\i}n space adjoint $T^{[\ast]}$ satisfies
$$\qquad\qquad \{T \vec{x}, \vec{y}\} = \{\vec{x}, T^{[\ast]} \vec{y}\}, \qquad \vec{x} \in \dom(T),  \vec{y} \in \dom(T^{[\ast]}).$$ 
The operator $T$ is selfadjoint (sometimes called $J$-selfadjoint)
if $T = T^{[\ast]}$, that is $T^\ast J = J T$.  
A subspace $\E \subseteq \K$ is called positive if $\{ \vec{x}, \vec{x} \} \geq 0$ for all $\vec{x} \in \E$ and {\it uniformly positive}
if there exists a constant $\gamma>0$ such that
$$\qquad\qquad \{ \vec{x},\vec{x}\} \geq \gamma \| \vec{x} \|^2, \qquad \vec{x} \in \E.$$

In the most important examples that arise in practice, a second hidden symmetry is present in the structure of a
$J$-selfadjoint operator (in the sense of Kre\u{\i}n spaces), bringing into focus the main theme of our survey. Specifically, assume that
there exists a conjugation $C$, acting on the same Hilbert space as the linear operators $T$ and $J$, satisfying
the commutation relations:
$$ CT = TC, \ \ \ \ CJ = JC.$$ Then  the $J$-selfadjoint operator $T$ is also $CJ$-symmetric:
$$ T^\ast CJ = T^\ast JC = J T C = JC T = CJ T.$$

Operator theory in Kre\u{\i}n spaces is well-developed, with important applications to continuum mechanics and function theory;
see the monograph \cite{Azizov}.
An important result of Langer and Tretter states that a continuous family of selfadjoint (unbounded)
operators in a Kre\u{\i}n space preserves the uniform positivity of spectral subspaces obtained by Riesz projection
along a fixed closed Jordan curve. The details in the statement and the proof are contained in the two
notes \cite{LangerTretter,LangerTretterCorrigendum}. We confine ourselves to reproduce a relevant example for our survey.

\begin{example} Let $\K = L^2([-1,1], dx)$ be the Kre\u{\i}n space endowed with the inner product
$$ \{f,g\} = \int_{-1}^1 f(x) \overline{g(-x)} dx.$$ The positive space $\h_+$ can be chosen to consist of
all even functions in $\K$, while the negative space to be formed by all odd functions.

Let $V \in L^\infty[-1,1]$ be a $\mathcal{PT}$-symmetric function,
that is
$$ V(-x) = \overline{V(x)}.$$ Then the Sturm-Liouville operator
$$ Tf (x) = - f ''(x) + V(x) f(x),$$
with domain $\dom(T) = \{ f \in \K; \ f(-1) = f(1) = 0\}$ is symmetric in Kre\u{\i}n space sense.
More precisely, let 
$$ (Jf)(x) = f(-x), \ \ \ f \in L^2[-1,1]$$
be the unitary involution (parity) that defines the Kre\u{\i}n space structure and let
$C$ denote complex conjugation: $(Cf) = \overline{f}$. Note that $CJ=JC$. The $J$-symmetry of the 
operator $T$ amounts to the obvious identity (of unbounded operators):
$$ T^\ast J = JT.$$ On the other hand
$$ T^\ast C = C T,$$
and
$$ T CJ = CJ T.$$ Therefore we are dealing with a $C$-symmetric operator $T$ commuting with the conjugation
$$(CJf) (x) = \mathcal{PT} f (x) = \overline{f(-x)}.$$

By means of the linear deformation $T_\epsilon f (x) = - f ''(x) + \epsilon V(x) f(x),
\ 0 \leq \epsilon \leq 1$, the conclusion of \cite{LangerTretter} is that, assuming
$$ \| V \|_{\infty} < \frac{3 \pi^2}{8},$$
one finds that the spectrum of $T$ consists of simple eigenvalues $\lambda_j$, all real, alternating
between positive and negative type, and satisfying
$$ \left|\lambda_j - \frac{j^2 \pi^2}{4}\right| \leq \| V \|_{\infty}.$$
In particular one can choose $V(x) = i x^{2n+1}$ with an integer $n \geq 0$.
\end{example}

\subsection{Similarity of differential $C$-symmetric operators}

The intriguing question why certain $\mathcal{PT}$-symmetric hamiltonians with complex potential have real
spectrum is still open, in spite of an array of partial answers and a rich pool of examples, see
\cite{MR1627442,MR1686605,Most1, Most2, Most3}.

 The recent works
\cite{CGS, CGHS}
offer a rigorous mathematical explanation for the reality of the spectrum for a natural class of Hamiltonians.
We reproduce below a few notations from this article and the main result.

The authors are studying an algebraic, very weak form of similarity between two closed, densely defined
linear operators $A_j : \dom(A_j) \to \h, \ j=1,2$.  Start with the assumption that
both spectra $\sigma(A_1), \sigma(A_2) \subseteq \C$ are discrete and consist of eigenvalues of
finite algebraic multiplicity. That is, for a point $\lambda \in \sigma(A_j)$ there exists  a finite dimensional
space (of generalized eigenvectors) $E^{(j)}(\lambda) \subseteq \dom(A_j)$ satisfying
$$ E^{(j)}(\lambda)  = \ker (A_j-\lambda I)^N,$$
for $N$ large enough. Assume also that there are linear subspaces $V_j \subseteq \dom(A_j), \ j =1,2,$
such that
$$ \bigcup_{\lambda \in \sigma(A_j)} E^{(j)}(\lambda) \subseteq V_j$$ and
$$ A_j V_j \subseteq V_j, \ \ j=1,2.$$

The operators $A_j$ are called {\it similar} if there exists an invertible linear transformation
$X: V_1 \to V_2$ with the property $X A_1 = A_2 X.$ Then it is easy to prove that
$\sigma(A_1) = \sigma(A_2)$.
If, under the above similarity condition, the operator $A_1$ is selfadjoint, then the spectrum of
$A_2$ is real. This general scheme is applied in \cite{CGS} to a class of differential operators as follows.

Let $q(x,\xi)$ be a complex valued quadratic form on $\R^d \times \R^d$ so that $\Re q$ is positive definite.
The $\mathcal{PT}$-symmetry of the operator with symbol $q$ is derived from an abstract $\R$-linear involution
$\kappa : \R^d \to \R^d$, so that
$$\qquad\qquad q(x,\xi) = \overline{ q(\kappa(x), - \kappa^t (\xi))}, \qquad (x,\xi) \in \R^d \times \R^d.$$ Let $Q$
denote Weyl's quantization of the symbol $q$, that is the differential operator
$$ Q = \sum_{|\alpha + \beta| = 2} q_{\alpha, \beta} \frac{x^\alpha D^\beta + x^\beta D^\alpha}{2},$$
where $D$ stands as usual for the tuple of normalized first order derivatives $D_k = -i \frac{\partial}{\partial \vec{x}_k}.$ It is known that the maximal closed realization of $Q$ on the domain
$$ \dom(Q) = \{ u \in L^2(\R^d); \ Qu \in L^2(\R^d) \}    $$
coincides with the graph closure of the restriction of $Q$ to the Schwarz space ${\mathcal S}(\R^d).$
The operator $Q$ is elliptic, with discrete spectrum and $\mathcal{PT}$-symmetric, that is
$ [ Q, \mathcal{PT}] =0$, where $\mathcal{PT}(\phi)(x) = \overline{\phi(\kappa(x))}.$ Attached to the symbol $q$
there is the fundamental matrix $F : \C^{2d} \to \C^{2d}$, defined by
$$\qquad\qquad q(X,Y) = \sigma(X,FY), \qquad  X,Y \in \C^{2d},$$
where $q(X,Y)$ denotes the polarization of $q$, viewed as a symmetric bilinear form
on $\C^{2d}$ and $\sigma$ is the canonical complex symplectic form on $\C^{2d}$.

Under the above conditions, a major result of Caliceti, Graffi, Hitrik, Sj\"ostrand \cite{CGHS} is the following.

\begin{theorem}Assume that $\sigma(Q) \subseteq \R$. Then
the operator $Q$ is similar, in the above algebraic sense, to a selfadjoint operator if and only
if the matrix $F$ has no Jordan blocks.
\end{theorem}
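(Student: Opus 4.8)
The plan is to route both implications through a single structural fact about elliptic quadratic operators and then to finish each direction with soft linear algebra. The fact I would invoke is the normal-form description of $Q$ underlying \cite{CGS, CGHS}, which goes back to the symbolic calculus of H\"ormander and Sj\"ostrand: conjugating $Q$ by the metaplectic transformation attached to a suitable complex linear symplectic change of variables puts $Q$ into a direct sum of elementary model operators (scaled creation and annihilation operators), and the Jordan structure of this normal form is governed entirely by the Jordan decomposition of the fundamental matrix $F$. Two consequences are what is needed. First, each generalized eigenspace $E(\lambda) = \ker(Q-\lambda I)^{N}$ (for $N$ large) is finite dimensional, so their algebraic span $V_Q \subseteq \dom(Q)$ is a well-defined $Q$-invariant subspace containing every $E(\lambda)$; when $Q$ plays the role of $A_1$ in the similarity setup, $V_Q$ may be taken as $V_1$. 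Second, and decisively, $Q$ is \emph{diagonalizable} --- meaning $E(\lambda) = \ker(Q-\lambda I)$ for every $\lambda$ --- if and only if $F$ has no Jordan blocks: a semisimple $F$ yields a normal form that is a sum of pairwise commuting diagonalizable model operators, hence diagonalizable (even when ``resonances'' among the eigenvalue combinations $\sum_j(2k_j+1)\lambda_j$ inflate multiplicities, no Jordan chains are created), while a nontrivial Jordan block of $F$ propagates to a nontrivial Jordan block of $Q$. I would import this equivalence rather than reprove it.

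Granting it, the forward implication is immediate and does not even use $\sigma(Q)\subseteq\R$. Suppose $Q$ is similar, in the weak algebraic sense of the statement, to a self-adjoint operator $A$ with discrete spectrum, via an invertible $X$ with $XQ = AX$ on the relevant subspaces. A self-adjoint operator admits no generalized eigenvector of rank $\geq 2$: if $(A-\lambda I)^2 w = 0$ then $\lambda \in \R$ and $\norm{(A-\lambda I)w}^2 = \inner{(A-\lambda I)^2 w, w} = 0$, so $(A-\lambda I)w = 0$. Since $X$ is an intertwining bijection, any rank-$\geq 2$ generalized eigenvector of $Q$ would be carried to one of $A$; hence $Q$ has none, i.e.\ $Q$ is diagonalizable, and by the structural equivalence $F$ has no Jordan blocks.

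For the converse, assume $F$ has no Jordan blocks. By the structural equivalence $Q$ is diagonalizable, so $V_Q$ is the algebraic direct sum of the finite-dimensional eigenspaces $\ker(Q-\lambda I)$, is $Q$-invariant, and contains every $E(\lambda)$. Because $\sigma(Q)\subseteq\R$ by hypothesis, I can manufacture a genuinely self-adjoint operator $A$ with discrete spectrum whose eigenvalue list, with multiplicities, is exactly that of $Q$ --- for instance a multiplication operator on a weighted $\ell^2$ space, or a direct sum of one-dimensional harmonic oscillators rescaled to match --- and take $V_2$ to be the algebraic span of an orthonormal eigenbasis of $A$. Fixing a basis of $V_Q$ consisting of eigenvectors of $Q$ and letting $X : V_Q \to V_2$ send it eigenvalue by eigenvalue to the eigenbasis of $A$ gives an invertible linear map with $XQ = AX$ on $V_Q$. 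Hence $Q$ is similar, in the prescribed sense, to a self-adjoint operator.

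The step I expect to be the genuine obstacle is the structural equivalence invoked in the first paragraph --- the faithful transfer of Jordan data between $F$ and $Q$. Establishing it requires the full metaplectic, Bargmann-side reduction of $Q$ to its normal form, control of the conjugating operator (in general unbounded and non-self-adjoint) on the appropriate weighted spaces, and a verification that this conjugation maps generalized eigenfunctions faithfully onto those of the model, together with a careful accounting of how resonances inflate multiplicities without producing Jordan chains when $F$ is semisimple. Once that input is in hand, the theorem reduces --- as above --- to the elementary observation that self-adjoint operators have no nontrivial Jordan chains.
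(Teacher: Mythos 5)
The paper itself offers no proof of this statement: it is quoted as ``a major result of Caliceti, Graffi, Hitrik, Sj\"ostrand \cite{CGHS}'' and left entirely to that reference, so there is no in-paper argument to measure your proposal against. Judged on its own terms, your two soft reductions are correct. In the forward direction, a selfadjoint operator admits no Jordan chain on the prescribed invariant subspace, and an intertwining bijection $X$ with $XQ=AX$ transports Jordan chains faithfully, so similarity to a selfadjoint operator forces $Q$ to be diagonalizable. In the converse direction, since the eigenvalues of $Q$ are real, of finite multiplicity, and non-accumulating, one can indeed build a diagonal selfadjoint model with the same eigenvalue list and send eigenbasis to eigenbasis; for the weak algebraic notion of similarity used here this suffices. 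Together these show that, under $\sigma(Q)\subseteq\R$, similarity to a selfadjoint operator is equivalent to diagonalizability of $Q$.

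The genuine gap is exactly where you locate it: the asserted equivalence ``$Q$ is diagonalizable if and only if $F$ is semisimple.'' Once your reductions are in place, this equivalence \emph{is} the theorem, so importing it as a black box leaves nothing proved beyond a repackaging of the statement. And it is not a routine import: passing from the Jordan structure of the $2d\times 2d$ matrix $F$ to the presence or absence of Jordan chains for the unbounded operator $Q$ requires the complex canonical (metaplectic, Bargmann-side) reduction to the normal form $\sum_j \mu_j x_j D_{x_j}$, control of an intertwining operator that is in general unbounded and non-selfadjoint, the faithful transport of generalized eigenfunctions under that conjugation, and the verification that resonances among the combinations $\sum_j(2k_j+1)\mu_j$ inflate multiplicities without creating Jordan blocks when $F$ is semisimple. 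That analysis is the entire content of \cite{CGHS}. Your proposal should therefore be read as a correct and clean reduction of the theorem to its analytic core, not as a proof of it.
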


The reader can easily construct examples based on the above criterion. The same article \cite{CGHS} contains an analysis of the following example.

\begin{example}
Let $$Q = -\Delta + \omega_1 \vec{x}_1^2 + \omega_2^2 \vec{x}_2^2 + 2i g \vec{x}_1 \vec{x}_2,$$
where $\omega_j >0, \ j=1,2, \ \omega_1 \neq \omega_2$ and $g \in \R$. The operator $Q$ is
globally elliptic and $\mathcal{PT}$-symmetric, with respect to the involution $\kappa(\vec{x}_1,\vec{x}_2) = (-\vec{x}_1,\vec{x}_2)$.
This operator appears also in a physical context \cite{Cannata}.

The above theorem shows that the spectrum of $Q$ is real precisely when
$$ -|\omega_1^2 - \omega_2^2| \leq 2g \leq |\omega_1^2 - \omega_2^2| $$
while $Q$ is similar to a selfadjoint operator if and only if
$$ -|\omega_1^2 - \omega_2^2| < 2g < |\omega_1^2 - \omega_2^2|. $$
\end{example}

\subsection{Pauli equation with complex boundary conditions}

An interesting example of a $\mathcal P \mathcal T$-symmetric spin-$\frac{1}{2}$ system is the Pauli Hamiltonian \cite{KochanJPA2012re}:
$$
H= - {\bm \nabla}^2 + {\bm B}\cdot {\bm L} + ({\bm B} \times \vec{x} )^2 + {\bm B}\cdot{\bm \sigma}
$$
defined on the Hilbert space $L^2(\Omega \in \R^2)\otimes \C ^2$. The domain of $H$ is defined by boundary condition:
$$
\frac{\partial \psi}{\partial {\bm n}} +A \psi =0, \ \mbox{on} \ \partial \Omega,
$$
where ${\bm n}$ is the outward pointing normal to the boundary and $A$ is a $2 \times 2$ complex-valued matrix. Above, ${\bm B}$ represents a magnetic field and all the physical constants were set to one.

The selfadjoint property of the Hamiltonian can be broken to a $\mathcal P \mathcal T$-symmetry by such boundary conditions. Interestingly, the same type of boundary condition, when numerically tuned, can lead to situations where the eigenvalue spectrum is entirely real or entirely complex. This example is also interesting because the time reversal
transformation is given by:
\begin{equation}
\mathcal T \left(
\begin{array}{c}
\psi_+(\vec{x} ) \\
\psi_-(\vec{x} )
\end{array}
\right )
=i \left(
\begin{array}{c}
\overline{\psi_-(\vec{x} )} \\
-\overline{\psi_+(\vec{x} )}
\end{array}
\right ), \ \mathcal T^2 = -1,
\end{equation}
as appropriate for spin-$\frac{1}{2}$ systems. The parity operation acts as usual $\mathcal P \psi(\vec{x} ) = \psi(-\vec{x} )$. 
  
Reference~\cite{KochanJPA2012re} analyzed the model in some simplifying circumstances, namely, for ${\bm B}=(0,0,B)$ in which case ${\bm B}\cdot {\bm L}$ and $({\bm B} \times \vec{x} )^2$ act only on the first two coordinates and ${\bm B}\cdot {\bm \sigma}$ reduces to $B\sigma_3$. The domain was taken to be $\Omega = \R^2 \times (-a,a)$ and the matrix $A$ entering the boundary condition was taken independent of the first two space-coordinates. 
Under these conditions, the model separates into a direct sum of two terms, out of which the term acting on the third space-coordinate $x$ 
is of interest to us
$$
H_b = 
\begin{bmatrix}
-\frac{d^2}{dx^2}+b & 0 \\
0 & - \frac{d^2}{dx^2} - b 
\end{bmatrix},
$$
which is defined on the Hilbert space $\mathcal H = L^2((-a,a),\C ^2)$ and subjected to the boundary conditions:
\begin{equation}
\frac{d\psi}{dx}(\pm a) + A^\pm \psi(\pm a)=0.
\end{equation}
The boundary conditions preserving the $\mathcal P \mathcal T$-symmetry of the system are those with:
$$A^- = \mathcal T A^+ \mathcal T.
$$

The analysis of the spectrum led to the following conclusions. 
\begin{enumerate}\addtolength{\itemsep}{0.5\baselineskip}
\item The residual spectrum is absent.
\item $H_b$ has only discrete spectrum.
\item In the particular $\mathcal P \mathcal T$-symmetric case:
$$
A^{\pm} = 
\begin{bmatrix}
i \alpha \pm \beta & 0 \\
0 & i \alpha \pm \beta
\end{bmatrix},
$$
with $\alpha$, $\beta$ real parameters, and $\beta \geq 0$, the spectrum of $H_b$ is always entirely real. If $\beta <0$, then complex eigenvalues may show up in the spectrum.
\end{enumerate}

\section{Miscellaneous applications}

We collect below a series of recent applications of complex symmetric operators to a variety of mathematical and
physical problems.

\subsection{Exponential decay of the resolvent for gapped systems}
This is an application taken from \cite{CSQ}.  Let $-{\bm \nabla}_D^2$ denote the Laplace operator with Dirichlet
boundary conditions over a finite domain $\Omega \subseteq \R^d$ with smooth boundary. Let $v(\vec{x} )$ be a scalar potential, which is ${\bm \nabla}_D^2$-relatively bounded with
relative bound less than one, and let $\vec{A}(\vec{x})$ be a smooth magnetic vector potential. The following Hamiltonian:
$$H_\vec{A} :{\mathcal D}({\bm \nabla}_D^2) \to L^2(\Omega),\ \
H_\vec{A} =-({\bm \nabla}+i\vec{A})^2+v(\vec{x} ),$$ 
generates the quantum dynamics of electrons in a material subjected to a magnetic field ${\bm B}=\nabla\times\vec{A} $. We will assume that this material is an insulator and that the magnetic field is weak. In this regime, even with the boundary, the energy spectrum of $H_\vec{A} $ will generically display a spectral gap $[E_-,E_+] \subseteq \rho(H_\vec{A} )$. This will be one of our assumptions. There is a great interest in sharp exponential decay estimates on the resolvent $(H_\vec{A} -E)^{-1}$ with $E$ in the spectral gap \cite{Prodan:2005qi}.

In the theory of Schr\"odinger operators, non-selfadjoint operators are often generated by conjugation with non-unitary transformations, such as: 

\begin{definition} 
Given an arbitrary $\vec{q} \in \R^d$ ($q\equiv|\vec{q} |$), let $U_\vec{q} $ denote the following
bounded and invertible map
$$U_\vec{q} :L^2(\Omega)\rightarrow L^2(\Omega), \ \ [U_{\vec{q}}f](\vec{x} )=e^{\vec{q} \vec{x} }f(\vec{x} ),$$ which leaves the
domain of $H_\vec{A} $ unchanged. 
\end{definition}

The conjugation of $H_\vec{A} $ with the transformation $U_\vec{q} $ defines a family of (non-selfadjoint) scaled Hamiltonians:
 $$H_{\vec{q} ,\vec{A} }\equiv U_\vec{q} H_\vec{A}  U_{\bm
q}^{-1}, \ \ \vec{q} \in \R^d.$$ 
The scaled Hamiltonians are explicitly given by
\begin{equation}
    H_{\vec{q} ,\vec{A} }:{\mathcal D}({\bm \nabla}_D^2)\rightarrow L^2(\Omega),
    \ \ H_{\vec{q} ,\vec{A} }=H_\vec{A} +2\vec{q} ({\bm \nabla}+i\vec{A} )-q^2.
\end{equation}
Note that $H_{\vec{q} ,\vec{A} }$ are not  $C$-symmetric operators, with respect to any natural
conjugation. The following construction fixes this shortcoming. 

\begin{lemma} Consider the following block-matrix operator
$\mathbf{H}$ and the conjugation $C$ on $L^2(\Omega)\oplus L^2(\Omega)$:
$$
    \mathbf{H}=
    \begin{bmatrix}
      H_{\vec{q} ,\vec{A} } & 0 \\
      0 & H_{-\vec{q} ,-\vec{A} } \\
    \end{bmatrix},\qquad
    C   = 
    \begin{bmatrix}
      0 & {\mathcal C} \\
      {\mathcal C} & 0 \\
    \end{bmatrix},
$$
where $\mathcal{C}$ is the ordinary complex conjugation. Then $\mathbf{H}$ is $C$-selfadjoint:
$\mathbf{H}^\ast=C\mathbf{H}C$. Moreover,
\begin{equation}\label{NormEq}
    \|(\mathbf{H}-E)^{-1}\|=\|(H_{\vec{q} ,\vec{A} }-E)^{-1}\|=\|(H_{-{\bm    q},-\vec{A} }-E)^{-1}\|.
\end{equation}
\end{lemma}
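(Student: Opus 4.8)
The plan is to deduce both assertions from two elementary transformation rules for the scaled Hamiltonian, after which everything reduces to a $2\times 2$ block computation. As a preliminary I would record that $C$ really is a conjugation on $\h := L^2(\Omega)\oplus L^2(\Omega)$ --- conjugate-linearity and isometry are inherited from $\mathcal{C}$, while $C^2 = \diag(\mathcal{C}^2,\mathcal{C}^2) = I$ --- and that $\mathcal{C}$ maps the Dirichlet domain $\dom({\bm \nabla}_D^2)$ bijectively onto itself, so that $C$ preserves $\dom(\mathbf{H}) = \dom({\bm \nabla}_D^2)\oplus\dom({\bm \nabla}_D^2)$. Since the extra first-order term $2\vec{q}\cdot({\bm \nabla}+i\vec{A})$ in $H_{\vec{q},\vec{A}} = H_{\vec{A}} + 2\vec{q}\cdot({\bm \nabla}+i\vec{A}) - q^2$ is ${\bm \nabla}_D^2$-bounded with relative bound zero, all four of $H_{\vec{q},\vec{A}}, H_{-\vec{q},-\vec{A}}, H_{-\vec{q},\vec{A}}, H_{\vec{q},-\vec{A}}$ carry this same domain, so no domain pathologies will intrude below.

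The substantive step is to establish the two identities
\begin{equation*}
	H_{\vec{q},\vec{A}}^\ast = H_{-\vec{q},\vec{A}}, \qquad \mathcal{C}\, H_{\vec{q},\vec{A}}\, \mathcal{C} = H_{\vec{q},-\vec{A}},
\end{equation*}
using that $H_{\vec{A}}$ is selfadjoint (the potential $v$ being real) and that $\vec{q}$ and $\vec{A}$ are real-valued. The first follows from integration by parts on the Dirichlet domain, which gives ${\bm \nabla}^\ast = -{\bm \nabla}$ and hence $\left(2\vec{q}\cdot({\bm \nabla}+i\vec{A})\right)^\ast = -2\vec{q}\cdot({\bm \nabla}+i\vec{A})$: the real vector $\vec{q}$ is unchanged, while the multiplier $i\vec{A}$ flips sign. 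The second follows from the fact that $\mathcal{C}$ commutes with the real-coefficient operator ${\bm \nabla}$ and with multiplication by the real functions $v$ and $\vec{A}$, whereas $\mathcal{C} i\mathcal{C} = -i$, so that $\mathcal{C}({\bm \nabla}+i\vec{A})\mathcal{C} = {\bm \nabla}-i\vec{A}$. Composing these rules gives the master identity $\mathcal{C}\, H_{\vec{q},\vec{A}}^\ast\, \mathcal{C} = H_{-\vec{q},-\vec{A}}$; conjugating it by $\mathcal{C}$ and then taking adjoints also yields $\mathcal{C}\, H_{-\vec{q},-\vec{A}}^\ast\, \mathcal{C} = H_{\vec{q},\vec{A}}$.

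Granting these, the first claim is a block computation: as $\mathbf{H}$ is block-diagonal, $\mathbf{H}^\ast = \diag(H_{\vec{q},\vec{A}}^\ast, H_{-\vec{q},-\vec{A}}^\ast)$, while
\begin{align*}
	C\mathbf{H} C &= \minimatrix{0}{\mathcal{C}}{\mathcal{C}}{0}\minimatrix{H_{\vec{q},\vec{A}}}{0}{0}{H_{-\vec{q},-\vec{A}}}\minimatrix{0}{\mathcal{C}}{\mathcal{C}}{0} \\
	&= \minimatrix{\mathcal{C} H_{-\vec{q},-\vec{A}}\mathcal{C}}{0}{0}{\mathcal{C} H_{\vec{q},\vec{A}}\mathcal{C}},
\end{align*}
and the master identity --- read in both directions --- identifies the two diagonal entries as $H_{\vec{q},\vec{A}}^\ast$ and $H_{-\vec{q},-\vec{A}}^\ast$, so $\mathbf{H}^\ast = C\mathbf{H} C$, that is $\mathbf{H} = C\mathbf{H}^\ast C$. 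For the resolvent norms I would note that $\mathbf{H} - E$ is block-diagonal, so for $E$ in its resolvent set $(\mathbf{H} - E)^{-1} = \diag\left((H_{\vec{q},\vec{A}}-E)^{-1}, (H_{-\vec{q},-\vec{A}}-E)^{-1}\right)$, whose norm is the larger of the two block norms. Finally, $\mathcal{C}\, H_{\vec{q},\vec{A}}^\ast\, \mathcal{C} = H_{-\vec{q},-\vec{A}}$ together with $\mathcal{C}\,\overline{E}\,\mathcal{C} = E$ gives $H_{-\vec{q},-\vec{A}} - E = \mathcal{C}\,(H_{\vec{q},\vec{A}}-E)^\ast\,\mathcal{C}$, so that $(H_{-\vec{q},-\vec{A}}-E)^{-1} = \mathcal{C}\,\left[(H_{\vec{q},\vec{A}}-E)^{-1}\right]^\ast\mathcal{C}$; since $\mathcal{C}$ is isometric (hence conjugation by it preserves the operator norm) and $\norm{T^\ast} = \norm{T}$, the two block norms are equal, which is \eqref{NormEq}. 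The same relation shows $\sigma(H_{-\vec{q},-\vec{A}}) = \sigma(H_{\vec{q},\vec{A}})$, so the three resolvents exist for precisely the same $E$; indeed $U_{\vec{q}}$ is bounded and invertible, so this common spectrum is just $\sigma(H_{\vec{A}})$.

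I do not anticipate a genuine obstruction here. The only point that needs care is the bookkeeping of which sign change --- $\vec{q}\mapsto-\vec{q}$, $\vec{A}\mapsto-\vec{A}$, or $i\mapsto-i$ on scalar multipliers --- originates from the operator adjoint and which from conjugation by $\mathcal{C}$, together with a one-time check that the Dirichlet conditions contribute no boundary terms in the integration by parts and that the relatively bounded first-order perturbation enlarges no domain. One could also obtain $C$-selfadjointness from the \v{Z}ihar$'$ criterion (the resolvent set of $\mathbf{H}$ being nonempty, as just noted), but the direct computation above is shorter and simultaneously delivers the resolvent-norm identity.
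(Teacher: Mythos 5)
Your proof is correct and takes essentially the same route as the paper, whose entire proof consists of citing the two identities $H^\ast_{\vec{q},\vec{A}}=H_{-\vec{q},\vec{A}}$ and $\mathcal{C}H_{\vec{q},\vec{A}}=H_{\vec{q},-\vec{A}}\mathcal{C}$ that you establish; you merely make explicit the block computation, the domain bookkeeping, and the antiunitary-conjugation argument for the equality of the resolvent norms.
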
 

\begin{proof}
The statement follows from $H^\ast_{\vec{q} ,\vec{A} }=H_{-\vec{q} ,\vec{A} }$ and ${\mathcal C}H_{\vec{q} ,\vec{A} }=H_{\vec{q} ,-\vec{A} }{\mathcal C}$.
\end{proof}

The refined polar decomposition for $C$-selfadjoint operators and its consequences permit
sharp estimates on the resolvent of the scaled Hamiltonians. Indeed, according to Theorem 
\ref{RPDUB}, the antilinear eigenvalue problem (with $\lambda_n\geq0$)
\begin{equation}\label{EigP}
    (\mathbf{H}-E)\phi_n=\lambda_n C\phi_n
\end{equation}
generates an orthonormal basis $\phi_n$ in $L^2(\Omega)\oplus L^2(\Omega)$
and
\begin{equation}\label{bound}
    \|(\mathbf{H}-E)^{-1}\|=\frac{1}{\min_n \lambda_n}.
\end{equation}
The task is then to generate a lower bound on the sequence $\{\lambda_n\}$. 
The advantage of using the antilinear eigenvalue equations is that one can find explicit (but somewhat formal) expressions for the $\lambda$'s. Indeed, if one writes $\phi_n=f_n\oplus \vec{g}_n$, then:
\begin{equation}\label{LambdaEq}
    \lambda_n=\frac{|\langle f_n,|H_\vec{A} -E-q^2|f_n\rangle
    +4 \Re\langle f_n,P_+[\vec{q} ({\bm \nabla}+i\vec{A} )]P_-f_n\rangle | }{|\Re\langle
    Sf_n,\overline{g}_n \rangle|},
\end{equation}
where $S=P_+-P_-$ and $P_\pm$ are the spectral projections of $H_\vec{A} $ for the upper/lower (relative to the gap) part of the spectrum. These formal expressions have already separated a large term (the first term in the 
denominator), which can be controlled via the spectral theorem for the selfadjoint operator $H_\vec{A} $, and a small term (the second term in the nominator), which can be estimated approximately. The following lower bound emerges.  

\begin{proposition} \label{Proposition:EWTT}
$$ \lambda_n\geq \min\{\, |E_\pm-E-q^2|\, \}\left(1-2q\sqrt{\frac{E_-}{(E_+-E-q^2)(E-E_-+q^2)}}\right).$$ 
\end{proposition}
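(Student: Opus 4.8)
The plan is to argue directly from the closed formula \eqref{LambdaEq} for $\lambda_n$, estimating its numerator from below and its denominator from above, and then carrying out a constrained optimization. Write $\phi_n = f_n \oplus g_n$ as in \eqref{LambdaEq} and put $B = H_{\vec{q},\vec{A}} - E$. Since the preceding lemma gives $H_{-\vec{q},-\vec{A}} = {\mathcal C}H_{\vec{q},\vec{A}}^\ast{\mathcal C}$ and $E$ is real (it lies in a gap of the selfadjoint operator $H_{\vec{A}}$), we have $\mathbf{H}-E = B \oplus {\mathcal C}B^\ast{\mathcal C}$; feeding this into \eqref{EigP} yields $Bf_n = \lambda_n {\mathcal C}g_n$ and $B^\ast B f_n = \lambda_n^2 f_n$, whence $\norm{Bf_n} = \lambda_n\norm{f_n} = \lambda_n\norm{g_n}$ and therefore $\norm{f_n}^2 = \norm{g_n}^2 = \tfrac12$, because $\phi_n$ is a unit vector and $\lambda_n > 0$. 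Throughout I abbreviate $\mu_+ = E_+ - E - q^2$ and $\mu_- = E - E_- + q^2$, both positive in the weak-field regime under consideration, so that $\min\{|E_\pm - E - q^2|\} = \min\{\mu_+,\mu_-\}$ and $(E_+-E-q^2)(E-E_-+q^2) = \mu_+\mu_-$.

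Next I split $f_n = f_n^+ + f_n^-$ with $f_n^\pm = P_\pm f_n$ and set $s = \norm{f_n^+}^2$, $t = \norm{f_n^-}^2$, so $s+t = \tfrac12$. Since $|H_{\vec{A}} - E - q^2|$ is a function of $H_{\vec{A}}$ it commutes with $P_\pm$, and the spectral theorem (with $H_{\vec{A}} \geq E_+$ on $\ran P_+$ and $H_{\vec{A}} \leq E_-$ on $\ran P_-$) gives
\begin{equation*}
  \inner{f_n, |H_{\vec{A}}-E-q^2| f_n} = \inner{f_n^+,(H_{\vec{A}}-E-q^2)f_n^+} + \inner{f_n^-,(E+q^2-H_{\vec{A}})f_n^-} \;\geq\; \mu_+ s + \mu_- t .
\end{equation*}
For the cross term, $\inner{f_n, P_+[\vec{q}({\bm \nabla}+i\vec{A})]P_-f_n} = \inner{f_n^+, \vec{q}({\bm \nabla}+i\vec{A})f_n^-}$, and combining Cauchy--Schwarz with the estimate $\norm{({\bm \nabla}+i\vec{A})f_n^-}^2 = \inner{f_n^-,-({\bm \nabla}+i\vec{A})^2 f_n^-} \leq \inner{f_n^-,H_{\vec{A}}f_n^-} \leq E_-\norm{f_n^-}^2$ (the sign normalization of $v$ in \cite{CSQ} being what makes $-({\bm \nabla}+i\vec{A})^2 \leq H_{\vec{A}}$) produces $|4\Re\inner{f_n,P_+[\vec{q}({\bm \nabla}+i\vec{A})]P_-f_n}| \leq 4q\sqrt{E_-}\sqrt{st}$. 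By the reverse triangle inequality the numerator of \eqref{LambdaEq} is therefore at least $\mu_+ s + \mu_- t - 4q\sqrt{E_-}\sqrt{st}$, while its denominator is at most $\tfrac12$, since $S = P_+-P_-$ is a unitary involution and $|\Re\inner{Sf_n,\overline{g}_n}| \leq \norm{f_n}\,\norm{g_n} = \tfrac12$.

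Putting these together, $\lambda_n \geq 2(\mu_+ s + \mu_- t - 4q\sqrt{E_-}\sqrt{st})$ in all cases (if the right side is negative this is trivial since $\lambda_n \geq 0$), and it remains to minimize over $s+t = \tfrac12$, $s,t \geq 0$. The substitution $s = \tfrac12\cos^2\theta$, $t = \tfrac12\sin^2\theta$ turns the right side into $\tfrac{\mu_++\mu_-}{2} + \tfrac{\mu_+-\mu_-}{2}\cos 2\theta - 2q\sqrt{E_-}\sin 2\theta$, whose minimum over $\theta$ equals $\tfrac{\mu_++\mu_-}{2} - \big(\big(\tfrac{\mu_+-\mu_-}{2}\big)^2 + 4q^2 E_-\big)^{1/2}$. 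The proof then concludes by checking that this quantity dominates $\min\{\mu_+,\mu_-\}\big(1 - 2q\sqrt{E_-/(\mu_+\mu_-)}\big)$: when the latter is negative there is nothing to prove; when it is nonnegative, moving the square root to one side and squaring (both sides then being nonnegative) shows that the inequality is equivalent to $\mu_+\mu_- \geq 4q^2 E_-$, which is precisely the same nonnegativity condition, so it automatically holds.

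I expect the main obstacle to be this final optimization-and-reduction step: one must choose the parametrization that yields a genuine closed form for the constrained minimum and then recognize that the clean factored bound in the statement is exactly tight under $\mu_+\mu_- \geq 4q^2 E_-$. A secondary delicate point is the spectral estimate $\norm{({\bm \nabla}+i\vec{A})P_- f_n} \leq \sqrt{E_-}\,\norm{P_- f_n}$, which relies on the precise hypotheses on the potential $v$ inherited from \cite{CSQ}; everything else is bookkeeping with the block structure of $\mathbf{H}$ and two applications of the spectral theorem for $H_{\vec{A}}$.
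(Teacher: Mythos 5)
Your argument is correct and is precisely the one the paper sketches (the survey itself gives no proof, only the formula \eqref{LambdaEq}, the remark about splitting a ``large'' spectral term from a ``small'' cross term, and a citation to [CSQ]): lower-bound the numerator of \eqref{LambdaEq} by $\mu_+ s+\mu_- t-4q\sqrt{E_-}\sqrt{st}$, upper-bound the denominator by $\tfrac12$ via $\|f_n\|^2=\|g_n\|^2=\tfrac12$, minimize over $s+t=\tfrac12$, and check that the resulting closed form dominates the stated factored bound exactly when that bound is nonnegative. The only point worth flagging is the one you already flag yourself: the estimate $\|({\bm\nabla}+i\vec{A})P_-f_n\|\le\sqrt{E_-}\,\|P_-f_n\|$ needs $v\ge 0$ (so that $-({\bm\nabla}+i\vec{A})^2\le H_{\vec{A}}$), a hypothesis the survey does not state explicitly but which is inherited from [CSQ] and is forced anyway by the appearance of $\sqrt{E_-}$ in the conclusion.
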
 

Note that this lower bound is based on information contained entirely in the eigenspectrum of original Hamiltonian (no information about the eigenvectors is needed). 
Let
$$  \overline{G}_E(\vec{x}_1,\vec{x}_2)\equiv\frac{1}{\omega_\epsilon^2}
    \int\limits_{|\vec{x} -\vec{x}_1|\leq \epsilon}d\vec{x} 
    \int\limits_{|{\bf y}-\vec{x}_2|\leq \epsilon}d{\bf y} \
    \vec{g}_E(\vec{x} ,{\bf y}),$$
where $\omega_\epsilon$ is the volume of a sphere of radius
$\epsilon$ in $\R^d$. 
We can now assemble the main result. 

\begin{theorem}
For $q$ smaller than a critical value $q_c(E)$, there exists
a constant $C_{q,E}$, independent of $\Omega$, such that:
\begin{equation}\label{UpperBound}
    |\overline{G}_E(\vec{x}_1,\vec{x}_2)|\leq C_{q,E}e^{-q|\vec{x}_1- \vec{x}_2|}.
\end{equation}
$C_{q,E}$ is given by:
\begin{equation}
    C_{q,E}=\frac{\omega_\epsilon^{-1} e^{2q\epsilon}}{\min|E_\pm-E-q^2|}\cdot  \frac{1}{1-q/F(q,E)}
\end{equation}
with
\begin{equation}
    F(q,E)=\sqrt{\frac{(E_+-E-q^2)(E-E_-+q^2)}{4E_-}}.
\end{equation}
The critical value $q_c(E)$ is the positive solution of the equation
$q=F(q,E)$.
\end{theorem}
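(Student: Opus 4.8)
The plan is to run a Combes--Thomas exponential estimate driven by the non-unitary dilation $U_\vec{q}$ introduced above, feeding the decay rate through the antilinear eigenvalue problem \eqref{EigP} rather than through a direct numerical-range computation. Write $\vec{g}_E(\vec{x},\vec{y})$ for the integral kernel of $(H_\vec{A}-E)^{-1}$; since $H_\vec{A}-E$ is second-order elliptic this kernel exists and is locally integrable, so $\overline{G}_E(\vec{x}_1,\vec{x}_2)$ is well defined, and Fubini's theorem identifies it with a resolvent matrix element,
\begin{equation*}
  \omega_\epsilon^{2}\,\overline{G}_E(\vec{x}_1,\vec{x}_2)=\inner{(H_\vec{A}-E)^{-1}\chi_2,\chi_1},
\end{equation*}
where $\chi_j$ denotes the indicator function of the $\epsilon$-ball about $\vec{x}_j$ (intersected with $\Omega$).

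The first real step would be to insert the dilation. Fix $\vec{x}_1\neq\vec{x}_2$ and set $\vec{q}=q(\vec{x}_1-\vec{x}_2)/|\vec{x}_1-\vec{x}_2|$, so that $|\vec{q}|=q$. Using $(H_\vec{A}-E)^{-1}=U_\vec{q}^{-1}(H_{\vec{q},\vec{A}}-E)^{-1}U_\vec{q}$ together with the fact that $U_\vec{q}^{\pm1}$ is multiplication by the real function $e^{\pm\vec{q}\cdot\vec{x}}$, one rewrites the matrix element as $\inner{(H_{\vec{q},\vec{A}}-E)^{-1}(e^{\vec{q}\cdot\vec{x}}\chi_2),\,e^{-\vec{q}\cdot\vec{x}}\chi_1}$. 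Since $\chi_j$ is supported within distance $\epsilon$ of $\vec{x}_j$, one has $\norm{e^{-\vec{q}\cdot\vec{x}}\chi_1}\le\omega_\epsilon^{1/2}e^{q\epsilon}e^{-\vec{q}\cdot\vec{x}_1}$ and $\norm{e^{\vec{q}\cdot\vec{x}}\chi_2}\le\omega_\epsilon^{1/2}e^{q\epsilon}e^{\vec{q}\cdot\vec{x}_2}$. Cauchy--Schwarz, together with the choice of $\vec{q}$ (which makes $\vec{q}\cdot(\vec{x}_1-\vec{x}_2)=q|\vec{x}_1-\vec{x}_2|$), then gives
\begin{equation*}
  |\overline{G}_E(\vec{x}_1,\vec{x}_2)|\le\omega_\epsilon^{-1}e^{2q\epsilon}\,e^{-q|\vec{x}_1-\vec{x}_2|}\,\norm{(H_{\vec{q},\vec{A}}-E)^{-1}}.
\end{equation*}

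The remaining task is to bound $\norm{(H_{\vec{q},\vec{A}}-E)^{-1}}$ uniformly in the direction of $\vec{q}$ and in $\Omega$, which is exactly where the $C$-selfadjoint embedding pays off: by \eqref{NormEq} this norm equals $\norm{(\mathbf{H}-E)^{-1}}$, which by Theorem \ref{RPDUB} and \eqref{bound} equals $1/\min_n\lambda_n$, with the $\lambda_n\ge0$ solving \eqref{EigP}. Proposition \ref{Proposition:EWTT} then bounds $\min_n\lambda_n$ below by $\min_{\pm}|E_\pm-E-q^2|\,(1-q/F(q,E))$, once one notes that $2q\sqrt{E_-/((E_+-E-q^2)(E-E_-+q^2))}=q/F(q,E)$. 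This bound is positive precisely when $q<F(q,E)$; since $F(\,\cdot\,,E)$ is continuous with $F(0,E)>0$ and $F(q,E)\to0$ as $q^2\uparrow E_+-E$ while $q\mapsto q$ increases, the inequality $q<F(q,E)$ holds on an interval $[0,q_c(E))$, where $q_c(E)$ is the least positive solution of $q=F(q,E)$. Substituting $\norm{(H_{\vec{q},\vec{A}}-E)^{-1}}\le\bigl(\min_{\pm}|E_\pm-E-q^2|\,(1-q/F(q,E))\bigr)^{-1}$ into the previous display yields \eqref{UpperBound} with the asserted constant $C_{q,E}$; the independence from $\Omega$ follows from the remark after Proposition \ref{Proposition:EWTT}, since that lower bound uses only the (standing, $\Omega$-independent) gap data $E_\pm$ and $E_-$.

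The step I expect to be the main obstacle is the reduction in the first paragraph: one must justify that $(H_\vec{A}-E)^{-1}$ is an integral operator whose kernel can legitimately be paired against the indicators $\chi_j$ — so that averaging over $\epsilon$-balls is precisely the device converting the operator-norm estimate into a pointwise one — and that the exponential weights $e^{\pm\vec{q}\cdot\vec{x}}$ leave $\mathcal{D}(H_\vec{A})$ invariant, so that the dilation identity holds as stated. Once those two points are secured, everything downstream is bookkeeping built on Proposition \ref{Proposition:EWTT}.
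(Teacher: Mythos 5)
Your proposal is correct and follows essentially the same route as the paper: identify $\overline{G}_E$ with a resolvent matrix element against $\epsilon$-ball indicators, conjugate by $U_{\vec{q}}$ with $\vec{q}$ parallel to $\vec{x}_1-\vec{x}_2$, apply Cauchy--Schwarz to the exponentially weighted indicators to extract $\omega_\epsilon^{-1}e^{2q\epsilon}e^{-q|\vec{x}_1-\vec{x}_2|}$, and then control $\norm{(H_{\vec{q},\vec{A}}-E)^{-1}}$ via the $C$-selfadjoint doubling, the antilinear eigenvalue problem, and Proposition \ref{Proposition:EWTT}. The only difference is presentational: you spell out the chain through \eqref{NormEq} and \eqref{bound} and the identification $2q\sqrt{E_-/((E_+-E-q^2)(E-E_-+q^2))}=q/F(q,E)$, which the paper leaves implicit.
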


\begin{proof}
If $\chi_{\vec{x} }$ denotes the characteristic
function of the $\epsilon$ ball centered at $\vec{x} $ (i.e.,
$\chi_{\vec{x} }(\vec{x} ^\prime)=1$ for $|\vec{x} ^\prime-\vec{x} |\leq
\epsilon$ and $0$ otherwise), then one can equivalently write
$$\overline{G}_E(\vec{x}_1,\vec{x}_2)=\omega_\epsilon^{-2}\langle
\chi_{\vec{x}_1},(H_\vec{A} -E)^{-1}\chi_{\vec{x}_2}\rangle.$$ If $\varphi_1(\vec{x} ) \equiv
e^{-\vec{q} (\vec{x} -\vec{x}_1)}\chi_{\vec{x}_1}(\vec{x} )$ and
$\varphi_2(\vec{x} ) \equiv e^{\vec{q} (\vec{x} -\vec{x}_2)}\chi_{{\bm
x}_2}(\vec{x} )$, then
\begin{eqnarray*}
    |\overline{G}_E(\vec{x}_1,\vec{x}_2)| &=&\omega_\epsilon^{-2}|\langle \varphi_1,(H_{\vec{q} ,\vec{A} }-E)^{-1}\varphi_2\rangle| e^{-\vec{q} (\vec{x}_1-\vec{x}_2)},
\end{eqnarray*}
where we used the identity: $U_\vec{q} (H_\vec{A} -E)^{-1}U_\vec{q} =(H_{\vec{q} ,\vec{A} }-E)^{-1}$. 
Choosing $\vec{q} $ parallel to $\vec{x}_1-\vec{x}_2$, we see that
$$|\overline{G}_E(\vec{x}_1,\vec{x}_2)|\leq \omega_\epsilon^{-1} e^{2q\epsilon}
e^{-q|\vec{x}_1-\vec{x}_2|}\sup\limits_{|{\bf q}|=q}\|(H_{\vec{q} ,\vec{A} }-E)^{-1}\|.$$ The statement then follows from the estimates
of Proposition \ref{Proposition:EWTT}.
\end{proof}

\subsection{Conjugate-linear symmetric operators}

Let $T:\h\to\h$ be a bounded $C$-symmetric operator and let
$A = CT$. Then $A$ is a conjugate-linear operator satisfying the symmetry condition
\begin{equation}\label{real-symmetry}
\qquad\qquad \langle A\vec{x}, \vec{y} \rangle = \langle A\vec{y}, \vec{x} \rangle, \qquad  \vec{x},\vec{y} \in \h.
\end{equation}
Indeed
\begin{equation*}
\langle A\vec{x},\vec{y}\rangle = \langle CT\vec{x},\vec{y} \rangle = \overline{ [T\vec{x},\vec{y}]} = \overline{[\vec{x}, T\vec{y}]} =
\langle C\vec{x}, T\vec{y}\rangle = \langle A\vec{y},\vec{x}\rangle.
\end{equation*}
Conversely, if $A$ is a conjugate-linear bounded operator satisfying
$$\qquad\qquad \Re \langle A\vec{x},\vec{y}\rangle = \Re \langle \vec{x}, A\vec{y} \rangle, \qquad \vec{x},\vec{y} \in \h,$$
then identity \eqref{real-symmetry} holds, simply by remarking that
$$ \Im \langle A\vec{x}, \vec{y} \rangle = \Re \langle A (i\vec{x}), \vec{y} \rangle = \Re \langle A\vec{y}, i\vec{x} \rangle =
\Im \langle A\vec{y}, \vec{x} \rangle$$ also holds true.

Thus, there is a straightforward dictionary between conjugate-linear operators that are $\R$-selfadjoint
and $C$-symmetric operators. A study of the first class, motivated by classical examples such as 
Beltrami or Hankel operators, has been vigorously pursued by the Finnish school \cite{Eirola,HuhtanenNevanlinna,HP1,HP2,HR,R1}.
We confine ourselves to reproduce below only a small portion of their results.
In particular, we discuss the adapted functional calculus for conjugate-linear operators and the related theory
of complex symmetric Jacobi matrices.

Suppose that $A$ is a bounded conjugate-linear operator and $p(z)$ is a polynomial. Then
$p(A)$ makes sense as a $\R$-linear transformation. Moreover, writing
$$p(z) = q(z^2) + z r(z^2)$$ one immediately finds that $$p(A) = q(A^2) + A  r(A^2),$$ in
which the first term is $\C$-linear and the second is conjugate-linear.
Assume that $A$ is $\R$-selfadjoint in the sense of formula \eqref{real-symmetry}. Then we
know from the refined polar decomposition (Theorem \ref{RPDB}) that $A= J |T|$, in which $T$ is a positive $\C$-linear operator and 
$J$ is a conjugation commuting with $|T|$. Thus $A^2 = |T|^2$ is a positive operator and
$$ p(A) = q(|T|^2) + J |T| r(|T|^2).$$

The spectrum $\sigma(A)$ of a conjugate-linear operator $A$ is circularly symmetric, that is,
it is invariant under
rotations centered at the origin. By passing to a uniform limit in the observation above,
one finds the following result of Huhtanen and Per\"{a}m\"{a}ki.

\begin{theorem}
Let $A$ be a bounded conjugate-linear operator which is $\R$-selfadjoint and let $A=J|T|$ be its polar
decomposition, in which $J$ is a conjugation commuting with $|T|$. For a continuous 
function $f(z) = q(|z|^2) + z r(|z|^2)$ with $q,r$ continuous on $\sigma(A^2) \subseteq [0,\infty)$
the spectral mapping theorem holds
$$ f(\sigma(A)) = \sigma(f(A)).$$
In particular,
$$ \| f(A)\| = \max_{\lambda \in \sigma(A)} |f(\lambda)|.$$
\end{theorem}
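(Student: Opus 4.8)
The plan is to pull everything back to the spectral theorem for the single positive operator $|T|$ together with the commuting conjugation $J$, using two short algebraic identities. Write $g=q(|T|^2)$ and $h=|T|\,r(|T|^2)$. Then $A^2=J|T|J|T|=J^2|T|^2=|T|^2$, so $\sigma(A^2)=\sigma(|T|^2)\subseteq[0,\infty)$ and $\sigma(|T|)$ is the set of non-negative square roots of $\sigma(A^2)$; moreover $f(A)=q(A^2)+A\,r(A^2)=g+Jh$. Here $g$ and $h$ are continuous functions of $|T|$, hence commute with one another, with $|T|$, and with the spectral projections of $|T|$; also $gJ=J\bar g$ and $hJ=J\bar h$, where $\bar g=JgJ=\bar q(|T|^2)$ and $\bar h=JhJ=|T|\,\bar r(|T|^2)$. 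Note that $g$ is $\C$-linear while $Jh$ is conjugate-linear, so $f(A)$ is a genuinely $\R$-linear (neither linear nor conjugate-linear) bounded operator — this hybrid nature is what prevents a direct appeal to the spectral theorem for normal operators and is the main obstacle throughout.

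First I would determine $\sigma(A)$. Circular symmetry follows from $U_\theta AU_\theta^\ast=e^{2i\theta}A$ with $U_\theta=e^{i\theta}I$ (because $A$ is conjugate-linear), so $\sigma(A)$ is rotation-invariant about $0$. Combined with the identities $(A-\lambda)(A+\bar\lambda)=(A+\bar\lambda)(A-\lambda)=A^2-|\lambda|^2I=|T|^2-|\lambda|^2I$ — valid since $A\bar\lambda=\lambda A$ as operators — this yields $\sigma(A)=\{\lambda\in\C:|\lambda|\in\sigma(|T|)\}$: if $|\lambda|\notin\sigma(|T|)$ the two factorizations furnish a two-sided bounded inverse of $A-\lambda$; conversely if $A-\lambda$ is invertible then so is $A+\bar\lambda$ (rotation-invariance, as $|{-\bar\lambda}|=|\lambda|$), whence $|T|^2-|\lambda|^2I=(A-\lambda)(A+\bar\lambda)$ is invertible, i.e. $|\lambda|\notin\sigma(|T|)$. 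Consequently $f(\sigma(A))=\bigcup_{s\in\sigma(|T|)}\{q(s^2)+\lambda r(s^2):|\lambda|=s\}$ is a union of circles, the one over $s$ being centered at $q(s^2)$ with radius $s|r(s^2)|$.

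For $\sigma(f(A))=f(\sigma(A))$ I would prove the two inclusions separately. For $\sigma(f(A))\subseteq f(\sigma(A))$, set $N_\mu=(\bar g-\bar\mu I)-Jh$ and check, using $gJ=J\bar g$, $hJ=J\bar h$ and the commutativity noted above, that $(f(A)-\mu I)N_\mu=N_\mu(f(A)-\mu I)=\Phi_\mu(|T|)$, where $\Phi_\mu(s)=|q(s^2)-\mu|^2-s^2|r(s^2)|^2$ is real-valued and continuous on $\sigma(|T|)$; in particular, invertibility of $\Phi_\mu(|T|)$ forces invertibility of $f(A)-\mu I$. Contrapositively, $\mu\in\sigma(f(A))$ gives $0\in\sigma(\Phi_\mu(|T|))=\Phi_\mu(\sigma(|T|))$, so $\Phi_\mu(s_0)=0$ for some $s_0\in\sigma(|T|)$, i.e. $|q(s_0^2)-\mu|=s_0|r(s_0^2)|$, whence $\mu=q(s_0^2)+\lambda r(s_0^2)=f(\lambda)$ for a suitable $\lambda$ with $|\lambda|=s_0$ (take $\lambda=(\mu-q(s_0^2))/r(s_0^2)$, or any $|\lambda|=s_0$ if $r(s_0^2)=0$), and $\lambda\in\sigma(A)$. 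For $f(\sigma(A))\subseteq\sigma(f(A))$ I would build approximate eigenvectors: given $\lambda_0\in\sigma(A)$ with $s_0=|\lambda_0|>0$, start from unit vectors with $\||T|\vec{u}_n-s_0\vec{u}_n\|\to0$ (Weyl's criterion for the self-adjoint $|T|$) and, exactly as in the proof of Theorem~\ref{TheoremSpectrum}, replace them by unit vectors $\vec{x}_n$ with $J\vec{x}_n=\omega\vec{x}_n$ for a prescribed unimodular $\omega$ and still $\||T|\vec{x}_n-s_0\vec{x}_n\|\to0$; continuous functional calculus then gives $f(A)\vec{x}_n=q(|T|^2)\vec{x}_n+J\big((|T|r(|T|^2))\vec{x}_n\big)\to\big(q(s_0^2)+s_0\omega\,\overline{r(s_0^2)}\big)\vec{x}_n$, so $q(s_0^2)+s_0\omega\,\overline{r(s_0^2)}\in\sigma(f(A))$, and choosing $\omega$ with $s_0\omega\,\overline{r(s_0^2)}=\lambda_0r(s_0^2)$ (legitimate since $|\lambda_0|=s_0$) yields $f(\lambda_0)\in\sigma(f(A))$; the case $s_0=0$ is easier still.

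Finally, $\|f(A)\|\ge\max_{\lambda\in\sigma(A)}|f(\lambda)|$ because the right-hand side is the spectral radius of $f(A)$, which never exceeds the norm. For the reverse bound I would partition $\sigma(|T|)$ into finitely many Borel sets $\Delta_k$ of diameter $<\epsilon$ with spectral projections $E(\Delta_k)$ of $|T|$; since $J$ commutes with each $E(\Delta_k)$, each subspace $\ran E(\Delta_k)$ is $f(A)$-invariant, so for a unit vector $\vec{x}$ the images $f(A)E(\Delta_k)\vec{x}$ are mutually orthogonal, and $\|f(A)E(\Delta_k)\vec{x}\|\le\big(|q(s_k^2)|+s_k|r(s_k^2)|+O(\epsilon)\big)\|E(\Delta_k)\vec{x}\|$ by writing $f(A)E(\Delta_k)=q(|T|^2)E(\Delta_k)+J\,(|T|r(|T|^2))E(\Delta_k)$ and using the triangle inequality (with $s_k\in\Delta_k$ fixed); summing and letting $\epsilon\to0$ gives $\|f(A)\|\le\sup_{s\in\sigma(|T|)}(|q(s^2)|+s|r(s^2)|)=\max_{\lambda\in\sigma(A)}|f(\lambda)|$. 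As indicated, the crux is the $\R$-linearity of $f(A)$: the argument works precisely because the conjugate-linear ``companion'' $N_\mu$ can be written down explicitly and because the $J$-eigenvector trick converts spectral data of the honest self-adjoint operator $|T|$ into genuine approximate eigenvectors of $f(A)$ at every point of the relevant circle.
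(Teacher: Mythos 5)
Your argument is correct, and it is worth noting that the paper itself offers essentially no proof here: this is a cited result of Huhtanen and Per\"am\"aki, and the survey's entire justification is the identity $p(A)=q(A^2)+A\,r(A^2)=q(|T|^2)+J|T|r(|T|^2)$ for polynomials together with the phrase ``passing to a uniform limit.'' What you supply is the actual content that phrase glosses over. Your two key devices are sound: the factorization $(f(A)-\mu I)N_\mu=N_\mu(f(A)-\mu I)=\Phi_\mu(|T|)$ with $N_\mu=(\bar g-\bar\mu I)-Jh$ is the correct conjugate-linear analogue of multiplying by the ``conjugate'' factor (the cross terms cancel exactly because $gJ=J\bar g$, $\mu J=J\bar\mu$, and $h$ commutes with $\bar g$), and it reduces one inclusion to the ordinary spectral mapping theorem for the selfadjoint $|T|$; the reverse inclusion via Weyl sequences with $J\vec{x}_n=\omega\vec{x}_n$ for a prescribed unimodular $\omega$ is exactly the device the paper uses in Theorem \ref{TheoremSpectrum}, here exploited at every point of the circle rather than only at the $C$-real representative. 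Your preliminary identification $\sigma(A)=\{\lambda:|\lambda|\in\sigma(|T|)\}$ via $(A-\lambda)(A+\bar\lambda)=A^2-|\lambda|^2I$ and rotation invariance is also correct and is needed to make sense of $f(\sigma(A))$. Finally, you are right that the norm identity does \emph{not} follow formally from the spectral mapping theorem (spectral radius equals norm is not automatic for the $\R$-linear, non-normal operator $f(A)$), and your separate upper bound via the $f(A)$-invariant spectral subspaces $\ran E(\Delta_k)$ closes that gap; this matches the lower bound $\max_{s\in\sigma(|T|)}\bigl(|q(s^2)|+s|r(s^2)|\bigr)$ coming from the spectral radius, so equality holds. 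The only stylistic caveat is that your $O(\epsilon)$ terms are really moduli of continuity of $q(s^2)$ and $sr(s^2)$ on the compact set $\sigma(|T|)$, which is all that is needed.
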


The selection of examples we present below is related to the classical
moment problem on the line, where Jacobi matrices play a central role.

\begin{example}
Let $\{\alpha_n\}$ be a bounded sequence of complex numbers and let
$\{ \beta_n\}$ be bounded sequences of positive numbers.
The associated infinite matrix
$$ \Xi = \begin{bmatrix}
\alpha_1 & \beta_1 & 0 & \ldots & 0 \\
\beta_1 & \alpha_2 & \beta_2 & \ldots & 0\\
0& \beta_2 & \alpha_3 & \ldots & 0\\
\vdots & \vdots & \ddots & & \vdots\\
0 & 0  & \ldots & \ddots & \vdots \end{bmatrix}$$
is complex symmetric.  As such, $\Xi$ is $C$-symmetric, regarded as an operator on $\ell^2(\N)$, with respect to the
standard conjugation $C (\vec{x}_n) = (\overline{\vec{x}_n}).$ Then the operator
$A = C\Xi$ is conjugate-linear and $\R$-selfadjoint in the above sense.

In view of the functional calculus carried by the operator $\Xi = J|T|$, it is natural to consider
the vector space $\mathcal{P}$ of polynomials generated by $|z|^{2n}$ and $z |z|^{2n}$. An element of
$\mathcal{P}$ is of the form
$$ f(z) = q(|z|^2) + z r(|z|^2)$$
where $q$ and $r$ are polynomials in $|z|^2$.
Let $E$
denote the spectral measure of operator $|T|$.  If
$d\mu = \langle E(d\lambda) e_1, e_1\rangle$, in which $e_1 = (1,0,0,\ldots)$, then
$$\langle q(\Xi^2) e_1, e_1 \rangle = \int_{\sigma(\Xi)} q(|\lambda|^2) d\mu(\lambda).$$
Next observe that
$$ \Re \langle f(\Xi) e_1, e_1 \rangle \geq 0 $$
whenever $\Re f|_{\sigma(\Xi)} \geq 0.$ Hence the measure $\mu$ can be extended
to a positive measure $\nu$ supported by $\sigma(\Xi)$ and satisfying
$$\qquad\qquad \langle f(\Xi) e_1, e_1 \rangle = \int_{\sigma(\Xi)} f(\lambda) d\nu(\lambda), \qquad f \in {\mathcal P}.$$
Due to the rotational symmetry of $\sigma(\Xi)$, the extension $\nu$ of $\mu$ is far from unique.

As a consequence one obtains a positive definite inner product on ${\mathcal P}$, defined by
\begin{equation}\label{eq:MPIP}
 ( f, g ) = \langle f(\Xi)e_1, g(\Xi)e_1 \rangle = \int_{\sigma(\Xi)} f \overline{g} \,d\nu.
\end{equation}
The reader will now recognize the classical relationship between orthogonal polynomials, Jacobi matrices
and positive measures. In our particular case, we obtain the recurrence relations
$$\qquad\qquad \lambda \overline{p_j(\lambda)} = \beta_{j+1} p_{j+1}(\lambda) + \alpha_j p_j(\lambda) +\beta_j p_{j-1}(\lambda), \qquad j \geq 0,$$
where $p_0, p_1, \ldots$ represent the orthonormal sequence of polynomials obtained from $1, z, |z|^2, z |z|^2, |z|^4, \ldots$ 
with respect to the inner product \eqref{eq:MPIP}. We take by convention $p_{-1} = 0$ and
$\beta_0 = 0$.
\end{example}

The framework above offers a functional model for all conjugate-linear $\R$-selfadjoint operators possesing a cyclic vector. 
Numerous details, including a numerical study of the relevant inversion formulae is contained in \cite{HP1,HP2}.

\subsection{The Friedrichs operator}\label{SectionFriedrichs}

Motivated by boundary value problems in elasticity theory, Friedrichs \cite{Friedrichs}
studied a variational problem for a compact symmetric form on
the Bergman space of a planar domain.  The bilinear from introduced  by Friedrichs is
represent against the standard $L^2$ inner product by an conjugate-linear operator
now known as the \emph{Friedrichs operator} of a planar domain. The present section,
adapted from \cite{VPSBF}, only touches one aspect of this topic, 
namely its connection to complex symmetric operators and their minimax principles.

Let $\Omega \subseteq \C $ denote a bounded, connected domain and
let $L^2_a(\Omega)$ denote the \emph{Bergman space} of $\Omega$,
the Hilbert subspace of all analytic functions in the Lebesgue
space $L^2(\Omega) = L^2(\Omega, dA)$.  The symmetric bilinear
form (see Subsection \ref{Subsection:Bilinear})
\begin{equation}\label{EquationFriedrichsForm}
  B(f,g) = \int_{\Omega}f(z)g(z)\,dA(z)
\end{equation}
on $L^2_a(\Omega) \times L^2_a(\Omega)$ was studied by Friedrichs
and others in the context of classical potential theory and planar
elasticity.  This form is clearly bounded, and it turns out that it
is compact whenever the boundary $\partial\Omega$ is $C^{1+\alpha}$
for some $\alpha >0$.
In the other direction, Friedrichs himself showed that if
$\partial \Omega$ has an interior angle of $\alpha$, then
$|\sin \alpha / \alpha|$ belongs to the essential spectrum of
the form and hence $B$ is not compact.  We assume throughout this
section that the domain $\Omega$ is chosen so that the bilinear
form $B$ is compact.

We are interested here in finding the best constant $c(\Omega) < 1$
and an optimal subspace $\V$ of $L^2_a(\Omega)$ of codimension one
for which the \emph{Friedrichs inequality}
\begin{equation}\label{EquationFriedrichsInequality}
  \left| \int_{\Omega} f^2\, dA \right| \leq c(\Omega) \int_{\Omega} |f|^2 dA
\end{equation}
holds for all $f$ in $\V$.  As we will shortly see,
the optimal constant $c(\Omega)$ is precisely $\sigma_2$,
the \emph{second} singular value of the bilinear form
\eqref{EquationFriedrichsForm}.

One important aspect of the Friedrichs inequality is that it provides
an $L^2(\Omega,dA)$ bound on harmonic conjugation.  Recall that
harmonic conjugation $u \mapsto \widetilde{u}$ (where $u$ and
$\widetilde{u}$ are real-valued harmonic functions on $\Omega$)
is well-defined only after insisting upon a certain normalization
for the conjugate functions $\widetilde{u}$.  Typically, one
requires that $\widetilde{u}$ vanishes at a certain point $z_0$ in $\Omega$.
Such requirements correspond to restricting the analytic function $f = u + i \widetilde{u}$ to lie in
a subspace $\V$ of $L^2_a(\Omega)$ of codimension one.
The fact that $c(\Omega)= \sigma_2$ in \eqref{EquationFriedrichsInequality}
yields the best possible $L^2(\Omega,dA)$ bound on harmonic conjugation:
\begin{equation*}
  \int_{\Omega} \widetilde{u}^2\,dA \leq \frac{1 + \sigma_2}{1 - \sigma_2} \int_{\Omega} u^2\,dA,
\end{equation*}
where $\widetilde{u}$ is normalized so that $u + i \widetilde{u}$ belongs
to the optimal subspace $\V$.  This follows immediately upon substituting $f = u + i \widetilde{u}$
in \eqref{EquationFriedrichsInequality} and simplifying (see the proof of Lemma \ref{LemmaSimple}
for a similar computation).

Without any further restrictions on the domain $\Omega$, the bilinear
form \eqref{EquationFriedrichsForm} is not represented by a $C$-symmetric operator
in any obvious way.  Indeed, there are few natural conjugations on the Bergman space
$L^2_a(\Omega)$ that are evident.  Although one might attempt to define a conjugation
on $L^2_a(\Omega)$ in terms of complex conjugation with respect to an orthonormal
basis of $L^2_a(\Omega)$,
such bases are notoriously difficult to describe explicitly, even for relatively
simple $\Omega$.

For any fixed conjugation $C$ on $L^2_a(\Omega)$, Lemma \ref{LemmaRepresenting}
guarantees the existence of a bounded $C$-symmetric operator $T$ representing $B$ in the sense that
$B(f,g) = [Tf,g] = \inner{f,CTg}$ for all $f,g$ in $L^2_a(\Omega)$.
In the present situation, it turns out that the
conjugate-linear operator $CT$ appearing in the preceding formula
is more natural to work with than any potential linear representing
operator $T$.

Let $P_{\Omega}:L^2(\Omega) \to L^2_a(\Omega)$ denote
the \emph{Bergman projection}, the orthogonal projection from the
full Lebesgue space $L^2(\Omega)$ onto the Bergman space
$L^2_a(\Omega)$.  The \emph{Friedrichs operator} is the conjugate-linear
operator $F_{\Omega}:L^2_a(\Omega)\to L^2_a(\Omega)$
defined by the equation
\begin{equation*}
  F_{\Omega}f = P_{\Omega}\overline{f},
\end{equation*}
which can also be written in terms of the Bergman kernel $K(z,w)$ of $\Omega$:
\begin{equation*}
  [F_{\Omega}f](z) = \int_{\Omega} K(z,w) \overline{f(w)}\,dA(w), \quad z \in \Omega.
\end{equation*}
The Friedrichs operator represents the bilinear form \eqref{EquationFriedrichsForm}
in the sense that
$$B(f,g) = \inner{f, F_{\Omega}g}$$
for all $f,g$ in $L^2_a(\Omega)$.  Indeed, this is a straightforward computation:
\begin{equation*}
  B(f,g)
  = \inner{P_{\Omega}f,\overline{g}}
  = \inner{f, P_{\Omega}\overline{g}}
  =\inner{f, F_{\Omega}g}
\end{equation*}
and hence $CT = F_{\Omega}$ for any $C$-symmetric operator $T$
representing the bilinear form $B$.  In light of the refined polar decomposition (Theorem \ref{RPDB}), we see that there exists a 
conjugation $J$ that commutes with $|T|$ and satisfies $F_{\Omega} = J|T|$.

Since $P_{\Omega}$ is a projection, it follows immediately that $0
\leq |T| \leq I$.  In fact, we can say a good deal more about
$|T|$ (or equivalently, about the symmetric bilinear form
\eqref{EquationFriedrichsForm}). We start by recalling a useful
fact, implicit in the article of Friedrichs:

\begin{lemma}\label{LemmaSimple}
  If $\Omega$ is connected, then $\sigma_1 < \sigma_0 = 1$.  In particular,
  the largest singular value of $B(x,y)$ has multiplicity one and
  the corresponding eigenfunctions are the constant functions.
\end{lemma}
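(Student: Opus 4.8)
The plan is to transport the extremal problem for $|T|$ back to the Bergman space through the Friedrichs operator $F_{\Omega} = J|T|$ (the refined polar decomposition recalled just above) and to exploit that $F_{\Omega}$ is built from the \emph{orthogonal} projection $P_{\Omega}$. First I would check that $\sigma_0 = 1$. The constant function $\vec{1}$ lies in $L^2_a(\Omega)$ (as $\Omega$ is bounded) and is fixed by the Bergman projection, so $F_{\Omega}\vec{1} = P_{\Omega}\overline{\vec{1}} = \vec{1}$. Writing $F_{\Omega} = J|T|$ and using that $J$ is an isometric involution commuting with $|T|$, the identity $J|T|\vec{1} = \vec{1}$ gives $|T|\vec{1} = J\vec{1}$ and then $|T|^2\vec{1} = |T|J\vec{1} = J|T|\vec{1} = \vec{1}$, whence $|T|\vec{1} = \vec{1}$; together with the already-noted bound $|T| \le I$ this shows $\sigma_0 = 1$ and that $\vec{1}$ is an eigenvector of $|T|$ for the top singular value.

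The heart of the matter is to show $\ker(I - |T|) = \C\cdot\vec{1}$. Suppose $f \neq 0$ satisfies $|T|f = f$. Then $\|F_{\Omega}f\| = \|J|T|f\| = \||T|f\| = \|f\|$, so equality holds throughout $\|f\| = \|F_{\Omega}f\| = \|P_{\Omega}\overline{f}\| \le \|\overline{f}\| = \|f\|$. Since $P_{\Omega}$ is the orthogonal projection onto $L^2_a(\Omega)$, equality in $\|P_{\Omega}\overline{f}\| \le \|\overline{f}\|$ forces $\overline{f} = P_{\Omega}\overline{f}$, i.e. $\overline{f}$ is analytic on $\Omega$. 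But $f$ itself is analytic, so $f$ and $\overline{f}$ are simultaneously holomorphic on the \emph{connected} open set $\Omega$; separating real and imaginary parts via the Cauchy--Riemann equations (equivalently, the open mapping theorem) shows $f$ is constant. Hence $\ker(I-|T|) \subseteq \C\cdot\vec{1}$, and since $\vec{1} \in \ker(I-|T|)$ the top singular value is simple, so $\sigma_1 < \sigma_0 = 1$. For the final clause, Theorem~\ref{TheoremSpectrum}(2) says a unit vector $f$ is an eigenfunction of $B$ for the singular value $\sigma_0 = 1$ exactly when $Tf = Cf$, i.e. $F_{\Omega}f = CTf = f$; the same argument (now $J|T|f = f$ forces $|T|f = f$) places such $f$ in $\ker(I-|T|) = \C\cdot\vec{1}$, so the extremal eigenfunctions are precisely the constants.

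The step I expect to require the most care is the bridge between the spectral condition $|T|f = f$ on the \emph{linear} operator $|T|$ and the \emph{isometry} assertion $\|F_{\Omega}f\| = \|f\|$ for the \emph{conjugate-linear} operator $F_{\Omega}$, together with recognizing the latter as the equality case in the norm inequality for the Bergman projection. Once that is in place, the remaining ingredients — the bound $|T| \le I$, the bookkeeping with $F_{\Omega} = J|T|$, and the elementary fact that a holomorphic function whose complex conjugate is also holomorphic on a connected domain must be constant — are routine.
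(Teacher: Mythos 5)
Your proof is correct, and it rests on the same structural skeleton as the paper's: the refined polar decomposition $F_{\Omega} = J|T|$ with $J$ commuting with $|T|$, the bound $0 \le |T| \le I$, the fact that $F_{\Omega}$ fixes constants, and connectedness of $\Omega$ at the very end. Where you genuinely diverge is in the decisive step showing that $|T|f = f$ forces $f$ to be constant. The paper selects a $J$-fixed eigenvector (possible because $J$ leaves each spectral subspace of $|T|$ invariant), so that $F_{\Omega}f = f$ exactly, and then computes $B(f,f) = \inner{f, F_{\Omega}f} = \norm{f}^2$, i.e.\ $\int_{\Omega} f^2\,dA = \int_{\Omega}|f|^2\,dA$; writing $f = u+iv$ and comparing real parts gives $\int_{\Omega} v^2\,dA = 0$, so $f$ is a real-valued analytic function, hence constant. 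You instead work only with norms: $\norm{F_{\Omega}f} = \norm{\,|T|f\,} = \norm{f}$ needs no $J$-fixed representative, and the equality case of $\norm{P_{\Omega}\overline{f}} \le \norm{\overline{f}}$ for the orthogonal projection $P_{\Omega}$ puts $\overline{f}$ in $L^2_a(\Omega)$, whence $f$ and $\overline{f}$ are both holomorphic and $f$ is constant on the connected domain. Your route buys a slightly cleaner statement (every eigenvector for the eigenvalue $1$ is directly shown to be constant, with no appeal to a $J$-invariant basis of the eigenspace), at the cost of invoking the strictness of the projection inequality rather than an explicit integral identity; the paper's computation has the virtue of exhibiting concretely how the extremal equality $|B(f,f)| = \norm{f}^2$ kills the imaginary part of $f$. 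Both arguments are complete, and your handling of $\sigma_0 = 1$ via $|T|^2 1 = 1$ together with $|T|\ge 0$ is a valid substitute for the paper's one-line remark.
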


\begin{proof}
  Since $F_{\Omega} = J|T|$ and $J$ commutes with $|T|$, one can find a basis
  of each spectral subspace of $|T|$ (corresponding to a non-zero
  eigenvalue) which is left invariant by $J$.
  If $f$ is such an eigenvector corresponding to the
  eigenvalue $1$, then $|T|f=f$ and $Jf=f$, which implies that
  $F_{\Omega}f = f$. Consequently
  $$\int_{\Omega} f^2\,dA = B(f,f) = \inner{f,F_{\Omega}f} = \inner{f,f} = \int_{\Omega} |f|^2\,dA.$$
  Setting $f = u + iv$ where $u$ and $v$ are real-valued and harmonic, we obtain
  \begin{align*}
    \int_{\Omega} (u^2+v^2)\,dA
    &= \int_{\Omega} (u^2 - v^2)\,dA + 2 i \int_{\Omega} uv\,dA\\
    &= \int_{\Omega} (u^2 - v^2)\,dA
  \end{align*}
  since the left hand side is real.  This implies that
  $\int_{\Omega} v^2 \, dA = 0$
  and hence $v$ vanishes identically on $\Omega$.  Since $\Omega$
  is connected and $f$ analytic, $f$ must be constant throughout $\Omega$.
  Conversely, it is clear that $\sigma_0 = 1$ since $0 \leq |T| \leq I$ and
  $F_{\Omega}$ fixes real constants.
\end{proof}

The following result demonstrates the nature of Friedrichs inequality
at the abstract level \cite{VPSBF}.

\begin{theorem}\label{TheoremAbstract}
  If $B:\h\times\h\to\h$ is a compact, symmetric,
  bilinear form with singular values
  $\sigma_0 \geq \sigma_1 \geq \cdots \geq 0$,
  repeated according to multiplicity, and corresponding unit eigenfunctions
  $\vec{e}_0, \vec{e}_1,\ldots$, then
  \begin{equation}\label{EquationFriedrichsAbstract}
    |B(\vec{x},\vec{x})| \leq \sigma_2 \norm{\vec{x}}^2
  \end{equation}
  whenever $\vec{x}$ is orthogonal to the vector
  $\sqrt{\sigma_1} \vec{e}_0 + i \sqrt{\sigma_0} \vec{e}_1$.
  Furthermore, the constant $\sigma_2$ in \eqref{EquationFriedrichsAbstract}
  is the best possible for $\vec{x}$ restricted to a subspace of $\h$ of codimension one.
\end{theorem}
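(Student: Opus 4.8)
The plan is to diagonalize $B$ against a distinguished orthonormal basis, reduce the inequality to a two-dimensional computation on $\vecspan\{\vec e_0,\vec e_1\}$, and read off optimality from the minimax principle already in hand. First I would fix a bounded $C$-symmetric operator $T$ representing $B$ (Lemma \ref{LemmaRepresenting}), so that $B(\vec x,\vec y)=\inner{\vec x,CT\vec y}$ and the singular values $\sigma_n$ of $B$ are the eigenvalues of $|T|$; since $B$ is compact, $T$ is compact and $C$-symmetric. By the refined polar decomposition (Theorem \ref{RPDB}), $CT=J|T|$ for a conjugation $J$ commuting with $|T|$ and all of its spectral projections. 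Because $J$ commutes with $|T|$, I may take the unit eigenfunctions $\vec e_n$ of $|T|$ to be $J$-real, $J\vec e_n=\vec e_n$; these are precisely the eigenfunctions of the conjugate-linear operator $CT=J|T|$, so that $B(\vec e_m,\vec e_n)=\sigma_n\delta_{mn}$. Writing $\vec x=\sum_n c_n\vec e_n$ with $c_n=\inner{\vec x,\vec e_n}$, a short computation using $J$-reality yields the diagonal identities
\[
  B(\vec x,\vec x)=\sum_n \sigma_n c_n^2,\qquad \norm{\vec x}^2=\sum_n|c_n|^2,
\]
which is the operator-theoretic form of the Autonne--Takagi factorization (Theorem \ref{TheoremTakagi}).

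With the form diagonalized, the bound \eqref{EquationFriedrichsAbstract} reduces to the behaviour of the two leading terms. The prescribed vector is \emph{isotropic} for $B$: its coordinates are $(c_0,c_1)=(\sqrt{\sigma_1},i\sqrt{\sigma_0})$, whence $\sigma_0c_0^2+\sigma_1c_1^2=\sigma_0\sigma_1-\sigma_1\sigma_0=0$, so it spans one of the two null lines of the binary form $\sigma_0c_0^2+\sigma_1c_1^2$. The role of the orthogonality hypothesis is to confine the $\vecspan\{\vec e_0,\vec e_1\}$-component of $\vec x$ to this null line, where the two large coefficients cancel:
\[
  \sigma_0 c_0^2+\sigma_1 c_1^2=0.
\]
Once this is in place, the surviving indices all satisfy $\sigma_n\le\sigma_2$, whence
\[
  |B(\vec x,\vec x)|=\Big|\sum_{n\ge 2}\sigma_n c_n^2\Big|\le\sigma_2\sum_{n\ge 2}|c_n|^2\le\sigma_2\norm{\vec x}^2.
\]

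For optimality I would invoke Theorem \ref{TheoremMain} with $n=1$ (legitimate once $1<\tfrac12\dim\h$), which gives $\min_{\codim\V=1}\max_{\vec x\in\V,\ \norm{\vec x}=1}\Re B(\vec x,\vec x)=\sigma_2$. On any complex subspace $\V$ one has $B(e^{i\theta}\vec x,e^{i\theta}\vec x)=e^{2i\theta}B(\vec x,\vec x)$, so rotating the phase gives $\max_{\vec x\in\V}\Re B(\vec x,\vec x)=\max_{\vec x\in\V}|B(\vec x,\vec x)|$; hence no codimension-one subspace can force $|B|$ strictly below $\sigma_2$, so $\sigma_2$ is best possible. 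Our explicit $\V$ attains it, since $\vec e_2\in\V$ and $|B(\vec e_2,\vec e_2)|=\sigma_2$.

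The crux, and the main obstacle, is the cancellation step: one must verify that the orthogonality in the hypothesis pins the $(\vec e_0,\vec e_1)$-part of $\vec x$ to the null line spanned by the prescribed vector rather than to its companion null line, since the two are interchanged by a single sign and only one yields $\sigma_0c_0^2+\sigma_1c_1^2=0$. This is a $2\times 2$ bookkeeping computation, but a delicate one: the placement of the factor $i$, the conjugate-linearity of the inner product, and the pairing of $\sqrt{\sigma_0},\sqrt{\sigma_1}$ with $\vec e_0,\vec e_1$ must all be tracked with care. Finally I would treat separately the degenerate cases $\sigma_1=0$ and $\sigma_0=\sigma_1$, where the binary form degenerates or the two null lines coincide.
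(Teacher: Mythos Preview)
The paper does not actually prove this theorem; it only states it with a citation to \cite{VPSBF}. So there is no in-paper proof to compare against, and I evaluate your argument on its own.

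Your overall strategy is sound: diagonalize $B$ via $CT=J|T|$ to obtain $B(\vec x,\vec x)=\sum_n\sigma_n c_n^2$ with $c_n=\inner{\vec x,\vec e_n}$ and $J\vec e_n=\vec e_n$, isolate the two leading terms, and read off optimality from Theorem~\ref{TheoremMain} together with the phase-rotation identity $\max_{\vec x\in\V}\Re B(\vec x,\vec x)=\max_{\vec x\in\V}|B(\vec x,\vec x)|$ on complex subspaces. All of that is correct.

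The gap is exactly at the point you flag as the crux, and it is a real one. You assert that because $\vec v=\sqrt{\sigma_1}\,\vec e_0+i\sqrt{\sigma_0}\,\vec e_1$ lies on a null line of the binary form $\sigma_0z_0^2+\sigma_1z_1^2$, Hilbert-space orthogonality to $\vec v$ confines the $(c_0,c_1)$-component of $\vec x$ to that same null line. This is false: orthogonality to $\vec v$ in $\vecspan\{\vec e_0,\vec e_1\}$ forces the projection onto the \emph{orthogonal complement} of $\vec v$, which (with the paper's conjugate-linear-in-the-second-slot convention) is spanned by $(i\sqrt{\sigma_0},\sqrt{\sigma_1})$. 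On that line one computes
\[
  \sigma_0c_0^2+\sigma_1c_1^2=(\sigma_1^2-\sigma_0^2)s^2,\qquad |c_0|^2+|c_1|^2=(\sigma_0+\sigma_1)|s|^2,
\]
so $|\sigma_0c_0^2+\sigma_1c_1^2|=(\sigma_0-\sigma_1)(|c_0|^2+|c_1|^2)$, which need not be dominated by $\sigma_2(|c_0|^2+|c_1|^2)$. Take $\sigma_0=2,\ \sigma_1=1,\ \sigma_2=0$ to see the inequality fail outright. So the ``delicate bookkeeping'' you anticipate does not merely require care with signs; the mechanism you describe---null vector $\Rightarrow$ orthogonal complement is the null line---is simply wrong.

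What \emph{does} work is the vector with the indices exchanged: the orthogonal complement of $\sqrt{\sigma_0}\,\vec e_0+i\sqrt{\sigma_1}\,\vec e_1$ in $\vecspan\{\vec e_0,\vec e_1\}$ is $\C(\sqrt{\sigma_1}\,\vec e_0-i\sqrt{\sigma_0}\,\vec e_1)$, which \emph{is} a null line, and on it your cancellation $\sigma_0c_0^2+\sigma_1c_1^2=0$ goes through exactly as you want. This strongly suggests a misprint in the survey's statement (or a non-obvious normalization of the $\vec e_j$); your proof is correct for that vector and hence establishes the intended result, but it does not prove the inequality for the vector actually printed. You should say so explicitly rather than leaving the $2\times2$ verification as a promised-but-unperformed step.
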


In essence, \eqref{EquationFriedrichsAbstract} provides the best
possible bound on a symmetric bilinear form that can be obtained
on a hyperplane which passes through the origin.
Since the orthogonal complement of the vector
$(\sqrt{\sigma_1} \vec{e}_0 - i \sqrt{\sigma_0} \vec{e}_1 )$also  has the same
property, we see that the optimal subspace in Theorem \ref{TheoremAbstract} is not unique.

\subsection{Asymptotics of eigenvalues of compact symmetric bilinear forms} The example of the Friedrichs operator
discussed in the previous section is only one instance of a more general framework. We reproduce below from
\cite{PutinarProkhorov} a few abstract notions and facts, with the direct aim at illuminating some aspects of the asymptotic analysis
of the spectra of compact symmetric bilinear forms.

Let $\h$ be a complex separable Hilbert space and let $B(\vec{x},\vec{y})$ be a
compact bilinear symmetric form on $\h$.  Following the discussion in Subsection \ref{Subsection:Bilinear},
the singular values of $B$ (also called the \emph{characteristic values} of $B$)
form a decreasing sequence $\lambda_0 \geq \lambda_1 \geq \ldots \geq 0$
and we can find a sequence of associated vectors $\{ \vec{u}_n\}$ that are 
characterized by the double orthogonality conditions:
\begin{equation}\label{07.28.06.1}
B(\vec{u}_n, \vec{u}_m ) = \lambda_n \delta_{mn}, \qquad \langle \vec{u}_n,\vec{u}_m\rangle = \delta_{mn}.
\end{equation}
These vectors are obtained as eigenvectors, fixed by the auxiliary conjugation $J$ (Theorem \ref{RPDB}) 
of the modulus $|T|$ of any $C$-symmetric representing operator
$T$ satisfying $B(\vec{x},\vec{y}) = \inner{ T\vec{x}, C\vec{y}}$, as in Lemma \ref{LemmaRepresenting}.

The following variant of Weyl-Horn estimate is the root of all asymptotic evaluations of the
distribution of the characteristic values of $B$.

\begin{proposition}
\label{th1} Let $B(\,\cdot\,,\,\cdot\,)$ be a compact bilinear symmetric form on a
complex Hilbert space $\h$ and let $\lambda_0 \geq \lambda_1 \geq \ldots \geq 0$ denote its sequence of
characteristic values. Let $\vec{g}_0, \vec{g}_1,\ldots,\vec{g}_n$ be a system of vectors
in $\h$. Then for any nonnegative integer $n$,
\begin{equation}
| \det (B(\vec{g}_i,\vec{g}_j)) | \leq \lambda_0 \lambda_1 \cdots\lambda_n \det (\langle
\vec{g}_i, \vec{g}_j \rangle).
\label{06.15.06.1}
\end{equation}
\end{proposition}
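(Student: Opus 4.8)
The plan is to diagonalize $B$ against a distinguished orthonormal basis and then to reduce \eqref{06.15.06.1} to a Cauchy--Binet computation. First I would use Lemma~\ref{LemmaRepresenting} to write $B(\vec{x},\vec{y})=\inner{T\vec{x},C\vec{y}}$ for a bounded $C$-symmetric operator $T$, which is compact because $B$ is, and then apply the refined polar decomposition (Theorem~\ref{RPDB}) to obtain $T=CJ|T|$ with $J$ a conjugation commuting with $|T|$. Splitting $\h$ into the eigenspaces of the compact operator $|T|$ and invoking Lemma~\ref{LemmaAllSame} on each, I get an orthonormal basis $\{\vec{u}_k\}_{k\ge 0}$ of $\h$ with $|T|\vec{u}_k=\lambda_k\vec{u}_k$, $J\vec{u}_k=\vec{u}_k$, and $\lambda_0\ge\lambda_1\ge\cdots\ge 0$; a one-line computation using $\inner{C\vec{a},C\vec{b}}=\inner{\vec{b},\vec{a}}$ then gives $B(\vec{u}_i,\vec{u}_j)=\lambda_i\delta_{ij}$, i.e.\ the vectors of \eqref{07.28.06.1}, and equivalently the Autonne--Takagi expansion $B(\vec{x},\vec{y})=\sum_{k\ge0}\lambda_k\inner{\vec{x},\vec{u}_k}\inner{\vec{y},\vec{u}_k}$.

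Next I would expand the inputs, $\vec{g}_i=\sum_{k\ge 0}c_{ik}\vec{u}_k$ with $c_{ik}=\inner{\vec{g}_i,\vec{u}_k}$, and collect the coefficients into the $(n+1)\times\infty$ matrix $M=(c_{ik})$, whose rows lie in $\ell^2(\N)$ since each $\vec{g}_i\in\h$. Bilinearity of $B$ together with $B(\vec{u}_i,\vec{u}_j)=\lambda_i\delta_{ij}$ give, with $\Lambda=\diag(\lambda_0,\lambda_1,\ldots)$ and $N:=M\Lambda^{1/2}$,
\[
  \big(\inner{\vec{g}_i,\vec{g}_j}\big)_{i,j=0}^{n}=MM^{*},\qquad
  \big(B(\vec{g}_i,\vec{g}_j)\big)_{i,j=0}^{n}=M\Lambda M^{T}=NN^{T}.
\]
Applying the Cauchy--Binet formula to these infinite matrices (legitimate because the rows are square-summable and the series below converge absolutely) yields $\det(NN^{T})=\sum_{S}(\det N_S)^2$ and $\det(MM^{*})=\sum_{S}|\det M_S|^{2}$, where $S$ runs over the $(n+1)$-element subsets of $\{0,1,2,\ldots\}$ and $X_S$ denotes the submatrix of $X$ on the columns indexed by $S$. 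Since $N_S=M_S\,\diag(\sqrt{\lambda_k}:k\in S)$, one has $|\det N_S|^{2}=|\det M_S|^{2}\prod_{k\in S}\lambda_k$.

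The conclusion then follows from the triangle inequality and the elementary fact that, because $\lambda_0\ge\lambda_1\ge\cdots$, every admissible $S$ satisfies $\prod_{k\in S}\lambda_k\le\lambda_0\lambda_1\cdots\lambda_n$:
\[
  \big|\det\big(B(\vec{g}_i,\vec{g}_j)\big)\big|
  \le\sum_{S}|\det N_S|^{2}
  =\sum_{S}|\det M_S|^{2}\prod_{k\in S}\lambda_k
  \le(\lambda_0\cdots\lambda_n)\sum_{S}|\det M_S|^{2}
  =\lambda_0\cdots\lambda_n\,\det\big(\inner{\vec{g}_i,\vec{g}_j}\big),
\]
which is \eqref{06.15.06.1}. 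The only point that needs genuine care is the infinite-dimensional Cauchy--Binet bookkeeping---justifying the term-by-term manipulations---but this is routine once one notes that all the sums in sight are dominated by $\lambda_0^{\,n+1}\det(\inner{\vec{g}_i,\vec{g}_j})<\infty$; everything else is the monotonicity of products of ordered nonnegative numbers. As a consistency check, the case $n=0$ is the Cauchy--Schwarz-type bound $|B(\vec{g},\vec{g})|\le\lambda_0\|\vec{g}\|^{2}$, and equality holds in \eqref{06.15.06.1} when $\vec{g}_0,\ldots,\vec{g}_n$ are the top eigenvectors $\vec{u}_0,\ldots,\vec{u}_n$.
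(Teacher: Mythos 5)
Your proposal is correct and follows essentially the same route as the paper: expand the $\vec{g}_i$ in the doubly orthogonal system \eqref{07.28.06.1} and compare the two Gram determinants term by term. Your Cauchy--Binet sum over $(n+1)$-element subsets $S$ is exactly the paper's symmetrized multi-index sum $\frac{1}{(n+1)!}\sum_{k_0,\dots,k_n}$, so the only difference is notational (plus your more explicit construction of the system $\{\vec{u}_k\}$, which the paper relegates to the remarks preceding the proposition).
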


\begin{proof}
Let $\{ \vec{u}_k\}$ denote an orthonormal system satisfying \eqref{07.28.06.1}.
Write
$$\qquad\qquad \vec{g}_i = \sum_{k=0}^\infty c_{ik} \vec{u}_k, \qquad 0 \leq i \leq n.$$
Then
$$ B(\vec{g}_i, \vec{g}_j) = \sum_k c_{ik} c_{jk} B(\vec{u}_k, \vec{u}_k) = \sum_k \lambda_kc_{ik}
c_{jk}.$$
Therefore
\begin{align*}
| \det (B(\vec{g}_i,\vec{g}_j)) |
&= \frac{1}{(n+1)!} \sum_{k_0,\dots,k_n}
\lambda_{k_0} \cdots\lambda_{k_n} (\det(c_{ik_j}))^2 \\
&\leq
 \lambda_0 \lambda_1 \cdots\lambda_n \frac{1}{(n+1)!} \sum_{k_0,\dots,k_n}
 |\det(c_{ik_j})|^2 \\
 &= 
  \lambda_0 \lambda_1 \cdots\lambda_n \det \Big(\sum_k c_{ik}
 \overline{c_{jk}}\Big)\\
 & =  \lambda_0 \dots\lambda_n \det (\langle
\vec{g}_i, \vec{g}_j \rangle).\qedhere
\end{align*}
\end{proof}

A cousin of the preceding result is stated below, as a compact bilinear symmetric form variant of the Ky Fan
inequality.
\begin{proposition}
Let $B(\,\cdot\,, \,\cdot\,)$ be a compact bilinear symmetric form on a complex Hilbert space $\h$ and let 
$\lambda_0 \geq \lambda_1 \geq \ldots \geq 0$ denote its sequence of characteristic values. 
Then for any orthonormal system $\vec{g}_0, \vec{g}_1,\dots, \vec{g}_n$ of vectors in $\h$
$$\left|\sum_{i=0}^n B(\vec{g}_i, \vec{g}_i)\right| \le \lambda_0 + \lambda_1 + \dots +\lambda_n.$$
\end{proposition}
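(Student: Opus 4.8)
The plan is to reduce the assertion to an elementary rearrangement estimate, following the bookkeeping already used in the proof of Proposition \ref{th1}. Let $\{\vec{u}_k\}$ denote the orthonormal system satisfying the double orthogonality relations \eqref{07.28.06.1}, namely the eigenvectors of $|T|$ fixed by the auxiliary conjugation $J$ for a $C$-symmetric operator $T$ representing $B$ (Lemma \ref{LemmaRepresenting}, Theorem \ref{RPDB}); adjoining a $J$-real orthonormal basis of $\ker |T|$ if necessary, we may assume that $\{\vec{u}_k\}$ is complete and that $\lambda_k = 0$ on the adjoined vectors. Writing $\vec{g}_i = \sum_k c_{ik}\vec{u}_k$, bilinearity of $B$ gives $B(\vec{g}_i,\vec{g}_i) = \sum_k \lambda_k c_{ik}^2$, while orthonormality of $\{\vec{g}_i\}$ reads $\sum_k c_{ik}\overline{c_{jk}} = \delta_{ij}$ for $0 \le i,j \le n$.

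Next I would sum over $i$ and interchange the (absolutely convergent) sums: $\sum_{i=0}^n B(\vec{g}_i,\vec{g}_i) = \sum_k \lambda_k a_k$, where $a_k := \sum_{i=0}^n c_{ik}^2$. Since $|a_k| \le \sum_{i=0}^n |c_{ik}|^2 =: b_k$, it suffices to prove $\sum_k \lambda_k b_k \le \lambda_0 + \cdots + \lambda_n$. The key observation is that $0 \le b_k \le 1$ and $\sum_k b_k = n+1$: the $b_k$ are precisely the diagonal entries of $P := X^* X$, where $X = (c_{ik})_{0 \le i \le n}$ is the matrix whose rows are the coordinate sequences of the orthonormal vectors $\vec{g}_i$; since $X X^* = I_{n+1}$, the operator $P$ is an orthogonal projection of rank $n+1$, so its diagonal entries lie in $[0,1]$ and sum to its trace $n+1$. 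Absolute convergence throughout is immediate from $\sum_k \lambda_k b_k \le \lambda_0 \sum_k b_k = \lambda_0 (n+1) < \infty$.

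Finally I would invoke the elementary fact that $\sum_k \lambda_k b_k$ is maximized, over all sequences with $0 \le b_k \le 1$ and $\sum_k b_k = n+1$, by the choice $b_0 = \cdots = b_n = 1$: writing $\sum_k \lambda_k b_k - \sum_{k=0}^n \lambda_k = \sum_{k=0}^n \lambda_k (b_k - 1) + \sum_{k > n} \lambda_k b_k$ and using $\lambda_k \ge \lambda_n$ with $b_k - 1 \le 0$ on $\{0,\dots,n\}$, and $\lambda_k \le \lambda_n$ with $b_k \ge 0$ on $\{k > n\}$, the right-hand side is at most $\lambda_n\bigl(\sum_k b_k - (n+1)\bigr) = 0$. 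Concatenating the three estimates yields $\left|\sum_{i=0}^n B(\vec{g}_i,\vec{g}_i)\right| \le \sum_k \lambda_k b_k \le \lambda_0 + \cdots + \lambda_n$. The only non-routine point is the identification of the $b_k$ with the diagonal of a rank-$(n+1)$ orthogonal projection, which is exactly what forces each $b_k \le 1$; the rest is bilinear bookkeeping and a one-line rearrangement argument, so I do not anticipate any serious obstacle.
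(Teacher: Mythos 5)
Your proof is correct, but it follows a genuinely different route from the paper's. The paper stays within the determinant machinery of Proposition \ref{th1}: it forms the degree-$(n+1)$ polynomial $P(\lambda)=\det\bigl(\sum_k(\lambda_k-\lambda)c_{ik}\overline{c_{jk}}\bigr)$, expands it by Cauchy--Binet into $\frac{1}{(n+1)!}\sum_{k_0,\dots,k_n}(\lambda_{k_0}-\lambda)\cdots(\lambda_{k_n}-\lambda)|\det(c_{ik_j})|^2$, and reads off the identity $\sum_{i=0}^n\sum_k\lambda_k|c_{ik}|^2=\frac{1}{(n+1)!}\sum_{k_0,\dots,k_n}(\lambda_{k_0}+\cdots+\lambda_{k_n})|\det(c_{ik_j})|^2$; the bound then follows because the determinant weights vanish unless the indices $k_j$ are distinct, in which case $\lambda_{k_0}+\cdots+\lambda_{k_n}\le\lambda_0+\cdots+\lambda_n$, and the weights sum to $\det(\langle\vec{g}_i,\vec{g}_j\rangle)=1$. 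You instead observe that the numbers $b_k=\sum_{i=0}^n|c_{ik}|^2$ are the diagonal entries of the rank-$(n+1)$ orthogonal projection $X^*X$, hence lie in $[0,1]$ and sum to $n+1$, and you finish with a one-line rearrangement estimate; this is the classical Schur--Horn/majorization proof of Ky Fan's inequality. Your argument is more elementary and self-contained (no determinants, no polynomial identity), and it makes the extremal configuration $b_0=\cdots=b_n=1$ transparent; the paper's argument has the virtue of reusing, essentially verbatim, the bookkeeping of the Weyl--Horn estimate that immediately precedes it, so the two propositions come out of a single computation. Your attention to the convergence of the interchanged sums and to completing $\{\vec{u}_k\}$ across $\ker|T|$ is a point the paper passes over silently; both are handled correctly in your write-up.
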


\begin{proof}
Let $\{ \vec{u}_k\}$ denote an orthonormal system satisfying \eqref{07.28.06.1}.
We have
$$\vec{g}_j=\sum_{k=0}^\infty c_{ik} \vec{u}_k,\ \ 0 \leq j \leq n, $$
\begin{equation}
\left\langle \vec{g}_i, \vec{g}_j \right\rangle=\sum_k c_{ik}
\overline{c_{jk}} =\delta_{ij}, \label{06.15.06.2}
\end{equation}
and
$$B(\vec{g}_i, \vec{g}_j)=\sum_k \lambda_k c_{ik} c_{jk}.$$
It is easy to see that
$$\left|\sum_{i=0}^n B(\vec{g}_i, \vec{g}_i)\right|=\left|\sum_{i=0}^n \sum_k \lambda_k c^2_{ik}\right| \le \sum_{i=0}^n \sum_k \lambda_k |c_{ik}|^2.$$
Let us consider the following polynomial of degree $n+1$:
\begin{equation}
P(\lambda) =\det \Big( \sum_k (\lambda_k -\lambda) c_{ik}\overline{c_{jk}} \Big).
\label{06.15.06.3}
\end{equation}
As above, one finds
$$P(\lambda) =\frac{1}{(n+1)!} \sum_{k_0, \dots, k_n} (\lambda_{k_0} -\lambda)\dots (\lambda_{k_n}-\lambda) |\det (c_{ik_j})|^2.$$
From this, by (\ref{06.15.06.2}) and (\ref{06.15.06.3}), we infer
\begin{align*}
\sum_{i=0}^n \sum_k \lambda_k |c_{ik}|^2 
&= \frac{1}{(n+1)!} \sum_{k_0, \dots, k_n} (\lambda_{k_0}+\dots +\lambda_{k_n})|\det(c_{ik_j})|^2\\
&\leq (\lambda_0+\dots +\lambda_n) \frac{1}{(n+1)!} \sum_{k_0, \dots, k_n} |\det(c_{ik_j})|^2\\
&= (\lambda_0+\dots +\lambda)\det (\left\langle \vec{g}_i, \vec{g}_j\right\rangle)\\
&=\lambda_0+\dots+\lambda_n.\qedhere
\end{align*}

\end{proof}

Examples abound. We relate the above to the Friedrichs operator studied in Subsection \ref{SectionFriedrichs}, as follows.
Let $\Omega$ be a bounded open subset of the complex plane, with the analytic quadrature identity
$$ \int_\Omega f(z) dA(z) = \int_K f(z) d\mu(z),$$
where $f$ is an analytic function defined on the closure of $\Omega$ and $\mu$ is a positive measure supported by
a compact set $K \subseteq \Omega$. We will work with Friedrichs' bilinear form defined on Bergman space:
$$ \qquad\qquad B(f,g) = \int_\Omega f g \,dA, \qquad f, g \in L^2_a(\Omega).$$
The compactness of the form $B$ follows from Montel's Theorem.
Putting together the preceding inequalities one obtains the asymptotic behavior of the eigenvalues
$\lambda_n$ of Friedrichs' form.

\begin{theorem}
Let $\Omega$ be a planar domain carrying an analytic quadrature identity given by
a positive measure $\mu$ supported by the compact set $K \subseteq \Omega$ .  Then:

$$
\limsup_{n\to \infty} (\lambda_0 \lambda_1 \dots \lambda_n)^{1/n^2} \le \exp(-1/C(\partial \Omega, K)),
$$
where $C(\partial \Omega, K)$ is the capacity of the condenser $(\partial \Omega, K)$,
$$ \limsup_{n \to \infty} \lambda_n^{1/n} \le \exp(-1/C(\partial \Omega, K)),
$$
and
$$
\liminf_{ n\to \infty} \lambda_n^{1/n} \le \exp (-2/C(\partial \Omega, K)).
$$
\end{theorem}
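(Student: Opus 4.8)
The plan is to prove only the first inequality and to deduce the other two by elementary manipulations; throughout, write $C=C(\partial\Omega,K)$. If Friedrichs' form $B$ has finite rank, all three assertions hold trivially, so I assume $\lambda_n>0$ for every $n$. Granting $\limsup_n(\lambda_0\cdots\lambda_n)^{1/n^2}\le e^{-1/C}$, the second inequality follows from the monotonicity of $\{\lambda_n\}$: since $\log\lambda_n\le\frac1{n+1}\sum_{k=0}^n\log\lambda_k$, one gets $\limsup_n\frac1n\log\lambda_n\le\limsup_n\frac1{n(n+1)}\sum_{k=0}^n\log\lambda_k\le-1/C$. The third follows by contradiction: if $\liminf_n\lambda_n^{1/n}>e^{-2/C}$, then $\log\lambda_n\ge(-2/C+\delta)n$ for some $\delta>0$ with $-2/C+\delta<0$ and all large $n$, whence $\frac1{n^2}\sum_{k=0}^n\log\lambda_k\ge-1/C+\delta/2+o(1)$, contradicting the first inequality.

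The route to the first inequality is to \emph{factor} Friedrichs' form through a compact restriction operator. For $f,g$ analytic in a neighborhood of $\overline\Omega$ --- a dense subspace of $L^2_a(\Omega)$ --- the quadrature identity gives $B(f,g)=\int_\Omega fg\,dA=\int_K fg\,d\mu$; since $\mu$ is a finite positive measure supported on the compact set $K\subseteq\Omega$, the restriction map $R\colon L^2_a(\Omega)\to L^2(K,\mu)$, $Rf=f|_K$, is bounded (sub-mean-value inequality) and compact (normal families), so $B(f,g)=\int_K fg\,d\mu$ for all $f,g\in L^2_a(\Omega)$. Letting $W$ denote complex conjugation on $L^2(K,\mu)$ (a conjugation), we obtain $B(f,g)=\inner{Rf,WRg}$, so the conjugate-linear operator representing $B$ in the sense of Subsection~\ref{Subsection:Bilinear} is $R^{*}WR$; this realizes the Friedrichs operator $F_\Omega$ of Section~\ref{SectionFriedrichs}, and hence $\lambda_n=s_n(R^{*}WR)$ for every $n$.

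The proof then has one formal step and one potential-theoretic step. For the formal step I will use that the exterior power $\Lambda^{n+1}$ is multiplicative, that $\prod_{k=0}^n s_k(F)=\norm{\Lambda^{n+1}F}$ even for conjugate-linear $F$, and that $\norm{\Lambda^{n+1}W}=1$ because $W$ is isometric, to obtain
\begin{equation*}
\lambda_0\lambda_1\cdots\lambda_n=\norm{\Lambda^{n+1}(R^{*}WR)}\le\norm{\Lambda^{n+1}R^{*}}\,\norm{\Lambda^{n+1}W}\,\norm{\Lambda^{n+1}R}=\Bigl(\prod_{k=0}^n s_k(R)\Bigr)^{2}.
\end{equation*}
The square here is precisely the gain that upgrades the naive rate $e^{-n^2/(2C)}$ (coming from the pointwise bound $\lambda_k\le\norm{R}\,s_k(R)$) to the required $e^{-n^2/C}$, and it is forced by the fact that $R$ enters $F_\Omega$ twice; note that, unlike the lower bounds available from Proposition~\ref{th1}, these \emph{upper} bounds genuinely require such a factorization rather than a test-system computation. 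For the potential-theoretic step I will invoke the classical estimate on the singular numbers (equivalently, Kolmogorov widths) of the embedding $L^2_a(\Omega)\hookrightarrow L^2(K,\mu)$, namely $\limsup_{n\to\infty}s_n(R)^{1/n}\le e^{-1/C}$: functions analytic in $\Omega$ are approximated on $K$, to within $e^{-n/C+o(n)}$, by polynomials (when $\C\setminus K$ is connected and $\partial\Omega$ regular) or, in general, by degree-$n$ rational functions with poles off $\Omega$ --- a Bernstein--Walsh bound governed by the Green function of the condenser $(\partial\Omega,K)$. Combining the two steps, $\sum_{k=0}^n\log s_k(R)\le-\frac1C\cdot\frac{n^2}{2}(1+o(1))$, so $\lambda_0\cdots\lambda_n\le\exp(-n^2/C+o(n^2))$, which is the desired inequality.

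The soft operator-theoretic steps are routine, so the hard part will be the potential-theoretic input: establishing, with the correct normalization of the condenser capacity, the width estimate $\limsup_n s_n(R)^{1/n}\le e^{-1/C}$, and in particular handling the case where polynomials do not suffice and one must use rational approximants with poles outside $\Omega$. Once this is in hand, the three displayed asymptotics merely record how much of the condenser decay rate is inherited by the individual eigenvalues, by their geometric means, and along a suitable subsequence.
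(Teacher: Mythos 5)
Your argument is correct in outline, and the two reductions (the second inequality from monotonicity of $\{\lambda_n\}$, the third by contradiction from the Ces\`aro-type averaging) are exactly right; but the route to the main inequality is genuinely different from the one the paper points to. The survey does not prove the theorem itself — it defers to the cited reference — and the proof it gestures at runs through the determinant machinery developed just beforehand: by the double-orthogonality relations \eqref{07.28.06.1} one has $\lambda_0\cdots\lambda_n=\det\bigl(B(\vec{u}_i,\vec{u}_j)\bigr)=\det\bigl(\int_K u_iu_j\,d\mu\bigr)$, i.e.\ equality is attained in Proposition \ref{th1}, and the upper bound is then obtained by estimating this Gram-type determinant directly, approximating the $u_i$ on $K$ by an $n$-dimensional space of rational functions with poles off $\overline{\Omega}$ and exploiting the symmetry of the kernel so that the Bernstein--Walsh error $e^{-j/C}$ enters through \emph{both} indices. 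You instead factor the representing conjugate-linear operator as $F_\Omega=R^{*}WR$ through the restriction $R:L^2_a(\Omega)\to L^2(K,\mu)$ and apply Horn's multiplicative inequality for singular values, importing the classical Widom--Parfenov asymptotics $\limsup_n s_n(R)^{1/n}\le e^{-1/C}$ as a black box. The two approaches are close cousins — the width asymptotics you cite are themselves proved by the same determinant/rational-approximation technique — but yours is cleaner at the operator level and makes completely transparent where the crucial factor $2$ (the difference between $e^{-n^2/(2C)}$ and $e^{-n^2/C}$) comes from, namely the two occurrences of $R$; the paper's route is more self-contained relative to Proposition \ref{th1} and handles the measure $\mu$ without detouring through a separate embedding theorem. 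Two small points you should still pin down: the extension of the identity $B(f,g)=\int_K fg\,d\mu$ from functions analytic on $\overline{\Omega}$ to all of $L^2_a(\Omega)$ requires a density argument, which holds under the standing regularity hypotheses making $B$ compact but is not automatic for arbitrary planar domains; and the classical width estimates are usually stated for restriction into $C(K)$ or $L^2(K,dA)$, so you should note that $\norm{Rf}_{L^2(K,\mu)}\le\mu(K)^{1/2}\sup_K|f|$ lets you transfer them to a general finite positive $\mu$ on $K$.
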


The proof, and other similar examples of asymptoitics of the eigevalues of compact bilinear symmetric forms
are contained in \cite{PutinarProkhorov}.

\subsection{The Neumann-Poincar\'e operator in two dimensions}

The classical boundary problems for harmonic functions
can be reduced to singular integral equations on the boundary of the respective domain
via single and double layer potentials. The double layer potential, also known as the
Neumann-Poincar\'e operator, offers an elegant path for solving such boundary problems and
at the same time it is one of the most important and well studied singular integral operators
\cite{Ahl,Plemelj}. The spectrum of the Neumann-Poincar\'e operator coincides, up to normalization, with the Fredholm 
eigevalues of the underlying domain, providing important invariants in quasi-conformal mapping theory.
Two real dimensions are special, due to the existence of complex variables and
the harmonic conjugate of a harmonic functions. An intimate relationship
between the Neumann-Poincar\'e operator and a $C$-symmetric operator, acting on the 
underlying Bergman space, was discovered by Schiffer \cite{Sch1,Sch2}. We illustrate,
from the restricted point of view of our survey,  this connection. Complete details can be found in
\cite{KPS}.

Let  $\Gamma$ be
$C^2$-smooth Jordan curve, surrounding the domain $\Omega \subseteq
\C$, and having $\Omega_e$ as exterior domain. We denote
by $z,w,\zeta,\ldots$ the complex coordinate in $\C$ and by
$\partial_{\overline{z}} = \frac{\partial}{\partial \overline{z}}$
the Cauchy-Riemann operator. The area measure will be
denoted $dA$. Following Poincar\'e, we consider the space $\mathfrak{H}$ consists of (real-valued)
harmonic functions $h$ on $\C \setminus \Gamma$ having
square summable gradients:
$$ h \in \mathfrak{H} \Leftrightarrow \int_{\Omega \cup \Omega_e} |\ \partial_{\overline z} h(z)|^2 dA(z) <\infty, \ h(\infty) =0.$$
Note that the gradients $\partial_{\overline z} h $ are now square
summable complex conjugate-analytic functions. The gradients of elements in $\mathfrak{H}_i$ form the Hilbert space
$\mathfrak{B}(\Omega)$, which is the complex conjugate of the
Bergman space $L_a^2(\Omega)$ of $\Omega$. Boundary values will be considered in appropriate fractional order Sobolev spaces
$W^s(\Gamma)$.

 The Hilbert space $\mathfrak{H}$ possesses two natural direct sum decompositions:
$$ \mathfrak{H} = \mathfrak{S} \oplus \mathfrak{D} = \mathfrak{H}_i \oplus \mathfrak{H}_e.$$
The first one corresponds to the ranges of the single $S_f$, respectively double $D_f$, layer potentials of 
charge distributions $f$ on the boundary $\Gamma$. The second subspaces are
$$ \mathfrak{H}_i = \{ (h_i,0) \in \mathfrak{H}\}, \ \ \ \mathfrak{H}_e = \{ (0,h_e) \in \mathfrak{H}\}.$$

The single and double layer potentials are in this case strongly
related to Cauchy's integral. For instance, the singular integral component of the double layer potential is
$$ (Kf)(z) = \int_\Gamma f(\zeta) \Re  \Big[\frac{d \zeta}{2 \pi i (\zeta - z)} \Big] = \frac{1}{2 \pi}
\int_\Gamma f(\zeta)\  d \ {\rm arg}(\zeta - z).$$

The following complex conjugate-linear singular integral operator plays
the role of the symmetry $P_d-P_s$ in our notation. Let $F =
\nabla S_f$, for $f \in W^{1/2}(\Gamma)$, be regarded as a single
conjugate-analytic function defined on all $\Omega \cup \Omega_e$.
Define the Hilbert (sometimes called Beurling) transform
\begin{equation*}
(T F )(z) = {\rm p.v.} \frac{1}{\pi } \int_{\Omega \cup \Omega_e} \frac{\overline{F(\zeta)}}
{(\overline{\zeta}-\overline{z})^2} dA(\zeta).
\end{equation*}

\begin{lemma} Let $h \in \mathfrak{H}$ be represented as
$h = D_f + S_g$, in which $f \in W^{1/2}(\Gamma)$ and $g \in W^{-1/2}(\Gamma)$. Then
\begin{equation*} 
T \nabla (D_f+ S_g) = \nabla ( D_f - S_g).
\end{equation*}
\end{lemma}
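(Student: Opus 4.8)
The plan is to use linearity of $T$ and of $h\mapsto\nabla h$, together with the direct sum $\mathfrak{H}=\mathfrak{S}\oplus\mathfrak{D}$, to reduce the identity to the two \emph{eigenvalue identities}
\begin{equation*}
  T\,\nabla D_f=\nabla D_f,\qquad T\,\nabla S_g=-\,\nabla S_g,
\end{equation*}
which together say that $T$, transported to $\mathfrak{B}(\Omega)\oplus\mathfrak{B}(\Omega_e)$ by $h\mapsto\nabla h$, is precisely the symmetry $P_d-P_s$ along that splitting; adding them gives $T\nabla(D_f+S_g)=\nabla D_f-\nabla S_g=\nabla(D_f-S_g)$, as claimed. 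Since every $h\in\mathfrak{H}$ is uniquely of the form $D_f+S_g$, nothing beyond the two eigenvalue identities is needed.

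First I would turn $T$ into a boundary operator. Identify $\nabla h$ with the conjugate-analytic function $F=\partial_{\overline z}h=\overline{\partial_z h}$ on $\Omega\cup\Omega_e$, so that $\overline F=\partial_z h$ is holomorphic in $\Omega$ and in $\Omega_e$ separately, vanishes at $\infty$ like $O(|z|^{-2})$ (finite Dirichlet energy and $h(\infty)=0$ rule out a logarithmic and a constant term), and has square-integrable modulus. Inserting the identity $\dfrac{1}{(\overline\zeta-\overline z)^2}=-\,\partial_{\overline\zeta}\!\bigl(\tfrac{1}{\overline\zeta-\overline z}\bigr)$ into the definition of $TF$ and integrating by parts over $\Omega$ and over $\Omega_e$ (Stokes), the area term drops because $\partial_{\overline\zeta}\overline F=0$ off $\Gamma$, the term at infinity vanishes by that decay, and the circle around $\zeta=z$ contributes nothing since $\oint_{|\zeta-z|=\epsilon}(\overline\zeta-\overline z)^{-1}\,d\zeta=0$ while $\overline F$ is continuous there. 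What survives is a contour integral over $\Gamma$ of the jump $\mu:=(\partial_z h)^+-(\partial_z h)^-$ (interior minus exterior boundary value, with $\Gamma=\partial\Omega$):
\begin{equation*}
  (TF)(z)=-\frac{1}{2\pi i}\int_{\Gamma}\frac{\mu(\zeta)}{\overline\zeta-\overline z}\,d\zeta
  =-\frac{1}{2\pi i}\int_{\Gamma}\frac{\mu(\zeta)\,t(\zeta)}{\overline\zeta-\overline z}\,ds,
\end{equation*}
where $\Gamma$ is parametrized by arc length, $t$ is the unit tangent, $d\zeta=t\,ds$ and $\overline{d\zeta}=\overline t\,ds$. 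On the other hand, a function holomorphic off $\Gamma$ and vanishing at $\infty$ equals the Cauchy transform of its own jump, so $\partial_z h(z)=\frac{1}{2\pi i}\int_{\Gamma}\mu(\zeta)(\zeta-z)^{-1}\,d\zeta$, whence
\begin{equation*}
  F(z)=\overline{\partial_z h(z)}=-\frac{1}{2\pi i}\int_{\Gamma}\frac{\overline{\mu(\zeta)\,t(\zeta)}}{\overline\zeta-\overline z}\,ds.
\end{equation*}
Comparing the two displays, $TF=\epsilon\,F$ will hold precisely when the contour density obeys $\overline{\mu\,t}=\epsilon\,\mu\,t$ on $\Gamma$.

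Next I would read off $\epsilon$ from the classical jump relations. Under the usual identification of a gradient with a complex number, $\partial_z h=\tfrac12\overline{\nabla h}$, so $\mu=\tfrac12\,\overline{[\nabla h]_\Gamma}$. For the single-layer potential $S_g$ the potential itself is continuous across $\Gamma$, so its tangential derivative is continuous and the jump of $\nabla S_g$ is \emph{normal}: $[\nabla S_g]_\Gamma=\rho\,\nu$ with $\rho$ real-valued and $\nu$ the outward unit normal. Since $t=i\nu$, this gives $\mu\,t=\tfrac12\,\rho\,\overline\nu\,(i\nu)=\tfrac{i}{2}\rho$, purely imaginary, so $\overline{\mu t}=-\mu t$ and $\epsilon=-1$. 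For the double-layer potential $D_f$ the normal derivative is continuous while the potential jumps by $f$, so the jump of $\nabla D_f$ is \emph{tangential}: $[\nabla D_f]_\Gamma=\kappa\,t$ with $\kappa=\partial_s f$ real-valued, whence $\mu\,t=\tfrac12\,\kappa\,\overline t\,t=\tfrac12\kappa$, real, so $\epsilon=+1$. This yields the two eigenvalue identities, and the lemma follows.

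The hard part will be the bookkeeping in the first step: justifying the integration by parts rigorously at the boundary regularity $f\in W^{1/2}(\Gamma)$, $g\in W^{-1/2}(\Gamma)$, where $S_g$ has distributional density and the jump relations hold only in a Sobolev sense, and tracking the normalizations (in $dA$, in Stokes' formula, and in the definition of $T$) precisely enough to land on the factor $-(2\pi i)^{-1}$ above. A reassuring internal check is that this \emph{same} constant is exactly what forces $\epsilon=-1$ on single layers and $\epsilon=+1$ on double layers; a mismatch would signal a sign or orientation error.
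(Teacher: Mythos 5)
The paper itself gives no proof of this lemma; it defers entirely to the reference [KPS] and merely remarks beforehand that $T$ ``plays the role of the symmetry $P_d-P_s$.'' Your argument is a correct reconstruction of exactly that statement, and it follows the standard route: reduce to the two eigen-identities $T\nabla D_f=\nabla D_f$ and $T\nabla S_g=-\nabla S_g$, convert the principal-value area integral defining $T$ into a contour integral of the jump $\mu=(\partial_z h)^+-(\partial_z h)^-$ by Stokes (the excised disk contributes nothing since $\oint_{|\zeta-z|=\epsilon}\overline{F(\zeta)}\,(\overline{\zeta}-\overline{z})^{-1}\,d\zeta=0$ by Cauchy, and the circle at infinity dies because finite energy plus $h(\infty)=0$ force $\partial_z h=O(|z|^{-2})$), and then read off the sign from the classical jump relations: the single-layer gradient jumps normally, so $\mu t$ is purely imaginary and $\epsilon=-1$; the double-layer gradient jumps tangentially, so $\mu t$ is real and $\epsilon=+1$. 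Two minor remarks. First, your phrase ``precisely when'' overstates what you use and what is immediate: only the sufficiency of $\overline{\mu t}=\epsilon\,\mu t$ for $TF=\epsilon F$ is needed, and only that direction follows from comparing the two contour-integral representations. Second, the low regularity $g\in W^{-1/2}(\Gamma)$ is most cleanly handled not by justifying the integration by parts directly at that regularity but by running the computation for smooth densities and extending by density, using that $T$ is bounded (indeed isometric, per the Corollary following the lemma) on $\mathfrak{B}(\Omega)\oplus\mathfrak{B}(\Omega_e)$; you flag the issue but do not name this standard fix.
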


\begin{corollary} The conjugate-linear transform $T$ is an isometric isomorphism of
the space $\mathfrak{B}(\Omega) \oplus \mathfrak{B}(\Omega_e)$ onto itself.
\end{corollary}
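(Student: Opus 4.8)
The plan is to recognize the conjugate-linear operator $T$ as (the complex conjugate of) the Ahlfors--Beurling transform, restricted to the closed subspace $\mathfrak{B}(\Omega)\oplus\mathfrak{B}(\Omega_e)$ of $L^2(\C,dA)$, and then to read off both the norm identity and the bijectivity from classical facts together with the preceding Lemma.

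First I would make the global identification precise. Since $\Gamma$ is a $C^2$ Jordan curve it has two-dimensional measure zero, so $L^2(\Omega\cup\Omega_e,dA)$ is just $L^2(\C,dA)$, and every $F\in\mathfrak{B}(\Omega)\oplus\mathfrak{B}(\Omega_e)$, extended arbitrarily across $\Gamma$, defines an element of $L^2(\C,dA)$. Let $\mathcal{B}$ denote the Ahlfors--Beurling transform $[\mathcal{B}\phi](z)=\mathrm{p.v.}\,\tfrac1\pi\int_{\C}\phi(\zeta)(\zeta-z)^{-2}\,dA(\zeta)$ and let $\mathcal{C}$ be complex conjugation on $L^2(\C,dA)$. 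Inspecting the integral defining $T$ one sees immediately that $TF=\overline{\mathcal{B}F}$, that is, $T=\mathcal{C}\mathcal{B}$ on $\mathfrak{B}(\Omega)\oplus\mathfrak{B}(\Omega_e)$. Since $\mathcal{B}$ acts in the Fourier transform as multiplication by the unimodular symbol $\overline{\xi}/\xi$, it is a unitary operator on $L^2(\C,dA)$; hence $T=\mathcal{C}\mathcal{B}$ is a conjugate-linear, norm-preserving bijection of $L^2(\C,dA)$ onto itself. In particular $\|TF\|=\|F\|$ for every $F$, so the whole statement reduces to showing that $T$ maps the proper closed subspace $\mathfrak{B}(\Omega)\oplus\mathfrak{B}(\Omega_e)$ onto itself.

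That invariance is precisely where the Lemma is needed, since the bare Beurling transform does not preserve Bergman spaces. Given $F\in\mathfrak{B}(\Omega)\oplus\mathfrak{B}(\Omega_e)$, I would write $F=\nabla h$ with $h\in\mathfrak{H}$ via the identification recalled above, and then use the decomposition $\mathfrak{H}=\mathfrak{S}\oplus\mathfrak{D}$ to write $h=D_f+S_g$ with $f\in W^{1/2}(\Gamma)$ and $g\in W^{-1/2}(\Gamma)$. The Lemma gives $TF=T\nabla(D_f+S_g)=\nabla(D_f-S_g)$, which again lies in $\mathfrak{B}(\Omega)\oplus\mathfrak{B}(\Omega_e)$; replacing $g$ by $-g$ shows the image is all of $\mathfrak{B}(\Omega)\oplus\mathfrak{B}(\Omega_e)$, and applying the Lemma twice yields $T^2=I$ on this subspace, so $T$ is a bijection of it. Combining this with the norm identity from the previous paragraph finishes the proof. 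A self-contained alternative to the Beurling-transform input (and the one closest to the ``$T$ plays the role of $P_d-P_s$'' picture) is to use that the energy decomposition $\mathfrak{H}=\mathfrak{S}\oplus\mathfrak{D}$ is orthogonal and that $\nabla$ carries the Dirichlet norm on $\mathfrak{H}$ to the $L^2$ norm on $\mathfrak{B}(\Omega)\oplus\mathfrak{B}(\Omega_e)$, so that $\|\nabla(D_f-S_g)\|^2=\|D_f\|^2+\|S_g\|^2=\|\nabla(D_f+S_g)\|^2$, giving $\|TF\|=\|F\|$ directly.

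The main obstacle is not analytic depth but this invariance step: one must be careful that $T$, manifestly an isometry of all of $L^2(\C,dA)$, nonetheless preserves the Bergman subspace, and this genuinely uses the Lemma together with the surjectivity of the gradient map $\nabla:\mathfrak{H}\to\mathfrak{B}(\Omega)\oplus\mathfrak{B}(\Omega_e)$ and the layer-potential decomposition $\mathfrak{H}=\mathfrak{S}\oplus\mathfrak{D}$. If one insists on building everything from scratch rather than citing those potential-theoretic facts, the real work migrates into verifying the orthogonality of the single- and double-layer ranges in the Dirichlet inner product via Green's identity and the Plemelj jump relations, with due attention to the normalization $h(\infty)=0$.
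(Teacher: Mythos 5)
Your proposal is correct, and in fact it contains two arguments, one of which is precisely the route the paper intends. The paper (deferring details to the reference on the Neumann--Poincar\'e operator) proves the corollary by observing that, under the identification of $\mathfrak{B}(\Omega)\oplus\mathfrak{B}(\Omega_e)$ with $\nabla\mathfrak{H}$, the Lemma exhibits $T$ as the symmetry $P_d-P_s$ attached to the orthogonal energy decomposition $\mathfrak{H}=\mathfrak{S}\oplus\mathfrak{D}$; since a difference of complementary orthogonal projections is a unitary involution, $T$ is an isometric bijection. That is exactly your ``self-contained alternative,'' including the computation $\norm{\nabla(D_f-S_g)}^2=\norm{\nabla D_f}^2+\norm{\nabla S_g}^2=\norm{\nabla(D_f+S_g)}^2$ (note the energy norms of the potentials, not the boundary densities, should appear there), and you correctly identify that the real content hidden in this step is the orthogonality of $\mathfrak{S}$ and $\mathfrak{D}$, which rests on Green's identity and the jump relations. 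Your primary route is genuinely different and also valid: recognizing $T$ as complex conjugation composed with the Ahlfors--Beurling transform gives the isometry on all of $L^2(\C,dA)$ for free from the unimodular Fourier multiplier $\overline{\xi}/\xi$, independently of any potential theory; the Lemma is then used only for the subspace invariance and the involution property $T^2=I$, which yields surjectivity. What the Beurling-transform argument buys is a cleaner separation of concerns (isometry is global and classical, invariance is where the layer potentials enter); what the paper's $P_d-P_s$ argument buys is that it stays entirely inside the energy-space framework and makes the ``angle operator'' interpretation, which drives the rest of the section, immediate. Either way the proof is complete.
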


We are ready to define the principal conjugate-linear operator for our study:
$$T_\Omega : \mathfrak{B}(\Omega) \to \mathfrak{B}(\Omega), \ \ T_\Omega (F) (z) = T (F,0) (z),
\ \ z \in \Omega,$$ where $(F,0)$ means the extension of $F \in
\overline{L_a^2(\Omega)}$ by zero on $\Omega_e$. Thus the operator
$T_\Omega$ and the one described above coincide as linear
transformations over the real field. 

A key observation, going back to the pioneeering work of Poincar\'e, is that 
the angle operator
$ P_s(P_e-P_i)P_s$ measuring the balance of energies (inner-outer) of a harmonic field
generated by a single layer potential is unitarily equaivelent to $K$, see for details
\cite{KPS}. But it is a simple matter of the geometry of Hilbert spaces that the 
angle operator $P_i(P_d-P_s)P_i$ is unitarily equaivalent to $ P_s(P_e-P_i)P_s$.
We are led to the following nontrivial consequences, originally proved by Schiffer
\cite{Sch1, Sch2}.

\begin{theorem} 
Let $\Omega$ be a bounded planar domain with $C^2$ smooth boundary
and let $T_\Omega : L_a^2(\Omega) \to L_a^2(\Omega)$ be the conjugate-linear operator
$$ [T_\Omega f](z) = {\rm p.v.} \frac{1}{\pi } \int_{\Omega} \frac{\overline{f(\zeta)}}
{({\zeta}-{z})^2} dA(\zeta), \ \ f \in A^2(\Omega), \ z \in \Omega.$$
Then $T_\Omega$ is compact and the eigenvalues of the conjugate-linear eigenvalue problem
$$ T_\Omega f_k = \lambda_k f_k$$
coincide (multiplicities included) with the spectrum of the Neumann-Poincar\'e
operator $K$, except the eigenvalue $1$. The eigenfunctions $\{f_k\}$ are orthogonal and complete
in $L_a^2(\Omega)$.
\end{theorem}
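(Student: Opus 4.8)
The plan is to assemble the final statement from the two unitary equivalences recorded just above it and then to read off its three assertions (compactness, the identification of the eigenvalues, and the orthogonal completeness of the eigenfunctions) from that chain together with the refined polar decomposition. First I would make precise the identification $T_\Omega\cong P_i(P_d-P_s)P_i|_{\mathfrak H_i}$. Write $P_i,P_e$ for the orthogonal projections of $\mathfrak H$ onto $\mathfrak H_i,\mathfrak H_e$ and $P_s,P_d$ for those onto $\mathfrak S,\mathfrak D$. The Lemma preceding the theorem says that the Hilbert (Beurling) transform acts as $+1$ on the gradients of double-layer potentials and as $-1$ on those of single-layer potentials, i.e.\ it is the self-adjoint unitary involution $T=P_d-P_s$ on $\mathfrak B(\Omega)\oplus\mathfrak B(\Omega_e)\cong\mathfrak H$ (this is exactly the Corollary). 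Since by definition $T_\Omega F=T(F,0)|_\Omega$ for $F\in\mathfrak B(\Omega)\cong\mathfrak H_i$, transporting $\mathfrak B(\Omega)=\overline{L^2_a(\Omega)}$ back to $L^2_a(\Omega)$ by complex conjugation converts $T_\Omega$ into a \emph{conjugate}-linear operator with kernel $(\zeta-z)^{-2}$ and identifies it with $P_i(P_d-P_s)P_i|_{\mathfrak H_i}$; the symmetry of that kernel in $\zeta$ and $z$ gives at once that $T_\Omega$ is $\R$-selfadjoint in the sense of \eqref{real-symmetry}.

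Next I would establish the two equivalences. Writing $P_d-P_s=I-2P_s$ and $P_e-P_i=I-2P_i$ one has $P_i(P_d-P_s)P_i|_{\mathfrak H_i}=I-2YY^{\ast}$ and $P_s(P_e-P_i)P_s|_{\mathfrak S}=I-2Y^{\ast}Y$ with $Y=P_i|_{\mathfrak S}\colon\mathfrak S\to\mathfrak H_i$; since $YY^{\ast}$ and $Y^{\ast}Y$ have the same spectrum with multiplicities off their kernels, and since those kernels --- the $+1$-eigenspaces $\mathfrak H_i\cap\mathfrak D$ and $\mathfrak S\cap\mathfrak H_e$ --- are both trivial (a single-layer potential whose field vanishes on one side of $\Gamma$ is identically zero, by continuity across $\Gamma$ and the maximum principle, and a double-layer potential whose field vanishes outside has constant, hence trivial, density via the jump relations), we get $P_i(P_d-P_s)P_i|_{\mathfrak H_i}\cong P_s(P_e-P_i)P_s|_{\mathfrak S}$. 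The remaining link, $P_s(P_e-P_i)P_s|_{\mathfrak S}\cong K$ away from the eigenvalue $1$, is the classical Plemelj--Poincar\'e symmetrization of the double layer potential: the interior/exterior balance of Dirichlet energy of a single-layer field is governed by the Neumann--Poincar\'e operator. Here is where the eigenvalue $1$ drops out: in two dimensions finiteness of the Dirichlet energy up to infinity forces $h(\infty)=0$, hence total charge zero for admissible single-layer densities, so the constant density --- the one-dimensional $1$-eigenspace of $K$ --- represents the zero field and is invisible to $T_\Omega$ (equivalently $\nabla D_f=0$ precisely when $f$ is constant). I would take the precise form of this symmetrization, with the attendant multiplicity count, from \cite{KPS}.

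Chaining the equivalences gives $\sigma(T_\Omega)=\sigma(K)\setminus\{1\}$ with multiplicities; this is consistent with the conjugate-linear eigenvalue problem $T_\Omega f_k=\lambda_k f_k$ because, $T_\Omega$ being conjugate-linear and symmetric, its real spectrum is symmetric about the origin, matching the known $\pm$-symmetry of the two-dimensional Neumann--Poincar\'e spectrum. Compactness of $T_\Omega$ then follows from the classical compactness of $K$ on a $C^2$ curve (the kernel $d\arg(\zeta-z)$ is weakly singular). Finally, $T_\Omega$ is a \emph{compact} conjugate-linear $\R$-selfadjoint operator, so the refined polar decomposition in its conjugate-linear form (Theorem~\ref{RPDB}) yields $T_\Omega=J|T|$ with $J$ a conjugation commuting with the compact positive operator $|T|$; choosing inside each eigenspace of $|T|$ an orthonormal basis fixed by $J$ produces an orthonormal basis $\{f_k\}$ of $L^2_a(\Omega)$ with $T_\Omega f_k=s_k f_k$, which is exactly the asserted orthogonal and complete system of eigenfunctions.

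Everything here except the Plemelj--Poincar\'e step is essentially bookkeeping --- the projection algebra, the $YY^{\ast}$ versus $Y^{\ast}Y$ lemma, and the refined polar decomposition, which is already in hand. The genuine obstacle, and the technical core of \cite{KPS} and of Schiffer's original argument \cite{Sch1,Sch2}, is the precise identification $P_s(P_e-P_i)P_s|_{\mathfrak S}\cong K$ with multiplicities, together with the careful accounting showing that the only eigenvalue lost is $1$; this rests on the jump relations for single and double layer potentials on a $C^2$ boundary, which is where the regularity hypothesis on $\Gamma$ enters in an essential way.
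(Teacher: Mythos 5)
Your proposal follows essentially the same route as the paper: identify $T_\Omega$ with the compressed angle operator $P_i(P_d-P_s)P_i|_{\mathfrak H_i}$, pass to $P_s(P_e-P_i)P_s|_{\mathfrak S}$ by the elementary $YY^{*}$ versus $Y^{*}Y$ geometry (the paper's ``simple matter of the geometry of Hilbert spaces''), invoke the Plemelj--Poincar\'e identification of the latter with $K$ from \cite{KPS} --- which is also where the paper defers the genuinely hard step and the accounting for the excluded eigenvalue $1$ --- and obtain the orthogonal complete eigenbasis from the refined polar decomposition $T_\Omega=J|T|$. The extra detail you supply on the kernel triviality and the $\pm$-phase bookkeeping is a correct fleshing-out of what the paper leaves implicit.
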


In particular one finds that
\begin{equation}
\| T_\Omega \| = \lambda_1^+,
\end{equation}
where $\lambda_1^+$ is the largest eigenvalue of $K$ less than $1$.

Note the ambiguity of phase in the eigenvalue problem $T_\Omega f
= \lambda f$. By multiplying $f$ by a complex number $\tau$ of
modulus one, the complex conjugate-linearity of $T_\Omega$ implies
$T_\Omega f = \tau^2 \lambda f.$ On the other hand, we have
identified $T$ with an $\R$-linear operator ($P_d-P_s$)
acting on gradients of real harmonic functions. This simple
observation leads to the following characteristic symmetry of the
Neumann-Poincar\'e operator specific for two variables.

\begin{proposition} Let $\Gamma \subseteq \R^2$ be a $C^2$-smooth Jordan curve. Then, except the point $1$,
 the spectrum of the
Neumann-Poincar\'e operator acting on $L^2(\Gamma)$ is symmetric with respect to the origin, multiplicities included:
$\lambda \in \sigma(K), \lambda <1$ if and only if $-\lambda \in \sigma(K)$.
\end{proposition}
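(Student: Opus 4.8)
The plan is to exploit the phase ambiguity in the conjugate-linear eigenvalue problem $T_\Omega f = \lambda f$ together with the identification, established in the preceding theorem, of the nonzero part of $\sigma(K)$ with the eigenvalues of $T_\Omega$. The starting observation is the one already recorded just above the statement: because $T_\Omega$ is \emph{conjugate}-linear, multiplying an eigenfunction $f$ by a unimodular scalar $\tau$ replaces the eigenvalue relation $T_\Omega f = \lambda f$ with $T_\Omega (\tau f) = \tau^2 \lambda f$. Hence if $\lambda$ is an eigenvalue of $T_\Omega$ with eigenfunction $f$, then $\tau^2\lambda$ is an eigenvalue for every $|\tau|=1$, which at first glance only tells us that the spectrum of $T_\Omega$ is circularly symmetric (consistent with the general remark that the spectrum of a conjugate-linear operator is rotation-invariant). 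The subtlety is that the eigenvalues $\lambda_k$ appearing in the theorem are the \emph{genuine} eigenvalues matched with $\sigma(K)$, and one must pin down which representative of each rotation-orbit is the relevant one.

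First I would recall that the eigenvalues $\lambda_k$ of $K$ (other than $1$) are real, since $K$ is unitarily equivalent to a selfadjoint operator — namely the angle operator $P_s(P_e-P_i)P_s$ acting on a genuine Hilbert space, as explained in the discussion leading up to the theorem. So the correspondence gives, for each nonzero $\lambda \in \sigma(K)$ with $\lambda < 1$, a real number and a conjugate-linear eigenvector. Next I would invoke the refined polar decomposition for the conjugate-linear operator $T_\Omega$: writing $T_\Omega = J|T|$ with $J$ a conjugation commuting with $|T|=\sqrt{T^*T}$ (Theorem \ref{RPDB}, applied to the $C$-symmetric linear operator $T$ representing the Friedrichs-type bilinear form, exactly as in Subsection \ref{SectionFriedrichs}), one sees that $|T|$ is a positive selfadjoint operator whose eigenvalues are the \emph{moduli} of the $\lambda_k$, and that each spectral subspace of $|T|$ has an orthonormal basis of $J$-real vectors. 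On such a $J$-real eigenvector $f$ of $|T|$ with $|T|f = sf$, $s>0$, we get $T_\Omega f = J|T|f = sJf = sf$, producing the eigenvalue $+s$; replacing $f$ by $if$ produces $T_\Omega(if) = -i\,T_\Omega f = -is f = -s(if)$, i.e.\ the eigenvalue $-s$ with an eigenvector of the same norm. Since $K$ is the $\R$-linear operator $P_d - P_s$ in disguise (the eigenfunctions of $T_\Omega$ being orthogonal and complete by the theorem), each eigenvalue of $K$ less than $1$ shows up this way, and the construction manifestly pairs $s$ with $-s$ and preserves multiplicities because it is implemented by the $\R$-linear bijection $f \mapsto if$ of the corresponding $|T|$-eigenspace.

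So the key steps, in order, are: (1) use the theorem to reduce the claim about $\sigma(K)\setminus\{1\}$ to a claim about the eigenvalues of $T_\Omega$; (2) apply Theorem \ref{RPDB} to write $T_\Omega = J|T|$ and observe that eigenvalues of $T_\Omega$ come in the form $\pm s$ with $s$ an eigenvalue of the positive operator $|T|$, via the $J$-real basis; (3) check that $f \mapsto if$ is an $\R$-linear isomorphism between the eigenspace of $T_\Omega$ for $s$ and the eigenspace for $-s$, giving equality of multiplicities; (4) read the conclusion back to $K$. The main obstacle I anticipate is step (3), specifically making precise the bookkeeping of multiplicities: one must be careful that "eigenspace of the conjugate-linear operator $T_\Omega$ for the eigenvalue $\lambda$" is only an $\R$-linear subspace (it is not $\C$-linear, since scaling by $i$ moves $\lambda$ to $-\lambda$), so the right statement is that the real dimension of the $\lambda$-eigenspace equals that of the $(-\lambda)$-eigenspace, and this must be reconciled with how "multiplicity" is counted for $K$ on the Hilbert-space side through the unitary equivalence with $P_s(P_e-P_i)P_s$. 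Once one fixes the convention — multiplicity of $\lambda$ in $\sigma(K)$ equals $\tfrac12\dim_\R(\ker(T_\Omega - \lambda))$ when $\lambda \ne 0$, or equivalently the $\C$-dimension of the corresponding $|T|$-eigenspace — the symmetry $\lambda \leftrightarrow -\lambda$ with multiplicities is immediate from the $f\mapsto if$ argument.
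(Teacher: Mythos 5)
Your proposal is correct and rests on exactly the same mechanism as the paper's proof: if $T_\Omega f = \lambda f$, then conjugate-linearity gives $T_\Omega(if) = -\lambda(if)$, and $f \mapsto if$ pairs the $\lambda$- and $(-\lambda)$-eigenspaces bijectively, preserving multiplicities. The paper's version dispenses with the polar-decomposition detour and instead just notes that $iF = \partial_{\overline z}\widetilde{u}$ is the gradient of the harmonic conjugate, so the new eigenvector is $(\widetilde{u},0)$, but this is the same argument.
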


\begin{proof} Let $\lambda \in \sigma({K}) \setminus \{ 1\}$ and let $(u,0) \in \mathfrak{H}$
be the associated eigenfunction of the operator $P_i(P_d-P_e)P_i$. By the above correspondence there exists an
anti-analytic function $F = \partial_{\overline z} u$ satisfying
$T_\Omega F = \lambda F$. Let $G= iF$ and remark that the
conjugate-linearity of $T_\Omega$ implies $T_\Omega G = -\lambda G$.
Remark also that $G = \partial_{\overline z} \widetilde{u},$ where
$\widetilde{u}$ is the harmonic conjugate of $u$. Thus, the
eigenvector in $\mathfrak{H}$ corresponding to the eigenvalue
$-\lambda$ is simply $(\widetilde{u},0)$.
\end{proof}

Another symmetry is also available from the above framework.

\begin{proposition} Let $\Omega$ be a bounded planar domain with $C^2$-smooth boundary
and let $\Omega_e$ be the exterior domain. Then the Bergman space operators $T_\Omega$
and $T_{\Omega_e}$ have equal spectra.
\end{proposition}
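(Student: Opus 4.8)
The plan is to use the structural fact, recorded above, that the conjugate-linear Beurling transform $T$ is a \emph{conjugation} on $\mathcal H := \mathfrak B(\Omega)\oplus\mathfrak B(\Omega_e)$: it is conjugate-linear (its kernel carries $\overline{F(\zeta)}$), isometric (the Corollary), and involutive, since by the Lemma it realizes the reflection $P_d-P_s$ attached to the direct sum $\mathfrak H=\mathfrak D\oplus\mathfrak S$, so $T^2=(I-2P_s)^2=I$. The decomposition $\mathcal H=\mathfrak B(\Omega)\oplus\mathfrak B(\Omega_e)$ is \emph{orthogonal} (functions supported on $\Omega$ versus $\Omega_e$), so I may write
\[
T=\begin{bmatrix} T_\Omega & X\\ Y & T_{\Omega_e}\end{bmatrix},
\]
where, by the very definition of $T_\Omega$ as ``extend by zero, apply $T$, restrict to $\Omega$,'' the diagonal blocks are exactly the two operators in the statement, and $X\colon\mathfrak B(\Omega_e)\to\mathfrak B(\Omega)$, $Y\colon\mathfrak B(\Omega)\to\mathfrak B(\Omega_e)$ are conjugate-linear.

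First I would read off $T^2=I$ in block form: a product of two of the (conjugate-linear) entries is $\C$-linear, so expanding gives $T_\Omega^2+XY=I$ on $\mathfrak B(\Omega)$ and $T_{\Omega_e}^2+YX=I$ on $\mathfrak B(\Omega_e)$. Now $T_\Omega$ is compact (the Theorem above) and $\R$-selfadjoint — every conjugation satisfies $\langle Ta,b\rangle=\langle Tb,a\rangle$ (substitute $b\mapsto Tb$ in $\langle Ta,Tb\rangle=\langle b,a\rangle$), a property inherited by compressions — so the refined polar decomposition for conjugate-linear $\R$-selfadjoint operators gives $T_\Omega=J_\Omega|S_\Omega|$ with $T_\Omega^2=|S_\Omega|^2\ge 0$; thus $XY=I-T_\Omega^2$ is the identity plus a compact operator with real spectrum, so $\sigma(XY)$ consists of $1$ together with finite-multiplicity eigenvalues accumulating only at $1$. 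The key step is then the elementary fact that $XY$ and $YX$ share the same nonzero eigenvalues with the same multiplicities: if $XYv=\mu v$ with $\mu\neq 0$ then $Yv\neq 0$ and $(YX)(Yv)=Y(XYv)=Y(\mu v)=\overline\mu\,Yv=\mu\,Yv$ (the conjugate is harmless since $\mu$ is real here), and $Y$ is injective on $\ker(XY-\mu)$ — symmetrically with $X$. Hence $\sigma(XY)$ and $\sigma(YX)$ agree away from $1$, so $\sigma(T_\Omega^2)$ and $\sigma(T_{\Omega_e}^2)$ agree, with matched multiplicities, away from $0$; moreover $0$ lies in both (it is $1-\mu$ with $\mu=1$, and $1\in\sigma(XY)\Leftrightarrow 1\in\sigma(YX)$). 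Therefore $\sigma(T_\Omega^2)=\sigma(T_{\Omega_e}^2)$ with matched eigenvalue multiplicities. Finally, since $T_\Omega=J_\Omega|S_\Omega|$ is $\R$-selfadjoint its spectrum is the circularly symmetric set $\{z:|z|^2\in\sigma(T_\Omega^2)\}$, and likewise for $T_{\Omega_e}$, whence $\sigma(T_\Omega)=\sigma(T_{\Omega_e})$.

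I expect the main difficulty to be bookkeeping rather than a new idea: keeping straight which blocks are conjugate-linear (so the $XY$ versus $YX$ comparison picks up a complex conjugate that vanishes only because all spectra in sight are real), and tracking the exceptional values $0$ and $\pm1$ when passing between $T_\Omega$, $T_\Omega^2$, and the Neumann–Poincar\'e operator $K$ — in the normalization of the Theorem the statement becomes that the interior and exterior Neumann–Poincar\'e operators of $\Gamma$ have equal spectrum apart from the eigenvalue $1$, with the potential discrepancy at $\mu=1$ being immaterial because $-1\notin\sigma(K)$ for a $C^2$ curve and the eigenvalue $1$ is excluded throughout. A second, more classical route would replace the last two paragraphs by the known interior–exterior relation for Neumann–Poincar\'e operators together with the Theorem applied to both $\Omega$ and $\Omega_e$, but that requires the exterior-domain version of the Theorem and is therefore less self-contained.
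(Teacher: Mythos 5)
Your route is genuinely different from the paper's, and as written it has a gap at exactly the point where it needs the most care. The paper also starts from $T^2=I$ on $\mathfrak{B}(\Omega)\oplus\mathfrak{B}(\Omega_e)$, but it uses the \emph{off-diagonal} block of that identity: writing $T(F,0)=(\lambda F, G)$ for an eigenvector $F$ of $T_\Omega$ and applying $T$ once more gives $T(0,G)=((1-\lambda^2)F,-\lambda G)$, so $G$ is an explicit eigenvector of $T_{\Omega_e}$ with eigenvalue $-\lambda$ (and $G\neq 0$ because $\|T_\Omega\|=\lambda_1^+<1$ forces $\lambda^2\neq1$); the preceding symmetry proposition then converts $-\lambda$ into $\lambda$. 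You instead use the diagonal blocks $T_\Omega^2+XY=I$ and $T_{\Omega_e}^2+YX=I$ together with the $\sigma(XY)\setminus\{0\}=\sigma(YX)\setminus\{0\}$ principle. That can be made to work, but you have the exceptional value inverted: $XY$ and $YX$ agree away from $0$, not away from $1$, so after the affine shift $\sigma(T_\Omega^2)$ and $\sigma(T_{\Omega_e}^2)$ are guaranteed to agree only away from the value $1$ --- that is, your argument establishes equality of $\sigma(T_\Omega)$ and $\sigma(T_{\Omega_e})$ only off the unit circle. Your check that ``$0$ lies in both'' verifies agreement at a point that was never in question (both compressions are compact on infinite-dimensional spaces), while the genuine exceptional point --- whether $1\in\sigma(T_{\Omega_e}^2)$, equivalently whether $\ker(YX)\neq\{0\}$ --- is not addressed: the remark that $-1\notin\sigma(K)$ controls, via the Theorem, only the \emph{interior} operator $T_\Omega$, and invertibility of $XY$ does not by itself imply invertibility of $YX$ (this is precisely the $S^*S=I\neq SS^*$ phenomenon). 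To close the gap you would need to show directly that $T_{\Omega_e}^2G=G$ forces $G=0$; for instance, such a $G$ satisfies $XG=XT_{\Omega_e}G=0$ and therefore produces a nonzero element of $\mathfrak{S}$ or $\mathfrak{D}$ whose gradient vanishes identically on $\Omega$, which must then be excluded by a uniqueness argument for layer potentials under the normalization $h(\infty)=0$.

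The remaining bookkeeping in your proposal is sound: the orthogonal block decomposition, the $\R$-selfadjointness of the compressions of a conjugation, the positivity of $T_\Omega^2$ via the refined polar decomposition, and the observation that the conjugate in $YX(Yv)=\overline{\mu}\,Yv$ is harmless because all spectra in sight are real. What the two approaches buy is different: your diagonal-block route reduces everything to the classical $AB$ versus $BA$ lemma but only recovers $|\lambda|$ and forces you to confront the $|\lambda|=1$ exceptional set by hand, whereas the paper's off-diagonal transport produces an explicit eigenvector of $T_{\Omega_e}$ with a signed eigenvalue in one line and isolates the delicate case in the single condition $\lambda^2=1$, which is immediately ruled out by $\|T_\Omega\|<1$.
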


\begin{proof} Let $(F,0)$ be an eigenvector of $T_\Omega$, corresponding to the eigenvalue $\lambda$.
Denote $T (F,0) = (\lambda F, G)$. Since $T^2 = I$ we get $(F,0) = \lambda T(F,0) + T(0,G) = (\lambda^2 F, \lambda G) + T(0,G)$.
Thus $T(0,G) = ((1-\lambda^2)F, -\lambda G)$. This means $-\lambda \in \sigma(T_{\Omega_e})$ and by the preceding symmetry principle
$\lambda \in  \sigma(T_{\Omega_e})$.
\end{proof}

We can assert with confidence that most of Schiffer (and collaborators) works devoted to the Fredholm spectrum of a planar domain are, although not stated as such,
consequences of the obvious unitary equaivalence between the angle operators $ P_s(P_e-P_i)P_s$ and $ P_s(P_e-P_i)P_s$ \cite{Sch1,Sch2}.

{
\subsection{Symmetrizable operators} A great deal of effort was put in the physics community for deriving from the
$C$-symmetry of an operator $T$,
$$ T^\ast C = CT$$
the reality of its spectrum. Almost all studies starting by a rescaling of the Hilbert space metric 
with the aid of a positive operator of the form
$$ A = CS >0,$$
where $S$ is bounded and commutes with $T$. Indeed, in this case
\begin{equation}\label{similarity}
T^\ast A =  T^\ast CS = C T S = CS T = AT,
\end{equation}
or in equivalent terms
$$ \langle A Tf,g\rangle = \langle A f, Tg \rangle.$$
Non-selfadjoint operators with this property are called \emph{symmetrizable}.
In general, but not always, the operator $A$ is assumed to be invertible. In case $A$ is only one-to one, non-negative
it has a dense range in the underlying Hilbert space, so that the sequilinear form $\langle Af,g\rangle$ defines
a norm which is not equivalent to the original one. The latter framework is the origin of the concept of generalized function in a Gelfand triple of Hilbert spaces, with its known impact in diagonalizing concrete unbounded operators.
Far from being exhaustive, we refer to the following list of works  relating $\mathcal{PT}$-symmetric operators to symmetrizable ones \cite{SGH1992,Znojil1,Zn2004,Znojil2012,AK1, AK2, AK3, AGK}.

{ Symmetrizable operators appear in many physics contexts. As explained in the work by Scholtz and collaborators \cite{SGH1992}, even in the traditional formulation of Quantum Mechanics, there are important situations when one has to deal with non-selfadjoint operators. This is the case, for example, for the effective quantum models obtained by tracing out a number of degrees of freedom of a large quantum system, an operation leading to non-selfadjoint physical observables. Such effective models can be soundly  interpreted and analyzed if the physical observables are symmetrizable.  The authors of \cite{SGH1992} went on to formulate the following problem: Given a set of non-selfajdoint observables $T_i$ that are simultaneously symmetrizable by the same $A$, i.e., $T_i^\ast A = A T_i$ for all $T_i$'s,  in which conditions is the ``metric operator" $A$ uniquely defined? The issue is important because the expected values of the observables are physically measurable and they must be un-ambiguously defined. The uniqueness of $A$ will ensure that through the rescaled Hilbert space metric by $A$.  The answer to this question, which is quite satisfactory from a physical point of view, is as follow: The metric operator $A$ is uniquely defined by the system of $T_i$'s if and only if the set of these observables is irreducible, that is, if the only operator (up to a scaling factor) commuting with all $T_i$'s is the identity operator.

} 

The rescaling of norm idea is however much older, with roots in potential theory.
As a continuation of the preceding section we briefly recount here this classical framework which has inspired several generations of mathematicians but apparently did not reach the $\mathcal{PT}$-community.

Let $\Omega$ be a bounded domain in $\R^d$ with boundary $\Gamma$.
We assume that $\Gamma$ is at least $C^2$-smooth. The $(d-1)$-dimensional surface measure on $\Gamma$
is denoted by $d\sigma$ and the unit outer normal to a point $y \in \Gamma$
will be denoted $n_y$. We denote by $E(x,y)=E(x-y)$ the normalized
Newtonian kernel:
$$
E(x,y) = \begin{cases}
           \frac{1}{2\pi} \log \frac{1}{|x-y|}, & d=2,\\
           c_d |x-y|^{2-d}, & d \geq 3,
         \end{cases}
$$
where $c_d^{-1}$ is the surface area of the unit sphere in
$\R^d$. The signs were chosen so that $\Delta E = -\delta$
(Dirac's delta-function).

For a $C^2$-smooth function (density) $f(x)$ on $\Gamma$ we form the fundamental potentials:
the {\it single and double layer potentials} in $\R^d$; denoted $S_f$ and $D_f$ respectively:
$$
\begin{aligned}
S_f(x) = \int_\Gamma E(x,y) f(y) d\sigma(y)\\
D_f(x) = \int_\Gamma \frac{\partial}{\partial n_y} E(x,y) f(y) d \sigma(y).
\end{aligned}
$$

The Neumann-Poincar\'e kernel, appearing in dimenion two in the preceding section,
$$ K(x,y):=  -\frac{\partial}{\partial n_y} E(x-y); \ \  K^\ast(x,y) =  -\frac{\partial}{\partial n_x} E(x-y)$$
satisfies growth conditions which insure the compactness of the associated integral operator acting on the boundary:
$$ (Kf)(x) = 2 \int_\Gamma K(x,y) f(y) d\sigma(y), \ \ f \in L^2(\Gamma, d\sigma). $$
Similarly, the linear operator $$ Sf = S_f|_\Gamma, \ \ f \in
L^2(\Gamma),$$ turns out to be bounded (from $L^2(\Gamma)$ to the
same space). Remark that the representing kernel $E(x,y)$ of $S$
is pointwise  non-negative for $d\geq 3$. As a matter of fact the total energy of the field generated by 
the pair of harmonic functions $S_f$ (in $\Omega$ and its complement) is $\langle Sf, f\rangle_{2,\Gamma}$.

Returning to the main theme of this section, the following landmark observation, known as {\it Plemelj'
symmetrization principle} unveils the reality of the spectrum of the Neumann-Poincar\'e operator $K$, see
\cite{Plemelj}.
For a modern proof and details we refer to \cite{KPS}.

\begin{theorem} The layer operators $S,K : L^2(\Gamma) \longrightarrow L^2(\Gamma)$ satisfy the identity
\begin{equation}\label{layer}
KS = SK^\ast .
\end{equation}
\end{theorem}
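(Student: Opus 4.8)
**Proof proposal for Plemelj's symmetrization principle $KS = SK^*$.**

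The plan is to verify the identity $KS = SK^*$ by an unwinding of the definitions of the single and double layer operators in terms of the Newtonian kernel $E(x,y) = E(x-y)$, together with a symmetry / integration-by-parts argument. First I would write out both sides as iterated integral operators on $L^2(\Gamma,d\sigma)$. For $f \in C^2(\Gamma)$ (a dense class), the operator $S$ has kernel $E(x,y)$, the operator $K$ has kernel $-2\,\partial_{n_y}E(x-y)$, and the operator $K^*$ has kernel $-2\,\partial_{n_x}E(x-y)$. Hence the kernel of $KS$ is
\begin{equation*}
  \kappa_1(x,z) = -2\int_\Gamma \partial_{n_y}E(x-y)\,E(y-z)\,d\sigma(y),
\end{equation*}
while the kernel of $SK^*$ is
\begin{equation*}
  \kappa_2(x,z) = -2\int_\Gamma E(x-y)\,\partial_{n_z}E(y-z)\,d\sigma(y).
\end{equation*}
So the whole statement reduces to proving $\kappa_1(x,z) = \kappa_2(x,z)$ for all $x,z$ (including the coincident-point singularities, where the integrals are to be understood in the appropriate principal-value / weakly singular sense, which is legitimate because $\partial\Omega$ is $C^2$-smooth so the kernels are only weakly singular).

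The key step is a Green's-identity argument on the domain $\Omega$ applied to the two harmonic functions $u(y) = E(x-y)$ and $v(y) = E(y-z)$, both harmonic in $\Omega$ away from the poles $x,z$. Green's second identity gives
\begin{equation*}
  \int_\Gamma \big( u\,\partial_{n_y}v - v\,\partial_{n_y}u \big)\,d\sigma(y) = \int_\Omega \big( u\,\Delta v - v\,\Delta u\big)\,dy,
\end{equation*}
and since $\Delta E = -\delta$, the right-hand side produces precisely the ``boundary'' correction terms that, after accounting for which of $x,z$ lie inside or on $\Gamma$, cancel. The symmetry $E(x-y) = E(y-x)$ of the Newtonian kernel lets one rewrite $\partial_{n_y}E(x-y)$ in $\kappa_1$ and transfer the normal derivative onto the other factor; combining this with Green's identity converts $\kappa_1$ into $\kappa_2$. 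In dimension two this is exactly the computation underlying the Hilbert/Beurling transform identity $T\nabla(D_f + S_g) = \nabla(D_f - S_g)$ reproduced earlier in the excerpt, and indeed one may alternatively deduce $KS = SK^*$ abstractly from the Schiffer-type correspondence: the operator identity $KS = SK^*$ is equivalent to the statement that the $C$-symmetric operator $T_\Omega$ intertwines $K$ and its adjoint in the way forced by the refined polar decomposition (Theorem \ref{RPDB}), since $S$ plays the role of the positive ``symmetrizing'' factor and $K$ is unitarily equivalent to an operator of the form $J|T_\Omega|$-type angle operator.

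The main obstacle I expect is handling the boundary behavior and the jump relations carefully: the layer potentials $S_f$ and $D_f$ have well-known discontinuities across $\Gamma$ (the double layer jumps by $f$, the normal derivative of the single layer jumps), so the naive application of Green's identity must be done either on $\Omega$ and $\Omega_e$ separately and then combined, or with a limiting argument (shrinking a tubular neighborhood of $\Gamma$, or excising small balls around $x$ and $z$). Keeping track of the orientation of $n_y$ and of the factor-of-$2$ normalization in the definitions of $K$ and $Kf = 2\int_\Gamma K(x,y)f(y)\,d\sigma(y)$ is where sign errors creep in; once the excision limits are taken correctly, the singular contributions from the poles $x$ and $z$ match on both sides and the identity drops out. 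A clean alternative that sidesteps the pointwise kernel manipulation entirely is to test the operator identity against densities: show $\langle KSf, g\rangle = \langle Sf, K^*g\rangle = \langle SK^*g, f\rangle$ would be circular, so instead one proves directly that for harmonic fields $h_1 = S_f$, $h_2 = S_g$ the bilinear energy pairing $\int_{\Omega\cup\Omega_e}\nabla h_1\cdot\nabla h_2$ is symmetric and computes it two ways using the jump relations — this realizes $KS = SK^*$ as the symmetry of a Dirichlet-form pairing, and is the argument I would ultimately write up, referring to \cite{KPS} for the detailed jump-relation bookkeeping.
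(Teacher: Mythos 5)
The paper does not actually prove this theorem; it only states it and defers to the reference \cite{KPS} for ``a modern proof and details,'' so there is no in-text argument to compare against line by line. That said, the argument you settle on at the end of your proposal --- reducing $KS=SK^*$ (via $S=S^*$) to the symmetry in $f$ and $g$ of the pairing $\langle Sf, K^*g\rangle$, and then realizing that pairing as the Dirichlet energy $\int_{\Omega\cup\Omega_e}\nabla S_f\cdot\nabla S_g$ computed through Green's identity and the jump relations for $\partial_n S_f$ --- is precisely the standard modern proof and the one given in the cited reference, so your approach is essentially the intended one. Two points to tighten in a full write-up: first, the ``abstract Schiffer-type correspondence'' detour in your middle paragraph should be dropped, since in this survey that correspondence is built on top of the symmetrization principle rather than the other way around, so invoking it risks circularity; second, in dimension $d=2$ the exterior Dirichlet integral of $S_f$ diverges unless $\int_\Gamma f\,d\sigma=0$ (the logarithmic growth of $E$ at infinity), so the energy argument must first be run on the codimension-one subspace of mean-zero densities and then extended, or else replaced by a cutoff/limiting argument; your proposal does not flag this, and it is the one place where the clean energy identity genuinely needs extra care.
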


For an early discussion of the  importance of the above rescaling identity in potential theory see \cite{Korn1917,Mercer1919}.
It was however Carleman who put Plemelj' symmetrization principle at work, in his remarkable dissertation
focused on domains with corners \cite{Carleman}.

Numerous authors freed the symmetrization principle from its classical field theory roots, to mention only
\cite{KreinCompact,Lax,Zaanen}. We reproduce only Kre\u{\i}n's observation, which potentially can
impact the spectral analysys of unbounded $C$-symmetric operators via their resolvent.

Let $H$ be an infinite dimensional, separable, complex Hilbert
space and let ${\mathcal C}_p = {\mathcal C}_p (H), p \geq 1, $ be
the Schatten-von Neumann class of compact operators acting on $H$.

\begin{theorem} Let $p \geq 1$ and let $M \in {\mathcal C}_p (H)$ be a linear bounded
operator with the property that there exists a strictly positive bounded
operator $A$ such that $R M = M^\ast R$.

Then the spectrum of $M$ is real and
for every non-zero eigenvalue $\lambda$, if $(M-\lambda)^m f =0$ for some $m>1$,
then $(M-\lambda)f = 0$.

Moreover, the eigenvectors of $M^\ast$, including the null vectors, span $H$.
\end{theorem}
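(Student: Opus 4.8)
The plan is to read the symmetrizability relation $AM=M^{\ast}A$ (here $R=A$ is the strictly positive symmetrizer) as the statement that $M$ is \emph{symmetric} for the sesquilinear form $\inner{f,g}_{A}:=\inner{Af,g}$, and then to run the spectral theorem for compact selfadjoint operators in the Hilbert space attached to this form, transporting every conclusion back to $H$. Suppose first that $A$ is uniformly positive, $A\geq\varepsilon I$ for some $\varepsilon>0$; then $\inner{\cdot,\cdot}_{A}$ is an inner product whose norm is equivalent to the original one, so $H_{A}:=(H,\inner{\cdot,\cdot}_{A})$ is the same topological vector space with the same ideal of compact operators. The computation
\[
\inner{Mf,g}_{A}=\inner{AMf,g}=\inner{M^{\ast}Af,g}=\inner{Af,Mg}=\inner{f,Mg}_{A}
\]
shows $M$ is selfadjoint on $H_{A}$, and it is compact there. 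Hence $\sigma(M)\subseteq\R$, every nonzero point of $\sigma(M)$ is an eigenvalue whose root subspace equals its eigenspace (so $(M-\lambda)^{m}f=0$ with $m>1$ and $\lambda\neq0$ forces $(M-\lambda)f=0$, the selfadjoint identity $\inner{(M-\lambda)^{2}u,u}=\norm{(M-\lambda)u}^{2}$ killing the Jordan chains), and there is an orthonormal basis $\{\vec{e}_{n}\}$ of $H_{A}$ with $M\vec{e}_{n}=\lambda_{n}\vec{e}_{n}$, $\lambda_{n}\in\R$.

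It remains to pass from $M$ to $M^{\ast}$. In the invertible case $M^{\ast}=AMA^{-1}$, so $M^{\ast}$ is similar to $M$; moreover, applying $A$ to $M\vec{e}_{n}=\lambda_{n}\vec{e}_{n}$ gives $M^{\ast}(A\vec{e}_{n})=AM\vec{e}_{n}=\lambda_{n}A\vec{e}_{n}$, and $A\vec{e}_{n}\neq\vec{0}$ by injectivity of $A$, so each $A\vec{e}_{n}$ is an eigenvector of $M^{\ast}$, a null vector precisely when $\lambda_{n}=0$. Since $\inner{h,A\vec{e}_{n}}=\inner{Ah,\vec{e}_{n}}=\inner{h,\vec{e}_{n}}_{A}$ for $h\in H$, a vector $h$ orthogonal in $H$ to every $A\vec{e}_{n}$ is $\inner{\cdot,\cdot}_{A}$-orthogonal to the complete system $\{\vec{e}_{n}\}$, whence $h=\vec{0}$; thus the eigenvectors of $M^{\ast}$, null vectors included, span $H$.

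Finally one must drop the invertibility of $A$ and allow $A$ to be merely injective and nonnegative, in which case $\norm{\cdot}_{A}$ is strictly weaker than $\norm{\cdot}$. One then completes $H$ with respect to $\inner{\cdot,\cdot}_{A}$ to obtain a Gelfand triple $H\hookrightarrow H_{A}$; the displayed computation above shows $M$ is symmetric on the dense subspace $H$ of $H_{A}$. The substantive step — and the one I expect to be the main obstacle — is to show that $M$ \emph{extends} to a bounded, indeed \emph{compact}, selfadjoint operator $\widetilde{M}$ on $H_{A}$: mere compactness of $M$ on $H$ does not by itself control $M$ in the coarser $A$-metric, and it is exactly here that the hypothesis $M\in\mathcal{C}_{p}$ is used. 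Granting this extension, the spectral theorem applied to $\widetilde{M}$ on $H_{A}$ delivers the real spectrum of $M$ and the collapse of nontrivial Jordan chains at nonzero eigenvalues precisely as before (these assertions involve only vectors of $H$, since $f$ and $Mf$ both lie in $H$), while the eigenvectors of $M^{\ast}$ built from the $\widetilde{M}$-eigensystem lie in $\ran A\subseteq H$ and remain complete in $H$ by the same adjointness identity $\inner{h,A\vec{e}_{n}}=\inner{h,\vec{e}_{n}}_{A}$ combined with the density of $H$ in $H_{A}$. Assembling these pieces completes the proof.
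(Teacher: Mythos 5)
The paper offers no proof of this statement---it is quoted from Kre\u{\i}n with a bare reference---so your proposal must stand on its own, and it does not quite do so. The case you treat completely, $A \geq \varepsilon I$, is the easy and essentially irrelevant one: the paragraph immediately preceding the theorem makes clear that the intended reading of ``strictly positive'' is an injective, nonnegative symmetrizer whose form $\inner{Af,g}$ generates a strictly weaker norm, and in the motivating application (the Neumann--Poincar\'e operator) the symmetrizer is the single-layer operator $S$, which is compact and hence never boundedly invertible. In that general setting your argument explicitly ``grants'' the one step that carries all the content of the theorem: that $M$ extends to a \emph{compact} selfadjoint operator on the completion $H_A$. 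Boundedness of the extension is in fact automatic (the Reid--Lax inequality: $\norm{Mf}_A^2 = \inner{M^2f,f}_A \le \norm{M^2f}_A\norm{f}_A$, iterated, gives $\norm{Mf}_A \le r(M)\norm{f}_A$ with $r(M)$ the spectral radius), but compactness on $H_A$ does \emph{not} follow from compactness on $H$, since the unit ball of $H_A$ is far larger than that of $H$; producing a complete orthonormal eigensystem for the extension is precisely Kre\u{\i}n's theorem, classically proved by a Rayleigh-quotient and deflation argument in which compactness of $M$ on $H$ is used to show that each successive supremum of $|\inner{AMf,f}|/\inner{Af,f}$ is attained. Leaving this step as ``the main obstacle'' means the completeness assertion for the eigenvectors of $M^*$---the only genuinely deep part of the statement---is not proved.

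A secondary remark: the first two conclusions do not need the completion at all, and routing them through $H_A$ obscures how little they cost. Since $M$ is compact on $H$, every nonzero spectral point is an eigenvalue with eigenvector $f \in H$, and $\lambda\inner{Af,f} = \inner{AMf,f} = \inner{Af,Mf} = \overline{\lambda}\,\inner{Af,f}$ together with $\inner{Af,f}>0$ forces $\lambda \in \R$. Likewise, if $(M-\lambda)^2 f = 0$ and $g=(M-\lambda)f$, then $A(M-\lambda) = (M-\lambda)^*A$ gives $\inner{Ag,g} = \inner{Af,(M-\lambda)^2f} = 0$, so $g=0$, and induction collapses longer Jordan chains. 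I would prove (1) and (2) this way and concentrate the entire effort on the completeness statement, which is where the theorem actually lives.
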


The above theorm directly applies to the Neumann-Poincar\'e operator $K$ and, in the case of dimension two,
to the Beurling transform $T_\Omega$ discussed in the preceding section, cf.  \cite{KPS}.

}

\bibliographystyle{plain}
\bibliography{JPA-ReviewArticle}

\end{document}